\pgfplotsset{compat = newest}
\newtheorem{theorem}{Theorem}[section]
\newtheorem{proposition}[theorem]{Proposition}
\newtheorem{remark}[theorem]{Remark}
\newtheorem{claim}[theorem]{Claim}
\newtheorem{question}[theorem]{Question}
\newtheorem{definition}[theorem]{Definition}
\newtheorem{corollary}[theorem]{Corollary}
\newtheorem{lemma}[theorem]{Lemma}
\newtheorem{convention}[theorem]{Convention}
\title{Regularity and persistence in non-Weinstein Liouville geometry\\
 via hyperbolic dynamics}
\author{Surena Hozoori}
\newcommand{\Addresses}{{% additional braces for segregating \footnotesize
  \bigskip
  \footnotesize

Surena Hozoori, \textsc{Department of Mathematics, University of Rochester.}\par\nopagebreak
  \textit{E-mail address}: \texttt{shozoori@ur.rochester.edu}
  
  }}
    \date{}
\begin{document}
\maketitle

\noindent
\begin{abstract}
We explore the construction of non-Weinstein Liouville geometric objects based on Anosov 3-flows, intoduced by Mitsumatsu \cite{mitsumatsu}, in the generalized framework of {\em Liouville Interpolation Systems} and non-singular partially hyperbolic flows. We study the subtle phenomena inherited from the regularity and persistence theory of hyperbolic dynamics in the resulting Liouville structures, and prove dynamical and geometric rigidity results in this context. Among other things, we show that Mitsumatsu's examples characterize 4-dimensional non-Weinstein Liouville geometry with 3-dimensional $C^1$-persistent transverse skeleton. We also draw applications to the regularity theory of the weak dominated bundles for non-singular partially hyperbolic 3-flows.
\end{abstract}

{
  \hypersetup{linkcolor=black}
  \tableofcontents
}
%%%%%%%%%%%%%%%%%%%%%%%%%%%%%

\section{Introduction}

An important strand of questions in symplectic geometry revolves around distinguishing the symplectic condition from other coinciding geometric conditions. The celebrated non-squeezing theorem of Gromov, for instance, distinguishes between the symplectic condition for diffeomorphisms and the existence of an invariant volume form \cite{gromov,symptop}. On the other hand, symplectic and complex geometries appear together in many contexts, and it was initially unknown how the world of the two geometries might in fact differ. The natural compatibility condition between the two geometries defines the class of {\em Stein} structures. It is now well-established that Stein geometry can be formulated  in terms of an equivalent symplectic topological description \cite{ce}. Such topological description requires the symplectic form to be exact, as well as the existence of a gradient-like {\em Liouville flow}. More specifically, one would focus on symplectic manifolds of the type $(W,d\alpha)$, where $\alpha$ is a 1-form with non-degenerate derivative $d\alpha$, called the {\em Liouville form}, and we further assume the unique vector field $Y$ defined by $\iota_Y d\alpha=\alpha$, called the {\em Liouville vector field}, is gradient-like with respect to some Morse function on $W$. The study of such 1-forms is called {\em Liouville geometry} and the gradient-like condition on the Liouville flow is referred to as the {\em Weinstein condition}. Under such circumstances, sometimes called the {\em Weinstein geometry}, we can exploit Morse theory to reach a topological description of such objects using {\em symplectic} handle decompositions \cite{weinstein}. One straightforward consequence of applying Morse theory in this context is the fact that the underlying manifold admits a CW-complex with at most half-dimensional cells. In particular in dimension 4, the Weinstein condition requires the underlying manifold to have the topological type of at most 2. The correspondence between the Stein and Weinstein then implies the same for Stein manifolds \cite{ce}.

It was unknown for a while whether examples of {\em non-Weinstein Liouville geometry} is possible. Note that one can always deform a Liouville vector field to be non gradient-like, and therefore, the non-Weinstein condition refers to Liouville forms which can not be homotoped to be Weinstein. The first examples of non-Weinstein geometry were found by McDuff \cite{mcduff} in 1991 and then, Geiges \cite{geiges2,geiges} and Mitsumatsu \cite{mitsumatsu} extended on the ideas of McDuff. In dimension 4, the idea of these constructions is most generalized in the result of Mitsumatsu in 1994, where he showed that given a 3-manifold $M$ equipped with an {\em Anosov flow} $X^t$, one can construct a 4-dimensional {\em Liouville domain} of the type $(W:=[-1,1]\times M,\alpha)$. Such Liouville domains are necessarily non-Weinstein, since the underlying manifold has the topological type of a closed 3-manifold. Other examples can been constructed \cite{mcduff,bowden} by attaching symplectic handles to the examples of Mitsumatsu. In all of these examples, the non-Weinstein condition is established thanks to the obstruction on the topological type of the underlying manifold. 

Furthermore, new attention has been paid to Liouville dynamics recently, motivated by the prominent role they play in the theory of {\em convex hypersurfaces} in contact geometry \cite{honda,ep,bhh}. More specifically, Liouville dynamics naturally appears as the hypersurface dynamics in the contact geometry of 1-dimension higher. The theory of convex (hyper)surfaces in contact geometry was initiated by Giroux in the early 90s \cite{convex}, and particularly in dimension 3, is proven to be extremely useful in the study of the topological aspects of contact structures (see \cite{massot,contop} for an introduction to the topic). Giroux shows that $C^\infty$-generically, contact geometry in the neighborhood of an embedded surface can be described in terms of a topological set of data on the surface. The Liouville dynamics in this context is simply a volume expanding surface dynamics and after a $C^\infty$-perturbation, such dynamics can be shown to be {\em Morse-Smale}. Then, thanks to a notion of {\em convexity} in this context, an efficient topological description of the contact form in a neighborhood of such embedded surface can be derived (in terms of {\em dividing curves} on the surface). Such topological description, introduced by Giroux, has revolutionized our understanding of contact topology in dimension 3 and in particular, has contributed significantly to many classification results in low dimensions. In higher dimensions however, the situation is much more complicated, partly due to the absence of a general dynamical theory of Liouville flows as one goes beyond dimension 2. Nevertheless, new progress has been made in this direction in recent years, as one would eventually like to apply the ideas of Giroux to high dimensional contact geometry. We might still hope to have a topological description of the contact geometry of a generic embedded hypersurface in a high dimensional contact manifolds. The relevant result in arbitrary dimensions is established by Honda-Huang in 2019 \cite{honda}, where it is shown that $C^0$-generically, an embedded hypersurface inherits a Weinstein
Louville dynamics, i.e. the induced hypersurface Liouville dynamics is gradient-like. However, the fact that such genericity is only $C^0$, leaves ample room for other {\em exotic} Liouville dynamics in high dimensions \cite{honda,huang,jul,mori1,mori2}. This is where non-Weinstein Liouville geometry resides and is left mostly unexplored. As pointed out in \cite{honda}, there has been no systematic study of non-Weinstein Liouville geometry and our tangible knowledge of the matter, to a great degree, relies on the variations and modifications of the Mitsumatsu's construction. Recent developments have started to change our understanding of the Liouville dynamics beyond the Weinstein case \cite{bc,eoy,breen} and this paper aims to contribute one step further in this direction.

On a separate note, it is shown \cite{hoz3} that the construction of Mitsumatsu can be exploited to achieve a characterization of Anosov 3-flows purely in terms of Liouville geometry. Anosov flows are the prototypical examples of chaotic dynamical systems introduced by Anosov \cite{anosov,anosov0}, for which deep results on the persistence and regularity of the invariant manifolds are established \cite{anosov,anosov0,hps,ps}.  These are flows with respect to which, the underlying manifold admits a continuous flow-invariant splitting %(often of low regularity, even when the flow is $C^\infty$)
of the type $TM\simeq E^s \oplus E^u \oplus \langle X \rangle$, where $X$ is the generator of the flow and, $E^u$ and $E^s$ are expanding and contracting subspaces of $TM$, called the {\em strong unstable and stable bundles}, respectively. Strong connections to the geometry and topology of the underlying manifold has been explored, in particular in dimension 3, thanks to the use of ({\em taut}) foliation theory in the study of these flow \cite{hyp,bartintro}. More specifically, the {\em weak invariant} foliations tangent to $E^{wu}:=E^u\oplus \langle X \rangle$ and $E^{ws}:=E^s\oplus \langle X \rangle$ are often exploited to develop such geometric theory in dimension 3. However, such foliations are usually of low regularity and therefore, the methods applied in this study are typically very topological in nature.

On the other hand, a contact geometric theory of Anosov 3-flows has been under development in recent years \cite{hoz3,hoz4,sal1,sal2,massoni,clmm,mit2}, which relies on an important observation of Mitsumatsu \cite{mitsumatsu} and Eliashberg-Thurston \cite{et} in the mid 1990s. That is, the generating vector field for any Anosov 3-flow lies in the intersection of a transverse pair of negative and positive contact structures, also called a {\em bi-contact structure}. It turns out that such {\em bi-contact condition} has a dynamical interpretation, as it characterizes a notion of hyperbolicty for flows weaker than being Anosov. Namely, this characterizes the class of {\em projectively Anosov flows}. %which can be thought of as a weak notion of hyperbolicity for flows. %which can be characterized as the class of flows with the same action on the projectified normal bundle $S(TM/\langle X\rangle)$ as of an Anosov flow. 
These are flows generated by the non-vanishing vector fields like  $X$ and admitting a continuous $X^t$-invariant {\em dominated splitting} $TM/\langle X \rangle \simeq E\oplus F$. Here, by $F$ {\em dominating} $E$, we mean the action of $X^t$ on $F$ has greater norm compared to $E$.
The observation of Mitsumatsu and Eliashberg-Thurston provides a bridge between the worlds of hyperbolic dynamics and contact geometry, by giving a purely contact geometric characterization of projectively Anosov flows. One can naturally ask whether in this context, Anosov flows have a counterpart in the contact geometric formulation. The affirmative answer to this question was provided in 2020 \cite{hoz3}, based on Mitsumatsu's construction of Liouville domains.
One can show that the Liouville condition for the 1-form defined, on $[-1,1]\times M$, as the linear interpolation of the two underlying contact forms, can carry information about the expansion data of the {\em supported} (projectively Anosov) vector field in the intersection. More precisely, we are interested in the Liouville domains of the form
$$\alpha=(1-s)\alpha_-+(1+s)\alpha_+ \ \ \ \ \ \ \ \ \ on\ \ \  [-1,1]_s\times M,$$
where $\alpha_-$ and $\alpha_+$ are negative and positive contact forms, respectively, on the 3-manifold $M$ with kernels transversely intersecting along the (projectively Anosov) vector field $\langle X\rangle$. For a 3-dimensional interpretation, one can think of this as an interpolation of contact forms $\alpha_-$ and $\alpha_+$ such that the kernel travels through $E$ the {\em dominated} bundle of $X$ (see Section~\ref{3.3} for more thorough discussion). Hence, Mitsumatsu's construction is central to the Liouville geometric theory of Anosov flows in dimension 3, which has facilitated the application of new symplectic geometric ideas to questions in the realm of Anosov dynamics. In particular, modifications of this construction has been applied to study a non-compact version of such geometric model as Liouville {\em manifolds} on $\mathbb{R}\times M$, by considering exponential interpolations \cite{mnw,massoni}, i.e. Liouville manifolds of the form
$$\alpha=e^{-s}\alpha_-+e^s\alpha_+ \ \ \ \ \ \ \ \ \ \ \ \ \ \ \ \ \ \ \ \ \  on\ \ \  \mathbb{R}_s\times M,$$ %thanks to their convenient deformation theory. 
where $\alpha_-$ and $\alpha_+$ are as before.
This non-compact theory has been exploited recently in order to define a host of new symplectic geometric invariants of Anosov 3-flows based on the invariants of the resulting non-Weinstein Liouville manifolds \cite{clmm}.

The goals of this paper are three-fold; (1) We want to unify the previous 3-dimensional variations of Mitsumatsu's construction in the generalized framework of {\em Liouville Interpolation Systems} (also denoted by LIS s here), which geometrically is meant to encapsulate the weakest notion of interpolation between contact forms as seen in the construction, and rectify the apparent differences between different models. In particular, we will compare the two most commonly used, i.e. linear vs. exponential, models. Moreover, we often generalize our arguments to non-singular partially hyperbolic 3-flows, a generalization of Anosov 3-flows for which the Mitsumatsu's construction still works; (2) Study the Liouville geometry of the resulting objects and see the subtle regularity and persistent theoretic aspects of hyperbolic dynamics is inherited in the Liouville geometric theory of these flows. This helps us establish dynamical and geometric rigidity of the Liouville structure, showing that the Liouville geometry in this case is strongly determined by the supported flow, i.e. the {\em skeleton dynamics}; (3) Extend our analysis of the LIS to Liouville manifolds with persistent 3-dimensional skeleton and show that under assumptions, non-Weinstein Liouville geometry is characterized by the examples of Mitsumatsu. Beside these, our generalized setting allows us to draw corollaries about the regularity of the weak dominated bundle, generalizing the classical results on $C^1$-regularity of the weak invariant bundles for Anosov 3-flows \cite{hps,reg}.

\vskip0.5cm

\noindent\fbox{%
    \parbox{\textwidth}{%
\textbf{Assumptions:} In this paper, unless stated otherwise, $M$ is a closed oriented connected 3-manifold and $X$ is vector field on $M$. Here, the flow generated by a $C^\infty$ vector field $X$ is denoted by $X^t$. Also, we assume all (projectively) Anosov flows to be {\em oriented}, i.e. have oriented invariant bundles. This condition is always satisfied, possibly after going to a double cover of $M$.
    }%
}
\vskip0.5cm

\begin{remark}
Refinements of the results in this paper can be given in terms of the regularity of the flows as well as the Liouville geometry. However, for the sake of more straight forward statements, we here assume the flows to be $C^\infty$, a common assumption thanks to the {\em $C^1$-structural stability} of Anosov flows. Subsequently, we often take our Liouville forms to be $C^\infty$, unless stated otherwise, exploiting the fact that the Liouville geometry of these flows, unlike their underlying invariant foliations, can be described in the same regularity as of the flows.
\end{remark}

All variations of the Mitsumatsu's examples rely on an interpolation between a pair of negative and positive contact forms $(\alpha_-,\alpha_+)$, whose kernels intersect transversely along an Anosov vector field $X$. In this paper, we want to show that in this situation, the Liouville geometry is strongly determined by such supported vector field and is independent of all the auxiliary data related to interpolation construction. %In particular, the Liouville condition yields information on the expansion of the dominated bundle of $X$.  
Therefore, we want to exploit this kernel interpolation idea in a very general sense of it.
We consider Liouville forms of the type
$$\begin{cases}
\alpha:=\lambda_-\alpha_-+\lambda_+\alpha_+ \ \ \ \ \ \ \ \text{on}\ \ I_s\times M \ \ \ \text{for some interval} \ \ I_s\subset \mathbb{R}\\
\alpha_-,\alpha_+: \text{negative and positive contact forms with transverse kernels on}\ \ M\\
\lambda_-,\lambda_+:I_s \times M\rightarrow \mathbb{R}_{>0} \text{ are positive functions}
\end{cases},$$
where considering the fact that $\ker{[\alpha]}=\ker{[\alpha_++(\frac{\lambda_+}{\lambda_-})\alpha_-]}$, the interpolation of the contact forms boils down to enforcing the geometric condition
$$\partial_s\cdot \frac{\lambda_+}{\lambda_-}\neq 0\ \ \Longleftrightarrow \ \ \alpha\wedge \mathcal{L}_{\partial_s}\alpha \neq 0 \ \ \Longleftrightarrow \ \  Y\pitchfork \partial_s,$$
where $Y$ is the Liouville vector field of $\alpha$. Respecting the orientations, we assume $\partial_s\cdot \frac{\lambda_+(.,x)}{\lambda_-(.,x)}> 0$ for any $x\in M$, i.e. $\frac{\lambda_+(.,x)}{\lambda_-(.,x)}$ is a local oriented reparametrization of the interval $I_s$. This geometric description is in fact enough for most purposes, as most of the action happens in near the skeleton. In this paper, we consider the two categories of {\em Liouville domains} in the compact setting (assuming positive contactness of the boundary) and {\em Liouville manifolds} in the non-compact one (assuming the completeness of the Liouville flows). There are other boundary conditions which are well motivated in the literature, e.g. the class of ideal Liouville domains \cite{ideal} or variations of singular symplectic geometry \cite{sing,poiss}, and we expect the culprit of our analysis to remain valid in a broader sense.%, as the idea is to show that geometry is strongly determined by the skeleton dynamics and away from any (possibly non-compact notion of)  boundary.

 When $I_s=[N_-,N_+]$ is a compact interval, it is natural to assume the positive contactness of $\alpha$ on $\partial(I_s\times M)$, which defines what we refer to as a Liouville domain. Note that this requires $\alpha|_{s=N_-}$ and $\alpha|_{s=N_+}$ are negative and positive contact forms on $M$, respectively. This in particular includes the linear construction originally used by Mitsumatsu and further in \cite{mitsumatsu,hoz3,hoz4}.

For the non-compact setting corresponding to $I_s=\mathbb{R}$, it is natural for us to consider the class of Liouville manifolds, i.e. assume the completeness of the resulting Liouville flow. Such category of objects satisfy a convenient deformation theory, thanks to an application of the {\em Moser technique}. However, it is not always arithmetically easy to determine whether a flow is complete. We therefore enforce the condition of $s\mapsto \ln{\frac{\lambda_+(s,x)}{\lambda_-(s,x)}}$ being a positive reparametrization of $\mathbb{R}$, for any $x\in M$, which implies completeness of the Liouville flow (as it will be proved in Section~\ref{4}). This non-compact model in particular includes the exponential model used in \cite{mnw,clmm,massoni}.

We record this information (in compact or non-compact case) as $(\alpha_-,\alpha_+)_{(\lambda_-,\lambda_+)}$ and call it a (compact or non-compact) {\em Liouville interpolation system (LIS)}. Considering such boundary conditions, we denote the space of such objects by
$$\begin{cases}
\mathcal{LIS}_c(M): \ \ \ \text{space of compact Liouville interpolation systems on }M\\
\mathcal{LIS}(M): \ \ \ \text{space of (non-compact) Liouville interpolation systems on }M
\end{cases}.$$

%Note that because of the rigidity of the Liouville completion procedure, a Liouville manifold is strictly determined by a compact subset containing the Liouville skeleton. We will later see that going beyond this definition of LIS can in fact be useful, when studying the linearization of the Liouville dynamics at the skeleton. 

This contains both the linear and exponential models previously considered in the literature and in fact, we allow the interpolation functions to depend on $x\in M$. We will later see that the selected interpolation regime does not affect the Liouville geometry.
We primarily work in the non-compact in order to enjoy the deformation theory needed to study of the symmetries of the Liouville structure. Therefore, unless stated otherwise, LIS s are non-compact in the following. We will later relate the compact and non-compact settings via strict Liouville emebeddings (see Section~\ref{5}).

Furthermore, it is important for us that the construction of Mitsumatsu in fact works in a class of dynamics larger than Anosov 3-flows. These are {\em non-singular partially hyperbolic flows} which are characterized by a continuous flow invariant splitting $TM\simeq E\oplus E^u$, where $E^u$ is a 1-dimensional expanding line bundle (strong unstable bundle), and $E$ is a 2-plane field containing $X$, which is {\em dominated} by $E^u$ (center bundle). The Liouville domain can still be constructed for such flows, thanks to the expansion of $E^u$.

We start our study by revisiting the previously known constructions, in order to recontextualize and generalize them to the broad interpolation framework of LIS s, and the category of non-singular partially hyperbolic flows. Similar characterization for Anosov flows is possible of course, if one notes that $X$ is Anosov, if and only if, both $X$ and $-X$ are non-singular partially hyperbolic vector fields. This generalizes the results of \cite{hoz3,massoni} for the linear and exponential {\em Liouville pairs}.

\begin{theorem}\label{introgeneralchar}
The followings are equivalent:

(1) $X^t$ is a non-singular partially hyperbolic 3-flow with the splitting $TM \simeq E\oplus  E^u$;

(2) $X^t$ admits some supporting LIS $(\alpha_-,\alpha_+)_{(\lambda_-,\lambda_+)}\in \mathcal{LIS}(M)$;

(3) $X^t$ admits some supporting compact LIS $(\alpha_-,\alpha_+)_{(\lambda_-,\lambda_+)}\in \mathcal{LIS}_c(M)$
\end{theorem}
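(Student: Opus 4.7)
The plan is to establish the cyclic chain of implications $(1)\Rightarrow(3)\Rightarrow(2)\Rightarrow(1)$, each using different techniques. For $(1)\Rightarrow(3)$, I adapt Mitsumatsu's original construction to the (strictly more general) partially hyperbolic setting. Starting from the invariant splitting $TM\simeq E\oplus E^u$, I first produce a bi-contact pair $(\xi_-,\xi_+)=(\ker\alpha_-,\ker\alpha_+)$ of 2-planes that both contain $X$, are transverse to each other along $\langle X\rangle$, and are obtained from $E$ by tilting infinitesimally in opposite senses in the direction of $E^u$. The uniform expansion of $E^u$ together with its domination over $E$ is precisely what forces $\alpha_\pm\wedge d\alpha_\pm$ to have definite opposite signs. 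I then consider the linear interpolation $\alpha=(1-s)\alpha_-+(1+s)\alpha_+$ on $[-1,1]_s\times M$ and verify $(d\alpha)^2\neq 0$: the non-degeneracy reduces to the positivity of a 4-form whose sign is controlled by $\mathcal{L}_X\alpha_\pm$ evaluated on a direction transverse to $\langle X\rangle$, and this is again the expansion/domination estimate. Positive contactness at $s=\pm 1$ is built into the endpoints of the interpolation, so this yields a compact LIS with $\lambda_-=1-s$ and $\lambda_+=1+s$.

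For $(3)\Rightarrow(2)$, I smoothly extend a compact LIS on $[N_-,N_+]\times M$ to all of $\mathbb{R}\times M$ by extending the positive functions $\lambda_\pm$ so that $s\mapsto\ln\bigl(\lambda_+(s,x)/\lambda_-(s,x)\bigr)$ becomes a positive reparametrization of the whole real line for every $x\in M$. This is possible because the positive contactness of $\alpha$ on $\partial([N_-,N_+]\times M)$ already forces the ratio to be monotone near the endpoints in the correct direction; the extension is a routine gluing with monotone tails sending $\lambda_+/\lambda_-$ to $0$ and $\infty$ as $s\to\mp\infty$. The resulting object is Liouville by inspection, and completeness of its Liouville flow follows from the reparametrization condition, which is the criterion justified later in the paper.

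For $(2)\Rightarrow(1)$, the intersection $\ker\alpha_-\cap\ker\alpha_+$ defines a non-vanishing vector field $X$ on $M$; I take $E:=\ker\alpha_+$ as the candidate center plane containing $X$. The expanding line $E^u$ will be produced from the Liouville flow $Y^t$: because $Y\pitchfork\partial_s$ and $\ln(\lambda_+/\lambda_-)$ is a positive reparametrization of $\mathbb{R}$, $Y^t$ carries $\{0\}\times M$ to arbitrary slices, and the scaling relation $\mathcal{L}_Yd\alpha=d\alpha$ forces the kernel-plane field $\ker\alpha|_{\{s\}\times M}$ to admit well-defined asymptotic limits as $s\to\pm\infty$. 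Pulled back to $\{0\}\times M\cong M$ via the flow, these limiting planes are $X^t$-invariant, and the direction transverse to $E$ that appears in the $s\to+\infty$ limit is the sought $E^u$; the exponential rate built into the Liouville scaling translates directly into uniform expansion of $E^u$ and its domination over $E$.

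\textbf{Main obstacle.} The most delicate step is $(2)\Rightarrow(1)$. The bi-contact content of a LIS by itself recovers only projectively Anosov dynamics, so promoting this to non-singular partial hyperbolicity requires genuinely using the asymptotic data of the non-compact LIS, namely the reparametrization condition on $\ln(\lambda_+/\lambda_-)$ and the completeness of the Liouville flow. Showing that the asymptotic plane field extracted from $Y^t$ is continuous, $X^t$-invariant, and yields a uniformly expanding transverse line that strictly dominates $E$ is where the bulk of the technical work will sit, and it is precisely where enforcing a full reparametrization of $\mathbb{R}$ (rather than merely monotonicity) is essential.
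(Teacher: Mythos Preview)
Your argument for $(2)\Rightarrow(1)$ has a genuine gap. You take $E:=\ker\alpha_+$ as the candidate center bundle, but $\ker\alpha_+$ is a contact plane and is \emph{not} invariant under $X^t$; the dominated bundle of the flow lies strictly between $\ker\alpha_-$ and $\ker\alpha_+$ in the interpolation. Your extraction of $E^u$ from asymptotics of $\ker\alpha|_{\{s\}\times M}$ fails for the same reason: since $\ln(\lambda_+/\lambda_-)\to\pm\infty$, the plane $\ker\alpha$ simply converges to $\ker\alpha_\pm$ at the ends, and these are again non-invariant contact planes. Pulling back along the Liouville flow $Y^t$ does not help either, because $\mathcal{L}_Y\alpha=\alpha$ means $Y^t$ \emph{preserves} $\ker\alpha$, so flowing and pulling back returns exactly the plane you started with and produces no new invariant object.

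The paper's route is both simpler and entirely local, requiring no asymptotic data or completeness. First, transversality of $\ker\alpha_\pm$ already makes $X$ projectively Anosov by Mitsumatsu and Eliashberg--Thurston, which furnishes the dominated splitting $TM/\langle X\rangle\simeq E\oplus F$ a priori. One then evaluates the Liouville condition $\iota_X\iota_{\partial_s}(d\alpha\wedge d\alpha)>0$ at the single graph $\{s=\Lambda_s(x)\}$ where $\ker\alpha=E$; a direct computation shows this inequality reduces exactly to $r_u>0$, i.e.\ absolute expansion of $F$, and Doering's lemma then lifts the splitting to $TM\simeq E\oplus E^u$. Because nothing here uses the non-compact ends, the same computation gives $(3)\Rightarrow(1)$ simultaneously, making your extension step $(3)\Rightarrow(2)$ unnecessary. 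One further point on $(1)\Rightarrow(3)$: the symmetric Mitsumatsu pair you construct is only as regular as the weak bundles, which can be sub-$C^1$ in the non-Anosov partially hyperbolic case, so a separate $C^\infty$-approximation argument (as in the paper's Section~3.6) is needed to land in $\mathcal{LIS}_c(M)$.
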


The key observation in the above theorem is the following. The bi-contact condition implies the existence of a dominated splitting $TM/\langle X\rangle \simeq E\oplus F$ and the Liouville condition at the skeleton gives the absolute expansion of $F$ required by the partial hyperbolicity of $X$, i.e. the existence of a lift of the dominated splitting to an invariant splitting of the form $TM\simeq E\oplus E^u$. Results of this paper show that the Liouville geometry of $(\alpha_-,\alpha_+)_{(\lambda_-,\lambda_+)}\in \mathcal{LIS}(M)$ is strongly determined by the supported (positive reparametrization class of) flows.

Here, we remark that the considering the class of non-singular partially hyperbolic 3-flows as a generlization of Anosov flows is quite natural from a Liouvile geometric point of view. However, we will see that subtle failures in the regularity and persistence theory of hyperbolic dynamics in this larger class of flows can have consequences in the non-Anosov examples. This distinction is important as the {\em Liouville homotopic invariants} of our constructed objects are invariant under homotopy through the class of non-singular partially hyperbolic flows, and it is not well understood how the space of such flows of is related (in terms of the homotopy type) to the space of Anosov flows (see Question~\ref{qinv}). As sketched in \cite{et}, non-Anosov examples of non-singular partially hyperbolic 3-flows can be constructed using the {\em DA (Derived from Anosov) deformation}, an operation of blowing up a periodic orbit of an Anosov flow to a fully repelling orbit, while still preserving a dominated splitting. We will revisit this construction in Section~\ref{3.5} to show that such deformation can be constructed via a {\em bi-contact homotopy}.

%We primarily carry our computations in the non-compact setting in order to exploit the deformation theory of Liouville forms in the presence of a complete Liouville flow. Then, we will relate the compact version to this non-compact setting using strict Liouville emebeddings (see Section~\ref{5}). 

Early on in our study, we will notice that there exists a strictly exact Lagrangian foliation $\mathcal{F}^{wn}$, which we call the {\em weak normal foliation}, tangent to the plane field $E^{wn}:=\langle \partial_s,X\rangle$ which is invariant under the Liouville flow and plays a significant role in the theory. This foliation is fixed when we fix the flow and therefore, the isotopies and maps preserving such exact Lagangian foliation can be employed to study the symmetries of LIS s. Later on (in Section~\ref{7}), we will see that the existence of such exact Lagrangian foliation is not a coincidence from our LIS construction, but in fact is inherited from the persistence features of the Liouville skeleton. 

For the Liouville form induced from an arbitrary LIS, we can explicitly compute the Liouville dynamics. In particular, we show that the skeleton is a section of the projection $\pi:I_s\times M \rightarrow M$ and can study its repellence from the geometric data. %The essence of the following theorem is that the Liouville flow is uniquely determined by the supported flow.

\begin{theorem}\label{introdynrig}
(Dynamical rigidity) Assume $X^t$ be a non-singular partially hyperbolic flow on $M$ admitting the dominated splitting $TM\simeq E\oplus E^{u}$, the weak dominated bundle $E$ is $C^k$ for some $0<k$, both weak invariant bundles are $C^l$ for some $0<l\leq k$, $(\alpha_-,\alpha_+)_{(\lambda_-,\lambda_+)}\in\mathcal{LIS}(M)$ is a supporting LIS and $Y$ is the corresponding Liouville vector field on $\mathbb{R}\times M$. Then,

(1) $Skel(Y)$ is a $C^k$ section of $\pi:\mathbb{R}\times M \rightarrow M$ given as
$$Skel(Y)=\bigg\{(\Lambda_s(x),x)\in \mathbb{R}\times M \ \ \text{ where } \ \ \Lambda_s:M\rightarrow \mathbb{R} \ \ \text{ is determined by }$$
$$\ker{\big[ \lambda_-(\Lambda_s(x),x)\alpha_-+\lambda_+(\Lambda_s(x),x)\alpha_+\big]}=E \bigg\}.$$ More specifically, $Skel(Y)$ is exactly as regular as $E$. When $X$ is Anosov, the skeleton is exactly as regular as the weak stable bundle $E^{ws}$ and in particular, it is always a $C^{1+}$ section of $\pi$;
 
 (2) $\pi_*(Y|_{Skel(Y)})\subset TM$ is a synchronization of $X$;
 
 (3) $Y$ preserves a $C^\infty$ exact Lagrangian foliation $\mathcal{F}^{wn}$ containing the flow lines of $Y$ expands a differentiable transverse measure at $Skel(Y)$. Furthermore, $Y$ is normally hyperbolic at $Skel(Y)$, if and only if, $X$ is Anosov;
 
(4) there exists a $C^l$ 1-dimensional foliation $\mathcal{F}^n$ inside $\mathcal{F}^{wn}$ which is transverse to and invariant under the flow of $Y$. In particular when $X$ is Anosov, $\mathcal{F}^n$ is $C^{1+}$ and coincides with the strong repelling foliation whose existence is implied by the normal hyperbolicity given in (3).\\

%(5) $Skel(Y)$ $C^k$-varies as one $C^{k}$-deforms $(\alpha_-,\alpha_+)_{(\lambda_-,\lambda_+)}$, while fixing $X$;

%(6) %$Skel(Y)$ $C^k$-varies as one $C^{k+1}$-deforms $(\alpha_-,\alpha_+)_{(\lambda_-,\lambda_+)}$ through LIS s with $C^k$ weak invariant bundle. In particular,
 %$Skel(Y)$ is $C^1$-persistent under $C^2$-deformations of a supported Anosov vector field $X$. %\textcolor{red}{Ck non-anosov version?}
\end{theorem}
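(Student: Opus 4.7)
The proof decomposes into understanding how the LIS-induced Liouville vector field $Y$ interacts with the canonical foliation $\mathcal{F}^{wn}$ tangent to $E^{wn} = \langle \partial_s, X \rangle$ (where $X$ is trivially lifted to $\mathbb{R} \times M$). Since both $\alpha_\pm$ are pulled back from $M$ and $X \in \ker \alpha_- \cap \ker \alpha_+$, we have $\alpha|_{E^{wn}} \equiv 0$; combined with involutivity of $E^{wn}$ (from $[\partial_s, X] = 0$), a short computation using involutivity gives $d\alpha|_{E^{wn}} = 0$, so $\mathcal{F}^{wn}$ is a $C^\infty$ strictly exact Lagrangian foliation with primitive identically zero. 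Lagrangianity then forces $Y$-tangency: for any $V \in T\mathcal{F}^{wn}$, the identity $d\alpha(Y,V) = \alpha(V) = 0$ shows $Y \in (T\mathcal{F}^{wn})^{\perp d\alpha} = T\mathcal{F}^{wn}$. So I may write $Y = Y^s \partial_s + Y^X \cdot X$ with $Y^s, Y^X \in C^\infty$.

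To identify the skeleton, I would compute $Y^s$ and $Y^X$ explicitly by pairing $\iota_Y d\alpha = \alpha$ with the Reeb vector fields $R_\pm$ of $\alpha_\pm$. The resulting formulas show that $Y^s$ vanishes precisely where $\ker [\lambda_- \alpha_- + \lambda_+ \alpha_+] = E$, the dominated bundle of the partially hyperbolic splitting. Because $s \mapsto \lambda_+(s,x)/\lambda_-(s,x)$ is a positive reparametrization of $\mathbb{R}$ (by the LIS orientation convention) and $E$ is transverse to both $\ker \alpha_\pm$ (by the supporting condition), for each $x \in M$ there is exactly one $s = \Lambda_s(x)$ satisfying this kernel-matching equation, producing $Skel(Y)$ as the graph in (1). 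The implicit function theorem applied to this equation transfers the $C^k$ regularity of $E$ directly to $\Lambda_s$. In the Anosov case $E = E^{ws}$ has the classical $C^{1+}$-regularity from \cite{hps,reg}, giving the last sentence of (1).

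Parts (2) and (3) follow from this structure. At the skeleton, $Y^s = 0$ leaves $Y|_{Skel(Y)} = Y^X \cdot X$, and $\pi_\ast$ recovers a positive multiple of $X$ (positivity following from $\partial_s(\lambda_+/\lambda_-) > 0$), establishing (2). The first claim of (3) is already proved ($\mathcal{F}^{wn}$ is $C^\infty$ exact Lagrangian, and $Y$-tangency gives $Y$-invariance). The transverse measure expansion at $Skel(Y)$ follows from evaluating $\mathcal{L}_Y d\alpha = d\alpha$ on a $\partial_s$-transversal to $Skel(Y)$ inside $\mathcal{F}^{wn}$. For the normal hyperbolicity equivalence, the normal bundle $T(\mathbb{R} \times M)|_{Skel(Y)}/T Skel(Y)$ must split equivariantly into strong contracting and expanding subbundles dominating the tangential dynamics; the expanding piece always exists, coming from $\partial_s$ and the Liouville expansion, while a strongly contracting piece in the $M$-direction exists precisely when $E$ further splits as $\langle X \rangle \oplus E^s$ with uniform contraction on $E^s$, that is, precisely when $X$ is Anosov.

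For (4), the candidate $\mathcal{F}^n$ lives inside the $2$-dimensional leaves of $\mathcal{F}^{wn}$ and must be transverse to the flow of $Y$. I would construct its tangent line field by pulling the weak dominated bundle of $X$ along $\pi$ and intersecting with $T\mathcal{F}^{wn}$, producing a $Y$-invariant line field with $C^l$-regularity inherited from the weak invariant bundles. Integrating leaf-by-leaf inside $\mathcal{F}^{wn}$ then yields the foliation. In the Anosov case, the normal hyperbolicity established in (3) triggers the invariant manifold theorem of \cite{hps} to produce a strong repelling foliation with $C^{1+}$-regularity, and a uniqueness argument (dynamical defining property) shows it coincides with our $\mathcal{F}^n$. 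The main obstacle I anticipate is in (4): verifying that the candidate line field inside $\mathbb{R} \times M$ genuinely inherits precisely the $C^l$-regularity of the weak invariant bundles on $M$ without degradation, and that the non-Anosov (partially hyperbolic) construction matches the normally hyperbolic Anosov construction on their overlap of applicability.
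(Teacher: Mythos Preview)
Your overall architecture---showing $Y\subset E^{wn}$, then computing $Y=Y^X X+Y^s\partial_s$ and locating the skeleton where $Y^s=0$---matches the paper's approach (which does the explicit computation in the exponential model and transports via elementary isotopies). But there are three genuine gaps.

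\textbf{Part (2) is incomplete.} You only argue that $\pi_*(Y|_{Skel(Y)})=Y^X\cdot X$ is a \emph{positive reparametrization} of $X$. The statement claims this is a \emph{synchronization}, i.e.\ the expansion rate of $E^u$ equals $1$ with respect to some norm. That is not automatic: the paper computes $1/f|_{\Lambda_s}$ explicitly and matches it against the expansion rate of the norm on $E^u$ induced by $\alpha|_{\Lambda_s}$ (Lemma~\ref{sync}). More conceptually, $\mathcal{L}_Y\alpha=\alpha$ restricted to $\Lambda_s$ gives the unit expansion rate for $\alpha|_{\Lambda_s}$ (whose kernel is $E$), and this is exactly what synchronization means. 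You need one of these two arguments.

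\textbf{Part (3) misidentifies normal hyperbolicity.} The skeleton is codimension one, so the normal bundle $T(\mathbb{R}\times M)/T\Lambda_s$ is a \emph{line}; there is no room for a splitting into contracting and expanding pieces. Normal hyperbolicity here is purely a \emph{rate} condition: the normal expansion must dominate all tangential expansion rates. After synchronization the tangential rates lie in $[r_s,1]$ (computed on $TM/\langle X\rangle$), and the paper shows the normal rate at $\Lambda_s$ is $\partial_s(g/f)|_{\Lambda_s}$, which one computes (Equation~\ref{expansioneq}) or argues via $r_n+r_s=1$ (Section~\ref{7.1}). Then $r_n>1\Leftrightarrow r_s<0\Leftrightarrow X$ Anosov. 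Your ``contracting piece in the $M$-direction'' description is not the correct mechanism.

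\textbf{Part (4): your candidate line field is not a line.} Since $X\subset E$, the pullback $\pi^{-1}_*E$ contains all of $E^{wn}=\langle X,\partial_s\rangle$, so intersecting with $T\mathcal{F}^{wn}$ returns $E^{wn}$ itself, not a $1$-dimensional distribution. The paper instead obtains $E^n$ as the $Y$-invariant complement of $\langle Y\rangle$ inside $E^{wn}$, either abstractly via the Hirsch--Pugh--Shub lemma (Lemma~\ref{strongnormal}) or, for the $C^l$ regularity claim, by constructing an explicit $C^l$ conjugacy to the linearized model $Y=X+2(1-r_s)s\partial_s$ (Section~\ref{6}), in which $\mathcal{F}^n$ is just the $s$-coordinate foliation. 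Your anticipated obstacle is real, but the resolution is not a direct transfer of regularity from $E$; it comes through the linearization.
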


A remarkable fact here is that by (1) in the above theorem, the skeleton is exactly as regular as the weak dominated bundle of the supported flow. Therefore, as we will later see (in Section~\ref{8}), one can study the Liouville dynamics near such skeleton in order to investigate the regularity of weak bundles. More specifically, the dynamics near the skeleton implies its persistence, up to a certain regularity. This is more easily seen in the Anosov cases, where by {\em normal hyperbolicity} (using (3) in the above), $C^1$-persistence is straight forward. However, we will carefully extend this to lower regularities, using (1) in the above theorem, i.e. realizing the skeleton as a graph and applying the more classical tool of the {\em $C^r$-section theorem}. In particular, a lower bound on the order of Hölder regularity can be derived from the expansion data.

\begin{corollary}\label{introperscor}
(Persistence of skeleton) $Skel(Y)$ is $C^1$-persistent under $C^2$-deformations of a supported Anosov vector field $X$. More generally,  $Skel(Y)$ is $C^k$-persistent for some $k>0$ under $C^2$-deformations through arbitrary non-singular partially hyperbolic flows.
\end{corollary}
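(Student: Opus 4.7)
The plan is to leverage the two complementary descriptions of $Skel(Y)$ provided by Theorem~\ref{introdynrig}: normal hyperbolicity in the Anosov case from part (3), and the graph representation $Skel(Y) = \{(\Lambda_s(x),x)\}$ in general from part (1). The $C^1$-persistence in the Anosov case will follow at once from Hirsch-Pugh-Shub theory, while the more subtle partially hyperbolic case will be handled via a graph transform and the classical $C^r$-section theorem.

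First I would set up the deformation. Given a $C^2$-small perturbation $\tilde{X}$ of $X$, the openness of the bi-contact transversality condition ensures that the original contact forms $(\alpha_-,\alpha_+)$ continue to supply a bi-contact structure intersecting $\tilde{X}$, so only the interpolation functions $(\lambda_-,\lambda_+)$ need to be adjusted so that the kernel of the interpolated 1-form passes through the weak dominated bundle $\tilde{E}$ of $\tilde{X}$. This adjustment produces a new Liouville vector field $\tilde{Y}$ on $\mathbb{R}\times M$ whose proximity to $Y$ is controlled by how $\tilde{E}$ depends on $\tilde{X}$.

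For the Anosov case, part (3) of Theorem~\ref{introdynrig} tells us that $Y$ is normally hyperbolic along $Skel(Y)$, and $\tilde{E}=\tilde{E}^{ws}$ depends $C^1$-continuously on $\tilde{X}$ by the classical $C^1$-regularity of the weak stable bundle of Anosov 3-flows. Hence $\tilde{Y}$ is $C^1$-close to $Y$, and the Hirsch-Pugh-Shub theorem on the persistence of normally hyperbolic invariant manifolds yields a unique $\tilde{Y}$-invariant $C^1$ submanifold $C^1$-close to $Skel(Y)$; by invariance and uniqueness this must coincide with $Skel(\tilde{Y})$. For the general partially hyperbolic case, normal hyperbolicity fails since only the strong unstable side of the normal direction expands, so I would instead use the graph description from part (1): the persistence problem becomes the stability of an invariant section of $\pi:\mathbb{R}\times M\to M$ under the graph transform induced by $\tilde{Y}$. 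The Liouville flow contracts the $s$-fiber toward $\Lambda_s$ at a rate controlled by the strong unstable expansion of $\tilde{X}$, while its projected action on the base expands at rates controlled by the weak dominated bundle. Whenever the rate inequality of the $C^r$-section theorem holds for some Hölder exponent $k>0$, the graph transform admits a unique $C^k$-fixed section depending $C^k$-continuously on $\tilde{Y}$, yielding the desired $C^k$-persistence of $Skel(Y)$.

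The main obstacle is extracting the exponent $k$ quantitatively from the partial hyperbolicity data and verifying that the relevant rates transfer correctly from the dynamics of $\tilde{X}$ on $M$ to those of $\tilde{Y}$ on $\mathbb{R}\times M$ near $Skel(\tilde{Y})$. Concretely, one must check that the normal contraction rate of $\tilde{Y}$ toward $Skel(\tilde{Y})$ dominates the $k$-th power of its tangential expansion, which under $\pi$ reduces to a reparametrization of $\tilde{X}$, and confirm that this rate comparison survives the merely continuous (Hölder) dependence of $\tilde{E}$ on $\tilde{X}$ in the partially hyperbolic range. Once the explicit formula for the Liouville dynamics near $Skel(Y)$ from the proof of Theorem~\ref{introdynrig} is substituted, this becomes a direct translation of the partial-hyperbolicity inequality, albeit requiring care in the low-regularity bookkeeping.
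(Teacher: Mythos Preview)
Your plan is essentially the paper's own: normal hyperbolicity plus Hirsch--Pugh--Shub for the Anosov case, and the graph description plus the $C^r$-section theorem for the general partially hyperbolic case (the paper phrases the latter via the stable bunching constant $B_s$, which is exactly the exponent $k$ you are looking for). Two slips worth fixing. First, the Liouville flow \emph{expands} the $s$-fiber at $\Lambda_s$ (it is normally repelling, with rate $1-r_s>0$ after synchronization), so the fiberwise contraction needed for the $C^r$-section theorem comes from the backward flow; your rate comparison should read $\|Y^{-1}_*|_{E^{wn}}\|<m(Y^{-1}_*|_{T\Lambda_s})^{k}$. Second, you cannot keep the original $(\alpha_-,\alpha_+)$ when perturbing $X$ to $\tilde X$: the kernels intersect along $\langle X\rangle$, not $\langle\tilde X\rangle$, so $\tilde X$ is generically not supported by them. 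The paper handles this by producing a $C^2$-family of supporting LISs for the family $X_\tau$ (via the fibration of LISs over flows), which then gives a $C^1$-family of Liouville vector fields $Y_\tau$; the persistence theorems are applied to that family. With these two adjustments your argument goes through.
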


Theorem~\ref{introdynrig} provides a very explicit description of the Liouville flow, in particular, via (2) and (4) above, i.e. the skeleton dynamics is simply a synchronization of the supported flow and the existence of the strong normal foliations determines the normal dynamics. Among other things, this implies that the Liouville dynamics is unique up to $C^l$-conjugacy, where $l$ is regularity of the weak bundles and in particular, $k>1$ in the Anosov case. In fact ,we can explicitly show this by proving the conjugacy of the Liouville flow to the linearization at its skeleton (Theorem~\ref{introlin}). We will later improve this to a smooth conjugacy using the Moser technique (Corollary~\ref{introdynuniq2}). 

The standard application of the Moser technique to Liouville manifolds (see Lemma~\ref{moserclassic}) implies that after applying an isotopy of $\mathbb{R}\times M$, we can assume the underlying symplectic structure to be fixed under the homotopies through Liouville manifolds. Therefore, any rigidity of the Liouville dynamics should have a geometric counterpart on its dual, the Liouville form.
It turns out that a novel use of the Moser technique helps us recover the Liouville form strictly (and not just up to homotopy) from the supported flow. This promotes the construction of Mitsumatsu to a 1-to-1 correspondence, via extracting the skeleton dynamics, between the appropriate equivalence classes of non-singular partially hyperbolic flows and Liouville manifolds.

\begin{theorem}\label{intro1to1} (Geometric rigidity)
There is a 1-to-1 correspondence between positive reparametrization classes of non-singular partially hyperbolic vector fields, up to $C^\infty$-conjugacy, and Liouville forms induced from some LIS on in $\mathcal{LIS}(M)$, up to strict Liouville equivalence, i.e.

$$\bigg\{\substack{\text{Positive reparametrization class of} \\  \text{non-singular partially hyperbolic flows} \\  \text{up to conjugacy}} \bigg\} \mbox{\Large$\overset{\text{1-to-1}}{\longleftrightarrow}$}
\bigg\{\substack{\text{Liouville forms induced from some LIS on $\mathbb{R}\times M$} \\  \text{up to strict Liouville equivalence}} \bigg\}.$$
\end{theorem}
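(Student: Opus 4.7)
The plan is to construct explicit maps in each direction using the earlier results and then verify they are mutually inverse, with the main technical content residing in a novel application of the Moser technique to upgrade Liouville homotopy to strict Liouville equivalence.

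For the forward map, given a non-singular partially hyperbolic flow $X^t$, Theorem~\ref{introgeneralchar} produces a supporting LIS $(\alpha_-,\alpha_+)_{(\lambda_-,\lambda_+)}\in\mathcal{LIS}(M)$, hence a Liouville form $\alpha=\lambda_-\alpha_-+\lambda_+\alpha_+$. I must show this is well-defined up to strict Liouville equivalence, namely: (a) two different supporting LIS for the same $X$ yield strictly Liouville equivalent forms, and (b) the equivalence class is invariant under positive reparametrization and $C^\infty$-conjugacy of $X$. For (b), a conjugating diffeomorphism $\phi\colon M\to M$ lifts naturally to $\mathrm{id}\times\phi$ (modulo reparametrization of the $\mathbb{R}$-factor via $\lambda_\pm$), pulling back a supporting LIS for the conjugate flow to one supporting $X$. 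Claim (a) is the heart of the matter: I would connect two supporting LIS by a straight-line path of LIS (using convex combinations of the contact forms and interpolation functions, noting that the LIS condition is open and convex in the relevant parameters) to get a path of Liouville forms $\alpha_t$, then apply the Moser trick as in Lemma~\ref{moserclassic} to produce an isotopy $\psi_t$ with $\psi_t^*\alpha_t=\alpha_0+df_t$ for some function $f_t$.

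For the backward map, given a Liouville form $\alpha$ induced by some LIS, Theorem~\ref{introdynrig}(1) presents $Skel(Y)$ as a section of $\pi\colon\mathbb{R}\times M\to M$, and Theorem~\ref{introdynrig}(2) identifies the projection of $Y|_{Skel(Y)}$ as a synchronization of a non-singular partially hyperbolic flow. Taking its positive reparametrization class up to $C^\infty$-conjugacy yields the desired element, manifestly invariant under strict Liouville equivalence because a strict equivalence pulls skeleton to skeleton and descends, via $\pi$, to a conjugacy of skeleton dynamics. To check that the two maps are inverse: going from a flow to a Liouville form and back recovers $X$ up to synchronization by Theorem~\ref{introdynrig}(2) applied to any supporting LIS; going from a Liouville form to a flow and back recovers the form up to strict equivalence precisely by the well-definedness (a) established above.

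The hard part will be the Moser step. Standard Moser only guarantees $\psi_1^*\alpha_1=\alpha_0+df$ for some globally defined $f$, which is weaker than the strict equivalence $\psi_1^*\alpha_1=\alpha_0$ that the theorem demands. To eliminate the exact error $df$, I would exploit the rigid geometric structures produced by Theorem~\ref{introdynrig}: both $\alpha_0$ and $\alpha_1$ share the same skeleton dynamics (a synchronization of $X$), the same weak normal foliation $\mathcal{F}^{wn}$, and—once the regularity of the weak dominated bundle permits—the same strong normal foliation $\mathcal{F}^n$. The linearization result of Theorem~\ref{introlin} together with Corollary~\ref{introdynuniq2} should pin down $\alpha_t$ in a neighborhood of the skeleton to a common normal form, and I would modify $\psi_t$ by a flow along $Y_0$ (which rescales $\alpha_0$ conformally) to absorb the remaining exact term, using that a Liouville flow takes a Liouville form to itself up to an exponential factor. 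Patching the skeleton-adapted correction with the standard Moser isotopy on the complement, via a careful cutoff along the $\mathcal{F}^{wn}$-leaves, should yield the desired strict equivalence and close the argument.
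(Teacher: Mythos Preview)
Your overall architecture is correct, but the Moser step—which you rightly identify as the crux—has a genuine gap, and your proposed fix does not work.

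The paper's key observation, which you miss, is this: once you fix the supported flow $X$, every Liouville form $\alpha_t$ in the family vanishes identically on the \emph{same} plane field $E^{wn}=\langle X,\partial_s\rangle$, and this plane field is Lagrangian for each $d\alpha_t$. Consequently, if you solve the Moser equation $\dot\alpha_t=\iota_{V_t}d\alpha_t$ with $V_t\in E^{wn}$ (which is possible precisely because $\dot\alpha_t$ annihilates $E^{wn}$ and $(\alpha_t,\mathcal L_{\partial_s}\alpha_t)$ is a basis for such 1-forms), then $\alpha_t(V_t)=0$ automatically. Cartan's formula then gives $\mathcal L_{V_t}\alpha_t=\iota_{V_t}d\alpha_t$ with no exact residue, so the Moser isotopy is \emph{strict} from the outset: $\psi_t^*\alpha_t=\alpha_0$, not $\alpha_0+df_t$. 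There is nothing to absorb.

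Your proposed absorption via the Liouville flow cannot work as written: flowing by $Y_0$ for time $t$ rescales $\alpha_0$ multiplicatively to $e^t\alpha_0$, whereas the error you want to kill is additive, $\alpha_0+df$. Matching these would force $(e^t-1)\alpha_0=df$, i.e.\ a rescaling of $\alpha_0$ to be exact, which fails generically. Moreover, if you let $t$ vary over the manifold, the pullback formula $(Y_0^{t})^*\alpha_0=e^{t}\alpha_0$ no longer holds. The subsequent ``patching via cutoffs along $\mathcal F^{wn}$-leaves'' is therefore built on a step that does not go through.

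A secondary issue: your claim that two supporting LIS are joined by a straight-line path of LIS assumes a convexity that is not evident (the Liouville condition for $\lambda_-\alpha_-+\lambda_+\alpha_+$ is not linear in the data). The paper instead reduces any LIS to an exponential one via explicit elementary normal isotopies (change of basis, horizontal, scaling), and then runs the Moser argument above on a compact piece containing the skeleton, extending to the completion by the standard cylindrical-end model.
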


This means that Liouville geometry only depends on the supported flow and is independent of all the other choices we made in our construction. This also sheds light on how we can naturally think of the compact LIS s in this context. In particular, this implies that the distinction between the previously introduced linear and exponential models boils down to compactness and not Liouville geometry. In the following $L(\alpha_-,\alpha_+)_{(\lambda_-,\lambda_+)}$ is the Liouville form induced from the LIS $(\alpha_-,\alpha_+)_{(\lambda_-,\lambda_+)}$.

\begin{corollary}\label{introlinemb}
Fixing a non-singular partially hyperbolic flow (up to positive reparametrization), for any supporting compact LIS $(\alpha_-,\alpha_+)_{(\lambda_-,\lambda_+)}$ and any supporting (non-compact) LIS $(\bar{\alpha}_-,\bar{\alpha}_+)_{(\bar{\lambda}_-,\bar{\lambda}_+)}$, there exists a strict Liouville embedding
$$i:([N_-,N_+]\times M,L(\alpha_-,\alpha_+)_{(\lambda_-,\lambda_+)})\rightarrow (\mathbb{R}\times M, L(\bar{\alpha}_-,\bar{\alpha}_+)_{(\bar{\lambda}_-,\bar{\lambda}_+)}),$$
an embedding $i:[N_-,N_+]\times M \rightarrow \mathbb{R}\times M$ satisfying
$$i^*L(\bar{\alpha}_-,\bar{\alpha}_+)_{(\bar{\lambda}_-,\bar{\lambda}_+)}=L(\alpha_-,\alpha_+)_{(\lambda_-,\lambda_+)}.$$
This is in particular true for any linear and exponential Liouville pairs supporting the same (positive reparametrization class of) flows.
\end{corollary}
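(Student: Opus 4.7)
The strategy is to reduce the statement to the rigidity theorem (Theorem~\ref{intro1to1}) by sandwiching the compact LIS inside a carefully chosen non-compact extension. First I would extend the given compact LIS $(\alpha_-,\alpha_+)_{(\lambda_-,\lambda_+)}$ on $[N_-,N_+]\times M$ to a non-compact LIS $(\alpha_-,\alpha_+)_{(\tilde{\lambda}_-,\tilde{\lambda}_+)}\in\mathcal{LIS}(M)$ on $\mathbb{R}\times M$ by keeping the underlying contact forms $\alpha_\pm$ unchanged, and extending the interpolation functions to positive smooth $\tilde{\lambda}_\pm$ on $\mathbb{R}\times M$ which restrict to $\lambda_\pm$ on the compact slab and for which $s\mapsto\ln(\tilde{\lambda}_+(s,x)/\tilde{\lambda}_-(s,x))$ is a positive reparametrization of $\mathbb{R}$ for every $x\in M$. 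Such an extension can be built by standard bump function techniques, arranging the log-ratio to match the compact data smoothly at $s=N_\pm$ and then to grow monotonically to $\pm\infty$ outside a neighborhood of $[N_-,N_+]$. Since $\alpha_\pm$ are unchanged, the intersection $\ker\alpha_-\cap\ker\alpha_+$ is preserved, so the extension supports the same positive reparametrization class of non-singular partially hyperbolic flow as the original compact LIS, and the inclusion $\iota:[N_-,N_+]\times M\hookrightarrow\mathbb{R}\times M$ is tautologically a strict Liouville embedding of the compact LIS into the extension.

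The second step is to invoke Theorem~\ref{intro1to1}: the extension $(\alpha_-,\alpha_+)_{(\tilde{\lambda}_-,\tilde{\lambda}_+)}$ and the prescribed LIS $(\bar{\alpha}_-,\bar{\alpha}_+)_{(\bar{\lambda}_-,\bar{\lambda}_+)}$ support the same reparametrization class of flow (by construction and by hypothesis respectively), hence their induced Liouville forms are strictly Liouville equivalent. This yields a diffeomorphism $\phi:\mathbb{R}\times M\to\mathbb{R}\times M$ satisfying $\phi^*L(\bar{\alpha}_-,\bar{\alpha}_+)_{(\bar{\lambda}_-,\bar{\lambda}_+)}=L(\alpha_-,\alpha_+)_{(\tilde{\lambda}_-,\tilde{\lambda}_+)}$, and setting $i:=\phi\circ\iota$ produces the desired strict Liouville embedding. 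The final assertion about linear versus exponential Liouville pairs is then the immediate specialization to those standard models, since a linear pair is an element of $\mathcal{LIS}_c(M)$ and an exponential pair is an element of $\mathcal{LIS}(M)$.

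I expect the main obstacle to be confined to the extension step, where smoothness, positivity, strict monotonicity of the ratio, and the asymptotic reparametrization condition must all be simultaneously maintained across the transition at $s=N_\pm$. One convenient concrete realization is to choose $\tilde{\lambda}_\pm$ so that the log-ratio agrees with $\ln(\lambda_+/\lambda_-)$ to infinite order at $s=N_\pm$ and is then continued by a fiberwise affine function of $s$ of strictly positive slope going off to $\pm\infty$, which makes smoothness and the reparametrization condition transparent. Once this is in hand, the remainder of the argument is a black-box application of Theorem~\ref{intro1to1} followed by a composition and should present no further difficulty.
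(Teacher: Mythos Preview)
Your overall strategy---extend the compact LIS to a non-compact one supporting the same flow, then invoke Theorem~\ref{intro1to1} and compose---is exactly the skeleton of the paper's argument. However, there is a genuine gap in your extension step. You carefully arrange smoothness, positivity of $\tilde\lambda_\pm$, strict monotonicity of the ratio, and the asymptotic reparametrization condition, but you never verify the \emph{Liouville condition} $d\alpha\wedge d\alpha>0$ on the extension. This is a separate requirement in Definition~\ref{lis} and is not implied by the four conditions you list: the symplectic non-degeneracy depends on the values and derivatives of both $\tilde\lambda_-$ and $\tilde\lambda_+$ individually (not just their ratio) together with the contact forms, and a generic smooth extension of the interpolation functions will not preserve it across the transition region. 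Your ``affine log-ratio'' suggestion controls only the ratio, leaving the product $\tilde\lambda_-\tilde\lambda_+$ unconstrained, and the Liouville condition can easily fail there.

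The paper handles exactly this issue in Lemma~\ref{emb1}: rather than extending the $\lambda_\pm$ directly, it applies a sequence of elementary isotopies (change of basis, horizontal, scaling) to rewrite the compact LIS as a compactly supported distortion of an \emph{exponential} Liouville pair $(\bar\alpha_-,\bar\alpha_+)_e$, and then extends the distortion function $w$ to a compactly supported function on $\mathbb{R}\times M$. The point is that the computations in Section~\ref{5.2.1} show the Liouville condition for an exponential pair is \emph{automatic} once $r_u>0$ and the $\pm$-contactness of $\bar\alpha_\pm$ hold, so the extension problem reduces to the admissibility of the scaling (Lemma~\ref{genscale}), which is easy. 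Your argument becomes correct if you replace the bump-function extension by this reduction to the exponential model; as written, the Liouville condition on the glued region is the missing idea.
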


%\textcolor{red}{does this answer any question from Massoni?}
%\textcolor{red}{regularity refinement remark}

This also refines the regularity of the uniqueness claim for the Liouville dynamics, compared to our conclusion from Theorem~\ref{introdynrig}.

\begin{corollary}\label{introdynuniq2}
Let $X^t$ be a non-singular partially hyperbolic 3-flow. The Liouville vector field induced from a $C^\infty$ supporting LIS is unique up to $C^\infty$-conjugacy.
\end{corollary}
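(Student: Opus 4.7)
The plan is to deduce this corollary directly from the geometric rigidity statement of Theorem~\ref{intro1to1}, which is proved via a Moser-type argument and upgrades the $C^l$-uniqueness visible from Theorem~\ref{introdynrig} to $C^\infty$. Let $(\alpha_-,\alpha_+)_{(\lambda_-,\lambda_+)}$ and $(\bar\alpha_-,\bar\alpha_+)_{(\bar\lambda_-,\bar\lambda_+)}$ be two $C^\infty$ LIS on $\mathbb{R}\times M$ both supporting the same non-singular partially hyperbolic flow $X^t$, with induced Liouville forms $\alpha$, $\bar\alpha$ and Liouville vector fields $Y$, $\bar Y$ respectively. Since both LIS support the same positive reparametrization class of $X$ (trivially, $X$ itself), Theorem~\ref{intro1to1} produces a $C^\infty$ diffeomorphism $\phi:\mathbb{R}\times M \to \mathbb{R}\times M$ realizing a strict Liouville equivalence, i.e.\ with $\phi^*\bar\alpha = \alpha$.

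The second step is to promote this equality of Liouville forms to a conjugacy of the corresponding Liouville vector fields. From $\phi^*\bar\alpha = \alpha$ we obtain $\phi^* d\bar\alpha = d\alpha$, and naturality of contraction under the diffeomorphism $\phi$ applied to $\iota_Y d\alpha = \alpha$ yields $\phi^*(\iota_{\phi_* Y} d\bar\alpha) = \phi^* \bar\alpha$, hence $\iota_{\phi_* Y} d\bar\alpha = \bar\alpha$. Non-degeneracy of $d\bar\alpha$ then forces $\phi_* Y = \bar Y$, so $\phi$ intertwines the two Liouville flows and delivers the desired $C^\infty$-conjugacy.

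I do not expect any serious obstacle in this deduction itself: the content of the corollary is essentially the dualization of geometric rigidity to the Liouville vector field, and once the strict Liouville equivalence is in hand, the passage to a flow conjugacy is formal. The genuine difficulty lies upstream, in the proof of Theorem~\ref{intro1to1}, where one has to arrange the Moser isotopy to be genuinely $C^\infty$ rather than only as regular as the weak invariant bundles of $X^t$; any loss of regularity there would propagate to this corollary and only recover the weaker $C^l$-conjugacy already contained in Theorem~\ref{introdynrig}.
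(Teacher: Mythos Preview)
Your proposal is correct and matches the paper's own approach: the corollary is obtained directly from the strict Liouville equivalence of Theorem~\ref{intro1to1}, with the passage from $\phi^*\bar\alpha=\alpha$ to $\phi_*Y=\bar Y$ being the formal duality step via non-degeneracy of $d\bar\alpha$. Your closing remark about the real work lying upstream in the Moser argument is also exactly how the paper frames it.
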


As pointed out by Massoni \cite{massoni}, an important step in exploiting these Liouville manifolds to derive invariants of the underlying flows is to show that the space of these objects, i.e. LIS s, forms a fibration over the space of the supported flow. In other words, a topological version of the geometric rigidity theorem above is needed if one wants to study the homotopy classes of flows and their invariants.
We follow similar ideas as in \cite{massoni} in order to derive our fibration result in this context. In the following, $S \mathcal{PHF}(M)$ is the space of non-singular partially hyperbolic flows on $M$, up to positive reparametrization, and the map defining the fibration sends a LIS to its skeleton dynamics.

\begin{theorem}\label{introfib}(Fibration)
The map
$$\begin{cases}
\mathcal{LIS}(M) \rightarrow S \mathcal{PHF}(M)  \\
(\alpha_-,\alpha_+)_{(\lambda_-,\lambda_+)}  \mapsto [\pi(F(\alpha_-,\alpha_+)_{(\lambda_-,\lambda_+)} |_{\Lambda_s})]
\end{cases}$$
is a Serre fibration of the space of Liouville interpolating systems over the space of non-singular partially hyperbolic flows up to positive reparametrization.
\end{theorem}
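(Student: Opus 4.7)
My plan is to verify the homotopy lifting property with respect to disks. Fix $n\geq 0$, a map $f : D^n \to \mathcal{LIS}(M)$, and a homotopy $H : D^n \times [0,1] \to S\mathcal{PHF}(M)$ with $H(\cdot,0)$ equal to the skeleton-flow of $f$. I want to build a continuous lift $\widetilde{H} : D^n \times [0,1] \to \mathcal{LIS}(M)$ of $H$ with $\widetilde{H}(\cdot,0) = f$. The strategy is to combine a parametric version of the construction behind Theorem~\ref{introgeneralchar} with the geometric rigidity of Theorem~\ref{intro1to1} and its smooth refinement Corollary~\ref{introdynuniq2}, by upgrading the strict Liouville equivalences they produce to a continuous family in parameters.

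First I would produce \emph{some} continuous lift $G : D^n \times [0,1] \to \mathcal{LIS}(M)$ of $H$, ignoring the prescribed initial value. The dominated splitting $TM \simeq E \oplus E^u$ of a non-singular partially hyperbolic flow depends continuously on the flow, and Theorem~\ref{introgeneralchar} constructs a supporting LIS out of that splitting by prescribing a pair of transverse (negative, positive) contact forms together with a fixed interpolation profile (for concreteness, the exponential one $\lambda_\pm = e^{\pm s}$). This construction is natural enough to run parametrically in $(d,t)$, producing $G(d,t)$ whose skeleton flow, via Theorem~\ref{introdynrig}(2), is a positive reparametrization of $H(d,t)$.

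Next, on the initial slice $\{t=0\}$, both $f(d)$ and $G(d,0)$ are elements of $\mathcal{LIS}(M)$ whose skeleton flows lie in the same positive reparametrization class $H(d,0)$. By Theorem~\ref{intro1to1} and Corollary~\ref{introdynuniq2}, there exists a $C^\infty$ strict Liouville equivalence $\Phi_0(d) : \mathbb{R}\times M \to \mathbb{R}\times M$ intertwining them. The key claim is that $\Phi_0(d)$ can be chosen to depend continuously on $d$: unpacking the proof of Theorem~\ref{intro1to1}, $\Phi_0(d)$ is realized as the time-one map of a Moser-type time-dependent vector field $Z_\tau(d)$ solving $\iota_{Z_\tau}\omega_\tau = -\beta_\tau(d)$, where $\beta_\tau(d)$ is an explicit primitive for the path of Liouville forms connecting $L(G(d,0))$ to $L(f(d))$, constructed using the exact Lagrangian foliation $\mathcal{F}^{wn}$ from Theorem~\ref{introdynrig}(3). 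Since $f(d)$, $G(d,0)$, and $\mathcal{F}^{wn}$ all depend continuously on $d$, so does $\Phi_0(d)$. Extending this family trivially in $t$ and setting $\widetilde{H}(d,t) := \Phi_0(d)^* G(d,t)$, I obtain $\widetilde{H}(d,0) = f(d)$ by construction, and $\widetilde{H}$ still lifts $H$ because pullback by a diffeomorphism conjugates the skeleton dynamics without changing its positive reparametrization class.

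The main technical obstacle I anticipate is this parametric continuity of the Moser equivalence $\Phi_0(d)$ on the non-compact cylinder $\mathbb{R}\times M$. Two behaviors must be controlled uniformly in $d$: near the common skeleton, where the Liouville forms degenerate in a way controlled by Theorem~\ref{introdynrig}(1) and (3) and where the primitive $\beta_\tau(d)$ must be chosen to vanish transversely along $Skel$ with matching leading coefficient; and in the ends $s\to\pm\infty$, where completeness of the resulting Moser flow relies on the $\ln(\lambda_+/\lambda_-)$-reparametrization hypothesis built into the definition of $\mathcal{LIS}(M)$. Verifying that the $Z_\tau(d)$ assemble into a family of vector fields whose time-one flows exist and depend continuously on $d$ in the $C^\infty_{\mathrm{loc}}$-topology is the bulk of the work; once that continuity is established, the homotopy lifting property for disks, and hence the Serre fibration property, follows.
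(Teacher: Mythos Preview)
Your approach is quite different from the paper's, which does not attempt to verify the homotopy lifting property directly. Instead, the paper gives an explicit parametrization of the fiber of $\mathcal{LIS}^{k*}(M)\to S\mathcal{PHF}(M)$ by convex open sets of auxiliary data (the skeleton function $\Lambda_s$, the invariant $1$-forms $\alpha_u,\alpha_s$, a function $h_+$, and a distortion $w$), factors through the space of exponential Liouville pairs, invokes Massoni's fibration theorem for $\mathcal{ELP}(M)$, and then bridges back to the smooth category $\mathcal{LIS}(M)$ via a separate homotopy-equivalence lemma showing that the inclusion $\mathcal{LIS}(M)\hookrightarrow\mathcal{LIS}^{k*}(M)$ is a weak equivalence.

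There are two genuine gaps in your plan. First, the parametric construction of $G(d,t)$: the procedure behind Theorem~\ref{introgeneralchar} uses the invariant $1$-forms $\alpha_u,\alpha_s$, which are only $C^k$ with $k$ the regularity of the dominated bundle. What you get continuously in parameters is a family in $\mathcal{LIS}^{k*}(M)$, not in $\mathcal{LIS}(M)$. Promoting this to a continuous $C^\infty$ family requires precisely the density/homotopy-equivalence argument the paper supplies; it is not automatic from ``the construction is natural enough.''

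Second, and more seriously, the correction step fails as written. The Moser diffeomorphism $\Phi_0(d)$ you produce from Theorem~\ref{intro1to1} is a \emph{normal} diffeomorphism for the flow $H(d,0)$: it preserves the foliation $\mathcal{F}^{wn}_{H(d,0)}$ leaf-wise, and its $M$-component moves points along orbits of $H(d,0)$. When you then apply $\Phi_0(d)^*$ to $G(d,t)$ for $t>0$, two problems arise. The object $\Phi_0(d)^*G(d,t)$ is not a well-defined element of $\mathcal{LIS}(M)$: pullback acts on the Liouville \emph{form} $L(G(d,t))$, and the result need not decompose as $\lambda_-\alpha_-+\lambda_+\alpha_+$ with the required interpolation conditions. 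More importantly, even at the level of supported flows, your justification is incorrect: pullback by $\Phi_0(d)$ conjugates the skeleton dynamics by (the $M$-shadow of) $\Phi_0(d)$, and conjugacy does \emph{not} preserve the class in $S\mathcal{PHF}(M)$, which is flows modulo positive reparametrization only, not modulo conjugacy. A diffeomorphism that preserves orbits of $H(d,0)$ has no reason to preserve orbits of $H(d,t)$, so $\widetilde{H}(d,t)$ would not lie over $H(d,t)$.
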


Thanks to our explicit construction, filtrations of the above fibration are available if one want to equip the flow any useful geometric information, like the {\em synchronization} (equivalently, the information of an expanding norm on the strong unstable bundle, up to constant scaling) or the skeleton graph (see Remark~\ref{fibfilter}).
%\textcolor{red}{fibration theorem filtration remark}
The following is the main takeaway of our fibration result.

\begin{corollary}
The Liouville homotopic invariants of LIS s are invariants of the supported flow, up to homotopy through non-singular partially hyperbolic flows.
\end{corollary}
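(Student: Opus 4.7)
The plan is to chain together the geometric rigidity (Theorem~\ref{intro1to1}) and the fibration theorem (Theorem~\ref{introfib}). Throughout, I think of a Liouville homotopic invariant as a function $I$ on Liouville forms that is constant on path components of the space of Liouville forms on $\mathbb{R}\times M$; applied to an element of $\mathcal{LIS}(M)$ it is just $I$ evaluated on the induced Liouville form $L(\alpha_-,\alpha_+)_{(\lambda_-,\lambda_+)}$.

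First I would show that $I$, when restricted to $\mathcal{LIS}(M)$, only depends on the positive reparametrization class of the supported non-singular partially hyperbolic flow. Fix such a flow class $[X]$ and let $\mathcal{L}_0,\mathcal{L}_1 \in \mathcal{LIS}(M)$ be two LIS's supporting $[X]$. By Theorem~\ref{intro1to1}, the induced Liouville forms $L(\mathcal{L}_0)$ and $L(\mathcal{L}_1)$ are strictly Liouville equivalent, i.e. there is a diffeomorphism $\phi:\mathbb{R}\times M \to \mathbb{R}\times M$ with $\phi^* L(\mathcal{L}_1)=L(\mathcal{L}_0)$. One then upgrades this diffeomorphism to a Liouville-form homotopy: by the Moser-type deformation theory used elsewhere in the paper (and alluded to in the discussion preceding Theorem~\ref{intro1to1}), $\phi$ is isotopic to the identity through a family $\{\phi_t\}_{t\in[0,1]}$, and pulling back along $\phi_t$ gives a path of Liouville forms from $L(\mathcal{L}_0)$ to $L(\mathcal{L}_1)$. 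Hence $I(\mathcal{L}_0)=I(\mathcal{L}_1)$, and $I$ descends to a function $\tilde I:S\mathcal{PHF}(M) \to \mathrm{Inv}$.

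Second, I would use homotopy lifting to show that $\tilde I$ is constant on path components of $S\mathcal{PHF}(M)$. Given a path $\{[X_t]\}_{t\in [0,1]}$ in $S\mathcal{PHF}(M)$ and any supporting LIS $\mathcal{L}_0$ over $[X_0]$, the homotopy lifting property of the Serre fibration in Theorem~\ref{introfib} produces a path $\{\mathcal{L}_t\}_{t\in[0,1]}$ in $\mathcal{LIS}(M)$ with each $\mathcal{L}_t$ supporting $[X_t]$. The induced family $\{L(\mathcal{L}_t)\}$ is then a homotopy through Liouville forms from $L(\mathcal{L}_0)$ to $L(\mathcal{L}_1)$, so $I(\mathcal{L}_0)=I(\mathcal{L}_1)$ and consequently $\tilde I([X_0])=\tilde I([X_1])$.

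The main obstacle I expect is the first step: being sure that strict Liouville equivalence produced by Theorem~\ref{intro1to1} lifts to an actual Liouville-form homotopy rather than a mere diffeomorphism. The most direct route is to inspect the proof of Theorem~\ref{intro1to1}, where $\phi$ should be built as the time-one map of a Moser-type flow; if so, then the intermediate times $\phi_t$ give the desired homotopy essentially for free. An alternative, purely internal to $\mathcal{LIS}(M)$, would be to interpolate the defining data $(\alpha_-,\alpha_+,\lambda_-,\lambda_+)$ directly — the transversality and monotonicity conditions on a LIS are convex in $(\lambda_-,\lambda_+)$, and a path in $(\alpha_-,\alpha_+)$ preserving the skeleton $[X]$ is easy to produce — then feed this through the fibration to confirm path-connectedness of the fiber. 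Either approach reduces the corollary to an application of Theorem~\ref{introfib} combined with the definition of Liouville homotopic invariance.
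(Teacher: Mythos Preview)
Your argument is correct, and your second step (homotopy lifting via the Serre fibration of Theorem~\ref{introfib}) is exactly the mechanism the paper has in mind: the corollary is stated immediately after the fibration theorem as its ``main takeaway,'' with no separate proof. Your first step is also valid, but slightly more roundabout than necessary. The paper's fibration is constructed explicitly (Section~\ref{5.2}) so that the fibers over a fixed flow class are visibly path-connected---indeed contractible, being parametrized by a skeleton function, a choice of expanding $\alpha_u$, a choice of $\alpha_s$, a function $h_+$ satisfying open inequalities, and a distortion function. Thus two LIS's over the same $[X]$ are joined by a path \emph{inside} $\mathcal{LIS}(M)$, and applying $L$ gives a Liouville homotopy directly, without passing through Theorem~\ref{intro1to1} and the attendant question of upgrading a strict equivalence to a homotopy. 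That said, the obstacle you flag is not actually an obstacle: the strict Liouville equivalence in Theorem~\ref{intro1to1} is built by composing the elementary isotopies of Section~\ref{4.1} (change of basis, horizontal, scaling), each of which comes with an explicit path to the identity, so the homotopy is already there. Your ``alternative, purely internal to $\mathcal{LIS}(M)$'' is in fact the paper's implicit route.
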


It is important for us to establish Theorem~\ref{introfib} in low regularity as well. More precisely, we explicitly construct our fibration at a regularity depending on the weak invariant bundles. In regularity lower than $C^1$ however, the Moser technique fails and we rely on constructing explicit strict Liouville isotopies (via elementary isotopies introduced in Section~\ref{4}). As a result, we achieve an explicit formulation of the Liouville dynamics via its linearization at the skeleton. %With this method, we reach a refinement of the $C^k$-uniqueness for $k<1$ (where the Moser technique doesn't work). 
%The study of such linearizations, per {\em Hartman-Grobman}, and their regularity is a classic question in the regularity theory of {\em holonomies} associated with hyperbolic dynamical systems \cite{}.

\begin{theorem}\label{introlin}
(Linearization) Let $\alpha=L(\alpha_-,\alpha_+)_{(\lambda_-,\lambda_+)}$ be the Liouville form induced from an LIS supporting a non-singular partially hyperbolic $X$ with $C^k$ weak invariant bundle (with $k\geq 1$ when $X$ Anosov). Then, there is a $C^k$ strict Liouville equivalence $\psi:\mathbb{R}\times M \rightarrow \mathbb{R}\times M$ between $(\mathbb{R}\times M,\alpha)$ and its linearization. In particular, $\psi_*(Y)$ is the linearization of $Y$, the Liouville vector field of $\alpha$, at its skeleton.
\end{theorem}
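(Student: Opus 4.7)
The plan is to construct $\psi$ explicitly by leveraging the $C^k$ graph structure of the skeleton from Theorem~\ref{introdynrig}(1), the invariant transverse foliation $\mathcal{F}^n$ from Theorem~\ref{introdynrig}(4), and the elementary isotopies of Section~\ref{4}, all of which bypass the Moser technique (which is unavailable at regularity below $C^1$). The equivalence will be built in two stages: first flatten the skeleton, then rectify the normal dynamics.

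First, I would reduce to the case $Skel(Y)=\{0\}\times M$. Since the skeleton is the graph of a $C^k$ function $\Lambda_s:M\to\mathbb{R}$, a fiberwise reparametrization of the $s$-coordinate, realized as an elementary isotopy of Section~\ref{4}, straightens it to the horizontal slice while preserving the Liouville form strictly. Next, using the $C^l$ 1-dimensional invariant foliation $\mathcal{F}^n\subset\mathcal{F}^{wn}$, I would introduce tubular coordinates $(x,t)\in M\times\mathbb{R}$ with $t=0$ parametrizing the skeleton and $\{x=\text{const}\}$ an $\mathcal{F}^n$-leaf. In such coordinates, Theorem~\ref{introdynrig}(2) and (4) force $Y=X_{\mathrm{sync}}(x)+f(x,t)\,\partial_t$ with $f(x,0)=0$ and $f_t(x,0)>0$; the linearization is $Y_{\mathrm{lin}}=X_{\mathrm{sync}}(x)+f_t(x,0)\cdot t\,\partial_t$. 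I would then define $\psi(x,t)=(x,\Phi(x,t))$, where $\Phi$ is the unique leafwise diffeomorphism with $\Phi(x,0)=0$ and $\Phi_t(x,0)=1$ conjugating the vector field $f(x,t)\partial_t$ to its linear part $f_t(x,0)\cdot t\,\partial_t$; existence and $C^k$-regularity of $\Phi$ follow from the explicit leafwise ODE together with the positivity of $f_t(x,0)$.

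The key remaining step is to promote this conjugation to a \emph{strict} Liouville equivalence, i.e.\ to verify $\psi^*\alpha_{\mathrm{lin}}=\alpha$. Here I would exploit that $\psi$ moves points only along leaves of $\mathcal{F}^n$, that $\mathcal{F}^{wn}$ is a $Y$-invariant exact Lagrangian foliation, and that $\alpha$ vanishes along $TSkel(Y)$. Decomposing $\alpha$ with respect to the splitting of $T(\mathbb{R}\times M)$ induced by $\mathcal{F}^{wn}$ and its symplectic complement, the $\mathcal{F}^{wn}$-leafwise part reduces to a 1-form on a Lagrangian leaf controlled entirely by the function $f$, while the normal part is unchanged by a leafwise $\psi$. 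The main obstacle is precisely this strict preservation of $\alpha$ at the possibly sub-$C^1$ regularity: without access to Moser's technique, $\psi^*\alpha_{\mathrm{lin}}=\alpha$ cannot be established by a soft averaging or cohomological argument, and must be verified by direct computation in the tubular coordinates, using the specific form $\alpha=\lambda_-\alpha_-+\lambda_+\alpha_+$ of the LIS and the fact that $\mathcal{F}^n$ lies in $\ker\alpha$ along the skeleton.
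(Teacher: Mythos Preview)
Your approach has two genuine gaps.

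\textbf{Circularity with $\mathcal{F}^n$.} You propose to introduce tubular coordinates along the leaves of the $C^l$ foliation $\mathcal{F}^n$ from Theorem~\ref{introdynrig}(4), and then argue that the leafwise conjugating map $\Phi$ is $C^k$. But in the paper the regularity statement for $\mathcal{F}^n$ is a \emph{corollary} of the linearization theorem, not an input to it: Lemma~\ref{strongnormal} only gives existence of $E^n$ via the Hirsch--Pugh--Shub machinery, with no regularity beyond continuity, and the $C^l$ claim in Theorem~\ref{introdynrig}(4) is deferred to Section~\ref{6}, where it is deduced by observing that the strong normal foliation of the \emph{linearized} model is $\langle\partial_s\rangle$ and then pulling back through the $C^k$ strict equivalence. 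So using the regularity of $\mathcal{F}^n$ to build $\psi$ is circular.

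\textbf{From flow conjugacy to strict Liouville equivalence.} Your map $\psi$ is constructed so that $\psi_*Y=Y_{\mathrm{lin}}$, and you then hope to verify $\psi^*\alpha_{\mathrm{lin}}=\alpha$ by a direct computation. But a conjugacy of Liouville vector fields does not force a strict Liouville equivalence: one would also need $\psi^*d\alpha_{\mathrm{lin}}=d\alpha$, and a leafwise reparametrization along $\mathcal{F}^n$ has no reason to be a symplectomorphism. Your heuristic that ``the normal part is unchanged by a leafwise $\psi$'' is not correct in general, since pulling back a $1$-form by a map that moves in the $\partial_t$ direction changes components that pair nontrivially with $\partial_t$, and $\alpha$ need not vanish on $E^n$ away from the skeleton. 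This is exactly the place where, at regularity $\geq 1$, one would invoke Moser; below $C^1$ you have no substitute, and the direct verification you gesture at does not go through.

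The paper's route is quite different and sidesteps both issues. It never uses $\mathcal{F}^n$. Instead it writes the linearization explicitly as the (non-compact extension of the) \emph{symmetric linear Liouville pair} $\alpha=(1-s)(\alpha_u+\alpha_s)+(1+s)(\alpha_u-\alpha_s)$, where $\alpha_u,\alpha_s$ are $C^{k*}$ foliation $1$-forms for the weak bundles. This is checked to be a complete Liouville form on $\mathbb{R}\times M$. Then Theorem~\ref{linemb2} (the explicit low-regularity embedding of a linear pair into an exponential one, built from the elementary change-of-basis, horizontal and scaling maps of Section~\ref{4.1}) gives a $C^{k*}$ strict Liouville embedding of $[-1,1]\times M$ into an exponential LIS, which extends uniquely to a strict equivalence of the completions. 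Every step manipulates the Liouville \emph{form} directly via maps that are strict Liouville equivalences by construction, so the promotion problem never arises, and the only regularity input is that of $\alpha_u,\alpha_s$, i.e.\ of the weak bundles.
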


An application of controlling the regularity in the linearization theorem is the fact that lower bounds on the regularity of {\em strong normal foliations} can be achieved. In particular, we achieve $C^1$-regularity of these foliations in the presence of normal hyperbolicity at the skeleton, i.e. the Anosov case (see (4) in Theorem~\ref{introdynrig}).

\begin{corollary}
The strong normal Lagrangian bundle $E^n$ of any LIS at its skeleton is tangent to a 1-dimensional $C^k$ foliation $\mathcal{F}^n$ of $\mathbb{R}\times M$, contained in the weak normal Lagrangian foliation $\mathcal{F}^{wn}$, whenever the weak bundles of the supported non-singular partially hyperbolic flow are $C^k$. In particular, $\mathcal{F}^n$ is $C^1$, whenever $X$ is Anosov.
\end{corollary}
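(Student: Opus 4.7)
The plan is to reduce the regularity question to an explicit linear model via Theorem~\ref{introlin} and then transport the obvious smooth foliation back by the $C^k$ linearizing equivalence. By Theorem~\ref{introdynrig}(3), the weak normal foliation $\mathcal{F}^{wn}$ is already $C^\infty$, and by part (4) of the same theorem a $C^l$ one-dimensional foliation $\mathcal{F}^n\subset \mathcal{F}^{wn}$ transverse to $Y$ exists; what needs to be upgraded is the regularity of $\mathcal{F}^n$ to the same level $C^k$ as the weak bundles of $X$.

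First, I work in the linearized picture. In the linearization of $Y$ at $Skel(Y)$, the restriction of the flow to each leaf of the (smooth) weak normal foliation $\mathcal{F}^{wn}_{\mathrm{lin}}$ is a standard 2-dimensional hyperbolic linear model whose contracting eigendirection is tangent to a manifestly $C^\infty$ one-dimensional foliation $\mathcal{F}^n_{\mathrm{lin}}\subset \mathcal{F}^{wn}_{\mathrm{lin}}$, whose tangent bundle along the skeleton is precisely the model strong normal Lagrangian bundle. Next, I apply Theorem~\ref{introlin} to obtain a $C^k$ strict Liouville equivalence $\psi:\mathbb{R}\times M\rightarrow \mathbb{R}\times M$ from $\alpha$ to its linearization and set
\[
\mathcal{F}^n:=\psi^{-1}(\mathcal{F}^n_{\mathrm{lin}}).
\]
Since $\psi$ is $C^k$ and conjugates the linearized Liouville flow to $Y^t$, the foliation $\mathcal{F}^n$ is a $C^k$ one-dimensional foliation of $\mathbb{R}\times M$ which is $Y$-invariant and contained in $\psi^{-1}(\mathcal{F}^{wn}_{\mathrm{lin}})=\mathcal{F}^{wn}$. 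At points of the skeleton, $d\psi$ (when $k\geq 1$) sends the model strong normal direction to $E^n$, so $\mathcal{F}^n$ is tangent to $E^n$ along $Skel(Y)$; when $k<1$ the same conclusion follows from uniqueness, since the foliation $\mathcal{F}^n$ given by Theorem~\ref{introdynrig}(4) is the only $Y$-invariant one-dimensional foliation inside $\mathcal{F}^{wn}$ on which $Y$ acts with the strong (non-skeleton) exponential rate dictated by $E^n$.

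Finally, for the Anosov case I invoke the classical regularity theory of Anosov 3-flows (Hirsch-Pugh-Shub, Hasselblatt), which guarantees that the weak invariant bundles are $C^{1+}$; hence the hypothesis of Theorem~\ref{introlin} is met with $k\geq 1$, the linearizing map $\psi$ is genuinely $C^1$, and $\mathcal{F}^n$ is therefore a $C^1$ foliation, as claimed. The main technical point is tracking the behavior of $\psi$ at the skeleton in the low regularity regime $k<1$, where $\psi$ need not be differentiable: the argument circumvents this by using the uniqueness statement above to identify $\psi^{-1}(\mathcal{F}^n_{\mathrm{lin}})$ with the $C^l$-foliation produced by Theorem~\ref{introdynrig}(4), so that the regularity boost from $l$ to $k$ is a direct consequence of the $C^k$-regularity of $\psi$ rather than of any further analysis of the leaves.
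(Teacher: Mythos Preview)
Your proposal is correct and follows essentially the same route as the paper: build the explicit linearized model where the strong normal foliation is the $s$-coordinate foliation (hence $C^\infty$), invoke the $C^k$ strict Liouville equivalence of Theorem~\ref{introlin}, and pull back. The paper phrases the identification at the skeleton slightly differently---rather than appealing to a uniqueness argument in the $k<1$ regime, it notes that the equivalence is built from elementary normal maps and therefore (by Lemma~\ref{lioumaps}) carries the Liouville vector field to the linearized one, so the $Y$-invariant transverse line field inside $\mathcal{F}^{wn}$ is automatically matched---but this is a cosmetic difference.
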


Inspired by the persistence of the skeleton in the LIS construction, i.e. Corollary~\ref{introperscor}, we study the persistence features of 3 dimensional skeletons in general Liouville manifolds (or domains). In the category of vector fields, $C^1$-persistence of an invariant submanifold and its normal hyperbolicty are known to be equivalent as a result of an important chapter in the history of hyperbolic dynamics. More precisely, Hirsch-Pugh-Shub \cite{hps} establishes $C^1$-persistence of normally hyperbolic invariant submanifolds in 1970, revisiting the classical {\em graph transformation} methods of Hadamard and Perrone. The celebrated result of Mañe \cite{mane} in 1978 then proves the converse of the persistent theorem by showing that any $C^1$-persistent invariant submanifold is normally hyperbolic, completing the geometric characterization of $C^1$-persistence in terms of normal hyperbolicity. One should note that $C^1$-persistence captures when an invariant set survives, {\em as a manifold}, under deformations. Understanding $C^0$-persistence, i.e. persistence of invariant sets as topological sets, is known to be considerably more subtle problem \cite{floer1,floer2,c0}. In the same spirit, while Theorem~\ref{introdynrig} introduces Liouville interpolation as a sufficient condition for normally hyperbolicity and persistent 3-dimensional skeletons, we would like to characterize when such conditions are present. Our analysis of normal hyperbolicity at a 3-dimensional Liouville skeleton results in the following structure theorem. This boils down to a complete characterization, if we further assume transversality of the skeleton and the kernel of the Liouville form. More precisely, adding such transversality assumption, the underlying Liouville manifold can be shown to be strictly Liouville equivalent to the Mitsumatsu's examples. To the best of our knowledge, this provides the first classification result of any kind in non-Weinstein Liouville geometry.

We notice in part (b) of the following that as usual, Anosovity and $C^1$-regularity appear together.
However, the interplay with Liouville geometry is required to enhance the standard regularity theoretic arguments and achieve the geometric model of a LIS. 

\begin{theorem}\label{intropers}
Suppose $(W^4,\alpha)$ is Liouville manifold with an oriented $C^1$-persistent 3-dimensional skeleton $\Lambda$ and $\alpha$ is nowhere vanishing. Let $E^n$ be the invariant repelling normal bundle at $\Lambda$. Then,

(a) $Y|_\Lambda$ is an Axiom A flow, where $\Lambda_T=\{ \ker{\alpha}=T\Lambda \}$ is collection of a finite number of repelling periodic orbits of $Y$, and for some tubular neighborhood of $\Lambda_T$, $\Lambda/N(\Lambda_T)$ is a hyperbolic plug whose non-empty core coincides with $\Lambda_L=\{ E^n\subset \ker{\alpha}\}$.

(b)  If we furthermore have $T\Lambda \pitchfork \ker{\alpha}$, then $Y|_\Lambda$ is a synchronized Anosov vector field and $(W^4,\alpha)$ is $C^1$-strictly Liouville equivalent to a a Liouville form induced from a LIS supporting $Y|_\Lambda$. 
\end{theorem}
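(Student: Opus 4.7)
The plan is to combine Mañe's converse to normal hyperbolicity with the algebraic identities forced on $\Lambda$ by $\iota_Y d\alpha = \alpha$. From Mañe I obtain the $Y$-invariant splitting $TW|_\Lambda = T\Lambda \oplus E^n$ with $E^n$ expanding (since $\Lambda$ is a repelling skeleton). From the Liouville equation I extract: $\alpha(Y) = d\alpha(Y,Y) = 0$, so $Y \in \ker\alpha|_\Lambda$ everywhere; $Y|_\Lambda$ is non-singular since $\alpha$ is nowhere vanishing; and consequently $\alpha|_\Lambda \wedge d\alpha|_\Lambda \equiv 0$, making $\alpha|_\Lambda$ an integrable $1$-form whose kernel $\mathcal{F}$ (where nonzero) is a $2$-foliation on $\Lambda$ containing both $Y$ and the characteristic line field $K = \ker d\alpha|_\Lambda$ of the coisotropic hypersurface $\Lambda \subset W$. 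The restriction of the Liouville scaling reads $(Y^t|_\Lambda)^*\alpha|_\Lambda = e^t\alpha|_\Lambda$, showing that $\Lambda_T = \{\alpha|_\Lambda = 0\}$ is $Y$-invariant and exponentially repelling within $\Lambda$.

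For part (a), I would analyze $\Lambda_T$ via a local reduction along the characteristic foliation $\mathcal{G}$ integrating $K$: the data $(\alpha|_\Lambda, d\alpha|_\Lambda)$ is basic for $\mathcal{G}$, so it descends to a $2$-dimensional Liouville structure whose associated Liouville vector field $\tilde Y$ vanishes exactly at the image of $\Lambda_T$. Since $2$-dimensional Liouville is automatically Weinstein, these zeros are isolated and of Morse type; lifting back, each connected component of $\Lambda_T$ is simultaneously a $\mathcal{G}$-leaf and a $Y$-orbit (as $Y \in K$ precisely on $\Lambda_T$), so compactness plus $Y$-invariance delivers a finite family of periodic orbits. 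For the hyperbolic plug structure on $\Lambda \setminus N(\Lambda_T)$ and the identification of its core with $\Lambda_L$, the central observation is the transport formula $\alpha(E^n)|_{Y^t(p)} = (e^t/\mu_t)\,\alpha(E^n)|_p$, obtained by combining $(Y^t)^*\alpha = e^t\alpha$ with the normal expansion $DY^t|_{E^n} = \mu_t$. Boundedness of $\alpha(E^n)$ on compact $\Lambda$, together with $\mu_t$ uniformly dominating $e^t$ on the non-wandering set (forced by normal hyperbolicity over the tangential $Y$-direction combined with the conformal symplectic eigenvalue pairing of $DY^t$), shows the recurrent set must satisfy $\alpha(E^n) = 0$, giving core $\subset \Lambda_L$. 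The hyperbolic splitting on $\Lambda_L$ itself is then extracted from the symplectic orthogonal decomposition $T\Lambda = \mathbb{R}Y \oplus ((E^n)^\omega \cap T\Lambda)$ together with the conformal symplectic action on $\ker\alpha/\mathbb{R}Y$.

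For part (b), transversality $T\Lambda \pitchfork \ker\alpha$ forces $\Lambda_T = \emptyset$, and the same boundedness argument over the now-compact $\Lambda$ promotes the Axiom A conclusion to $\Lambda = \Lambda_L$, making $Y|_\Lambda$ Anosov with synchronization induced by the Liouville scaling. To produce the LIS representation, I would construct a transverse pair of supporting contact forms $\alpha_\pm$ on $\Lambda$ from the Anosov splitting $T\Lambda = \mathbb{R}Y \oplus E^s \oplus E^u$ (perturbing $\alpha|_\Lambda$ in the stable and unstable directions so that their kernels become negative and positive contact planes intersecting along $\mathbb{R}Y$), identify a tubular neighborhood of $\Lambda$ in $W$ with $\mathbb{R} \times \Lambda$ via the normally hyperbolic $E^n$-flow extended globally by the Liouville retraction onto the skeleton, and then write $\alpha = \lambda_-\alpha_- + \lambda_+\alpha_+$ for suitable positive interpolation functions. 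The strict $C^1$ Liouville equivalence then follows from the Moser-type deformation of Theorem~\ref{intro1to1}. The main obstacle I anticipate is ruling out the borderline case in the core computation of part (a): the transport identity yields a contradiction only when the normal Lyapunov exponent strictly exceeds $1$, and the borderline case requires the conformal symplectic pairing $(1, e^t) \cup (\lambda, e^t/\lambda)$ of the $DY^t$-spectrum together with the strict normal-hyperbolic domination over the tangential $Y$-direction (which already has Lyapunov exponent $1$) in order to exclude $E^n$ from being the $e^t$-eigendirection on the recurrent set.
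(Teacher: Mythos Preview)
Your overall architecture is reasonable and the transport formula $\alpha(E^n)\circ Y^t = (e^t/\mu_t)\,\alpha(E^n)$ is exactly the analytic incarnation of the paper's rotation argument (domination of $T\Lambda$ by $E^n$ forces $\ker\alpha$ to rotate toward $E^n$ along forward orbits). However, there are two genuine gaps.

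\textbf{Part (a), the $\Lambda_T$ analysis.} The claim that ``2-dimensional Liouville is automatically Weinstein'' is false: take $\alpha = x(dy-dx)$ on $\mathbb{R}^2$, which has $d\alpha = dx\wedge dy > 0$ but zero locus $\{x=0\}$, a whole line. So the reduced picture does not by itself give isolated Morse zeros. More seriously, the characteristic foliation $\mathcal{G}$ need not have a Hausdorff leaf space, so ``descending to a 2D Liouville structure'' is only local. The paper bypasses this entirely: it argues directly on $\Lambda_T$ that (i) there $\alpha$ does not vanish on $E^n$, hence the normal expansion rate is exactly $1$; (ii) normal hyperbolicity then forces both transverse expansion rates inside $T\Lambda/\langle Y\rangle$ to be strictly below $1$; (iii) since their sum equals $1$ (from $\mathcal{L}_Y d\alpha = d\alpha$), both are positive, making $\Lambda_T$ a hyperbolic repeller for $Y|_\Lambda$. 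This rate computation is what you are missing; your ``conformal symplectic eigenvalue pairing'' gestures at it but does not pin it down.

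\textbf{Part (b), regularity of the normal coordinate.} This is the more serious gap. You propose to trivialize a tubular neighborhood via the $E^n$-foliation, but the bunching constant for $E^n$ (or for $E^{wn}=\langle Y, E^n\rangle$) is strictly below $1$ here, so a priori $E^n$ is only H\"older and your product structure is not $C^1$. The paper's key observation is that $\Lambda$ is in fact a \emph{hyperbolic} invariant set for $Y$ on $W$ with $2$-dimensional unstable bundle $\bar E^u = E^u\oplus E^n$ and $1$-dimensional stable bundle $E^s$; since the corresponding weak unstable bundle $\bar E^{wu}$ is codimension one, Hasselblatt's theorem gives $\bar E^{wu}\in C^{1+}$. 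Then $E^{wn} = \bar E^{wu}\cap \ker\alpha$ is the intersection of a $C^{1+}$ and a $C^\infty$ plane field, hence $C^1$. Only after this can one choose a $C^1$ vector field $\partial_s\subset E^{wn}$ transverse to $\Lambda$, verify $\alpha|_{E^{wn}}=0$ via the ODE $\partial_s\cdot\alpha(\partial_s)=0$, and write $\alpha=\lambda_-\alpha_-+\lambda_+\alpha_+$ in a genuine $C^1$ coordinate system. Without this regularity step your strict $C^1$-Liouville equivalence to a LIS is not justified, and the paper explicitly flags that the Liouville condition is indispensable for obtaining it.
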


%The above theorem characterizes the 3-dimensional Liouville skeleton dynamics in the case of normal hyperbolicity. 
This should be seen as a result distinguishing the Liouville condition for flows from other coinciding geometric conditions. More precisely, given any arbitrary flow $X$ on $M$, we can always extend $X$ it to a thickening $(-\epsilon,\epsilon)\times M$ by adding a sufficiently repelling normal direction with a $C^1$-persistent skeleton dynamically equivalent to $(M,X)$ (after a positive reparametrization of $X$, we can then set the divergence of the extended flow to be as desired). Theorem~\ref{intropers} however states that
%We see that while in the category of volume expanding flows, any 3-dimensional flow $X$ can be realized as the skeleton dynamics while adding enough normal repellence which results in $C^1$-persistence.
in the realm of Liouville geometry, this is only possible if $X$ satisfies certain necessary conditions. For instance, such $(M,X)$ would  necessarily admit a hyperbolic invariant set $\Lambda_L\subseteq M$. We are in general interested in pheneomena distinguishing a Liouville flow from a general volume expanding flow (see Question~\ref{qgeneral} and \ref{qlioupers}).

As a consequence, combined with the geometric rigidity established in Theorem~\ref{1to1}, we have achieved a characterization of Anosov dynamics as the skeleton dynamics for 3-dimensional transverse $C^1$-persistent Liouville skeletons. Various extensions of the above structure theorem can be further investigated, where one can hope to characterize $C^0$-persistent skeletons under assumptions, possibly in terms of partial hyperbolicity (see Question~\ref{qtrans}, ~\ref{qsing} and~\ref{qc0}).

\begin{corollary}
There exists the following 1-to-1 correspondence:
$$\bigg\{\substack{\text{Positive reparametrization classes of} \\  \text{Anosov flows} \\  \text{up to conjugacy}} \bigg\} \mbox{\Large$\overset{\text{1-to-1}}{\longleftrightarrow}$}
\bigg\{\substack{\text{Liouville forms on $\mathbb{R}\times M$ with $C^1$-persistent} \\  \text{3-dimensional skeleton $\Lambda$ with $\ker{\alpha}\pitchfork T\Lambda$} \\  \text{up to strict Liouville equivalence}} \bigg\}.$$
\end{corollary}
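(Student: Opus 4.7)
The plan is to combine Theorem~\ref{intropers}(b) with the geometric rigidity theorem Theorem~\ref{intro1to1}, cutting out the Anosov sub-stratum inside the non-singular partially hyperbolic correspondence. First I would set up the forward map. Given a positive reparametrization class $[X]$ of Anosov flows up to $C^\infty$-conjugacy, Theorem~\ref{introgeneralchar} (applied to $X$ as a non-singular partially hyperbolic flow) supplies a supporting LIS $(\alpha_-,\alpha_+)_{(\lambda_-,\lambda_+)}\in\mathcal{LIS}(M)$, and I would send $[X]$ to the strict Liouville equivalence class of the induced form $L(\alpha_-,\alpha_+)_{(\lambda_-,\lambda_+)}$. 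Three things need to be checked for this to land in the prescribed target: (i) the skeleton is a $3$-dimensional submanifold, which is immediate from Theorem~\ref{introdynrig}(1) since $Skel(Y)$ is the graph of a section $\Lambda_s\colon M\to\mathbb R$; (ii) the skeleton is $C^1$-persistent, which is Corollary~\ref{introperscor} in the Anosov case; and (iii) the transversality $\ker\alpha\pitchfork T\Lambda$ is automatic from the LIS form of $\alpha$, since $\alpha$ has no $ds$-component so $\partial_s\in\ker\alpha$, while $T\Lambda=\mathrm{graph}(d\Lambda_s)$ projects isomorphically to $TM$, and a dimension count at the skeleton (where the horizontal part of $\ker\alpha$ equals $E$) gives $\ker\alpha + T\Lambda = T(\mathbb R\times M)$. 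Well-definedness at the level of equivalence classes is then exactly the content of Theorem~\ref{intro1to1} (restricted to Anosov).

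For the inverse map, I would use Theorem~\ref{intropers}(b): given $(W^4,\alpha)=(\mathbb R\times M,\alpha)$ with oriented $C^1$-persistent $3$-dimensional skeleton $\Lambda$ and $\ker\alpha\pitchfork T\Lambda$, the theorem identifies $Y|_\Lambda$ with a synchronized Anosov vector field on the closed $3$-manifold $\Lambda$ and yields a strict Liouville equivalence with a LIS-induced form supporting $Y|_\Lambda$. I would send the equivalence class of $\alpha$ to the positive reparametrization class of $Y|_\Lambda$ up to $C^\infty$-conjugacy (using the identification of $\Lambda$ with $M$ implicit in the strict Liouville equivalence). That this only depends on the strict Liouville equivalence class follows because any strict Liouville equivalence restricts to a diffeomorphism between the skeletons conjugating the Liouville vector fields there.

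Finally I would verify that these two assignments are mutually inverse. Going Anosov $\to$ Liouville $\to$ Anosov is the identity by Theorem~\ref{introdynrig}(2), which states that $\pi_*(Y|_{Skel(Y)})$ is a synchronization of $X$, so the resulting positive reparametrization class up to conjugacy agrees with the original $[X]$. For Liouville $\to$ Anosov $\to$ Liouville, the composition sends $[\alpha]$ first to $[Y|_\Lambda]$ and then to the LIS-induced form associated with $[Y|_\Lambda]$ via Theorem~\ref{intro1to1}; Theorem~\ref{intropers}(b) already exhibits a strict Liouville equivalence from $\alpha$ to such a LIS-form, and Theorem~\ref{intro1to1} guarantees uniqueness of this form up to strict equivalence.

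The only substantive difficulty is the surjectivity statement on the Liouville side, namely that \emph{every} Liouville manifold with a $C^1$-persistent $3$-dimensional skeleton transverse to $\ker\alpha$ in fact comes from a LIS; this is precisely Theorem~\ref{intropers}(b), and has already been handled upstream. Given it, the present corollary reduces to restricting the bijection of Theorem~\ref{intro1to1} to the sub-stratum where the skeleton flow is Anosov, and the transversality hypothesis together with $C^1$-persistence are exactly what pin us to this sub-stratum rather than the larger non-singular partially hyperbolic one.
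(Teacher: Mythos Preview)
Your proposal is correct and follows essentially the same approach as the paper: the corollary is presented there as an immediate consequence of combining Theorem~\ref{intropers}(b) (Theorem~\ref{pers}(b) in the body) with the geometric rigidity Theorem~\ref{intro1to1} (Theorem~\ref{1to1}), exactly as you do. Your write-up is in fact more careful than the paper's, which gives no formal proof---in particular your explicit verification of $\ker\alpha\pitchfork T\Lambda$ for LIS-induced forms (via $\partial_s\in\ker\alpha\setminus T\Lambda$) and your check that the two assignments are mutual inverses fill in details the paper leaves implicit.
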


%The above theorem characterizes the skeleton dynamics in case of normal hyperbolicity
The study of persistence in the non-Anosov case requires extra care since in the absence of normal hyperbolicity's rate condition, the regularity of the weak dominated bundle might fail to be $C^1$ and therefore, low regularity should be dealt with. The example of Eliashberg-Thurston \cite{et} shows that low regularity (and worse, lack of unique integrability) indeed happens. But in the following, we show that for non-singular partially hyperbolic flows which are {\em geometrically close} to being Anosov, the existence of the strong normal bundle can still be derived. Dropping these conditions further, we can ask about the existence of {\em exotic Liouville pairs}, i.e. 3-dimensional embedded transverse Liouville skeleton, for which the Liouville dynamics nearby is not conjugate to its linearization at the skeleton (see Question~\ref{qgenrep} and \ref{qexotic}).

\begin{theorem}\label{nearanosov}
Let $\Lambda \subset (W,\alpha)$ be the 3 dimensional $C^1$ embedded Liouville skeleton with $\ker{\alpha}\pitchfork T\Lambda$. Also assume that at $\Lambda$, the Liouville vector field $Y$ expands $TW/T\Lambda$ with the rate $r_n>\frac{1}{2}$. Then, there exists an invariant bundle $E^n\subset \ker{\alpha}$ such that $E^n\pitchfork T\Lambda$. %\textcolor{red}{rephrase, conjugate to linearization?} 
In this case, the Liouville flow is conjugate to its linearization at $\Lambda$.
\end{theorem}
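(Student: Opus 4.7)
The plan is to extract the invariant normal bundle $E^n$ via a rate comparison inherited from the conformally symplectic structure of $d\alpha$ on $\ker{\alpha}|_\Lambda$, and then to promote the resulting splitting to a conjugacy with the linearization by a telescoping argument analogous to the proof of Theorem \ref{introlin}. Along $\Lambda$, the transversality $\ker{\alpha}\pitchfork T\Lambda$ makes $\alpha|_{T\Lambda}$ nowhere vanishing, so $E := \ker{\alpha}|_\Lambda \cap T\Lambda$ is a rank-$2$ subbundle of the rank-$3$ distribution $\ker{\alpha}|_\Lambda$. The identity $\alpha(Y) = d\alpha(Y,Y) = 0$ places $Y$ in $\ker{\alpha}$, and since $Y$ is tangent to the invariant $\Lambda$ we have $Y \in E$ as well, so $\langle Y\rangle \subset E$ with both $E$ and $\ker{\alpha}|_\Lambda$ being $Y$-invariant. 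Finally, $\ker{\alpha}|_\Lambda$ is coisotropic with characteristic line $\langle Y \rangle$, and the conformal identity $\mathcal{L}_Y d\alpha = d\alpha$ forces the linearized flow to descend to a conformally symplectic action on the rank-$2$ symplectic reduction $K := \ker{\alpha}|_\Lambda/\langle Y\rangle$ with conformal factor $e^t$.

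The key observation is that the two rates on $K$ can be read off from the Liouville data. The map $\ker{\alpha}|_\Lambda/E \to TW|_\Lambda/T\Lambda$ induced by $\ker{\alpha}\hookrightarrow TW \to TW/T\Lambda$ is an isomorphism of line bundles (its kernel is $\ker{\alpha}\cap T\Lambda = E$, which has been quotiented out), so the expansion rate on $\ker{\alpha}|_\Lambda/E$ equals the hypothesized normal rate $r_n$. Since the two Lyapunov rates on $K$ must sum to $1$, the rate on $\bar E := E/\langle Y\rangle$ equals $1-r_n$, and the hypothesis $r_n>1/2$ yields the strict domination $r_n > 1-r_n$ which, together with the trivial $r_n > 0$, makes $r_n$ exceed both Lyapunov rates $\{0,\, 1-r_n\}$ of the linearized flow on $E$. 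Consequently the $C^r$ invariant section theorem of Hirsch--Pugh--Shub, applied to the affine bundle of rank-$1$ complements of $E$ in $\ker{\alpha}|_\Lambda$ (equivalently, splittings of $0 \to E \to \ker{\alpha}|_\Lambda \to \ker{\alpha}|_\Lambda/E \to 0$), produces a unique $Y$-invariant rank-$1$ subbundle $E^n \subset \ker{\alpha}|_\Lambda$ with $\ker{\alpha}|_\Lambda = E \oplus E^n$. From $E^n \cap T\Lambda \subset E^n \cap E = 0$ together with a dimension count, the transversality $E^n \pitchfork T\Lambda$ inside $TW|_\Lambda$ follows at once.

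For the conjugacy step I would proceed in parallel to the proof of Theorem \ref{introlin}. The invariant bundle $E^n$, being normally expanding at a uniform positive rate, integrates by the Liouville flow to a $Y$-invariant $1$-dimensional foliation $\mathcal{F}^n$ on a tubular neighborhood $\mathcal{N}(\Lambda)$ transverse to $\Lambda$; using the leaves of $\mathcal{F}^n$ as the normal coordinate gives a trivialization $\mathcal{N}(\Lambda)\cong \Lambda\times(-\varepsilon,\varepsilon)$ in which the linearization $\hat Y := Y|_\Lambda + r_n\, s\, \partial_s$ can be written down explicitly, and the conjugacy $\psi$ between $Y$ and $\hat Y$ is then extracted as a telescoping limit $\psi := \lim_{t\to\infty} Y^{-t}\circ \hat Y^{t}$, whose uniform convergence on compact subsets is guaranteed by the strict gap $r_n > 1-r_n$ along $E^n$-fibers. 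The main obstacle is that the intermediate rate $1-r_n\in(0,r_n)$ on $\bar E$ puts us in a partially --- rather than normally --- hyperbolic setting, so classical Hartman--Grobman does not apply verbatim; the Liouville structure, in particular the Lagrangian nature of $E$ and $E^n$ inside $\ker{\alpha}$, is what confines the cross-terms between $\bar E$ and $E^n$ and keeps the telescope well-defined, in the same spirit as the arguments underpinning Theorem \ref{introlin}.
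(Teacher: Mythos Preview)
Your proof is correct and follows the paper's approach. You derive the rate identity $r_n+r_{\bar E}=1$ via the conformal symplectic structure on $\ker\alpha|_\Lambda/\langle Y\rangle$ (the paper obtains the equivalent relation $r_{ds}+r_e=1$ from the divergence formula in Section~\ref{7.1}), then feed the resulting domination $r_n>1-r_n$ into Lemma~\ref{strong} to split off $E^n\subset\ker\alpha|_\Lambda$; the conjugacy with the linearization follows, as you indicate, from the transverse invariant foliation $\mathcal{F}^n$ obtained by flowing $E^n$.
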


The dynamical rigidity result of Theorem~\ref{introdynrig} provides a Liouville geometric description of the weak dominated bundle of a non-singular partially hyperbolic flow as the graph representing the Liouville skeleton. Therefore, one can revisit the regularity theory of these invariant bundles in terms of the normal expansion at such repelling graphs (this includes both weak stable and unstable plane bundles in the Anosov case). The upshot is that using this Liouville geometric interpretation, one can then apply the more classical theory of {\em graph transformations} -whose roots famously goes back to the work of Hadamard and Perron-, recover a new proof for the weak invariant bundles of an Anosov 3-flow being $C^{1+}$ (a classical fact from the regularity theory of Anosov flows \cite{hps,reg,reg2}), and generalize it to new lower bounds for the regularity of the weak dominated bundles for non-singular partially hyperbolic 3-flows. Our method furthermore shows that these regularity estimates behave well under deformations. %We have used Liouville geometry to translate the regularity of the weak dominated bundle of a non-singular partially hyperbolic 3-flow into the regularity of invariant graphs, whose regularity we can study by looking at the nearby Liouville dynamics and using the very classic $C^r$-section theorem (see Theorem~3.5 of \cite{hps}).

\begin{quote}
\centering
{\em "Every five years or so, if not more often, someone ‘discovers’ the theorem of Hadamard and Perron proving it either by Hadamard’s method or Perron’s. I myself have been guilty of this." Anosov 1967 \cite{anosov}}
\end{quote}

This is our Liouville geometric take on the quote of Anosov.

\begin{theorem}\label{regex}
Let ${\Pi}^k(M)$ be the space of $C^k$ plane fields and $\mathcal{PHF}(M;B_s>k)$ be the space of non-singular partially hyperbolic vector fields like $X$ with the splitting  $TM \simeq E\oplus E^{u}$, and the bunching constant satisfying $B_s>k$. Then, the map defined by $$\begin{cases}\mathcal{D}:\mathcal{PHF}(M;B_s>k)\rightarrow \Pi^k(M) \\
\mathcal{D}(X):=E \end{cases},$$
sending a non-singular partially hyperbolic flow to its weak dominated bundle as a plane field,
is well-defined and continuous. In particular, the weak stable bundle $E^{ws}$ $C^1$-varies as one deforms $X$ through Anosov vector fields.%\textcolor{red}{refinement in regularity}
\end{theorem}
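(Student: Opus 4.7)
The plan is to translate the regularity question for the weak dominated bundle $E$ into a regularity question for a Liouville skeleton, and then apply the classical $C^r$-section theorem of Hirsch--Pugh--Shub (in the spirit of the Hadamard--Perron quote above). For each $X\in \mathcal{PHF}(M;B_s>k)$, Theorem~\ref{introgeneralchar} produces a supporting LIS $(\alpha_-,\alpha_+)_{(\lambda_-,\lambda_+)}\in\mathcal{LIS}(M)$, and Theorem~\ref{introdynrig}(1) identifies $\mathrm{Skel}(Y)$ with the graph of the function $\Lambda_s:M\to\mathbb{R}$ determined pointwise by the relation $\ker[\lambda_-\alpha_-+\lambda_+\alpha_+]=E$. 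Moreover, that graph is \emph{exactly} as regular as $E$. Thus to show $\mathcal{D}(X)\in\Pi^k(M)$ it is equivalent to show that $\mathrm{Skel}(Y)$ is a $C^k$ section of the projection $\pi:\mathbb{R}\times M\to M$.

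Next, I would set up the graph transform associated with the Liouville flow along $\mathrm{Skel}(Y)$. The skeleton is normally repelling, and the normal expansion rate at the skeleton can be computed directly from $\lambda_\pm$ and the partially hyperbolic data of $X$; by the geometric rigidity of Theorem~\ref{intro1to1} (equivalently, Corollary~\ref{introlinemb} letting us pass to a convenient interpolation model) the relevant normal-vs-tangent rate ratios are independent of the chosen LIS and are governed precisely by the hyperbolic bunching of $X$. The hypothesis $B_s>k$ is then exactly the spectral gap condition under which the $C^r$-section theorem (with $r=k$) applies to the graph transform: the unique invariant section is $C^k$, so $\mathrm{Skel}(Y)$ is $C^k$, hence $E\in\Pi^k(M)$ and $\mathcal{D}$ is well defined.

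For continuity, I would work in families. Given a continuous path $X_t\in\mathcal{PHF}(M;B_s>k)$, one needs a continuous choice of supporting LIS $(\alpha_-^t,\alpha_+^t)_{(\lambda_-^t,\lambda_+^t)}$; this is available either locally from the bi-contact pair associated with $X_0$ (adjusting the interpolation functions continuously via the implicit function theorem) or, more globally, from the Serre fibration of Theorem~\ref{introfib}. The resulting family of Liouville flows $Y_t$ depends continuously on $t$ in $C^2$, and the normal rate condition $B_s>k$ is open and holds uniformly on a compact sub-path. The parameter version of the $C^r$-section theorem then yields that the invariant section $\Lambda_s^t$ varies continuously in $C^k$, which translates back via Theorem~\ref{introdynrig}(1) to $C^k$-continuity of $E_t$ in $\Pi^k(M)$. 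In the Anosov case $X$ and $-X$ are both covered, and bunching for Anosov 3-flows is automatic beyond $k=1$ on a $C^1$-open neighborhood, which yields the final claim about $E^{ws}$ varying in $C^1$.

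The main obstacle I anticipate is the bookkeeping that matches the dynamically-defined bunching constant $B_s$ of $X$ with the analytic rate ratio appearing in the hypothesis of the $C^r$-section theorem applied to $Y^t$. The Liouville flow's derivative transverse to $\mathrm{Skel}(Y)$ is a combination of the interpolation data $\lambda_\pm$ and the linearized action of $X^t$ on $E^u$ and on $E$; one must verify that the \emph{ratio} of normal expansion to tangential expansion of $Y^t$ at the skeleton reduces, up to interpolation-independent factors, to the hyperbolic bunching of $X$. This is where the canonical nature of the skeleton dynamics from Theorem~\ref{introdynrig}(2) and the linearization statement of Theorem~\ref{introlin} are essential: they let us compute rates in the linearized model, where the $C^r$-section theorem's hypothesis becomes transparent and is exactly $B_s>k$.
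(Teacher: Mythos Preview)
Your proposal is correct and follows essentially the same route as the paper: translate the regularity of $E$ into the regularity of the Liouville skeleton via Theorem~\ref{introdynrig}(1), apply the $C^r$-section theorem to the graph transform induced by the Liouville flow with the bunching constant $B_s$ supplying the spectral-gap hypothesis, and use the parametric version together with a continuous family of supporting LIS (via the fibration) for continuity. The ``bookkeeping'' you flag as the main obstacle is exactly what the paper carries out, and it does so in the synchronized model (Theorem~\ref{introdynrig}(2)) where $r_u\equiv 1$, so that the normal expansion at the skeleton is $1-r_s$ and the maximal tangential expansion is $e$, making the $C^r$-section hypothesis reduce transparently to $B_s>k$---precisely the linearization strategy you propose.
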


In the above, the map $\mathcal{D}$ being well-defined refers to the non-trivial fact that the weak dominated bundle of the non-singular partially hyperbolic $X$ is $C^k$ under the assumptions (this is showing that the regularity lower bounds of Hasselblatt \cite{reg} for the weak invariant bundles of Anosov 3-flows can be extended to this category). Furthermore, the continuity of $\mathcal{D}$ refers to the fact that thanks to Liouville geometry, the persistence of $C^k$-sections is translated into the weak invariant bundle $C^k$-depending on $X$, another non-trivial fact concluded here. Note that one can take $k>1$ in the Anosov case.

To conclude our study, we record some elementary observations about other related geometric objects, more specifically, the Lagrangian foliations and Hamiltonian flows. The Liouville geometric invariants of Anosov 3-flows introduced in \cite{clmm} rely heavily on understanding such objects and in particular, how they interact with the Liouville skeleton whose dynamics we have explored in this paper. Here, we do not take up the task of studying these objects in depth and simply conclude with pointing out a few remarks. We have already discussed that the weak normal foliations $\mathcal{F}^{wn}$, which are invariant strictly exact Lagrangian foliations, i.e. $\alpha|_{T\mathcal{F}^{wn}}=0$, play a very important role in the theory and in particular, Theorem~\ref{intropers} implies that the their existence is rooted in the $C^1$-persistence of the Liouville skeleton. The significance of $\mathcal{F}^{wn}$ is thoroughly discussed in Section~\ref{4}. We further note that when the skeleton is $C^1$ in the Anosov case, the weak stable foliation of the skeleton dynamics yields an exact Lagrangian foliation of the skeleton. We finally remark that in our Liouville geometric model, the Reeb flows of the supporting contact forms correspond to the Hamiltonian dynamics on hypersrfaces away from the skeleton.

\begin{theorem}
Suppose $(\alpha_-,\alpha_+)_{(\lambda_-,\lambda_+)}\in \mathcal{LIS}(M)$ supports a non-singular partially hyperbolic flow $X$.

%(1) the weak normal foliation $\mathcal{F}^{wn}$ $C^\infty$-depends on $X$;

(1) when $X$ is Anosov, the Liouville skeleton $\Lambda_s$ is foliated by a $C^1$ strict exact Lagrangian foliation;

(2) the Reeb flows for any supporting $(\alpha_-,\alpha_+)$ can be realized as the Hamiltonian flows on a pair of energy hypersurfaces inside $(\mathbb{R}\times M,L(\alpha_-,\alpha_+)_{(\lambda_-,\lambda_+)})$.
\end{theorem}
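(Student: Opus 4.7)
The plan is to address the two parts independently: for part (1), I exploit the explicit graphical description of the Anosov skeleton from Theorem~\ref{introdynrig}, and for part (2), I invoke the strict Liouville embedding provided by Corollary~\ref{introlinemb} to bring a linear compact LIS inside the given non-compact one.

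For (1), I pull back the weak stable foliation of the Anosov flow $X$ via the $C^{1+}$ diffeomorphism $\pi|_{\Lambda_s}:\Lambda_s\to M$ from Theorem~\ref{introdynrig}(1), producing a $C^{1+}$ two-dimensional foliation $\mathcal{F}$ of $\Lambda_s$. The dimension count is correct for leaves to be Lagrangian in $(W^4,d\alpha)$, and both the Lagrangian and strict exactness conditions reduce to the single vanishing statement $\alpha|_{T\mathcal{F}}=0$. To verify it, note that a tangent vector to a leaf at $(\Lambda_s(x),x)$ has the form $(d\Lambda_s(v),v)$ with $v\in E^{ws}_x$; because $\alpha=\lambda_-\alpha_-+\lambda_+\alpha_+$ contains no $ds$-component, its evaluation on such a vector is exactly $\bigl(\lambda_-(\Lambda_s(x),x)\alpha_-+\lambda_+(\Lambda_s(x),x)\alpha_+\bigr)(v)$. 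By the characterization of the Anosov skeleton in Theorem~\ref{introdynrig}(1), this combined 1-form on $M$ has kernel $E^{ws}_x$, so the evaluation vanishes on $v\in E^{ws}_x$, as required.

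For (2), I fix the supporting linear compact LIS built on the same pair $(\alpha_-,\alpha_+)$ as the given non-compact LIS, namely $\alpha_{\mathrm{lin}}=(1-s)\alpha_-+(1+s)\alpha_+$ on $[-1,1]\times M$, and apply Corollary~\ref{introlinemb} to obtain a strict Liouville embedding $i:([-1,1]\times M,\alpha_{\mathrm{lin}})\hookrightarrow (\mathbb{R}\times M,L(\alpha_-,\alpha_+)_{(\lambda_-,\lambda_+)})$. Setting $\Sigma_\pm:=i(\{\pm 1\}\times M)$, strictness of the embedding forces the induced 1-form on $\Sigma_\pm$ to equal the boundary restriction $\alpha_{\mathrm{lin}}|_{s=\pm 1}=2\alpha_\pm$, which is positively contact. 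Since the characteristic foliation of $d\alpha$ on a contact-type hypersurface agrees with the Reeb direction of the induced contact form, the Hamiltonian flow on $\Sigma_\pm$, with respect to any function having $\Sigma_\pm$ as a regular level set (for example, a local smooth extension of $i_* s$), is a positive reparametrization of $R_\pm$.

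The main obstacle in (2) is conceptual rather than technical: the strict positivity $\lambda_\pm>0$ on $\mathbb{R}\times M$ forbids finding any section of $\pi$ on which $\alpha$ reduces to a scalar multiple of $\alpha_-$ or $\alpha_+$ alone, so the Reeb flows cannot appear directly as characteristic flows on a section inside the non-compact model. The detour through the compact linear LIS is therefore essential, since its boundary hypersurfaces are forced by construction to have the opposite interpolating function vanish. For (1), the corresponding subtlety is ensuring that the pulled back foliation inherits the correct regularity, which follows from the fact that the $C^{1+}$ weak stable foliation of an Anosov 3-flow pulls back through the $C^{1+}$ section map to a $C^{1+}$ foliation of $\Lambda_s$.
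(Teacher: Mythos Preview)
Your argument for part~(1) is correct and coincides with the paper's: both identify the strict exact Lagrangian foliation of $\Lambda_s$ with the weak stable foliation of the Anosov skeleton dynamics, verifying $\alpha|_{T\mathcal{F}^{ws}|_{\Lambda_s}}=0$ via the characterization $\ker\bigl(\lambda_-(\Lambda_s,\cdot)\alpha_-+\lambda_+(\Lambda_s,\cdot)\alpha_+\bigr)=E^{ws}$.

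For part~(2), however, there is a genuine gap. You invoke Corollary~\ref{introlinemb} with the linear compact LIS $\alpha_{\mathrm{lin}}=(1-s)\alpha_-+(1+s)\alpha_+$ built from the \emph{same} pair $(\alpha_-,\alpha_+)$ appearing in the given LIS, but nothing guarantees that this linear interpolation is Liouville on all of $[-1,1]\times M$. Being a LIS is part of the \emph{hypothesis} of Corollary~\ref{introlinemb}, not a consequence of $(\alpha_-,\alpha_+)$ being a supporting bi-contact form for a partially hyperbolic flow. Indeed, writing $\alpha_+=\alpha_u-\alpha_s$ and $\alpha_-=C\alpha_u+D\alpha_s$ with constants $C,D>0$, the contactness conditions reduce to $r_u>r_s$, yet the Liouville quantity at $s=1$ is proportional to $r_u(D+1)+r_s(C-1)$, which becomes negative for an Anosov flow ($r_s<0$) once $C$ is large. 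So Corollary~\ref{introlinemb} need not apply, and your detour through the linear model collapses.

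The paper proceeds differently and more directly: each side of the skeleton is, as a Liouville manifold, the symplectization of a contact structure on $M$ isotopic to $\xi_\pm$, because the Liouville vector field is complete, outward-pointing, and transverse to the sections $\{s=c\}$ whose induced contact structures converge to $\xi_\pm$ as $c\to\pm\infty$. In a symplectization any contact form for the underlying contact structure arises as the restriction to a graphical hypersurface, with Reeb flow equal to the Hamiltonian characteristic flow there; pulling back through the symplectization identification yields the desired energy hypersurfaces. Your observation that no \emph{section of $\pi$} carries the form $c\alpha_\pm$ is correct and insightful, but the remedy is this symplectization identification rather than a compact linear embedding that may not exist.
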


\vskip0.5cm

\textbf{Organization of the paper:} The audience of this paper is mostly assumed to be non-experts in dynamical systems. So, we start Section~\ref{2} with bringing the necessary background from hyperbolic dynamics. This includes basic concept from Anosov dynamics, as well as other notions of hyperbolicity which will appear in this paper. We will also discuss elements from the theory of invariant bundles and regularity theory, as needed. In Section~\ref{3}, we revisit the Liouville geometric theory of Anosov 3-flows, which has been under construction in recent years. In particular, we will discuss Mitsumatsu's construction which is central in this work. We will then introduce the generalized framework of Liouville Interpolation Systems and extend our theory to the wider class of non-singular partially hyperbolic 3-flows. We also give explicit constructions (bi-contact DA deformations) to indicate that such class of dynamics is in fact bigger than Anosov 3-flows. In Section~\ref{4}, we bring in our main computations regarding the Liouville dynamics in the LIS model, and use it to explore the Liouville dynamics in our construction. We will see that our computations yield a complete understanding of the Liouville dynamics and prove rigidity results in this regard. In Section~\ref{5}, we recover the similar rigidity phenomena from a geometric viewpoint, study the applications to the Liouville embedding problem and provide a fibration theorem, which can be used as a basis for further defining symplectic geometric invariants of Anosov flows from Liouville geometry. In Section~\ref{6}, we study the linearizations of the Liouville flow at its skeleton. To do so, we need to adopt a low regularity approach, which exploits explicit isotopies rather than the Moser technique. In Section~\ref{7}, we try to prove an inverse theorem for the persistence part of our dynamical rigidity theorem, and in particular show that Mitsumatsu's examples charachterize of Liouville geometry with $C^1$-persistent 3-dimensional transverse Liouville skeleton. In Section~\ref{8}, we will see that our dynamical rigidity theorem provides a Liouville geometric approach in the study of weak invariant bundles of a general non-singular partially hyperbolic 3-flow. We will use this viewpoint to generalize a previous well known results on the regularity of the weak invariant bundles of Anosov 3-flows. In Section~\ref{9}, some remarks about the related geometric objects have been made. These are Lagrangian foliations and Hamiltonian flows defined naturally in our setting. Finally in Section~\ref{10}, we bring in some questions, regarding Liouville dynamics mainly, motivated by or discussed along the way in our analysis.

\vskip0.5cm

\textbf{ACKNOWLEDGEMENTS:} We are grateful to Thomas Massoni for conversations in 2022 which sparked some of the questions addressed in this paper. This work has benefited from many fruitful conversations with Boris Hasselblatt, Joe Breen, Federico Salmoiraghi, Thomas Barthelme, Ko Honda, Kai Cieliebak and Julian Chaidez. This work was impossible without the love and support of Armita.% Sarirafraz.

%%%%%%%%%%%%%%%%%%%%%%%
%%%%%%%%%%%%%%%%%%%%%%
\section{Elements from hyperbolic dynamics and invariant manifolds}\label{2}

The goal of this section is to provide a rough background on the parts of the vast theory of hyperbolic dynamics which will appear in this manuscript. This paper centers around the 4 dimensional Liouville geometric objects one can construct given an Anosov 3-flow, based on which a characterization of such class of dynamical systems can be presented \cite{mitsumatsu,hoz3} (see Section~\ref{3.3}). Therefore, 3 dimensional Anosov flows are the focus of this paper and we begin with reviewing basic concepts in their theory. However, there are other notions of hyperbolicity in dynamical systems which will appear throughout this paper. Two important generalizations of Anosov dynamics in dimension 3 are important for us. Projectively Anosov flows (also called flows with dominated splitting) are the cornerstones of the connections of the theory and the world of contact and symplectic geometry (see Section~\ref{3.2}), and non-singular partially hyperbolic 3-flows will appear as an important middle ground between Anosovity and projective Anosovity. The generalizations of such ideas to higher dimensions -for us, dimension 4-, as well as the notion of normal hyperbolicity, which captures the persistence features of dynamics, will be important in this paper. Hence, their introduction in Section~\ref{2.2} follows. After basic introduction, we will also discuss elements from the theory of invariant bundles and their regularity, mainly borrowing from the classical formulation of Hirsch-Pugh-Shub~\cite{hps} and its important refinement by Hasselblatt \cite{reg,reg2}.

A beautiful and comprehensive reference on the following topics of this section can be found in~\cite{hyp}.

\subsection{Anosov 3-flows, adapted norms and synchronizations}\label{2.1}

We start with the definition of uniformly hyperbolic invariant sets.

\begin{definition}
Suppose $X$ be a $C^k$ ($k\geq 1$) vector field on a manifold of arbitrary dimension $M$, whose flow preserves a $C^1$ compact invariant set $\Lambda$. The invariant set $\Lambda$ is called {\em uniformly hyperbolic} if for any $p\in \lambda$, we have a continuous and $X$-invariant splitting $TM|_\Lambda\simeq \langle X \rangle \oplus E^{u}\oplus E^{s}$ such that for any $t\in \mathbb{R}$, we have
$$\begin{cases}
 ||X^t_*(u) ||\geq Ae^{Ct} ||u|| \ \ \text{ for any } u\in E^{u} \\
  ||X^t_*(v) ||\leq Ae^{-Ct} ||v|| \ \ \text{ for any } v\in E^{s}
,\end{cases}$$
where $||.||$ is some norm on $TM|_\Lambda$ and $A,C>0$ are positive constants.

Furthermore, we call $X$, or the flow generated by it $X^t$ {\em Anosov}, if $\Lambda=M$ is a closed manifold.
\end{definition}

In the context of this paper, uniform hyperbolicty appears mainly in the case of Anosov 3-flows, as well as on invariant 3-manifolds embedded in 4-manifolds (uniformly hyperbolic Liouville skeletons of codimention 1). 

The primary examples of Anosov flows in dimension 3 were the geodesic flows on the unit tangent space of hyperbolic closed surfaces, as well as, the suspension flows of Anosov linear transformations of $\mathbb{T}^2$. These examples are called {\em algebraic} thanks to their constructions and satisfy many interesting rigidity properties. In fact, these were the only known examples of Anosov 3-flows (up to {\em orbit equivalence}) for about 20 years, until new surgery techniques produced new examples in the early 80s and now we have wide classes of Anosov flows in dimension 3, even on hyperbolic manifolds.

\begin{remark}\label{stability}
A remarkable feature of Anosov flows is their {\em structural stability} in the following sense. For any Anosov flow $X^t$ on a manifold of arbitrary dimension $M$, any sufficiently $C^1$-close flow $Y^t$ is Anosov and {\em orbit equivalent} to $X^t$, i.e. there exists a homeomorphism of $M$ sending (oriented) orbits of $Y^t$ to (oriented) orbits of $X^t$.
As a result, in many contexts, especially ones motivated from topological or geometric viewpoint, this justifies restricting our attentions to $C^\infty$ flows for convenience, as any $C^1$ Anosov flow can be approximated, preserving the orbit structure, by a $C^\infty$ flow (this is more generally true for any {\em Axiom A flow}). Therefore, while most of the result in this paper can be refined in the terms of the regularity of the flow, we restrict our statements and proofs to $C^\infty$ flows for the sake of more straightforward staements and convenience.
\end{remark}

In the above definition, the sub-bundles $E^{s}$ and $E^{u}$ are called the {\em strong stable and unstable bundles}, respectively and it is not hard to see that the sub-bundles $E^{ws}:=\langle X \rangle \oplus E^{s}$ and $E^{wu}:=\langle X \rangle \oplus E^{u}$, which are called the {\em weak stable and unstable bundles}, respectively, are uniquely integrable and as a result, tangent to foliations $\mathcal{F}^{ws}$ and $\mathcal{F}^{wu}$, which are called the {\em weak stable and unstable foliations}, respectively.

\begin{remark}
It is important to notice that in the above definition, the splitting of the tangent space, and therefore all the mentioned invariant bundles and foliations, is a priori only Hölder continuous in general, regardless of the regularity of the flow generated by $X$. In fact, there is an interesting regularity theory to address such subtlety and we will discuss this more in Section~\ref{2.3}. In particular, in the case we are mostly interested in, i.e. the case of Anosov 3-flows, it can be (non-trivially) shown \cite{hps,reg} that the weak bundles $E^{ws}$ and $E^{wu}$ are at least $C^1$ (see Corollary~\ref{c1regweak}).
\end{remark}

The foliations $\mathcal{F}^{ws}$ and $\mathcal{F}^{wu}$ in some sense provide a {\em local picture} for Anosov 3-flows, given the following technical remark.

\begin{remark}\label{adapt}
We notice that the above definition implies the {\em eventual} expansion, which is not {\em immediate} when $0<A<1$. However, an averaging technique of Holmes \cite{holmes}, popularized by Hirsch-Pugh-Shub \cite{hps}, indicates that we can always choose a norm with respect to which, the expansion and contractions are immediate, i.e. we can assume $A=1$. Gourmelon \cite{gour} generalizes this idea to weaker notions of hyperbolicty, like {\em dominated splittings}, which we will encounter in the following. A differentiable refinement of such averaging technique is given in the Appendix of \cite{hoz5} as well.
\end{remark}

Therefore, assuming $A=1$, we achieve a somewhat {\em local} model of Anosov 3-flows  (see Figure~1). One should note that there are elements of this picture which are not truly local, since the invariant foliations $\mathcal{F}^{ws}$ and $\mathcal{F}^{wu}$ depend on the long term behavior of the flow and perturbing the flow in a neighborhood can deform these foliations on the entirety of the underlying manifold.

In order to quantify the notions of exponential expansion and contraction, we use the notion of {\em expansion rates}. These quantities are known to play a significant role in many aspects of Anosov dynamics, including its regularity theory (see Section~\ref{2.3}), as well as the contact and symplectic (or in general, differential) geometric theory of such flows \cite{hoz3,massoni}. Here, we overview their main properties and one should refer to Section~3 of \cite{hoz3} for more discussions on the basic properties of these quantities.

Choosing any norm which is differentiable along the flow, we define the expansion rates of the unstable and stable bundles (as functions $r_u,r_s:M\rightarrow \mathbb{R}$), respectively, as
$$
r_u:=\partial_t \cdot \ln{||X^t_*(u) ||} \ \ \ \text{and} \ \ \ 
r_s:=\partial_t \cdot \ln{||X^t_*(v) ||} 
$$
where $u\in E^{u}$ and $v\in E^{s}$ are arbitrary vectors. In fact, the existence of an adapted norm discussed in Remark~\ref{adapt}, implies that for any Anosov flow, such norm can be chosen such that $r_s<0<r_u$, i.e. chhosing an appropriate norm, we can assume that the expansion and contractions in the unstable and stable directions, respectively, start immediately, giving local picture for Anosov 3-flows in terms of their invariant foliations.

 \begin{figure}[h]\label{localanosov}

 \begin{subfigure}[b]{0.4\textwidth}

  \center \begin{overpic}[width=9cm]{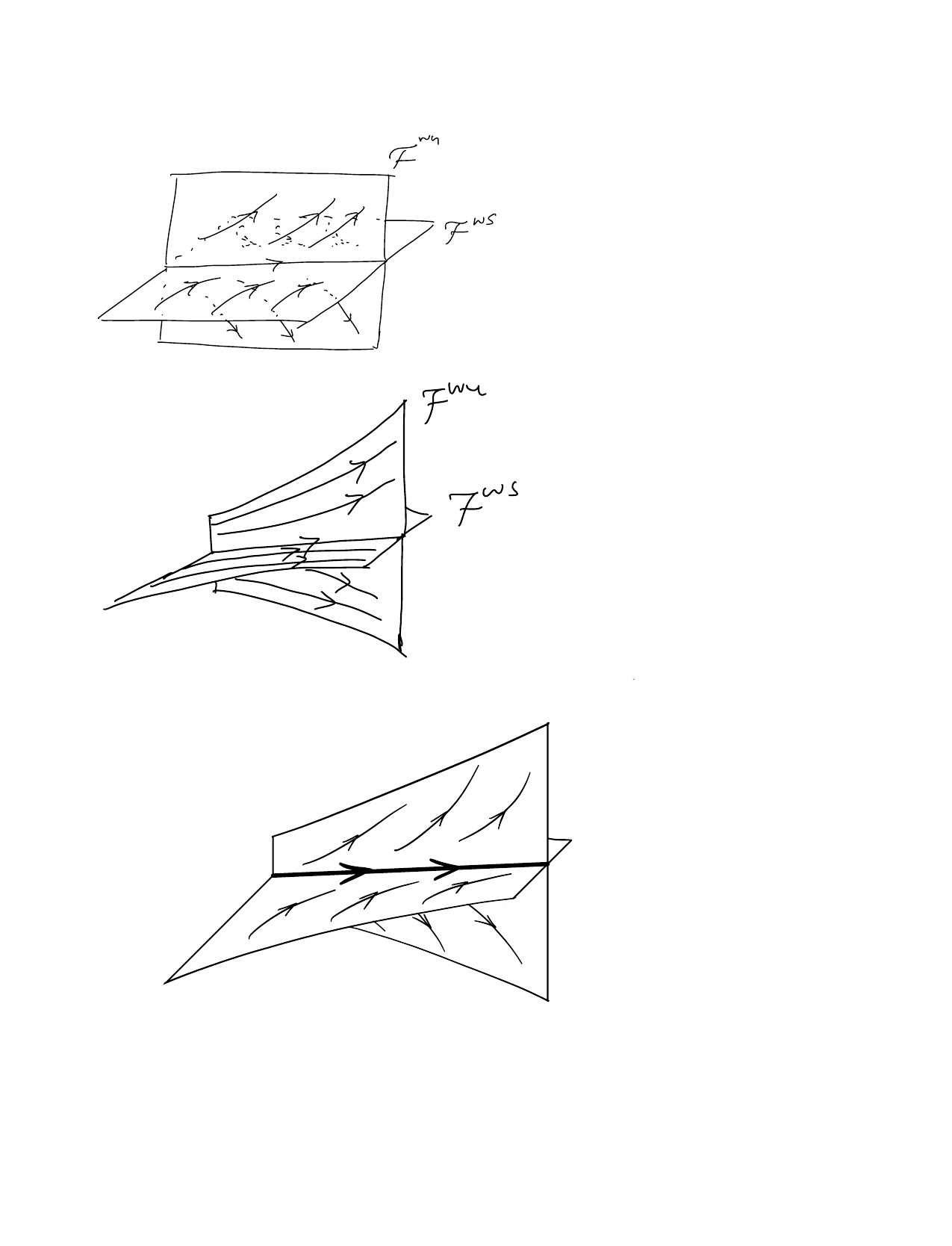}

         \put(45,25){$\mathcal{F}^{ws}$}
          \put(210,145){$\mathcal{F}^{wu}$}

       \end{overpic}
    \caption{Foliation picture of Anosov 3-flows}
    \label{fig:1}
  \end{subfigure}
  \hspace{2cm}
  \begin{subfigure}[b]{0.5\textwidth}
  \center \begin{overpic}[width=7cm]{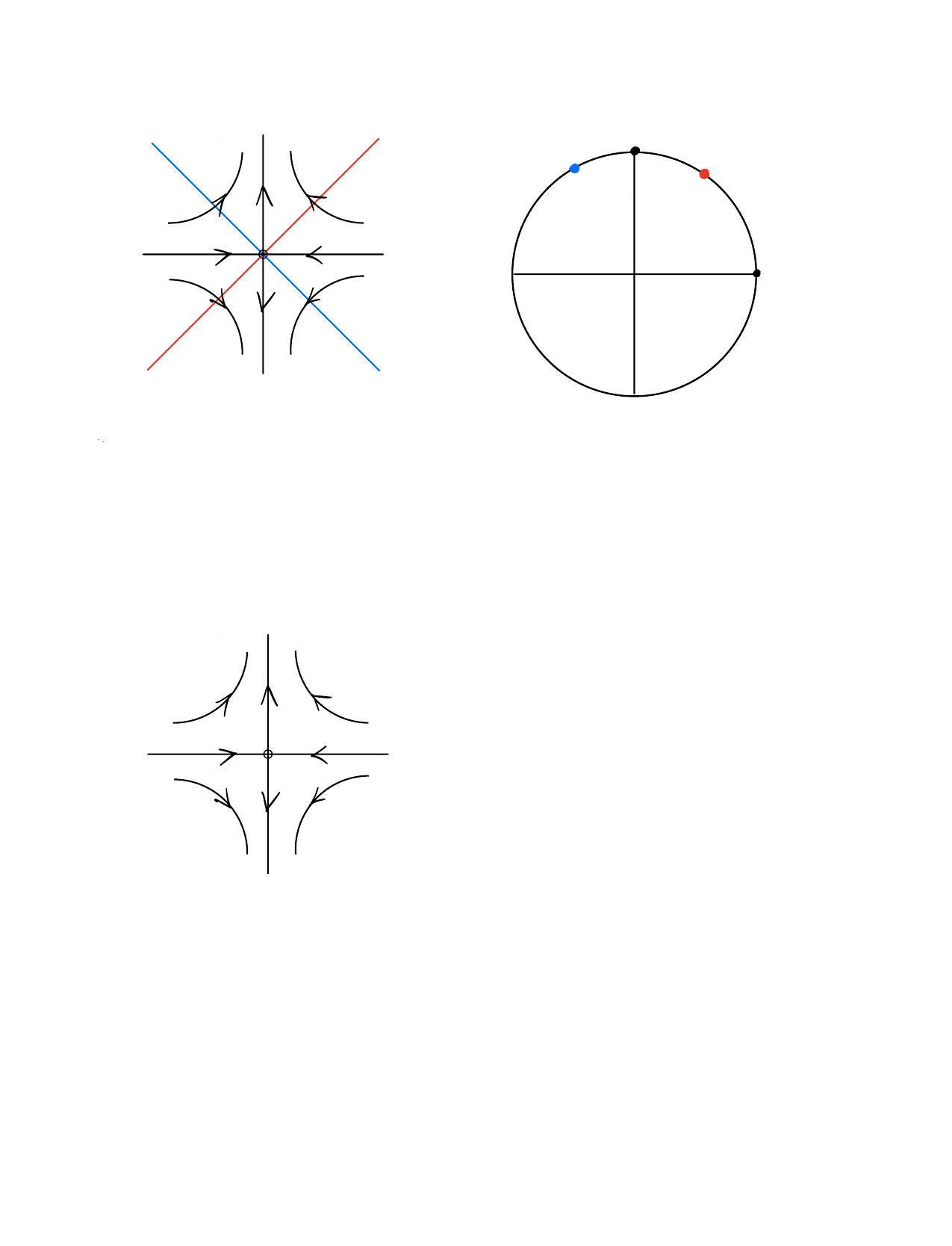}
  
      \put(170,75){$E^{s}$}
          \put(92,153){$E^u$}

  \end{overpic}
    \vspace{0.3cm}
    \caption{Action of Anosov flows on the tangent bundle}
    \label{fig:2}
  \end{subfigure}
  
 \caption{Local geometry of Anosov flows}
\end{figure}

Different characterizations of these quantities can be useful for various purposes and generalizations. For instance, if we take $e_u \in E^{u}$ and $e_s\in E^{s}$ to be  unit vectors with respect to such norm. It is easy to show
$$
\mathcal{L}_X e_u=-r_ue_u \ \ \ \text{and} \ \ \ 
\mathcal{L}_X e_s=-r_se_s .
$$

But more conveniently in the context of this paper, we would like to measure such quantities on the normal bundle of the flow $TM/ \langle X \rangle$ (for instance, this will provide a characterization needed in the generalizations of Anosov flows, e.g. {em projectively Anosov flows}, when strong invariant bundles do not exist and we only can work with weak invariant bundles corresponding to invariant sub-bundles of $TM/ \langle X \rangle$). First note that the norm in the above definition induces an area form on the weak invariant bundles and equivalently, we can describe such expansion rates as
$$
r_u=\partial_t \cdot \ln{det(|X^t_*|_{E^{wu}})}  \ \ \ \text{and} \ \ \ 
r_s=\partial_t \cdot \ln{det(X^t_*|_{E^{ws}})} 
.$$

This can be viewed as measuring the expansion rates by looking at the normal bundle $TM/ \langle X \rangle \simeq E^{ws} \oplus E^{wu}$ (note that weak invariant bundles are projected into invariant line bundles in $TM/ \langle X \rangle$, which abusing notations, we still denote by $E^{ws}$ and $E^{wu}$). To see this more clearly (see Section~3 of \cite{hoz3} for more details), notice that any choice of the plane field $\eta$ transverse to and differentiable along $X$, gives rise to an isomorphism $TM/\langle X\rangle \simeq \eta$ and hence, a correspondence between the norms on $\eta$ and $TM/ \langle X \rangle$. Therefore, any vector field $\hat{e}_u\in E^{wu} \subset TM/ \langle X \rangle$ satisfying $\mathcal{L}_X \hat{e}_u=-r_u \hat{e}_u$ can be lifted to a vector field $e_u\subset \eta \cap E^{wu} \subset TM$ satisfying $\mathcal{L}_X e_u=-r_u e_u+q_u X$, where $q_u$ is a function on $M$ and $q_u\equiv 0$ exactly when $E^{u}\subset\eta$. Finally, we notice that that the expansion rates can be computed in terms of 1-forms whose kernels include $X$, a viewpoint which is useful when we discuss contact geometric methods, i.e. it is easy to see that if we define $\alpha_u$ by $\alpha_u(E^{ws})=0$ and $|\alpha_u(e_u)|=1$ and $\alpha_s$ similarly, we have
$$
\mathcal{L}_X \alpha_u=r_u \alpha_u\ \ \ \text{and} \ \ \ 
\mathcal{L}_X \alpha_s=r_s \alpha_s.
$$

We summarize these facts in the following, where we denote the action of $X^t$ on $TM/ \langle X \rangle$ by $X^t_*$.

\begin{proposition}\label{expansion}
Given an Anosov 3-flow and using the above notation, the followings provide equivalent characterization of the expansion rates for the unstable bundle with respect to a norm. Similar fact is true for the stable bundles.

(0) $r_u$ is the expansion rate of the unstable bundle with respect to some norm $||.||$ on $E^{u}$;

(1) $r_u=\partial_t \cdot \ln{||X^t_*(e) ||}$ for some norm $||.||$ on $E^{uu}$ and any $e\in E^{u}$;

(2) $\mathcal{L}_X e_u =-r_u e_u$, where $e_u \in E^{u}$ is the unit vector with respect to some norm $||.||$ on $E^{u}$;

(3) $\mathcal{L}_X e_u =-r_u e_u+q_u X$, where $e_u \in E^{wu}$ is the unit vector with respect to some norm $||.||$ on $E^{wu}$ and $q_u$ is function on $M$;

(4) $r_u=\partial_t \cdot \ln{det(X^t_*|_{E^{wu}})}$ for some area form on $E^{wu}$;

(5) $r_u=\partial_t \cdot \ln{||X^t_*(\hat{e}_u) ||}$ for some norm $||.||$ on $E^{wu}\subset TM/ \langle X \rangle$ and any $\hat{e}_u\in E^{wu}$;

(6) $\mathcal{L}_X \hat{e}_u =-r_u \hat{e}_u$, where $\hat{e}_u \in E^{wu} \subset TM/ \langle X \rangle$ is the unit vector with respect to some norm $||.||$ on $E^{wu}$;

(7) $\mathcal{L}_X \alpha_u=r_u \alpha_u$, where $\alpha_u$ is a non-vanishing 1-form satisfying $\alpha_u(E^{ws})=0$.
\end{proposition}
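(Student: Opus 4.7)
The plan is to set up one core identity linking pushforward expansion to the Lie derivative, and then propagate it through the various formulations. The definition (0) = (1) is immediate, so I start with (1) $\Leftrightarrow$ (2). Let $e_u$ be the unit vector field of $E^u$ with respect to the chosen norm, and define $\lambda(t,p)$ by $X^t_*(e_u(p)) = \lambda(t,p)\, e_u(X^t(p))$, so that (1) reads $r_u(p) = \partial_t|_{t=0}\ln\lambda(t,p)$. Applying $(X^{-t})_*$ to both sides gives $(X^{-t})_*(e_u(X^t(p))) = \lambda(t,p)^{-1} e_u(p)$, and differentiating this at $t=0$ with $\lambda(0,p)=1$ yields $\mathcal{L}_X e_u(p) = -r_u(p)\, e_u(p)$, which is (2). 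The converse runs the same computation backwards.

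Next I extend to the weak bundle. Pick the area form on $E^{wu}$ which makes $(X,e_u)$ area-one, or equivalently the norm on $E^{wu}$ for which this is an orthonormal frame. Because $X^t_* X = X$ as vector fields, the matrix of $X^t_*$ on $E^{wu}$ in the basis $(X, e_u)$ is upper triangular with diagonal $(1,\lambda(t,p))$, so $\det(X^t_*|_{E^{wu}}) = \lambda(t,p)$ up to evaluation of the area form at $X^t(p)$ versus $p$, which contributes nothing at $t=0$; this is (4) $\Leftrightarrow$ (1). If instead I pick a general unit $e_u \in E^{wu}$, the decomposition $e_u = c_1 X + c_2 e_u^s$ with $e_u^s \in E^u$ unit and $\mathcal{L}_X X = 0$ give $\mathcal{L}_X e_u \equiv -r_u e_u \pmod{X}$, which is (3). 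The quotient statements (5) and (6) follow by the same computation after descending to $TM/\langle X\rangle$: the projection restricts to an isomorphism $E^u \to E^{wu}/\langle X\rangle$ intertwining the $X^t_*$-actions, and the induced norm on $E^{wu}\subset TM/\langle X\rangle$ reads off the same eigenvalue $\lambda(t,p)$.

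Finally, (7) is the dual of (2). Let $\alpha_u$ be the 1-form with $\alpha_u|_{E^{ws}} = 0$ and $\alpha_u(e_u) = 1$, and test $\mathcal{L}_X\alpha_u - r_u\alpha_u$ against the frame $(X, e_s, e_u)$ with $e_s \in E^s$ unit. Using $\mathcal{L}_X(\alpha_u(Z)) = (\mathcal{L}_X\alpha_u)(Z) + \alpha_u(\mathcal{L}_X Z)$ together with $\mathcal{L}_X X = 0$, $\mathcal{L}_X e_s = -r_s e_s$, and (2), all three evaluations vanish, giving $\mathcal{L}_X\alpha_u = r_u\alpha_u$.

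I do not expect any essential obstacle here; the work is bookkeeping and sign-chasing. The only mild subtlety is matching the various choices so that the \emph{same} function $r_u$ appears in each statement: this amounts to declaring $e_u$ and $\alpha_u$ dual, the frame $(X,e_u)$ area-one on $E^{wu}$, and passing to the induced quotient norm and Hodge-type dual on $TM/\langle X\rangle$. The Hölder regularity of the Anosov splitting does not interfere, since all the Lie derivatives in play are taken along $X$ itself, along which $e_u, e_s, \alpha_u$ are differentiable (this is what allows the definition of $r_u$ to make sense in the first place, as noted in the paragraph preceding the statement).
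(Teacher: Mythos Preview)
Your proposal is correct and follows essentially the same approach as the paper, which in fact does not give a formal proof but only an informal discussion in the paragraphs preceding the statement (deferring details to Section~3 of \cite{hoz3}). Your argument is more carefully organized than the paper's sketch—in particular your pushforward computation for (1)$\Leftrightarrow$(2), the upper-triangular determinant argument for (4), and the frame-testing for (7) make explicit what the paper leaves as ``easy to show''—but the underlying ideas (unit vector fields, the induced area form on $E^{wu}$, passage to $TM/\langle X\rangle$, and the dual 1-form) are identical.
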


The fact that such expansion rates can be computed in the normal bundle $TM/\langle X\rangle$, makes them well-behaved under the reparametrizations of the flow.  In particular, we have
 
 \begin{proposition}\label{exprep}
If $r_s$ and $r_u$ are the expansion rate for $X$, the corresponding expansion rates for the reparametrization of the flow generated by $fX$, where $f:M\rightarrow \mathbb{R}$ is a non-zero function, are $fr_u$ and $fr_s$.
\end{proposition}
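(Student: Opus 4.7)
The cleanest route is to leverage the differential-form characterization (7) of the expansion rates in Proposition~\ref{expansion}, which expresses $r_u$ and $r_s$ as eigenvalues of $\mathcal{L}_X$ acting on certain 1-forms that annihilate the opposite weak bundle. Since this characterization never uses the norm on $TM$ explicitly, and since the weak invariant line bundles $E^{ws}$ and $E^{wu}$ inside $TM/\langle X\rangle$ are determined by the oriented orbit structure and the asymptotic contraction/expansion pattern, they are preserved under a (positive) reparametrization $X \mapsto fX$. In particular, the defining 1-forms $\alpha_u$ (with $\alpha_u(E^{ws})=0$) and $\alpha_s$ (with $\alpha_s(E^{wu})=0$) remain valid witnesses of the weak bundles for the reparametrized flow $fX$.

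The plan is then to compute $\mathcal{L}_{fX}\alpha_u$ directly using Cartan's formula. Because $\alpha_u(X)=0$, we get $\iota_{fX}\alpha_u = f\,\alpha_u(X) = 0$, so
\[
\mathcal{L}_{fX}\alpha_u = \iota_{fX}\,d\alpha_u + d(\iota_{fX}\alpha_u) = f\,\iota_X\,d\alpha_u = f\,\mathcal{L}_X\alpha_u = (fr_u)\,\alpha_u.
\]
Comparing with characterization (7) applied to the flow of $fX$, this identifies $fr_u$ as the unstable expansion rate of $fX$. The identical computation with $\alpha_s$ in place of $\alpha_u$ yields $fr_s$ for the stable rate.

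There is essentially no obstacle beyond book-keeping. The one subtle point, worth an explicit remark, is the invariance of the weak bundles under reparametrization: since $f$ is non-vanishing on the compact manifold $M$, it is bounded between two positive (or two negative) constants, so exponential growth/decay along orbits is preserved, guaranteeing that $E^{ws}$ and $E^{wu}$ are intrinsic to the unparametrized foliation by flow lines. Once this is noted, the proposition follows from the one-line Cartan computation above. (For a reader who prefers the Lie-derivative characterizations (2), (3), or (6) on $TM$ or $TM/\langle X\rangle$, the same argument applies after noting that $\mathcal{L}_{fX}\hat e_u = f\,\mathcal{L}_X\hat e_u$ modulo $\langle X\rangle$, since $[fX,\hat e_u] = f[X,\hat e_u] - (\hat e_u\cdot f)X$ and the last term vanishes in the quotient.)
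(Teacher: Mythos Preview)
Your proof is correct and follows the approach the paper indicates: the paper does not give an explicit proof of this proposition but remarks just before it that the claim follows because the expansion rates can be computed in the normal bundle $TM/\langle X\rangle$ (equivalently, via the 1-form characterization (7) in Proposition~\ref{expansion}), which is exactly the route you take with the Cartan-formula computation $\mathcal{L}_{fX}\alpha_u = f\iota_X d\alpha_u = f r_u\alpha_u$.
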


The following lemma of Simić \cite{simic} shows that for an appropriate choice of norm, the regularity of the expansion rates can be assumed to the same as of the weak invariant bundles $E^{ws}$ and $E^{wu}$.

\begin{lemma}\label{simiclemma}
Assuming that the vector field $X$ generating an Anosov flow has at least one degree of regularity higher than the weak stable bundle $E^{ws}$, for an appropriate choice of norm on $E^u$, one has the corresponding expansion rate $r_u$ to be as regular as $E^{ws}$. In particular, if $X$ assumed to be a $C^{2+}$ Anosov vector field, such norm can be chosen such that $r_u$ is $C^{1+}$.
 The same holds for expansion rate of $E^s$.

\end{lemma}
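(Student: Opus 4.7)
The plan is to exploit characterization~(7) of Proposition~\ref{expansion}: a choice of norm on the line bundle $E^u$ corresponds (up to sign) to a non-vanishing 1-form $\alpha_u$ with $\ker\alpha_u = E^{ws}$, and the expansion rate is determined by $\mathcal{L}_X\alpha_u = r_u\alpha_u$. Since $E^{ws}$ is $C^k$ by hypothesis, such an $\alpha_u$ can be chosen of class $C^k$, but the naive choice does not suffice: because $X \in E^{ws} = \ker\alpha_u$, Cartan's formula collapses to $\mathcal{L}_X\alpha_u = \iota_X d\alpha_u$, and since $d\alpha_u$ is only $C^{k-1}$, the ratio $r_u = \mathcal{L}_X\alpha_u(v)/\alpha_u(v)$ read off on $E^u$ ends up being only $C^{k-1}$.

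To recover the lost derivative, I would average $\alpha_u$ along the flow of $X$. Starting from a $C^k$ reference 1-form $\alpha_u^{(0)}$ with $\ker\alpha_u^{(0)} = E^{ws}$, positively oriented on $E^u$, together with a smooth nonnegative weight $w\colon\mathbb{R}\to\mathbb{R}$ of compact support and total mass $1$, define $\tilde\alpha_u := \int_{\mathbb{R}} \phi_s^*\alpha_u^{(0)} \cdot w(s)\, ds$, where $\phi_s$ denotes the flow of $X$. The $X$-invariance of $E^{ws}$ ensures that each pullback $\phi_s^*\alpha_u^{(0)}$ still has kernel $E^{ws}$ and is positive on oriented $E^u$, so the same holds for $\tilde\alpha_u$. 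Since $X$ is $C^{k+1}$ by hypothesis, $\phi_s$ is $C^{k+1}$ in $p$ uniformly over the compact support of $w$, making $\phi_s^*\alpha_u^{(0)}$ a $C^k$ 1-form in $p$ uniformly in $s$; hence $\tilde\alpha_u$ is itself a $C^k$ 1-form with kernel $E^{ws}$.

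The decisive gain comes from computing $\mathcal{L}_X\tilde\alpha_u$ not by Cartan but by differentiating the pullback along the flow. Using the group law $\phi_t^*\phi_s^* = \phi_{s+t}^*$ and the change of variables $u = s+t$, one obtains $\phi_t^*\tilde\alpha_u = \int \phi_u^*\alpha_u^{(0)} \cdot w(u-t)\, du$, so differentiating in $t$ at $t=0$ and shifting the $t$-derivative onto the weight yields
\[
\mathcal{L}_X\tilde\alpha_u \;=\; -\int_{\mathbb{R}} \phi_u^*\alpha_u^{(0)} \cdot w'(u)\, du.
\]
This is an integral of $C^k$-in-$p$ 1-forms against the smooth weight $w'$, hence itself a $C^k$ 1-form. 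Consequently, $\tilde r_u = \mathcal{L}_X\tilde\alpha_u(v)/\tilde\alpha_u(v)$ on $E^u$ is a ratio of two $C^k$ functions with nowhere-vanishing denominator and is therefore $C^k$; the norm on $E^u$ realizing this regularity is the one making $\tilde\alpha_u$ take unit value on oriented unit vectors. The argument for $r_s$ is symmetric, using a 1-form annihilating $E^{wu}$, and the $C^{2+}$ Anosov special case is immediate since the weak invariant bundles are $C^{1+}$. The main subtlety is rigorously justifying, in finite regularity, the differentiation under the integral and the interchange of $\mathcal{L}_X$ with the integral, but this is handled by a standard differentiation-under-the-integral argument using the smoothness and compact support of $w$.
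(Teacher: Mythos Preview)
Your argument is correct, but it is considerably more elaborate than necessary, and your diagnosis of why ``the naive choice does not suffice'' is mistaken. You worry that $\mathcal{L}_X\alpha_u = \iota_X d\alpha_u$ loses a derivative because $d\alpha_u$ is only $C^{k-1}$. But there is another formula for the Lie derivative of a 1-form evaluated on a vector field,
\[
(\mathcal{L}_X\alpha_u)(\hat e_u) \;=\; X\cdot\big(\alpha_u(\hat e_u)\big) \;-\; \alpha_u\big([X,\hat e_u]\big),
\]
which does not require differentiating $\alpha_u$ at all. This is exactly what the paper exploits: choose any $C^\infty$ vector field $\hat e_u$ transverse to $E^{ws}$ and normalize $\alpha_u$ by $\alpha_u(\hat e_u)=1$ (so $\alpha_u$ is as regular as $E^{ws}$, i.e.\ $C^k$). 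Then the first term vanishes and
\[
r_u \;=\; r_u\,\alpha_u(\hat e_u) \;=\; (\mathcal{L}_X\alpha_u)(\hat e_u) \;=\; -\,\alpha_u\big([X,\hat e_u]\big).
\]
Since $X$ is $C^{k+1}$ and $\hat e_u$ is $C^\infty$, the bracket $[X,\hat e_u]$ is $C^k$; composing with the $C^k$ form $\alpha_u$ gives $r_u\in C^k$ directly. No flow-averaging is needed.

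Your averaging argument does work and is a useful technique in other contexts (it is essentially the mechanism behind Lemma~\ref{approx1}), but here it is overkill. One small point in your write-up: when you read off $\tilde r_u$ as $\mathcal{L}_X\tilde\alpha_u(v)/\tilde\alpha_u(v)$ ``on $E^u$'', you should instead evaluate on a \emph{smooth} vector field transverse to $E^{ws}$, since the strong bundle $E^u$ need not be $C^k$; the identity $\mathcal{L}_X\tilde\alpha_u = \tilde r_u\,\tilde\alpha_u$ holds as 1-forms, so any transverse test field will do.
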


\begin{proof}
Choose any $C^\infty$ vector field $\hat{e}_u$ which is transverse to $ E^{wu}$. 
Define the 1-form $\alpha_u$ such that $\ker{\alpha_s}=E^{ws}$ and $\alpha_u(\hat{e}_u)=1$ and note that such 1-form has the same regularity as $E^{ws}$. Note that this yields a norm on $E^u$ by letting $||e_u||=|\alpha_u(e_u)|$ for any $e_u\in E^u$. We have
$$r_u=r_u\alpha_u(\hat{e}_u)=(\mathcal{L}_X \alpha_u)(\hat{e}_u)=-\alpha_u([X,\hat{e}_u])$$
and therefore, this implies that for any $k\geq 1$, $r_u$ is $C^k$ as long as $X$ is $C^{k+1}$ and $E^{ws}$ is $C^k$. The claim about $r_u$ being $C^{1+}$ when $X$ is $C^{2+}$ follows from the fact for such $X$, the weak invariant bundles are known to be $C^{1+}$ \cite{reg}.
\end{proof}

We would like to note that the definition of the expansion rates can be naturally extended to an arbitrary vector bundle over an invariant set of the flow generated by a vector field $X$, which we denote by $E\rightarrow \Lambda$, as long as we have a fiberwise norm differentiable with respect to the action of the flow, which abusing notation, we still write as $X^t_*$. The following definition is consistent with the characterization (2) in Proposition~\ref{expansion}. In the case, $E$ is 1-dimensional, this becomes equivalent to the other formulations in Proposition~\ref{expansion}. We again denote the action of $X$ on $E$ by $X^t_*$.

\begin{definition}\label{generalexp}
Using the above notation and for a differentiable vector bundle $E\rightarrow \Lambda$, where $\Lambda$ is an invariant set for the flow generated by $X$, we define 
$$r=\partial_t \cdot \ln{det(X^t_*|_{E})}|_{t=0},$$
as the expansion rate of $E$ under the flow generated by $X$. %\textcolor{red}{Or change it to sup expansion in E?}
\end{definition}
%%%%%%%%
%%%%%%%%
%%%%%%%%
%%%%%%%%

Finally, we would like to notice that the expansion information of an Anosov flow can be recorded in terms of a special reparametrization of the flow we call {\em synchronization}. These are reparametrizations with respect to which, the flow has constant unit expansion rate in the unstable bundle, i.e. $r_u=1$. We can naturally extend this definition to an arbitrary vector bundle $E\rightarrow \Lambda$.

\begin{definition}
In the above setting, we say that $X$ is E-{\em synchronized}, if with respect to some norm we have the expansion rate of $E$ is $r\equiv 1$. We call an Anosov 3-flow synchronized, if it is $E^{u}$-synchronized.
\end{definition}

Notice that a necessary condition for a reparametrization of a flow to be $E$-synchornized is to have an action on $E$ with eventual expansion (or eventual contraction if we allow reversing the direction of the flow) with respect to some norm, where after an averaging of the norm as in \cite{hps}, one can achieve immediate expansion. For basically all situations we are interested in, the flow can then be synchronized after rescaling the generating vector field (which corresponds to a $C^0$ reparametrization of the flow). In particular, any Anosov flow can be {\em }synchronized with respect to $E^{wu}$ and $E^{ws}$, by first using an adapted norm (with respect to which $r_s <0<r_u$) and then, multiplying the generating vector field $X$ by the functions $\frac{1}{r_u}$ or $\frac{1}{r_s}$, respectively, where for synchronizing with respect to $E^s$ such operation reverses the direction of the flow (for which the original $E^{ws}$ is the unstable bundle).

We gather the relevant facts in the following:

\begin{proposition}\label{syncuniq}
If $X$ is the generating vector field for an Anosov 3-flow with the weak invariant bundles $E^{wu}$ and $E^{ws}$ equipped with an adapted norm (i.e. expansion rates $r_u>0$ and $r_s<0$, respectively), then

(1) $\frac{1}{r_u} X$ and $\frac{1}{r_s} X$ are $E^{u}$ and $E^{s}$-synchronized, respectively, with respect to the same norm;

(2) $E^{u}$-synchronization is unique up to $C^1$-conjugacy along the flow. The same holds for $E^{s}$-synchronization.

(3) $X$ is volume preserving, if and only if, possibly after a reparametrization of $X$, $X$ and $-X$ are $E^{wu}$ and $E^{ws}$-synchronized, respectively.

\end{proposition}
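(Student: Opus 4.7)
The plan is to treat (1), (2), (3) in order, relying on Proposition~\ref{exprep} (scaling of expansion rates under reparametrization) and the multiple equivalent characterizations in Proposition~\ref{expansion}. Part (1) is immediate: by Proposition~\ref{exprep}, the expansion rate of $E^u$ under $(1/r_u)X$ is $\tfrac{1}{r_u}\cdot r_u = 1$ with respect to the same norm, which is the $E^u$-synchronization condition; the argument for $(1/r_s)X$ on $E^s$ is identical (with $1/r_s<0$ playing the orientation-reversing role natural for synchronizing the stable direction).

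For (2), suppose $X_1 = f_1 X$ and $X_2 = f_2 X$ are $E^u$-synchronizations with respect to norms $\|\cdot\|_1$ and $\|\cdot\|_2$, and set $h := \|\cdot\|_2/\|\cdot\|_1 > 0$. Since both parametrize the orbits of $X$, for every $p$ there is a bijection $t\leftrightarrow t'$ with $X_1^t(p) = X_2^{t'}(p) =: q$, and for any $v\in E^u_p$ the image $w$ at $q$ agrees under both time-maps. The synchronization conditions give $\|w\|_1 = e^t\|v\|_1$ and $\|w\|_2 = e^{t'}\|v\|_2$; combining with $\|w\|_2 = h(q)\|w\|_1$ and $\|v\|_2 = h(p)\|v\|_1$ yields
$$t' - t = \ln h(q) - \ln h(p).$$
This exhibits the time reparametrization as $C^1$ in $(p,t)$ (inheriting the regularity of $h$), hence a $C^1$-conjugacy along the flow; the $E^s$ case is analogous.

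For (3), the $(\Leftarrow)$ direction is a Leibniz computation. Characterization~(7) of Proposition~\ref{expansion} provides dual 1-forms with $\mathcal{L}_X\alpha_u = \alpha_u$ and $\mathcal{L}_X\alpha_s = -\alpha_s$ after the indicated reparametrization. Choosing any 1-form $\beta$ with $\beta(X)=1$ and $\beta|_{E^u\oplus E^s}=0$, the volume form $\omega := \alpha_u\wedge\alpha_s\wedge\beta$ satisfies $\mathcal{L}_X\omega = (r_u+r_s)\omega + \alpha_u\wedge\alpha_s\wedge\iota_X d\beta$, and both terms vanish (the first because $r_u+r_s=0$, the second because $\iota_X d\beta$ annihilates $X$, making the wedge a 3-form that kills $X$ in a 3-manifold). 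For $(\Rightarrow)$, starting from any reference adapted norm with $r_u>0>r_s$ and reference volume $\omega_0:=\alpha_u\wedge\alpha_s\wedge\beta$, the same computation gives $\mathcal{L}_X\omega_0 = (r_u+r_s)\omega_0$. Writing a preserved volume as $\omega = g\,\omega_0$ with $g>0$ reduces $\mathcal{L}_X\omega=0$ to the cohomological equation
$$X\cdot\ln g = -(r_u+r_s).$$
Given such $g$, rescaling the norms by $h_u=h_s=g^{1/2}$ produces new rates $r'_u = r_u + \tfrac12 X\ln g$ and $r'_s = r_s + \tfrac12 X\ln g$ that sum to zero; reparametrizing the vector field by $1/r'_u$ then yields $r''_u=1$ and $r''_s=-1$ simultaneously, establishing both synchronizations.

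The main obstacle is the $(\Rightarrow)$ direction of (3): the cohomological equation $X\cdot\ln g = -(r_u+r_s)$ has Livschitz-type periodic-orbit obstructions in general, and the argument goes through precisely because volume preservation of $X$ is equivalent to the existence of such a $g$ (the preserved volume directly supplies $g$ as its ratio with $\omega_0$). Beyond that, the proof amounts to careful bookkeeping of how reparametrizations and norm rescalings act on the expansion rates, as recorded in Propositions~\ref{exprep} and~\ref{expansion}.
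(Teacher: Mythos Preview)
Your proof is correct and, for parts (1) and (2), takes essentially the same route as the paper: (1) is Proposition~\ref{exprep} verbatim, and in (2) both you and the paper extract the coboundary identity $t'-t=\ln h(q)-\ln h(p)$ (the paper phrases this via the 1-form characterization $\mathcal{L}_X\alpha_u=\alpha_u$ and computes $f=1/(1+X\cdot\ln g)$ directly, but the content is identical).

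For (3) you genuinely do more than the paper. The paper simply cites \cite{hoz4} for the equivalence ``$X$ volume preserving $\Leftrightarrow$ $r_s=-r_u$ for some norm'' and then invokes (1). You instead prove this equivalence from scratch: your Leibniz computation with $\omega=\alpha_u\wedge\alpha_s\wedge\beta$ is clean (the key point that $\iota_Xd\beta\in\mathrm{span}\{\alpha_u,\alpha_s\}$ is correct since these span the annihilator of $X$), and your $(\Rightarrow)$ argument via the symmetric rescaling $h_u=h_s=g^{1/2}$ is a nice touch. One small point worth making explicit: after the rescaling, $r_u'+r_s'=0$ together with $r_u'-r_s'=r_u-r_s>0$ forces $r_u'>0>r_s'$, so the new norm remains adapted and the subsequent reparametrization by $1/r_u'$ is legitimate.
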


\begin{proof}
(1) follows from Proposition~\ref{exprep}, noting that we need to reverse the orientation of the flow to have $E^s$-synchronization. But that is done thanks to the fact that $r_s<0$.

To see (2), assume $X$ is $E^u$-synchronized with respect to some norm, i.e. $\mathcal{L}_X \alpha_u=\alpha_u$, where $\alpha_u$ corresponds to such norm. Now if some reparametrization of $X$, like $Y=fX$ for a function $f>0$, is also $E^u$-synchronized with respect to some other norm corresponding to $\bar{\alpha}_u=g\alpha_u$ for the $C^1$ function $g>0$, we have
$$g\alpha_u=\bar{\alpha}_u=\mathcal{L}_Y \bar{\alpha}_u=\mathcal{L}_{(fX)}(g\alpha_u)=f[(X\cdot g)\alpha_u+g\mathcal{L}_X \alpha_u]=f[X\cdot \ln{g} +1]g\alpha_u,$$
implying that $f=\frac{1}{1+X\cdot \ln{g}}$, which means that $X$ is $C^1$-conjugate to $Y$ via the time change $t\mapsto t+\ln{g}$.

For (3), notice that $X$ being volume preserving is equivalent to $r_s=-r_u$ (see \cite{hoz4}). The conclusion follows from (1).
\end{proof}

\begin{remark}
Here, we remark that the rigidity of algebraic Anosov flows can be described in terms of synchronizations. In particular, an Anosov vector field $X$ is algebraic, if and only if, $X$ and $-X$ are synchronized and preserve a $C^1$ transverse plane field. However, an {\em asymptotic} synchronization is possible for any Anosov flow as shown in \cite{hoz5}.
\end{remark}

%%%%%%%%%%%%%%%%

\subsection{Related notions of hyperbolicity}\label{2.2}

The goal of this section is to introduce other notions of hyperbolicity, beside Anosov flows, which will be used in this paper. More specifically, we are interested in (1) {\em projectively Anosov flows}, as important generalizations of Anosov 3-flows which form the cornerstone of the contact/symplectic geometric theory of Anosov flows; (2) (non-singular ){\em partially hyperbolic flows}, as an important middle ground between Anosov 3-flows and projectively Anosov flows for which a Liouville geometric description is still possible; and (3) {\em normally hyperbolic flows} which show up as the Liouville dynamics near a codimension 1 skeleton under certain conditions.

\subsubsection{Dominated splittings and projectively Anosov flows}\label{2.2.1}

A weak notion of hyperbolicity for flows is provided by the notion of {\em dominated splittings} (vs. the stronger notion of {\em hyperbolic} splittings), where we have invariant weak bundles, as well as {\em relative} hyperbolicity (vs. uniform hyperbolicity).

\begin{definition}\label{padef}
Let $X^t$ be any non-singular flow on a manifold $W$ (of arbitrary dimension) and $\Lambda$ be an invariant set for $X^t$ such that $TM/\langle X \rangle \big|_\Lambda \simeq  E \oplus F$ is an invariant continuous splitting of $TW$ over $\Lambda$ and $E$ and $F$ satisfy
$$|| X^{t}_* (u)||/ ||X^t_* (v)|| \geq Ae^{Ct} ||u||/||v||,$$
for any $u\in F$ and $V\in E$ and some positive constants $A$ and $C$. We call such splitting over $\Lambda$ a {\em dominated splitting}, or say that $F$ {\em dominates} $E$.
When $M$ is a closed 3-manifold and $\Lambda=M$, we call $X^t$ a projectively Anosov flow.
\end{definition}

In other words, a flow is projectively Anosov, if its action on the projectified normal bundle of the flow direction is similar to an Anosov flow. Here, we remark that projectively Anosov flows have been studied in various contexts and sometimes with different names. While the term projective Anosov has been frequently used in the geometry literature \cite{noda,noda2,asaoka}, these flows are usually referred to as {\em flows with dominated splittings} in the dynamics literature \cite{three} and {\em conformally Anosov flows} in the contact and foliation theoretic literature \cite{et}.

The invariant bundles $E$ and $F$ in the above definition can be lifted to invariant plane fields on $TM|_\Lambda$, which abusing notation we still call $E$ and $F$. These plane fields are (possibly non-uniquely) integrable and hence, tangent to {\em branched foliations}. It is noteworthy that projectively Anosov flows are known to be much more abundant than Anosov 3-flows, with examples existing on any 3-manifold, including examples on $S^3$, $T^3$ or Nils manifolds, which don't admit any Anosov flows. On the other hand, rigidity and classification results are provided assuming high regularity of the weak invariant bundles \cite{asaoka,noda}, implying that for most projectively Anosov flows such bundles have low regularity. It is also well-understood that the splitting of the normal bundle $TM/\langle X \rangle$ cannot necessarily be lifted to a an invariant splitting of $TM$ as in the definition of Anosov flows \cite{noda2}. The significance of projectively Anosov flows for us is mainly thanks to the essential role they play in the contact geometric theory of Anosov 3-flows \cite{mitsumatsu,hoz3}, as we will discuss further in Section~\ref{3}.

\begin{remark}
Given a non-singular 3-flow $X^t$ which preserves a continuous splitting $TM/\langle X\rangle \simeq E\oplus F$, the essence of the domination relation in Definition~\ref{padef} is the {\em rotation} of intermediate plane fields in the following sense. We can choose any Riemannian metric with respect to which $E$ and $F$ are orthogonal. Assuming orientability of $E$ and $F$ for convenience, any (oriented) plane field $\xi$ containing the direction of $X$ can be described in terms of an angle function $\theta_\xi:M\rightarrow \mathbb{S}^1$, where the equations $\sin{\theta}=0$ and $\cos{\theta}=0$ define $E$ and $F$, respectively. Now, if $\xi$ is defined by an angle function with $\tan{\theta_\xi}>0$ (i.e. $\xi$ being a plane field in the first or third quadrant of Figure~2) and we define $\theta_\xi^t$ to be the angle $X^t_*(\xi)$ makes with $E$. Then, it is easy to show
$$\partial_t\cdot \theta_\xi^t >0 \Longleftrightarrow \partial_t\cdot \tan{\theta_\xi^t}>0 \Longleftrightarrow \text{ $F$ dominates $E$ in the sense of Definition~\ref{padef}}.$$
In other words, the action of the flow pushes such $\xi$ towards $F$ and away from $E$, if and only if, $F$ dominates $E$. The same is true if $\tan{\theta_\xi}<0$.
\end{remark}
%%%%%%%%%%%
%%%%%%%%%%%%
%%%%%%%%%%%%

   \begin{figure}[h]
\centering
\begin{overpic}[width=0.41\textwidth]{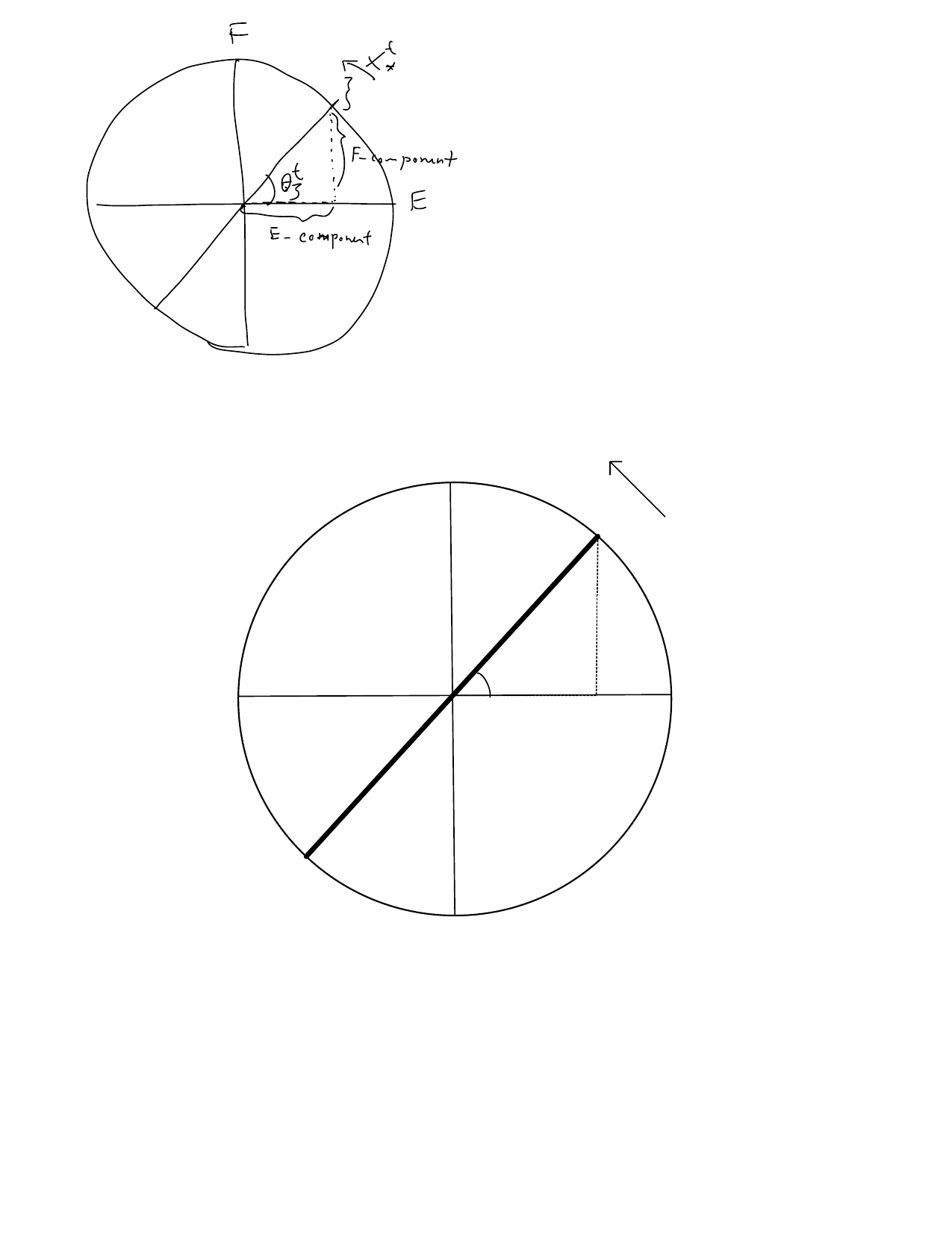}
    \put(90,162){$F$}
    \put(145,145){$\xi$}
        \put(155,160){$X_*^t$}
        \put(170,85){$E$}
         \put(110,93){$\theta_\xi^t$}
          \put(145,120){$F$-component}
           \put(100,75){$E$-component}
  \end{overpic}

\caption{Domination of $E$ by $F$ as rotation of plane fields}
\end{figure}

Similar to the case of Anosov flows, it is known that in the above definition, the constant $A$ can be taken to equal 1 by an appropriate choice of norm, which we call {\em adapted} again \cite{gour}. In other words, the domination starts immediately with respect to an adapted norm. Moreover, we can still define, as in Definition~\ref{generalexp} , the expansion rates of $E$ and $F$, which we still denote by $r_s$ and $r_u$, respectively. It is easy to see that for an adapted norm, we have $r_u>r_s$ \cite{hoz3}.

%%%%%%%%%%%%%%%

\subsubsection{Partially hyperbolic 3-flows}\label{2.2.2}

An intermediate notion between Anosov and projectively Anosov is partially hyperbolicty. Partial hyperbolicity appears in the literature in various contexts with different definitions. For flows, it usually refers to when the action of a flow provides a domination relation and for non-singular flows on closed 3-manifolds, which is the main focus of this paper, this essentially boils down to the existence of a dominated splitting as in the definition of projectively Anosov flows, while having a strong unstable (or stable depending on the definition) bundle. More precisely, the domination relation of the form $TM \simeq E \oplus F$ is required in the definition of partial hyperbolicity, where the non-singularity of the flow implies that the line bundle $\langle X \rangle$ in entirely included in one of the two bundles, which will then need to be 2 dimensional, if the ambient manifold is 3 dimensional. By convention, we take $E$ to be the 2-dimensional bundle including the flow direction and since the flow action does not change the norm on $X$, the domination of $E$ by $F$ implies absolute expansion in the $F$ direction, i.e. $F$ being in fact an invariant strong unstable bundle for the flow. 

We note that in most of the literature on partially hyperbolic flows \cite{pa,pa2}, the convention is to consider the flow direction being included in $F$ and hence, the existence of a strong stable bundle, while considering the additional condition of sectional hyperbolicity to deal with singularities of the flow. For the purposes of this paper, it is more natural to consider the existence of the strong unstable bundle with no extra assumption as we are only dealing with non-singular flows. Hence, we have the following definition which suits our goals.

\begin{definition}
Let $X^t$ be any projectively Anosov flow on a 3-manifold $M$ with the dominated splitting $TM/\langle X \rangle \simeq  E \oplus F$. We call $X^t$ partially hyperbolic, if for any $u\in F$, we have
$$|| X^{t}_* (u)|| \geq Ae^{Ct} ||u||,$$
with respect to some norm $||.||$ and positive constants $A$ and $C$. In this situation, we also denote $F$ by $E^{wu}$ and call $TM/\langle X \rangle \simeq  E \oplus E^{wu}$ a partially hyperbolic splitting.
\end{definition}

In other words, non-singular partially hyperbolic flows are projectively Anosov flows where we have absolute expansion in the dominating direction. As mentioned above, and will be reiterated in Lemma~\ref{doering}, the splitting in the above definition can always be lifter to a splitting of the form $TM\simeq E\oplus E^u$, where $E^u$ is a string unstable bundle.

The existence of non-Anosov examples of such flows is implied by a construction of Elaishberg-Thurston \cite{et} (also see \cite{bbp}) using a DA (Derived from Anosov) deformation near a periodic orbit of an Anosov flow (a deformation introduced by Frank-Williams \cite{da}), which gives, as far as we know, the only examples of non-singular partially hyperbolic 3-flows in the literature. We will revisit this construction in Section~\ref{3.5} and show explicitly that such examples can be constructed by a deformation through projectively Anosov flows.

The conditions of projective Anosovity and partial hyperbolicity can be naturally written in terms of the expansion rates of the invariant bundles. We record these facts in the following:

\begin{proposition}
Let $X^t$ be a projectively Anosov flow with the dominated splitting $TM/\langle X \rangle \simeq  E \oplus F$ with expansion rates $r_s$ and $r_u$ for $E$ and $F$, respectively. Then,

(a) there exists some norm on $TM/\langle X \rangle$ with respect to which
$$r_u>r_s.$$

(b) the flow $X^t$ is partially hyperbolic, if and only if, there exists some norm on $TM/\langle X \rangle$ with respect to which
$$r_u>r_s \ \ \ \text{and}\ \ \ r_u>0$$

(c) the flow $X^t$ is Anosov, if and only if, there exists some norm on $TM/\langle X \rangle$ with respect to which
$$r_u>0>r_s.$$
\end{proposition}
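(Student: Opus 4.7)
The plan is to reduce each of (a)--(c) to a pointwise inequality among instantaneous rates by invoking the adapted-norm averaging technique of Holmes and Hirsch--Pugh--Shub, and its generalization to dominated splittings by Gourmelon, as referenced in Remark~\ref{adapt}. The governing observation is that eventual exponential estimates of the form $\|X^t_*(u)\| \geq A e^{Ct}\|u\|$ or $\|X^t_*(u)\|/\|X^t_*(v)\| \geq A e^{Ct}\|u\|/\|v\|$ can, after a judicious choice of norm, be upgraded to immediate pointwise estimates with $A = 1$, whose differentiation at $t = 0$ returns exactly the rate inequalities appearing in the statement.

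For (a), I would apply Gourmelon's averaging to $TM/\langle X\rangle$ equipped with the dominated splitting $E \oplus F$ in order to produce a norm for which the defining inequality of Definition~\ref{padef} holds with $A = 1$. Taking logarithms and differentiating at $t = 0$ then identifies the derivative of the left-hand side as $r_u - r_s$ at the base point, yielding $r_u - r_s \geq C > 0$ pointwise. For the forward direction of (b), I would additionally perform the Holmes--HPS averaging on the unstable summand $F = E^{wu}$ to secure immediate expansion there, so that $r_u > 0$. For the converse in (b), I would use compactness of $M$: continuity of $r_u$ on the compact manifold forces a uniform lower bound $r_u \geq c > 0$, and integrating the ODE $\partial_t \log\|X^t_*(u)\| = (r_u \circ X^t)(u)$ along orbits reproduces the partial-hyperbolicity estimate with $A = 1$, while integrating $r_u - r_s \geq C$ reproduces the domination estimate with $A = 1$.

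Part (c) is entirely parallel to (b): in the forward direction one applies the classical HPS adapted-norm construction for hyperbolic splittings to the strong bundles $E^u$ and $E^s$ in $TM$, and invokes the identification from Proposition~\ref{expansion} between the rates on the strong bundles and those on the corresponding line bundles in $TM/\langle X\rangle$, so that after combining the two averaged norms via orthogonality one obtains $r_u > 0 > r_s$ on the normal bundle; the converse integrates the pointwise signs of $r_u$ and $r_s$ using compactness.

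The one mildly subtle step is the simultaneous-norm assertion in (b) and (c): one must ensure that a single norm on $TM/\langle X\rangle$ can be chosen to witness both the domination-type and the expansion/contraction-type inequalities at once. This is handled by performing the averaging constructions on the invariant summands independently and then taking them to be orthogonal in the induced direct-sum norm, which preserves the rates on each individual summand while producing a single ambient norm. No further genuine obstacle arises.
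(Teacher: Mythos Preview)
Your proposal is correct and follows exactly the approach the paper intends: the paper does not give an explicit proof of this proposition but simply ``records these facts'' as immediate consequences of the adapted-norm discussion in Remark~\ref{adapt} (Holmes, Hirsch--Pugh--Shub, and Gourmelon's extension to dominated splittings), together with the definitions of projective Anosovity, partial hyperbolicity, and Anosovity. Your write-up merely spells out in detail what the paper takes for granted.
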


\begin{remark}\label{syncuniqph}
It is easy to observe that the argument of Simić on the regularity of the expansion rates, i.e. Lemma~\ref{simiclemma}, still holds in such general context. Moreover, in the category of non-singular partially hyperbolic flows, we can define synchronization with respect to $E^u$ and the same proof as Proposition~\ref{syncuniq} shows that such synchronization is unique up to conjugacy.
\end{remark}

%%%%%%%%%%%%%%%%

\subsubsection{Normally hyperbolic invariant manifolds}\label{2.2.3}

We finally would like to introduce normal hyperbolicity which is shown to be essential in the persistence theory of dynamical systems, in particular, using the work of Hirsch-Pugh-Shub \cite{hps} and Mañe \cite{mane}, and in our context, appears as the natural skeleton dynamics in the Liouville geometric objects we construct from Anosov flows.

In the following, for the bundle map $A:E\rightarrow E$, where $E$ is equipped with a norm $||.||$, the (maximum) norm is defined as $$||A||:=\sup\{||Ax||:||x||=1\}$$ and the minimum norm is defined as
$$m(A)=\inf\{||Ax||:||x||=1\}.$$

\begin{definition}
Let $X^t$ be any non-singular flow on a manifold $W$ and $\Lambda$ a compact $C^1$ invariant set for $X^t$ such that $TW|_\Lambda \simeq  E^{s} \oplus T\Lambda \oplus E^{u}$ is an invariant continuous splitting of $TW$ over $\Lambda$.
We call $\Lambda$ a {\em normally hyperbolic invariant manifold} for $X^t$, or say that $X^t$ is normally hyperbolic at $\Lambda$, when
$$||X^1_*|_{E^s}||<m(X^1_* |_{T\Lambda}) \leq ||X^1_* |_{T\Lambda} ||<m(X^1_*|_{E^u} ),$$ with respect to some norm $||.||$ on $TW|_{\Lambda}$. When $E^s=\emptyset$, we call such invariant manifolds {\em normally repelling}.
\end{definition}

A very important feature of normal hyperbolicity is that it characterizes $C^1$-persistent invariant manifolds, thanks to important development in the study of hyperbolic dynamical systems \cite{hps,mane}.

\begin{definition}
Let $\Lambda$ be a compact $C^1$ invariant set for the flow $X^t$. We say that $\Lambda$ is $C^1$-persistent, if

(1) $\Lambda$ has an open neighborhood $U$ such that $\Lambda=\cap_{t\in\mathbb{R}} X^t (U)$;

(2) for any other flow $\hat{X}^t$ which is $C^1$-close to $X^t$, the set $\hat{\Lambda}=\cap_{t\in\mathbb{R}} \hat{X}^t (U)$ is a $C^1$ invariant set for $\hat{X}^t$ which is $C^1$-close to $\Lambda$.
\end{definition}

Hirsch-Pugh-Shub studied normally hyperbolic flows extensively in their seminal book \cite{hps} and among other things, prove the $C^1$-persistence of such invariant sets.

The flow version of the {\em fundamental theorem of normal hyperbolicity} reads as

\begin{theorem}(Hirsch-Pugh-Shub 1970 \cite{hps})\label{normalh}
Let $X$ be a $C^1$ vector field, defined on the manifold $W$, leaving a compact $C^1$ submanifold $\Lambda$ invariant and is normally hyperbolic at $\Lambda$, respecting $T_\Lambda W=E^{s}\oplus T\Lambda \oplus E^{u}$. Then,

(a) There are $C^1$ invariant foliations (locally near $\Lambda$) $\mathcal{W}^{ws}$ and $\mathcal{W}^{wu}$ tangent to $E^{u}\oplus T\Lambda$ and $E^{s}\oplus T\Lambda$ (existence of weak invariant bundles);

(b) Each leaf of $\mathcal{W}^{ws}$ or $\mathcal{W}^{wu}$ is fibered by $C^1$ submanifolds  $\mathcal{W}^{s}$ or $\mathcal{W}^{u}$ (existence of strong invariant bundles);

(c) If $\hat{X}^t$ is a flow $C^1$ close to $X$, then $\hat{X}^t$ is normally hyperbolic at an invariant submanifold $\hat{\Lambda}$ which is $C^1$-close to $\Lambda$ ($C^1$-persistence);

(d) Near $\Lambda$, the flow $X^t$ is topologically conjugate to its linearization (linearization).
\end{theorem}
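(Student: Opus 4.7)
\textbf{Proof proposal for Theorem~\ref{normalh}.} The plan is to follow the graph transform strategy that originates with Hadamard and Perron, adapted by Hirsch--Pugh--Shub to the normally hyperbolic setting. The starting point is to fix a tubular neighborhood $U$ of $\Lambda$ diffeomorphic to a disc bundle over $\Lambda$ whose fibres split as $E^s_p \oplus E^u_p$ for $p\in\Lambda$; using an adapted Riemannian metric (in the sense of Remark~\ref{adapt}, extended to the normal hyperbolicty setting as in Gourmelon's construction), I will arrange that $\|X^1_*|_{E^s}\| < 1 < m(X^1_*|_{E^u})$ while also controlling the tangential part by a uniform margin $\mu$ with $\|X^1_*|_{E^s}\| < \mu^{-1} m(X^1_*|_{T\Lambda})$ and $\mu\|X^1_*|_{T\Lambda}\| < m(X^1_*|_{E^u})$. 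All invariant objects will be produced as fixed points of suitable graph transforms in Banach spaces of Lipschitz sections, so the first (and most delicate) technical step is setting up these function spaces correctly and arranging the cone fields that they encode.

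For part~(c), the persistence statement, I would parametrize candidate invariant manifolds near $\Lambda$ as graphs $\sigma:\Lambda\to E^s\oplus E^u$ of some small Lipschitz constant, and define the graph transform $\mathcal{T}_{\hat X}$ by pushing such a graph forward under time-$1$ of the perturbed flow $\hat X$ and re-expressing the image as a graph. The rate inequalities make $\mathcal{T}_{\hat X}$ a contraction in the sup-norm on the space of Lipschitz sections with bounded Lipschitz constant; the fixed point yields $\hat\Lambda$ as a Lipschitz invariant manifold close to $\Lambda$. Upgrading Lipschitz to $C^1$ is then done by the standard trick of showing that the graph transform also contracts on the space of sections whose tangent plane lies in a narrow cone around $T\Lambda$, and that this contraction is uniform in $\hat X$ $C^1$-close to $X$. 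Part~(a) is proved by the same mechanism, but applied to graphs over $\Lambda$ inside $U$ modelling the weak leaves: one fixes a leaf direction $E^{ws}:=E^s\oplus T\Lambda$, looks for an invariant family of discs tangent to it, and uses the fact that $\|X^1_*|_{E^s}\| < m(X^1_*|_{T\Lambda})/\mu$ to produce a contraction; part~(b) is obtained by iterating the construction inside each weak leaf to extract the strong sub-foliation $\mathcal W^s$ (and symmetrically $\mathcal W^u$), using the fibration of a normally hyperbolic weak leaf by the orbits of its restricted flow.

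For part~(d), the linearization statement, I would work in the tubular neighborhood and write $X$ in the form $X(p,v) = DX|_{\Lambda}(p)\cdot v + R(p,v)$ where $R$ is a higher-order remainder vanishing on $\Lambda$. A Hartman--Grobman style argument then builds the conjugating homeomorphism between $X^t$ and its linearization along $\Lambda$ as a fixed point of an operator in the Banach space of continuous sections of $TW|_\Lambda$ satisfying an appropriate equivariance; the contraction property comes from the same rate inequalities that fuelled~(c), together with the fact that the linearized flow acts hyperbolically on $E^s\oplus E^u$ and isometrically (up to a controlled factor) on $T\Lambda$.

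The main obstacle is not any one of the individual contraction arguments — each is routine once the correct Banach space and cone field are chosen — but rather the bookkeeping of constants: the three inequalities $\|X^1_*|_{E^s}\| < m(X^1_*|_{T\Lambda}) \leq \|X^1_*|_{T\Lambda}\| < m(X^1_*|_{E^u})$ must each translate into a sharp enough contraction estimate to close the argument, and these estimates interact with the choice of Lipschitz modulus for graphs and with the width of invariant cones. Getting these book-keeping inequalities to hold simultaneously, uniformly over $C^1$-perturbations, is where the serious work lies; the elegance of the HPS approach is precisely that the adapted-norm construction makes this interaction manageable. Finally, I would note that the same machinery, stripped of its tangential component (take $\dim\Lambda = 0$), recovers the classical stable/unstable manifold theorem at a hyperbolic fixed point, which serves as a useful sanity check for the estimates.
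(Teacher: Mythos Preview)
The paper does not supply its own proof of this theorem: it is quoted as a classical result of Hirsch--Pugh--Shub \cite{hps} in Section~\ref{2.2.3} and used as a black box thereafter, so there is no ``paper's proof'' to compare against. Your sketch is a faithful high-level outline of the standard graph-transform argument in \cite{hps}, and the ingredients you identify --- adapted norms, Lipschitz sections as candidate invariant graphs, contraction on cone fields, bookkeeping of the rate inequalities --- are exactly the ones that drive the original proof; as a reading guide to \cite{hps} it is accurate, though of course the actual execution (particularly the upgrade from Lipschitz to $C^1$ and the uniformity in perturbations) occupies many pages there.
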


The celebrated result of Mañe \cite{mane} then proves the inverse of the $C^1$-persistence statement, giving a complete characterization of $C^1$-persistence in terms of normal hyperbolicity.

\begin{theorem}(Mañe 1976 \cite{mane})
The invariant set $\Lambda$ is $C^1$-persistent for $X^t$, if and only if, $X^t$ is normally hyperbolic at $\Lambda$.
\end{theorem}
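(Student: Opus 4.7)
The forward direction, that normal hyperbolicity implies $C^1$-persistence, is already part of Theorem~\ref{normalh}, so the task is the reverse implication: assuming $\Lambda$ is $C^1$-persistent for $X^t$, we need to produce the hyperbolic splitting $TW|_\Lambda \simeq E^s \oplus T\Lambda \oplus E^u$ with the requisite domination. The plan is to argue by contradiction: suppose $\Lambda$ is $C^1$-persistent but $X^t$ fails to be normally hyperbolic at $\Lambda$, then construct an arbitrarily $C^1$-small perturbation $\tilde{X}$ of $X$ whose maximal invariant set in a fixed tubular neighborhood of $\Lambda$ is not $C^1$-close to $\Lambda$.

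First I would reduce to the time-one map $f := X^1$ and pass to the linear cocycle $A_n := Df^n$ acting on the quotient normal bundle $N\Lambda := TW|_\Lambda / T\Lambda$, together with the induced cocycle $B_n := Df^n|_{T\Lambda}$ on $T\Lambda$. Failure of normal hyperbolicity at $\Lambda$ is equivalent to the failure of the pair $(A_n, B_n)$ to admit a dominated splitting of $N\Lambda$ into $E^s \oplus E^u$ satisfying the two required inequalities with $B_n$ sandwiched between them in the standard Sacker–Sell / exponential-dichotomy sense. The key analytic step would be to invoke a Mañé-style linear algebra lemma: a continuous linear cocycle over a compact invariant set admits a dominated splitting precisely when one has uniform exponential growth of angle contractions along every orbit; equivalently, the absence of domination produces orbits along which the action of $A_n$ almost preserves some auxiliary line to arbitrary accuracy on arbitrarily long windows. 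Pliss's lemma is the standard vehicle to harvest such windows quantitatively.

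Next, and this is where the main obstacle lies, one has to turn these infinitesimal degeneracies into a macroscopic perturbation that breaks $C^1$-persistence. The tool is Franks' lemma: any $C^1$-small perturbation of the derivative cocycle along a finite orbit segment can be realized as the derivative of a genuine $C^1$-small perturbation of $f$ supported in an arbitrarily small neighborhood of that segment. Combining Franks' lemma with the Pliss windows, I would perturb $X$ in a small flow box around a well-chosen orbit in $\Lambda$ so as to rotate the almost-invariant line into a direction transverse to $T\Lambda$ and amplify it, producing a new local invariant graph over $\Lambda$ which is forced, by the perturbation geometry, to lie at definite $C^1$-distance from $\Lambda$ while still living in the fixed neighborhood $U$. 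This graph realizes $\cap_{t \in \mathbb{R}} \tilde{X}^t(U)$ as strictly larger than (or $C^1$-far from) $\Lambda$, contradicting $C^1$-persistence.

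The hardest part is the realization step: the linear perturbation provided by the cocycle analysis must be genuinely realized by a vector field whose time-one map deviates from $f$ only in a thin flow box, while nevertheless generating the new invariant object. This requires splitting the perturbation across many Pliss times so as to integrate coherently, and careful control of the $C^1$-norm bounds via a partition-of-unity cutoff. Once this perturbation machinery is in place, the contradiction and hence the theorem follows. An alternative, more modern route would replace Franks' lemma with the Bonatti–Gourmelon–Vivier framework for perturbing linear cocycles over flows, which streamlines the $C^1$ size control but relies on the same structural idea that absence of a dominated splitting for $(A_n, B_n)$ is a $C^1$-unstable phenomenon.
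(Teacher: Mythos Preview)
The paper does not prove this theorem; it is quoted as a classical result due to Ma\~{n}\'{e} \cite{mane} and used as background, with no proof or proof sketch given in the text. So there is nothing in the paper to compare your argument against.

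That said, your outline is broadly faithful to the architecture of Ma\~{n}\'{e}'s original argument: reduce to the time-one map, study the derivative cocycle on the normal bundle $N\Lambda$ versus $T\Lambda$, use the absence of domination to locate orbits with long almost-neutral windows (Pliss-type selection), and then realize a cocycle perturbation via a Franks-lemma mechanism to produce a $C^1$-small perturbation of the dynamics whose maximal invariant set in $U$ is not $C^1$-close to $\Lambda$. The structural logic is correct.

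Where your sketch is thinnest is the step you yourself flag as hardest: you assert that the perturbed system has an invariant graph at definite $C^1$-distance from $\Lambda$, but you do not explain the mechanism by which the rotated/amplified normal direction actually forces the new maximal invariant set $\cap_t \tilde{X}^t(U)$ to either fail to be a $C^1$ manifold or to be $C^1$-far from $\Lambda$. In Ma\~{n}\'{e}'s proof this is delicate: one must show not merely that some new invariant object appears, but that the \emph{specific} set $\cap_t \tilde{X}^t(U)$ violates one of the two clauses in the definition of $C^1$-persistence. Typically this is done by arranging the perturbation so that the invariant set develops a corner or a fold (destroying the $C^1$ manifold structure), rather than by exhibiting a far-away invariant graph as you suggest. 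Your ``new local invariant graph'' picture would need substantially more justification to close the argument.
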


It is noteworthy that Mañe characterizes when the invariant set persists as a differentiable manifold, while it is well-known that in the absence of the rate condition in normal hyperbolicity, persistence of the invariant set as a set, i.e. $C^0$-persistence, is known to be much more delicate~\cite{floer2,floer1,c0}.

%%%%%%%%%%%%%%

\subsection{Invariant bundles and regularity theory}\label{2.3}

In this section, we want to discuss the tools we need in order to find invariant bundles and determine their regularities. Our main references on this topic are \cite{hps} and \cite{reg,reg2}. More specifically, our main tools to find invariant manifolds come from the theory of Hirsch-Push-Shub \cite{hps} on normal hyperbolicity, as an important take on the previous work of Hadamard and Perrone, while our regularity arguments for the weak invariant bundles rely on the refinements introduced by Hasselblatt~\cite{reg}.

The main idea of the invariant manifold theory, which goes back to Hadamard-Perrone, is called {\em graph transformation}. This is manifested for instance in the $C^r$-section theorem \cite{hps}, which states that a fiberwise contraction of a fiber bundle (with respect to some Finsler norm) has a unique continuous invariant set and furthermore, a lower bound for the regularity of such invariant set is provided by the rate of fiberwise contraction (we will revisit this theorem in Section~\ref{8}).

An important ingredient in Hirsch-Pugh-Shub's revisiting and refinement of the $C^r$-section theorem is the following lemma, which we will be one of our main tools in finding invariant sub-bundles of vector bundles over invariant sets.

\begin{lemma}[Hirsch-Pugh-Shub 1970 \cite{hps} Lemma~2.18]\label{strong}
Let
\[\begin{tikzcd}
	0 & {E_1} & {E_2} & {E_3} & 0 \\
	0 & {E_1} & {E_2} & {E_3} & 0 \\
	& \Lambda & \Lambda & \Lambda
	\arrow["{T_1}", from=1-2, to=2-2]
	\arrow["{T_2}", from=1-3, to=2-3]
	\arrow["{T_3}", from=1-4, to=2-4]
	\arrow[from=1-1, to=1-2]
	\arrow["i", from=1-2, to=1-3]
	\arrow["j", from=1-3, to=1-4]
	\arrow[from=1-4, to=1-5]
	\arrow[from=2-2, to=3-2]
	\arrow[from=2-3, to=3-3]
	\arrow[from=2-4, to=3-4]
	\arrow["j", from=2-3, to=2-4]
	\arrow["i", from=2-2, to=2-3]
	\arrow["{=}"{description}, draw=none, from=3-2, to=3-3]
	\arrow["{=}"{description}, draw=none, from=3-3, to=3-4]
	\arrow[from=2-1, to=2-2]
	\arrow[from=2-4, to=2-5]
\end{tikzcd}\]
be a commutative ladder of short exact sequences of Finslered vector bundles, all over the same compact base $\Lambda$, where $T_k$ is a bundle map over the base homeomorphism, $f:\Lambda\rightarrow \Lambda$, $k=1,2,3$. If $T_3$ is invertible and $$m(T_3|E_{3x})>||T_1|E_{1x} || \ \ \ \ for\ \ \ \ x\in\Lambda,$$
then $iE_1$ has a unique $T_2$-invariant complement in $E_2$.
\end{lemma}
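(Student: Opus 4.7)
The plan is to identify continuous complements of $iE_1$ in $E_2$ with continuous sections of the bundle $\mathrm{Hom}(E_3,E_1)\to\Lambda$, translate $T_2$-invariance of a complement into a fixed-point equation for such a section, and verify that the induced graph transform is a contraction in the sup norm whose Lipschitz constant is bounded by $\sup_{x\in\Lambda}\|T_1|E_{1x}\|/m(T_3|E_{3x})$, which by hypothesis and compactness is strictly less than $1$.

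First, I would pick any continuous splitting $s_0\colon E_3\to E_2$ of the bottom short exact sequence (for instance, a Finsler-orthogonal complement of $iE_1$; such a splitting exists because $\Lambda$ is compact and the exact sequence is continuous). Every other continuous complement of $iE_1$ in $E_2$ is then the image of a unique splitting of the form $s=s_0+i\circ\phi$, where $\phi$ is a continuous section of $\mathrm{Hom}(E_3,E_1)\to\Lambda$. Equipping the section space with the Finslered sup norm $\|\phi\|_\infty:=\sup_{x\in\Lambda}\|\phi_x\|$ makes it into a Banach space $\Gamma$.

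Next, I would translate invariance into an equation on $\Gamma$. Using the commutativity $j\circ T_2=T_3\circ j$ and $\ker j=iE_1$, a complement $s(E_3)$ is $T_2$-invariant iff $T_2\circ s=s\circ T_3$. Writing $s=s_0+i\phi$, using the intertwining $T_2\circ i=i\circ T_1$, and employing the invertibility of $T_3$ to transport the equation back to the source fiber, one arrives at the fixed-point equation
\[
\phi_{f(x)}=T_{1,x}\,\phi_x\,T_{3,x}^{-1}+\psi_x,
\]
where $\psi_x\in\mathrm{Hom}(E_{3,f(x)},E_{1,f(x)})$ is the fixed continuous inhomogeneity measuring the failure of $s_0$ to be invariant. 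The right-hand side defines an affine operator $F\colon\Gamma\to\Gamma$ whose linear part sends $\phi$ to $y\mapsto T_{1,f^{-1}(y)}\,\phi_{f^{-1}(y)}\,T_{3,f^{-1}(y)}^{-1}$. Its fiberwise operator norm at $y$ is bounded by $\|T_1|E_{1,f^{-1}(y)}\|\cdot\|T_3^{-1}|E_{3,y}\|=\|T_1|E_{1,f^{-1}(y)}\|/m(T_3|E_{3,f^{-1}(y)})$. Compactness of $\Lambda$ together with the strict pointwise inequality in the hypothesis yields a uniform $k<1$ dominating this ratio, so $\|F(\phi)-F(\phi')\|_\infty\leq k\|\phi-\phi'\|_\infty$. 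Banach's fixed-point theorem in $\Gamma$ then produces a unique $\phi$ with $F(\phi)=\phi$, and the complement $(s_0+i\phi)(E_3)$ is the desired unique $T_2$-invariant complement.

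The step I expect to be most delicate is setting up the graph transform in the previous paragraph: one must use the invertibility of $T_3$ to solve the invariance condition at the \emph{source} fiber, which is precisely what forces the linear part to have the form $T_1(\cdot)T_3^{-1}$ and lets the hypothesis kick in; pushing forward by $T_3$ instead would yield an expansive transform. The inhomogeneity $\psi$ is continuous on a compact base hence bounded, so it plays no role in the contraction estimate, and the compactness of $\Lambda$ is exactly what converts the pointwise strict inequality $\|T_1|E_{1x}\|<m(T_3|E_{3x})$ into uniform contractivity of $F$.
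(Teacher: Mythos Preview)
The paper does not prove this lemma; it is quoted verbatim from Hirsch--Pugh--Shub \cite{hps} as a tool and then immediately applied (to Doering's lemma and later to the strong normal bundle). So there is no ``paper's own proof'' to compare against.

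That said, your argument is correct and is precisely the standard Hirsch--Pugh--Shub graph-transform proof: parametrize complements of $iE_1$ by sections $\phi$ of $\mathrm{Hom}(E_3,E_1)$ via a fixed auxiliary splitting, rewrite $T_2$-invariance as an affine fixed-point equation $\phi_{f(x)}=T_{1,x}\,\phi_x\,T_{3,x}^{-1}+\psi_x$, and observe that the linear part has sup-norm Lipschitz constant $\sup_x\|T_1|E_{1x}\|/m(T_3|E_{3x})<1$ by compactness. Your remark about the direction of the transform (postcomposing with $T_3^{-1}$ rather than precomposing with $T_3$) is exactly the point, and your use of $f^{-1}$ to define $F$ globally is legitimate because the statement assumes $f$ is a homeomorphism. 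The uniqueness claim is also fine: every continuous complement of $iE_1$ arises from exactly one $\phi$, so Banach uniqueness gives bundle uniqueness.
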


We will mainly use this lemma in order to find invariant sub-bundles given an appropriate domination relation.

A famous application of the above lemma is the Doering lemma \cite{doering}, which states that a non-vanishing vector field $X$ induces a hyperbolic splitting on the normal bundle of the flow $TM / \langle X \rangle \simeq E^{ws} \oplus E^{wu}$, if and only if, $X$ induces the Anosov splitting on the tangent space $TM \simeq \langle X\rangle \oplus E^{s}\oplus E^{u}$. 

Note that the proof of reverse is trivial. But for the forward implication, one needs to prove the existence of an invariant line bundle (the strong stable bundle) $E^{s}$ inside $E^{ws}$, as well the same for $E^{wu}$, something which is not necessarily possible for a general dominated splitting \cite{noda}. To see how the Doering's argument works, it is sufficient to let $E_1=E^{ws}$, $E_2=TM$ and $E_3=TM / iE^{ws}$, where $i$ and $j$ are the natural inclusion and projection, respectively, and $T_k$, $k=1,2,3$ are the natural actions of the time-1 map of the flow generated by $X$ on each of those subspaces over $\Lambda=M$. Now, we are in the situation of the above lemma, since at any point we have $$e^{\int_0^1 r_u \circ X^\tau d\tau}=m(T_3|E_{3})>||T_1|E_{1} ||=\max{\{1,e^{\int_0^1 r_s \circ X^\tau d\tau}\}}=1$$ and therefore, $iE^{ws}$ has an invariant complement in $TM$, implying the existence of the strong unstable bundle $E^{u}$ (an alternative proof can also be given letting $E_1=\langle X \rangle$, $E_2=E^{wu}$ and $E_3=E^{wu}/\langle X \rangle$). Similar argument yields the existence of the strong stable bundle $E^{s}$.

However, we note that the same argument in fact works for a general non-singular partially hyperbolic 3-flows, i.e. when $r_u>r_s$ and $r_u>0$, as they would still satisfy $m(T_3|E_{3})>||T_1|E_{1} ||$ in the above formulation, i.e. when we replace $E^s$ with $E$. We record this as follow.

\begin{lemma}[Doering 1987 \cite{doering}]\label{doering}
Let $X^t$ be a non-singular partially hyperbolic flow with the weak splitting $TM/\langle X \rangle\simeq E\oplus E^{wu}$. Then, the plane field $E^{wu}$ contains a strong unstable bundle $E^{u}\subset E^{wu}$, which is invariant under the flow. In other words, there exists a continuous invariant splitting $TM\simeq E \oplus E^u$, where $X\subset E$.
\end{lemma}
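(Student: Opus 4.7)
The plan is to imitate Doering's classical argument, which the author already sketches in the Anosov case, and extend it to the partially hyperbolic setting by applying Lemma~\ref{strong} (HPS Lemma 2.18) to a suitable short exact sequence of bundles over $M$. The key observation is that under partial hyperbolicity we have immediate expansion in the $E^{wu}$ direction (with respect to an adapted norm) but no contraction in a would-be $E^s$ direction; so rather than Doering's original SES $0\to E^{ws}\to TM\to TM/E^{ws}\to 0$, I would use
\[
0 \;\longrightarrow\; \langle X\rangle \;\xrightarrow{\,i\,}\; E^{wu} \;\xrightarrow{\,j\,}\; E^{wu}/\langle X\rangle \;\longrightarrow\; 0,
\]
where the middle term is understood as the plane field in $TM$ obtained by lifting the line bundle $E^{wu}\subset TM/\langle X\rangle$ by adjoining $\langle X\rangle$.

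To set up the commutative ladder, take the vertical arrows $T_1,T_2,T_3$ to be the restrictions of the time-one map $X^1_*$ to each of the three bundles; these are well-defined (non-singularity of $X$ handles $T_1$, and invariance of the dominated splitting handles $T_2,T_3$) and the diagram commutes tautologically since $T_1,T_2,T_3$ are restrictions of a single linear map on $TM$. To verify the hypothesis $m(T_3|_{E_{3,x}})>\|T_1|_{E_{1,x}}\|$ pointwise, I would choose Finsler structures as follows. First, declare $\|X\|\equiv 1$, which forces $\|T_1|_{\langle X\rangle}\|\equiv 1$ because $X^1_*(X_p)=X_{X^1(p)}$ has norm one. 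Second, equip $TM/\langle X\rangle$ with an adapted Finsler norm witnessing partial hyperbolicity (via the adapted-norm construction of Gourmelon cited in Remark~\ref{adapt}), so that the expansion rate of $E^{wu}$ satisfies $r_u>0$ immediately. Then
\[
m(T_3|_{E_{3,x}}) \;=\; e^{\int_0^1 r_u\circ X^\tau(x)\,d\tau} \;>\; 1 \;=\; \|T_1|_{E_{1,x}}\|
\]
at every $x\in M$, and Lemma~\ref{strong} produces a unique continuous $T_2$-invariant 1-dimensional complement $E^u\subset E^{wu}$ of $\langle X\rangle$.

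To promote $E^u$ to full $X^t_*$-invariance for every $t\in\mathbb{R}$, I would observe that $X^t_*(E^u)$ is again a continuous 1-dimensional complement of $\langle X\rangle$ in $E^{wu}$ (since $X^t_*$ preserves both $\langle X\rangle$ and $E^{wu}$, and sends complements to complements) and is $X^1_*$-invariant because $X^1_*$ and $X^t_*$ commute; uniqueness in Lemma~\ref{strong} then forces $X^t_*(E^u)=E^u$. Combining with the given splitting on $TM/\langle X\rangle$, and lifting the $E$-summand to a plane field (still denoted $E$) containing $\langle X\rangle$, yields the desired invariant splitting $TM\simeq E\oplus E^u$, and the inequality $r_u>0$ certifies that $E^u$ is a genuine strong unstable line bundle.

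The only delicate point is the choice of Finsler structure: one must separately normalize $\|X\|\equiv 1$ and pick an adapted norm on the normal quotient so that the pointwise inequality required by Lemma~\ref{strong} holds without any further averaging. Conceptually there is no new obstacle over Doering's Anosov argument, since the weaker partial hyperbolicity assumption supplies exactly the unstable-side expansion needed to dominate the time-one action on $\langle X\rangle$, which is all that the HPS bundle lemma asks for.
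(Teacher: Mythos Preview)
Your argument is correct. In fact the paper explicitly mentions your short exact sequence $0\to\langle X\rangle\to E^{wu}\to E^{wu}/\langle X\rangle\to 0$ as an alternative route (in parentheses, just before the lemma statement), so you have recovered one of the two proofs the author had in mind.

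The paper's \emph{primary} presentation uses instead the sequence $0\to E\to TM\to TM/E\to 0$, where $E$ is the full two-dimensional dominated plane field containing $X$. There the norm check becomes $m(T_3)=e^{\int_0^1 r_u}>\|T_1\|=\max\{1,e^{\int_0^1 r_s}\}$, which invokes both defining inequalities of partial hyperbolicity ($r_u>0$ and $r_u>r_s$) simultaneously, and directly produces $E^u$ as a one-dimensional complement to $E$ inside $TM$. Your sequence isolates only the inequality $r_u>0$, which is arguably cleaner and makes more transparent that the existence of the strong unstable bundle depends solely on absolute expansion in the $E^{wu}$ direction, not on the domination relation. The trade-off is that you then need a separate sentence to assemble $TM\simeq E\oplus E^u$ from the two pieces, whereas the paper's SES delivers that splitting in one step. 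Both approaches are equally valid applications of Lemma~\ref{strong}; your extra care about upgrading $X^1_*$-invariance to full $X^t_*$-invariance via uniqueness is a nice touch the paper leaves implicit.
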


The above lemma in fact holds in any dimension. However, in dimension 3, as we will later see, for non-singular partially hyperbolic flows, the flow direction $X$ admits an invariant complement inside $E$ as well.

The machinery developed by Hirsch-Pugh-Shub \cite{hps} is aimed at studying the regularity of the weak stable and unstable invariant sub-manifolds (or foliations) of an Anosov flow. However, the {\em bunching} technology of Hasselblatt \cite{reg,reg2} provides the refinement needed to control the regularity of the weak invariant bundles (i.e. regularity at the level of the tangent bundle).

To describe Hasselblatt's result, we need the following description. Let $X$ be a non-vanishing vector field on $M$  (of arbitrary dimension) with a hyperbolic invariant set $\Lambda$ and the splitting $TM|_{\Lambda}\simeq E^{s}\oplus \langle X \rangle \oplus E^{u}$ and there are $C,\epsilon >0$ such that for all $p\in M$, there exists
$\mu_f<\mu_s <1-\epsilon<1+\epsilon <\nu_s< \nu_f$ (here, the subscripts $s$ and $f$ refer to slow and fast, respectively) so that for $v\in E^{s}|_p$ and $u\in E^{u}|_p$ and $t>0$, we have 

$$\frac{1}{C} \mu_f^t ||v|| \leq || X_*^t (v)|| \leq C \mu_s^t ||v||$$
and
$$\frac{1}{C} \nu_f^{-t} ||u|| \leq || X_*^{-t} (u)|| \leq C \nu_s^{-t} ||u||.$$

\begin{definition}\label{bunchdef}
The unstable and stable bunching constants are defined as
$$B^u(X):=\inf_{p\in M}[(\ln{\mu_s}-\ln{\nu_s})/\ln{\mu_f}]$$
and
$$B^s(X):=\inf_{p\in M}[(\ln{\nu_s}-\ln{\mu_s})/\ln{\nu_f}],$$
respectively.
\end{definition}

The main result of Hasselblatt in \cite{reg,reg2} states that the above constants provide a guaranteed level of regularity for the weak invariant bundles, where in the case of {\em symplectic} Anosov flows, i.e. when an Anosov flow preserves a transverse symplectic form, such regularity is in fact optimal (note that for Anosov 3-flows, being symplectic is equivalent to being volume preserving).

\begin{theorem}(Hasselblatt 1994 \cite{reg,reg2})\label{hassbunch}
If $B^u(X)\notin \mathbb{N}$, then $E^{wu}$ is $C^{B^u(X)}$. If $B^u(X)\in \mathbb{N}$, then $E^{wu}$ is $C^{B^u(X)-\epsilon}$ for any $\epsilon>0$. The same holds for $E^{ws}$ and $B^s(X)$.
\end{theorem}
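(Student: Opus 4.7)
The plan is to realize $E^{wu}$ as the unique invariant section of an appropriate fiber bundle over $M$, and then apply the $C^r$-section theorem in the style used for Theorem~\ref{normalh} to extract the regularity estimate from the bunching data. Concretely, I would fix a $C^\infty$ reference splitting $TM\simeq F^s\oplus F^{wu}$ close to the true (merely continuous) splitting $E^s\oplus E^{wu}$, and parametrize all $\dim(E^{wu})$-planes transverse to $F^s$ as graphs of linear maps $L\colon F^{wu}\to F^s$. This gives a $C^\infty$ fiber bundle $\mathcal{E}\to M$ whose fibers are (open subsets of) $\mathrm{Hom}(F^{wu},F^s)$, and the weak unstable bundle becomes a continuous section $\sigma_{wu}\colon M\to\mathcal{E}$.

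Next, I would write down how the derivative cocycle $X^t_*$ acts on $\mathcal{E}$: on the fiber level, a graph $L$ is sent to $\Pi_s\circ X^t_*\circ(\mathrm{id}+L)\circ(\Pi_{wu}\circ X^t_*\circ(\mathrm{id}+L))^{-1}$, which to leading order is the conjugation $L\mapsto X^t_*|_{F^s}\circ L\circ (X^t_*|_{F^{wu}})^{-1}$ plus lower order terms coming from the fact that $F^{s/wu}$ are not invariant. Invariance of $E^{wu}$ is the statement that $\sigma_{wu}$ is a fixed section of this bundle map, and it is the unique continuous such section because the induced graph transform is a fiberwise contraction: its rate on fibers is controlled by $\|X^t_*|_{E^s}\|\cdot\|(X^t_*|_{E^{wu}})^{-1}\|$, which using the notation of Definition~\ref{bunchdef} is bounded by $C^2(\mu_s/\nu_s)^t$.

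Now the $C^r$-section theorem gives $C^r$ regularity of $\sigma_{wu}$ provided the fiberwise contraction rate is stronger than the $r$-th power of the base contraction transverse to the section, which here is the contraction of $X^t$ on $E^s$, bounded below by $(\mu_f/C)^t$. The required inequality is
\[
\frac{\mu_s}{\nu_s}<\mu_f^{\,r},
\]
i.e.\ $\ln\mu_s-\ln\nu_s<r\ln\mu_f$, and since $\ln\mu_f<0$ this is exactly $r<(\ln\mu_s-\ln\nu_s)/\ln\mu_f=B^u(X)$. Taking the infimum over $p\in M$ and an infimal sequence $r_n\nearrow B^u(X)$ handles the non-integer case directly, while at integer values of $B^u(X)$ the standard loss of a single derivative in the $C^r$-section theorem forces the $C^{B^u(X)-\epsilon}$ statement for every $\epsilon>0$.

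The main obstacle I anticipate is not the bookkeeping above but the verification that the fiberwise Lipschitz/Hölder constants of the graph transform, and the transverse base contraction, can \emph{simultaneously} be bounded by the quantities $\mu_s/\nu_s$ and $\mu_f$ in a pointwise (not just coarse) sense — this is exactly what forces the pointwise infimum in Definition~\ref{bunchdef} and what makes Hasselblatt's refinement stronger than the naive Hirsch-Pugh-Shub bound. In practice this requires choosing an \emph{adapted Finsler} in the sense of Remark~\ref{adapt} so that the constants $C$ disappear after passage to the time-$1$ map, and then checking carefully that the higher-order corrections to the conjugation action on graphs do not spoil the contraction rate. Once that is in place the $C^r$-section theorem applies cleanly and yields the theorem for $E^{wu}$; the case of $E^{ws}$ is identical after reversing time.
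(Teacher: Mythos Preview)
The paper does not contain a proof of this statement: Theorem~\ref{hassbunch} is quoted as a background result from Hasselblatt's original papers \cite{reg,reg2} and is stated without proof in Section~\ref{2.3}. There is therefore nothing in the paper to compare your proposal against. (The paper does, in Section~\ref{8}, give a Liouville-geometric reproof of the special consequence that the weak bundles of an Anosov 3-flow are $C^{1+}$, but that argument is specific to dimension~3 and to the codimension-one situation, and does not attempt the general bunching estimate.)

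For what it is worth, your sketch is the correct outline of how the result is actually proved in \cite{reg,reg2}: one realizes $E^{wu}$ as the unique invariant section of a Grassmann-type bundle, linearizes the graph transform to get the fiber contraction rate $\mu_s/\nu_s$, compares it to the base contraction $\mu_f$, and invokes the $C^r$-section theorem. You have also correctly identified the genuine content of Hasselblatt's refinement over the naive Hirsch--Pugh--Shub estimate, namely that the comparison must be made \emph{pointwise} (hence the infimum over $p$ in Definition~\ref{bunchdef}) rather than globally, which requires an adapted norm and some care with the nonlinear correction terms in the graph transform. One small correction: the relevant base rate in the $C^r$-section theorem is the Lipschitz constant of the inverse base map, which here is governed by the \emph{fastest} stable contraction $\mu_f$ (as you wrote), but you should be explicit that the section theorem is applied to the time-reversed dynamics so that the base map is expanding and the fiber map contracting in the same time direction.
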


It is not hard to see in the case that $E^{wu}$ is codimension 1, we have $B^u(X)>1$ and therefore, $E^{wu}$ is $C^{1+}$. More precisely, in this case $E^{s}$ is 1-dimensional and after a reparametrization of the flow we can assume $r_s\equiv -1$ and at each $p\in M$, we have
$$\ln{\mu_s}=\ln{\mu_f}=-1,$$
which yields $$B^u(X)=\inf_{p\in M} [1+\ln{\nu_s}]>1,$$
boiling down to
$$B^u(X)=\inf_{p\in M} [1+\inf_{t>0}\frac{1}{t}\int_{-t}^0 r_u\circ X^\tau (p)d\tau]>1,$$
when $E^u$ is also 1 dimensional, i.e. the case of an Anosov 3-flow. When $X^t$ is a volume preserving, the norm can be chosen such that we also have
$$\ln{\nu_s}=\ln{\nu_f}=1$$
and therefore, $$B^u(X)=B^s(X)=2$$
and the weak invariant bundles have regularity arbitrary close to $C^2$. In fact Katok-Hurder \cite{katok} show such Anosov flows satisfy a regularity condition stronger than being $C^{2-\epsilon}$ for any $\epsilon >0$, which is called {\em Zygmund regularity} and a well known result of Ghys \cite{ghys} shows that the $C^2$-regularity is only achieved for both $E^{wu}$ and $E^{ws}$ simultaneously, only in the case of algebraic Anosov flows. It is noteworthy that Paternain \cite{patreg} has shown that there exists $C^\infty$ non-algebraic Anosov flows for which one of the weak invariant foliation is $C^\infty$ and the other one is only $C^{1+k}$ for some $k<1$. These examples are constructed as {\em magnetic} Anosov flows and in particular, are orbit equivalent to algebraic Anosov flows (geodesic flows more specefically).

We record these in the following.

\begin{corollary}(Ghys 87 \cite{ghys}, Hurder-Katok 90 \cite{katok}, Hasselblatt 94 \cite{reg,reg2})\label{c1regweak}
Let $X$ be a $C^k$ vector field, $k\geq 2$, generating an Anosov 3-flow. Then,

(1) $E^{ws}$ and $E^{wu}$ are $C^{1+}$;

(2) If $X^t$ is volume preserving. Then, $E^{ws}$ and $E^{wu}$ are $C^{2-\epsilon}$ for any $\epsilon>0$.

(3) $X^t$ is an algebraic Anosov 3-flow, if and only if, $E^{ws}$ and $E^{wu}$ are $C^{2}$, if and only if, $E^{ws}$ and $E^{wu}$ are $C^{k}$.
\end{corollary}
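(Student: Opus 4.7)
The plan is to deduce parts (1) and (2) as essentially immediate consequences of the bunching criterion in Theorem~\ref{hassbunch}, using the computations already sketched in the discussion between that theorem and the present corollary. Part (3) reduces to classical rigidity results from the literature.

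For part (1), since the regularity of $E^{ws}$ and $E^{wu}$ as plane fields is unchanged under positive reparametrization of $X$, I would rescale the vector field so that $r_s \equiv -1$ with respect to an adapted norm (possible since $r_s < 0$ uniformly by adaptedness), while $r_u$ remains uniformly positive on the compact manifold $M$. The stable slow and fast rates in Definition~\ref{bunchdef} then coincide, $\ln \mu_f = \ln \mu_s = -1$, and direct substitution produces
$$B^u(X) = \inf_{p\in M}\Bigl[1 + \inf_{t>0} \tfrac{1}{t}\!\int_{-t}^0 r_u \circ X^\tau(p)\, d\tau\Bigr] > 1,$$
where strict positivity of the infimum follows from $r_u > 0$ and compactness of $M$. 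The symmetric computation, after rescaling so $r_u \equiv 1$, yields $B^s(X) > 1$. Theorem~\ref{hassbunch} then gives $E^{ws}, E^{wu} \in C^{1+}$.

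For part (2), the volume-preserving hypothesis is equivalent to $r_u = -r_s$ (a fact recalled in the proof of Proposition~\ref{syncuniq}(3)), so after a further rescaling I may assume $r_u \equiv 1$ and $r_s \equiv -1$ simultaneously. Both hierarchies now collapse: $\ln \mu_f = \ln \mu_s = -1$ and $\ln \nu_f = \ln \nu_s = 1$. Plugging into Definition~\ref{bunchdef} gives $B^u(X) = B^s(X) = 2$, and Theorem~\ref{hassbunch} applied at this integer bunching exponent yields $E^{ws}, E^{wu} \in C^{2-\epsilon}$ for every $\epsilon > 0$.

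For part (3), the implication that an algebraic Anosov 3-flow has $C^\infty$ (and hence $C^k$, hence $C^2$) weak invariant bundles is immediate from the homogeneous description of the algebraic examples, namely geodesic flows on hyperbolic surfaces and suspensions of hyperbolic toral automorphisms, whose weak foliations descend from smooth left-invariant foliations on the relevant Lie group covers. The implication $C^k \Rightarrow C^2$ is trivial. The genuine content, and the main obstacle of the corollary, is the converse direction $C^2 \Rightarrow$ algebraic, which is Ghys' rigidity theorem from 1987; I would invoke this result directly from the literature rather than attempt to reprove it here, as its proof belongs to a genuinely different circle of ideas (transverse projective structures and their classification) from the bunching-type arguments that handle (1) and (2).
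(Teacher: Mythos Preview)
Your proposal is correct and follows essentially the same approach as the paper: the discussion preceding the corollary carries out exactly the reparametrization-and-bunching computation you describe for parts (1) and (2), and part (3) is attributed directly to Ghys' rigidity theorem without reproof.
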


\begin{remark}\label{bunchex}
Hasselblatt \cite{reg,reg2} defines the bunching constants of Definition~\ref{bunchdef} in the context of hyperbolic invariant sets and use them to give (sometimes optimal) lower bounds on the regularity of the weak invariant bundles. We notice here that the definition has natural generalizations which we will use later in. Let $X$ be a non-singular flow with an invariant set $\Lambda$ with dominated splitting $TM/\langle X \rangle |_\Lambda \simeq E\oplus E^{u}$ with absolute expansion of $E^u$ (in dimension 3, these are non-singular partially hyperbolic flows, but we can consider arbitrary dimensions). We again define
$$B^s(X):=\inf_{p\in M}[(\ln{\nu_s}-\ln{\mu_s})/\ln{\nu_f}],$$
where $\nu_s,\mu_s,\nu_f$ are defined as in Definition~\ref{bunchdef}, except we drop the condition $\mu_s<1$, and instead we assume $\nu_f\geq\nu_s>1$ (absolute expansion of $E^u$) and $\nu_s>\mu_s$ ($E^u$ dominating $E$). Assuming $X$ is synchronized (i.e. parametrized such that $r_u\equiv 1$), we have $\ln{\nu_s}=\ln{\nu_f}=1$ above and consequently,
$$B^s(X)=\inf_{p\in M}[1-\ln{\mu_s}]=\inf_{p\in M}[1-\sup_{t>0}\frac{1}{t}\int_0^t r_s\circ X^\tau (p)d\tau].$$
This well-defined constant positive number $B^s(X)$ helps us have estimates on what to expect about the regularity of the weak dominated bundle, where Theorem~\ref{hassbunch} states that such degree of regularity in fact holds in the case of hyperbolic invariant sets. Note that $B^s(X)>1$ corresponds to Anosovity of $X$. We will use this generalization to show that the result of Hasselblatt's on the regularity of the weak invariant bundle of Anosov flows can be extended to non-singular partially hyperbolic 3-flows (see Theorem~\ref{regex}).
\end{remark}

%%%%%%%%%%%%%%%%%
\section{Liouville geometry of non-singular partially hyperbolic 3-flows}\label{3}

The main purpose of this section is to summarize, generalize and contextualize the previous results in the interactions between Anosov flows in dimension 3 and contact and symplectic geometries, in order to have a unified theory of the subject which goes beyond conventions.

The study of such interaction was initiated in the mid 90s, mainly in Mitsumatsu's study of {\em non-Weinstein Liouville domains} \cite{mitsumatsu}, and independently, in Eliashberg-Thurston's study of {\em confoliations} \cite{et}. Their observation basically gives a contact geometric characterization of projectively Anosov flows, which can be seen as the corner stone of a high regularity and stable geometric models for Anosov 3-flows (compared to low regularity and unstable models based on foliations). Later on, this observation was promoted to a Liouville geometric characterization of Anosov 3-flows by the author \cite{hoz3}, exploiting approximation techniques which facilitates going from a low regularity description of foliations to a more flexible high regularity picture in terms of contact structures.

We start with reviewing some elementary notions in contact and Liouville geometry and then, describe the observations of Mitsumatsu \cite{mitsumatsu} and Eliashberg-Thurston \cite{et} as the bridge between hyperbolic dynamics in dimension 3 and contact geometry, use the construction of Mitsumatsu \cite{mitsumatsu} to provide a Liouville geometric characterization of Anosov 3-flows. We then introduce the general framework of {\em Liouville interpolation systems} with the goal of generalizing such characterization to the broader class of non-singular partially hyperbolic 3-flows and an arbitrary interpolation regime.

%%%%%%%%%%%%%%%%%%%
\subsection{Notions from Liouville and contact geometries}\label{3.1}

Here, we review elementary notions from Liouville and contact geometries in dimension 4 and 3, respectively. For a comprehensive introduction to Liouville (symplectic) geometry one can consult \cite{symptop,ce}, and \cite{contop} provides the necessary background in contact topology.

\subsubsection{Liouville domains in dimension 4}\label{3.1.1}

Liouville geometry is the study of {\em exact symplectic manifolds}. On a compact oriented 4-dimensional manifold $W$, this means the study of 1-forms $\alpha$ such that $d\alpha \wedge d\alpha>0$ is a volume form. We call such 1-form a {\em Liouville form}.

The Stoke's theorem implies that the boundary of such compact 4-manifold is necessarily non-empty, i.e. $\partial W\neq \emptyset$. Naturally, one needs a suitable condition on the boundary of $W$ in order to have a nice theory of such objects. Such condition is naturally defined from a dynamical point of view. More precisely, the non-degeneracy of the symplectic form $d\alpha$ provides a 1-to-1 correspondence between 1-forms and vector fields on $W$. The vector field $Y$ which is dual to $\alpha$ under such correspondence, i.e. the vector field satisfying $\iota_Y d\alpha=\alpha$, is called the {\em Liouville vector field} and using Cartan's formula, it is easy to observe $\mathcal{L}_Y \alpha=\alpha$ and therefore,
$$\mathcal{L}_Y (d\alpha \wedge d\alpha)=2d\alpha \wedge d\alpha,$$
which means that the flow generated by $Y$ expands the volume form induced on $W$ from the symplectic form, another indication that $\partial W\neq \emptyset$. Therefore, a natural condition on the boundary is to assume $Y$ is transversely pointing outward on $\partial W$.

\begin{definition}\label{lioudef}
The pair $(W,\alpha)$ is called a Liouville domain, if $\alpha$ is a Liouville 1-form on the compact 4-manifold $W$, i.e.
$d\alpha \wedge d\alpha >0$,
and the vector field $Y$ defined by $\iota_Y d\alpha=\alpha$, named the Liouville vector field, is positively transverse to $\partial W$.
\end{definition}

A basic example of Liouville domains in dimension 4 is to consider the unit disk in $\mathbb{R}^4$, defined by $\mathbb{D}=\{ (x_1,y_1,x_2,y_2)\in \mathbb{R}^4:x_1^2+y_1^2+x_2^2+y_2^2=1\}$, equipped with the 1-form $\alpha_{std}=\frac{1}{2}[x_1dy_1-y_1dx_1+x_2dy_2-y_2dx_2]$, since it is easy to check $d\alpha \wedge d\alpha$ is the standard volume form on $\mathbb{R}^4$ and the corresponding Liouville vector field can be computed as $Y=\frac{1}{2}[x_1\partial_{x_1}+y_1\partial_{y_1}+x_2\partial_{x_2}+y_2\partial_{y_2}],$ which points outward on the boundary $\partial \mathbb{D}$. Another example is given as the tautological 1-form $\alpha_{taut}$ on the unit cotangent bundle of a surface $\Sigma$. That is $(T^*_1 (\Sigma),\alpha_{taut})$ with the property that $\beta^*\alpha_{taut}=\beta$ for any 1-form $\beta$ on $\Sigma$ considered as $\beta:\Sigma\rightarrow T^*\Sigma$.

As in any other geometric theory, we need a reasonable deformation theory of Liouville structures. This is mainly provided by the so-called Moser technique. Moser technique has various applications in geometry and we will also use it in the context of Liouville interpolating systems to show the geometric rigidity of such objects. Therefore, we bring the main idea of this omportant technique for Liouville domains to indicate how it is applied later on.

 Suppose we have a 1-parameter family of Liouville domains $(W,\alpha_t)$ for $t\in [0,1]$ such that $\alpha_t=\alpha_0|_{\partial W}$ for all $t$ s, i.e. we assume the Liouville structure not changing on the boundary of the ambient manifold. The goal of the Moser technique is to show that there exists an isotopy $\phi^t$ of $W$ such that we have $\phi^t|_{\partial W}=Id$ and
$$\phi^{t*}(\alpha_t)=\alpha_0+dh_t,$$
where $h_t:W\rightarrow \mathbb{R}$ is a family of real functions with $h_t\equiv 0 |_{\partial W}$ for $t\in [0,1]$. To show this, we first assume such isotopy exists and is generated by the vector field $v_t$ and then compute such $v_t$ satisfying the necessary and sufficient conditions. Differentiating both sides of the above equation yields
$$\phi^{t*}[\partial_t \cdot \alpha_t +\mathcal{L}_{v_t} \alpha_t]=d(\partial_t \cdot h_t)$$
$$\Rightarrow \phi^{t*}[\partial_t \cdot \alpha_t +\iota_{v_t} d\alpha_t+d(\alpha_t(v_t))]=d(\partial_t \cdot h_t).$$

Thanks to the fact that $d\alpha_t$ is non-degenerate, there exists a unique vector field $v_t$ satisfying $\iota_{v_t} d\alpha_t=-\partial_t \cdot \alpha_t$, reducing the above equation to
$$\phi^{t*}[d(\alpha_t(v_t))]=d(\partial_t \cdot h_t) \Rightarrow d[\phi^{t*}(\lambda_t(v_t))]=d(\partial_t \cdot h_t).$$ 

Note that such $v_t$ necessarily vanishes on $\partial W$ when assuming $\alpha_t=\alpha_0$ for $t\in [0,1]$. Therefore, if $\phi^t$ is the isotopy (relative $\partial W$) generated by $v_t$, it suffices to let $h_t:W\rightarrow \mathbb{R}$ be the unique solution to the ODE
$$\begin{cases}
\partial_t \cdot h_t=\phi^{t*}(\alpha_t(v_t)) \\
 h_0\equiv 0
\end{cases}$$
and we have computed the vector field $v_t$ generating the desired isotopy.

\begin{remark}
It is important to note that $v_t$ is in fact a {\em Hamiltonian isotopy}. Recall that given a 1-parameter family of functions $h_t:W\rightarrow \mathbb{R}$ on $(W,\alpha)$, there exists a unique family of vector fields $v_t$, named a {\em Hamiltonian vector field}, satisfying $\iota_{v_t}d\alpha_t=dh_t$. The flow generated by a Hamiltonian vector field preserves the symplectic structure and deforms any Liouville form by an exact term i.e. $\alpha_t:=\phi^{t*}(\alpha)=\alpha+dh_t$, which also gives the Liouville vector field of $\alpha_t$ as $Y_t=Y+v_t$, where $Y$ is the Liouville vector field of $\alpha$. When $h_t=h$ does not depend on $t$, a Hamiltonian vector field also preserves the level sets of $h:W\rightarrow \mathbb{R}$ which are generically codimension-1 submanifolds of $W$.
\end{remark}

Given Definition~\ref{lioudef}, it is natural to investigate the consequences of various dynamical conditions on the geometry of Liouville domains. A useful condition to consider is to assume that $Y$ is gradient-like with respect to a differentiable function $f:W\rightarrow \mathbb{R}$, i.e. for some $\delta>0$ and norm $||.||$, we have $$Y\cdot f \geq \delta (||Y||^2+||df||^2).$$ It is not hard to show that with the above condition, the zeros of $Y$ are exactly the critical points of $f$ and $Y\cdot f >0$ elsewhere, hence justifying the terminology gradient-like. See \cite{gradient} for more on this condition.

\begin{definition}
We call a Liouville domain $(W,\alpha)$ Weinstein, if, possibly after a Liouville homotopy through Liouville domains, its corresponding Liouville vector field $Y$ is gradient-like. Otherwise, we call it non-Weinstein.
\end{definition}

In the Weinstein case, one can take a Morse theoretic approach and hence achieve a topological theory of Liouville geometry in terms of Liouville handle decompositions. An important topological consequence of such Morse theoretic approach is the fact that in the Weinstein case, the underlying manifold can be constructed using at most half dimensional handles. In dimension 4, this means that the topological type of the underlying manifold is at most 2. In other words, the Morse skeleton in this case, which coincides with the set of all points of $W$ which remain in $W$, under the flow generated by the Liouville vector field $Y$, consists of at most 2-dimensional handles. Theory of Liouville geometry in the Weinstein case, as well as it close relation to complex geometry, is well-developed in the last few decades \cite{ce}.

A useful notion to describe the above idea is the {\em skeleton} of a (general) Liouville domain. In short, the skeleton of a Liouville domain $(W,\alpha)$ is the set of points which do not exit $W$ under the flow generated by the corresponding Liouville vector field $Y$.

\begin{definition}
Let $(W,\alpha)$ be a Liouville domain with Liouville vector field $Y$ defined by $\iota_Y d\alpha=\alpha$. We define
$$Skel(W,\alpha)=\cap_{t>0} Y^{-t} (W)$$
as the (Liouville) skeleton of $(W,\alpha)$. We also denote the skeleton by $Skel(\alpha)$ or $Skel(Y)$, when there is no risk of ambiguity.
\end{definition}

Note that $Skel(Y)$ is invariant under $Y^t$ and moreover, $Y^t$ defines a topological retraction of $W$ onto $Skel(Y)$. In the case that $(W,\alpha)$ is Weinstein, $Skel(Y)$ is in fact the Morse complex induced from the gradient-like vector field $Y$. Such Morse complex contains no cells of dimension more than half of $W$, i.e. here no 3 or 4 cells, and hence, the topological type of $W$ is at most 2-dimensional.

 On the other hand, the non-Weinstein case much less understood, including the most basic questions. Note that it is not hard to homotope a given Liouville domain to make the resulting Liouville vector field not gradient-like, hence the subtlety in the non-Weinstein case resides in the study of Liouville domains up to homotopy. The construction of the first examples of non-Weinstein Liouville domains were carried by McDuff \cite{mcduff} and Geiges \cite{geiges} in the early 90s, by introducing Liouville forms on 4-manifolds with disconnected boundary. More specifically, they constructed Liouville domains $(W,\alpha)$ with $W=[-1,1]\times M$ for some closed 3-manifold $M$. Such examples are automatically non-Weinstein, since the underlying manifold has the topological type of a closed 3-manifold (in Theorem~\ref{dynrig} of this paper, we show that the skeleton of these examples, as well as their generalizations by Mitsumatsu, are in fact 3-dimensional submanifolds of $W$ diffeomorphic to $M$). Their constructions was later generalized by Mitsumatsu \cite{mitsumatsu} given any 3-manifold equipped with an Anosov flow and other examples of non-Weinstein Liouville domains have been constructed since \cite{bowden} based on attaching symplectic handles to Mitsumatsu's examples, where the non-Weinstein claim again relies on the topoloigcal type of the underlying manifold. Moreover, Mitsumatsu's approach provides a foundation for a Liouville geometric theory of Anosov 3-flows \cite{hoz3}, which we refine in this paper, by showing that such construction can be promoted to a 1-to-1 correspondence (see Theorem~\ref{1to1}).

%%%%%%%%%%%%%%%%%%%
\subsubsection{Contact geometry as the boundary geometry for Liouville domains}\label{3.1.2}

The\ geometric interpretation of the boundary condition in Definition~\ref{lioudef} relates 4-dimensional Liouville geometry to 3-dimensional contact geometry. More specifically, on the 3-manifold $\partial W$ (with thew orientation induced as the boundary of $W$) we have
$$\alpha|_{\partial W} \wedge d\alpha|_{\partial W}=\frac{1}{2} \iota_Y(d\alpha \wedge d\alpha) >0,$$
where the inequality follows from the positive transversality of $Y$ at the boundary. Notice this condition is valid for codimension-1 submanifold of $W$ which is positively transverse to the Liouville vector field. Such 1-form on $\partial W$ or any poistively transverse submanifold of $W$ is in fact a {\em positive contact form}.

\begin{definition}
A 1-form $\alpha$ on an oriented 3-manifold $M$ is called a positive (negative) contact form if $\alpha \wedge d\alpha >0 \ (<0)$, with respect to the given orientation on $M$. The plane field $\xi=:\ker{\alpha}$ is then called a positive (negative) contact structure on $M$ and the pair $(M,\xi)$ is called a positive (negative) contact manifold. Whenever not mentioned, we assume the contact forms, structures and manifolds to be positive.
\end{definition}

Notice that by the Frobenius theorem, a contact structure is a (co-orientable) maximally non-integrable plane field. 

\begin{remark}\label{lowcontact}
In the above definition, a contact form is assumed to be at least $C^1$, while for various reasons in the literature, contact structures with lower regularity have also been studied. In this paper, we also deal with non-integrable plane fields with lower regularity. However, in our context there is always a high regularity vector field tangent to such plane fields along which the plane field is differentiable. Therefore, we still call a (possibly $C^0$) 1-form $\alpha$ a contact form, if for some vector field $X\subset \ker{\alpha}$, we have $ \alpha \wedge(\mathcal{L}_X \alpha) \neq 0$, a condition which is equivalent to $\alpha \wedge d\alpha\neq 0$ whenever $\alpha$ is $C^1$. Note that we can still make sense of positive vs. negative contact forms in this context by comparing $\alpha \wedge(\mathcal{L}_X \alpha)$ (whose kernel includes $\langle X \rangle$) and $\iota_X \Omega$ where $\Omega$ is any positive volume form on $M$.
\end{remark}

An example of a (positive) contact form can be given by $\alpha_{std}=dz-ydx$ on $\mathbb{R}^3$, where $\alpha \wedge d\alpha$ is the standard volume form on the Euclidean space and the kernel of such 1-form is called the standard contact structure on $\mathbb{R}^3$ and the Darboux theorem for contact structures (another application of the Moser technique) implies that any contact structure is locally equivalent to such plane field. We can also define the standard contact form on $\mathbb{S}^3$ by restricting the standard Liouville form of $\mathbb{R}^4$ described above to the unit sphere, i.e. $\alpha:=\alpha_{std}|_{U\mathbb{S}^3}$. This is in fact can be seen to be the one point compactification of the standard contact structure on $\mathbb{R}^3$.

As in Liouville geometry, contact geometry can be viewed from a dynamical viewpoint.  More specifically, given a contact form $\alpha$ on $M$, a unique vector field $R_\alpha$, named a Reeb vector field, exists satisfying $\iota_{R_\alpha} d\alpha=0$ and $\alpha(R_\alpha)=1$. Note that $R_\alpha$ is transverse to $\xi=\ker{\alpha}$ and the flow generated by $R_\alpha$ preserves $\xi$, since we have $\mathcal{L}_{R_\alpha}\alpha=0$ by the Cartan's formula. Equivalently, any vector field which is transverse to a contact structure $\xi$ and preserve it is a Reeb vector field for an appropriate choice of contact form $\alpha$. It is easy to check that for $(\mathbb{R}^3,\alpha_{std})$ the Reeb vector field can be computed as $R_{\alpha_{std}}=\partial_z$.

As discussed above, given any Liouville domain $(W,\alpha)$ and a codimension submanifold $i:M\rightarrow W$ which is positively transverse to the Liouville vector field, $(M,i^*\alpha)$ is a positive contact manifold. Conversely, for any contact manifold $(M,\alpha)$, one can define the pair $(W:=\mathbb{R}_s\times M,\alpha_W:=e^s \alpha)$ which is easy to check to be a Liouville 1-form defined on a non-compact manifold. Now, $(M,\alpha)$ can be embedded as $\{0\}\times M$ in $(W,\alpha_W)$. Moreover, it can be realized as a level curve of the Hamiltonian $h=e^s$, whose corresponding Hamiltonian vector can be seen to be the Reeb vector field $R_\alpha$. More generally, an embedding of any contact form $(M,e^f\alpha)$, it is easy to show that the map $i_f:M\rightarrow \mathbb{R}\times M$ defined by $i_f(p)=(e^,p)$ provides a similar {\em contact embedding}, where the corresponding Reeb vector field can again be realized as a Hamiltonian vector field restricted to such image.

This in particular, gives a a unique model for the Liouville geometry near any transverse submanifold. If the submanifold $M$ is in the interior of $W$, this means that there is a standard tubular neighborhood $N(M)\simeq (-T,T)_s\times M$ such that $\{ 0 \}\times M=M$ and $\partial_s$ is the Liouville vector field and in this coordinates, the Liouville form can be written as $\alpha=e^s(\alpha|_M)$. When the submanifold is a boundary component of a Liouville domian $(W,\alpha)$, this means that here is a tubular neighborhood of the boundary $N(\partial W)$ and the strict Liouville equivalence
$$\begin{cases}
\phi:N(\partial W)\rightarrow (-T,0]_s\times\partial W \\
\phi^*(e^s (\alpha|_{\partial W}))=\alpha
\end{cases}.$$

%%%%%%%%%%%%%%%
\subsubsection{Liouville manifolds: A non-compact theory}\label{3.1.3}

For various technical purposes, one might need to work on non-compact manifolds. In the context of this paper, that is mainly to enjoy a more convenient deformation theory, which is standard to define Liouville geometric invariants of Anosov 3-flows \cite{clmm}. The following definition capture the right condition we need to enforce at infinity (see \cite{ce}), when the underlying manifold has {\em cylinderical ends at infinity}.

\begin{definition}
The pair $(W,\alpha)$ is a Liouville manifold, if $W$ is a (necessarily non-compact) 4-manifold without boundary and $\alpha$ is a Liouville form such that the Liouville vector field $Y$ defined by $\iota_Y d\alpha=\alpha$ is complete and $W$ is convex, i.e. there exists an exhaustion $W=\cup_{k=1}^\infty W_k$ by Liouville domains $W_k \subset W$.
\end{definition}

One can extend the useful notion of skeletons to the non-compact setting of Liouville manifolds, using a given exhaustion as in the above definition, and by letting
$$Skel(W,\alpha)=\cup_{k=1}^{\infty}\cap_{t>0} Y^{-t} (W_k).$$

We then call a Liouville manifold finite-type if its skeleton is compact. This is equivalent to the existence of a proper function $f:W\rightarrow \mathbb{R}$ which is bounded from below and, outside a compact set of $W$, is Lyapunov for $Y$ and without critical points. In some sense, for finite-type Liouville manifolds, there is no geometric complications as one approaches infinity. Hence, they have a very similar theory as Liouville domains, but with a more convenient deformation theory.

More precisely, consider a Liouville domain $(W,\alpha)$ with the contact form defined on its boundary $\alpha_\partial=\alpha|_{\partial W}$ induced on the boundary. Then, one can define the Liouville form $\bar{\alpha}:=e^s \alpha_\partial$ on $\partial W \times [0,\infty)_s$ and it can be shown that using an appropriate gluing map, which is achieved by gluing Liouville flow lines of $(W,\alpha)$ and $(\partial W\times[0,\infty),\bar{\alpha})$, we get a Liouville form on $W\cup \partial W\times[0,\infty)$, which is unique up to strict Liouville equivalence. The result is in fact a finite-type Liouville manifold with the same skeleton as the Liouville domain $(W,\alpha)$. We call such Liouville manifold the {\em completion} of $(W,\alpha)$  Conversely, if $(\bar{W},\alpha)$ is a finite-type Liouville manifold, away from the compact skeleton, one can trim the cylinderical infinity ends of $\bar{W}$ to achieve a Liouville domain of the same topological type $(W\subset \bar{W},\alpha|_{\bar{W}})$, where the cut piece is strictly Liouville equivalent to $(\partial W\times[0,\infty)_s,e^s (\alpha|_{\partial \bar{W}}))$.

\begin{remark}\label{isoextension}
Suppose that two Liouville domains $(W_1,\alpha_1)$ and $(W_2,\alpha_2)$ are {\em strictly Liouville equivalent}, i.e. there exists a diffeomorphism $\phi:W_1\rightarrow W_2$ such that $\phi^*\alpha_2=\alpha_1$. Since the completion of a Liouville domain is defined, uniquely up to strict Liouville equivalence, via the relation $\mathcal{L}_{Y_i}\alpha_i=\alpha_i$ for $i=1,2$ which gives the standard model for Liouville geometry of the cylinderical end (see Section~\ref{3.1.2}). Any strict Liouville equivalence of Liouville domains can be naturally extended to their (uniquely defined up to strict Liouville equivalence) completions $(\bar{W}_1,\bar{\alpha}_1)$ and $(\bar{W}_2,\bar{\alpha}_2)$, i.e. there exist a map $\bar{\phi}:\bar{W}_1\rightarrow \bar{W}_2$ such that $\bar{\phi}^*\bar{\alpha}_2=\bar{\alpha}_1$. In general, we need $\phi$ to be $C^2$ in order to preserve the Liouville vector field as well. However, suppose we only have the strict Liouville equivalence $\phi$ to be only Hölder continuous $C^k$ (for $k>0$), but with enough partial derivatives existing to still satisfying $\phi_*(Y_1)=Y_2$ and $\phi^*\alpha_s=\alpha_1$ (we always have this if we assume $\phi$ is $C^2$, since that would mean $\phi^*(d\alpha_2)=d\alpha_1$). Then, there exists a $C^k$ homeomorphism $\bar{\phi}:(\bar{W}_1,\alpha_1)\rightarrow(\bar{W}_2,\alpha_2)$ extending $\phi$.
\end{remark}

The use of the Moser technique, with more care about what happens near infinity, yields the desired deformation theory of Liouville manifolds, which does not have the boundary limitations of Liouville domains, since we can exploit a complete Liouville vector field,  and is more suitable for defining invariants of Liouville geometry. First, we give the following definition for a Liouville homotopy of finite type Liouville manifolds. We note that Liouville homotopies are defined in more generality and abstract setting and the following is in fact a consequence of the original definition. But since we only care about finite type Liouville manifolds, this suffices for us as definition.

\begin{definition}
A smooth family of finite type Liouville manifolds $(W,\alpha_s)$ for $s\in[0,1]$ such that the closure $\cup_{s\in[0,1]} Skel(W\alpha_s)$ is compact is called a Liouville homotopy (of finite type Liouville manifolds).
\end{definition}

As an application of Moser technique, we have the following.

\begin{lemma}[\cite{ce}, Lemma~11.8]\label{moserclassic}
Suppose $(W,\alpha_t)$ for $t\in[0,1]$  is a Liouville homotopy of finite type Liouville manifolds. Then, there exists an isotopy $\psi^t:W\rightarrow W$ such that $\psi^{t*}\alpha_t-\alpha_0$ is exact and vanishes outside a compact set for all $t\in [0,1]$.
\end{lemma}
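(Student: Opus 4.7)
The plan is to apply Moser's technique: find a time-dependent vector field $v_t$ with $\iota_{v_t}d\alpha_t = -\partial_t \alpha_t$, and let $\psi^t$ be its flow. By the Cartan-formula computation
\[
\frac{d}{dt}\psi^{t*}\alpha_t = \psi^{t*}\bigl[\partial_t \alpha_t + \mathcal{L}_{v_t}\alpha_t\bigr] = \psi^{t*}\bigl[\partial_t \alpha_t + \iota_{v_t}d\alpha_t + d(\alpha_t(v_t))\bigr] = d\bigl[\psi^{t*}(\alpha_t(v_t))\bigr],
\]
integration yields
\[
\psi^{t*}\alpha_t - \alpha_0 = d\!\int_0^t \psi^{s*}\bigl(\alpha_s(v_s)\bigr)\, ds,
\]
which is automatically exact. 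Non-degeneracy of $d\alpha_t$ determines $v_t$ uniquely, so the argument is effortless on compact manifolds. The subtlety in the non-compact, finite-type setting is to ensure (i) that $v_t$ is complete so $\psi^t$ exists for $t \in [0,1]$, and (ii) that the resulting primitive is compactly supported, so the exact discrepancy vanishes outside a compact set.

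To handle this, first invoke the finite-type hypothesis: the closure $K := \overline{\bigcup_{t\in[0,1]} Skel(W,\alpha_t)}$ is compact. Choose an exhaustion $W = \bigcup_k W_k$ by Liouville domains and fix one, $W_{k_0}$, containing $K$ in its interior. The Liouville flow of each $\alpha_t$ pushes $\partial W_{k_0}$ outward to infinity, so each $\alpha_t$ restricts to a smoothly varying family of contact forms on $\partial W_{k_0}$ and identifies the cylindrical end $W \setminus W_{k_0}$ with $[0,\infty)_s \times \partial W_{k_0}$ via the standard model $\alpha_t = e^s(\alpha_t|_{\partial W_{k_0}})$ recalled in Section~\ref{3.1.2}. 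A preliminary application of Gray's theorem to the family $\alpha_t|_{\partial W_{k_0}}$ on the compact contact manifold $\partial W_{k_0}$, propagated outward by the Liouville flows and damped to the identity inside $W_{k_0}$ by a cutoff, produces a compactly-supported isotopy after which the normalized $\alpha_t$ agrees with $\alpha_0$ outside a slightly larger compact set. In particular $\partial_t \alpha_t$ now has compact support, so the Moser vector field $v_t$ inherits compact support and its flow is complete on $[0,1]$.

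With this preparation, the integrand $\psi^{s*}(\alpha_s(v_s))$ is supported in a fixed compact set for all $s \in [0,t]$, so the primitive $h_t := \int_0^t \psi^{s*}(\alpha_s(v_s))\, ds$ is compactly supported. Composing this $\psi^t$ with the preliminary cylindrical isotopy yields the desired isotopy of $W$ for which $\psi^{t*}\alpha_t - \alpha_0 = dh_t$ is exact and vanishes outside a compact set.

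The main obstacle is the preliminary normalization on the cylindrical end. The contact forms $\alpha_t|_{\partial W_{k_0}}$ vary smoothly but do not coincide, so Gray's theorem must be applied carefully, keeping track of how the isotopy of the contact slice interacts with the radial Liouville coordinate, and extended to all of $W$ by a cutoff in such a way that the modified primitives remain Liouville forms throughout the homotopy (the cutoff must not destroy non-degeneracy of $d\alpha_t$). Once this bookkeeping is done, the remainder of the argument proceeds in direct analogy with the compact Moser calculation already sketched for Liouville domains earlier in the text.
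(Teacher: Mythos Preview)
The paper does not actually prove this lemma; it is quoted verbatim from Cieliebak--Eliashberg \cite{ce} (Lemma~11.8) and used as a black box. The only related argument the paper gives is the Moser computation for compact Liouville domains with boundary held fixed (Section~\ref{3.1.1}), followed by the one-line remark that Lemma~\ref{moserclassic} ``means that the Hamiltonian isotopy needed to construct any Liouville homotopy can be done in a compactly supported manner.'' So there is nothing in the paper to compare your proof against beyond that sentence.

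Your sketch is the standard route to the result and is essentially what one finds in \cite{ce}: reduce to compactly supported $\partial_t\alpha_t$ by normalizing the cylindrical end, then run the Moser argument with a compactly supported (hence complete) $v_t$. One caveat: the step ``damped to the identity inside $W_{k_0}$ by a cutoff'' is the genuinely delicate point you flag, and in practice it is cleaner to avoid it. Rather than cutting off an isotopy of $W$ (which risks destroying non-degeneracy of $d\alpha_t$ in the transition region), one typically works directly with the Moser vector field: on the end $[0,\infty)_s\times\partial W_{k_0}$ the forms $\alpha_t$ differ from $e^s(\alpha_t|_{\partial W_{k_0}})$ only by terms controlled by the Liouville flow, and the Moser field $v_t$ can be shown to have uniformly bounded $\partial_s$-component there, hence is complete without any preliminary normalization. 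Either approach works, but the second sidesteps the cutoff-versus-symplectic-condition bookkeeping you identify as the main obstacle.
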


In the view of our discussion on Moser technique in Section~\ref{3.1.1}, this means that the Hamiltonian isotopy needed to construct any Liouville homotopy can be done in a compactly supported manner.

%%%%%%%%%%%%%%%%%%%%%

\subsection{Observation of Mitsumatsu and Eliashberg-Thurston:\\ A bridge between hyperbolic dynamics and contact geometry}\label{3.2}

The foundation of the Liouville geometry of Anosov 3-flows is based on an important but simple observation of Mitsumatsu \cite{mitsumatsu} and Eliashberg-Thurston \cite{et} in the mid 90s. That is, the vector field generating an Anosov 3-flow lies in the intersection of a transverse pair of positive and negative contact structures. 

The observation is a straight forward application of the Frobenius theorem, if one considers the flow action on the bi-sectors of the two invariant bundles. However, it paves the road from the world of hyperbolic dynamics to contact structures as stable high regularity differential geometric objects. More specifically, contactness is an open condition (unlike foliations) and therefore, if one achieves a contact geometric description os Anosov flows, the underlying contact structures can be perturbed to be as regular as the flow, usually at the price of breaking some symmetry. Furthermore, such model can be defined to be truly local, in the sense that deforming such model in a neighborhood does not affect the geometric desription in the rest of the underlying manifold. This is in sharp contrast to the common model of Anosov flows in terms of invariant foliations as such objects depend on the long term behavior of the flow (and hence, not truly local), and this often causes technical difficulties when one needs to deform the flow in some neighborhood (for instance for surgery or gluing purposes). In this sense, contact geometry is consistent with the stability features in Anosov dynamics (see Remark~\ref{stability}).

One might hope that the mentioned condition is sufficient to characterize when a 3-flow is Anosov. But in fact there are many non-singular flows with such properties. This includes flows on manifolds like $\mathbb{S}^3$ and $\mathbb{T}^3$, which do not admit any Anosov flows. However, it turns out that the condition of lying in the intersection of such pair of contact structures has a dynamical interpretation.

\begin{convention}\label{orientableflow}
From now on, for any projectively Anosov 3-flow $X^t$ with the splitting $TM/\langle X\rangle E\oplus F$, we assume $E$ and $F$ to be orientable, i.e. we assume our projectively Anosov flows (or more particularly, partially hyperbolic or Anosov flows) to be {\em orientable}. This is always possible after possibly lifting to a double cover of the underlying manifold $M$ (since we are assuming $M$ to be orientable itself). This should be noted however that, this is mostly for more straightforward statements in the contact and symplectic geometric theory of such flows, and we expect similar geometric phenomena to be present in the unorientable case, as it relies on the local behavior of the flow.
\end{convention}

\begin{lemma}[Eliashberg-Thurston \cite{et}, Mitsumatsu \cite{mitsumatsu} 1995]\label{pabicontact}
A non-vanishing vector field $X$ on a 3-manifold $M$ is projectively Anosov, if and only if $X\subset \xi_- \cap \xi_+$, where $\xi_-$ and $\xi_+$ negative and positive contact structures, respectively, and $\xi_- \pitchfork \xi_+$.
\end{lemma}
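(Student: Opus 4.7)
The plan is to argue both directions via the rotation-of-plane-fields picture recalled in the excerpt: the domination inequality is equivalent to strict monotonicity of the angle of intermediate plane fields under the flow, while the contact condition for a plane field containing $X$ is exactly $\alpha\wedge\mathcal{L}_X\alpha\neq 0$.

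For the forward direction, assume $X^t$ is projectively Anosov with dominated splitting $TM/\langle X\rangle \simeq E\oplus F$. Lift $E$ and $F$ to $2$-plane fields containing $X$, still denoted $E$ and $F$, and fix a Riemannian metric on $M$ making them orthogonal in $TM/\langle X\rangle$. Define $\xi_+$ and $\xi_-$ to be the two plane fields containing $X$ whose projections to $TM/\langle X\rangle$ are the $\pm \pi/4$ bisectors between $E$ and $F$; then $\xi_+\pitchfork\xi_-$ because their projections span distinct lines. To check contactness, let $\alpha_\pm$ be defining 1-forms for $\xi_\pm$. Using the rotation interpretation from the excerpt, domination of $E$ by $F$ says $\partial_t\theta^t_{\xi}>0$ for any $\xi$ projecting into the open first or third quadrant, and $\partial_t\theta^t_{\xi}<0$ for the second or fourth. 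This strict monotonicity of $\theta^t_{\xi_\pm}$ means $\mathcal{L}_X\alpha_\pm$ is not a function multiple of $\alpha_\pm$, hence $\alpha_\pm\wedge\mathcal{L}_X\alpha_\pm\neq 0$ at every point (using the low-regularity characterization of Remark~\ref{lowcontact}). Since $\xi_+$ and $\xi_-$ sit in quadrants whose induced orientations are opposite, the two rotation signs carry opposite co-orientations, so $\alpha_+\wedge\mathcal{L}_X\alpha_+$ and $\alpha_-\wedge\mathcal{L}_X\alpha_-$ have opposite signs relative to a fixed volume form; after relabelling if necessary, $\xi_+$ is positive contact and $\xi_-$ is negative contact.

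For the reverse direction, assume $X\subset \xi_+\cap\xi_-$ with $\xi_+\pitchfork\xi_-$, $\xi_+$ positive and $\xi_-$ negative contact. The contact conditions $\alpha_+\wedge\mathcal{L}_X\alpha_+>0$ and $\alpha_-\wedge\mathcal{L}_X\alpha_-<0$ translate, via the rotation dictionary, into the fact that the projections of $\xi_+$ and $\xi_-$ to $TM/\langle X\rangle$ rotate under $X^t_\ast$ in opposite angular senses and with a uniform positive lower bound on $|\partial_t\theta^t|$ (by compactness of $M$). The two open cones $\mathcal{C}_\pm\subset TM/\langle X\rangle$ bounded by these projections are therefore respectively forward- and backward-attracting: one is strictly contracted by $X^t_\ast$ for $t>0$ and the other for $t<0$. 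Setting $F:=\bigcap_{t\geq 0} X^t_\ast(\overline{\mathcal{C}}_+)$ and $E:=\bigcap_{t\leq 0} X^t_\ast(\overline{\mathcal{C}}_-)$ produces two continuous, flow-invariant line fields in $TM/\langle X\rangle$; they are transverse because they lie in disjoint open cones. The uniform lower bound on the angular speed gives an exponential estimate of the form $\|X^t_\ast u\|/\|X^t_\ast v\| \geq Ae^{Ct}\|u\|/\|v\|$ for $u\in F$, $v\in E$, which is exactly the dominated-splitting inequality of Definition~\ref{padef}. Alternatively, the invariant splitting can be extracted by applying Lemma~\ref{strong} to a short exact sequence $0\to L\to TM/\langle X\rangle\to TM/\langle X,L\rangle\to 0$ for a reference transverse line $L\subset \mathcal{C}_+$, with the contact rotation data providing the required gap condition $m(T_3)>\|T_1\|$.

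The genuinely nontrivial step is the reverse direction: turning the pointwise contact inequalities into the uniform exponential domination and verifying continuity and transversality of the limit line fields $E$, $F$. The forward direction reduces to a local calculation in the $(E,F)$-frame, but for the converse one must globalize the monotone rotation into a rate estimate and handle the lack of a priori control on higher regularity of the invariant bundles, which is precisely where the cone-nesting argument (or the Hirsch-Pugh-Shub Lemma~\ref{strong}) does the necessary work.
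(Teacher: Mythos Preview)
Your proposal is correct and follows essentially the same approach as the paper. The paper itself only sketches the argument in one line (``a straightforward application of the Frobenius theorem, if one considers the flow action on the bi-sectors of the two invariant bundles'') together with the rotation-of-plane-fields remark you invoke; your forward direction is exactly this bisector construction, and your reverse direction via cone-field contraction is the standard argument that the paper leaves to the cited references. One small point worth tightening: in the converse you should make clear that the cone field is defined fiberwise over $M$ and that the contact sign conditions give \emph{strict forward invariance of the cone field} (i.e.\ $X^t_*(\overline{\mathcal{C}}_p)\subset \mathcal{C}_{X^t(p)}$ for $t>0$), rather than speaking of the rotation of $\xi_\pm$ as if at a fixed point; once phrased this way, compactness of $M$ immediately yields the uniform contraction rate and the nested intersection produces the continuous invariant line bundles.
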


Notice that in the above lemma, we are assuming the contact structures to be orientable and the projectively Anosov flows to have orientable invariant bundles.

   \begin{figure}[h]
\centering
\begin{overpic}[width=1\textwidth]{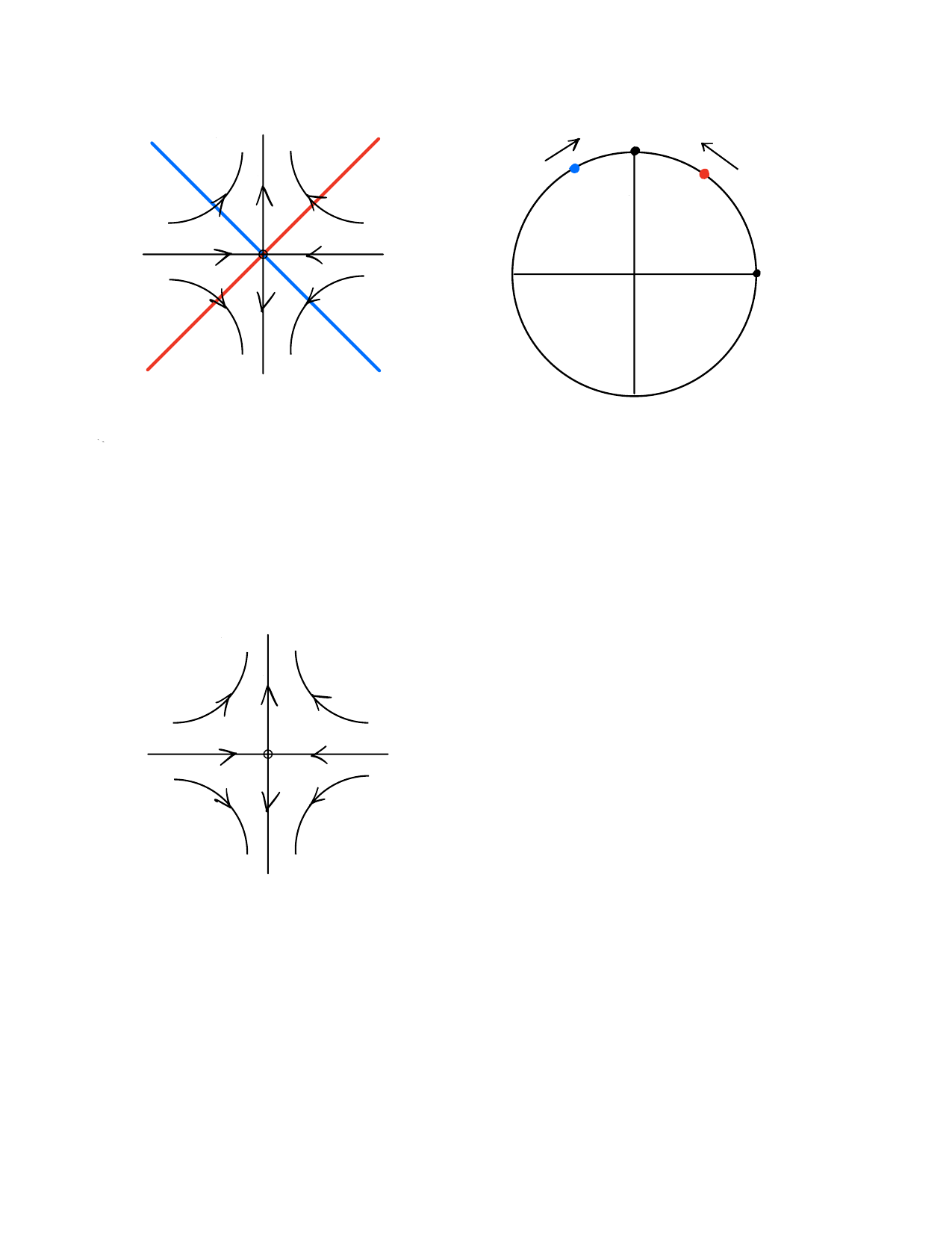}
    \put(105,189){$E^{wu}$}

        \put(189,175){$\xi_+$}
          \put(22,175){$\xi_-$}
        \put(188,104){$E^{ws}$}
        
                \put(432,94){$E$}
                  \put(345,183){$F$}
                   \put(405,177){$X$-action}
                   
                        \put(312,150){$\xi_-$}
          \put(377,150){$\xi_+$}

  \end{overpic}

\caption{Bi-contact condition for Anosov flows and the action on the projectified normal bundle}
\end{figure}

For discussion on projective Anosovity, see Section~\ref{2.2.1}. Thanks to this lemma, one can construct many projectively Anosov flows using contact geometry. For instance, any pair of positive and negative tight contact structures on $\mathbb{T}^3$ can be made transverse and hence, yielding a tight projectively Anosov flow \cite{et,hoz3}. Moreover, in any homotopy class of plane fields, there are unique, up to isotopy, positive and negative overtwisted contact structures, which can be made transverse assuming vanishing Euler class, producing a large class of overtwisted projectively Anosov flows \cite{et,adn}. In particular, this implies that any 3-manifold carries a non-Anosov projectively Anosov flow.

The above lemma justifies the following definition.

\begin{definition}
The pair $(\xi_-,\xi_+)$ is called a bi-contact structure, if $\xi_-$ and $\xi_+$ are negative and positive contact structures, respectively. Furthermore, $(\xi_-,\xi_+)$ is called a transverse bi-contact structure, if $\xi_- \pitchfork \xi_+$. We say $(\xi_-,\xi_+)$ is a supporting bi-contact structure for the non-vanishing (projectively Anosov) vector field $X$, if it is transverse and $X\subset \xi_-\cap \xi_+$.
\end{definition}

The above lemma then shows a correspondence between homotopy classes of projectively Anosov flows and transverse bi-contact structures, while for a study of non-transverse bi-contact structures one can refer to \cite{colin,massoni2}. As it will be shown in Section~\ref{3.5}, a homotopy class of projectively Anosov flows can contain distinct flows up to orbit equivalence and therefore, the equivalence between bi-contact structures and projectively Anosov flows given in Lemma~\ref{pabicontact} is not a 1-to-1 correspondence if one considers flows up to orbit equivalence. 

A natural question then is whether the we can add contact geometric conditions to a bi-contact structure in order to make the resulting dynamical interpretation equivalent to Anosovity of flows. It turns out that the answer is affirmative and in fact, there are different ways to achieve such characterization. The one which is the focus of this paper is built on the construction of Mitsumatsu and is closely related to the historical motivations of our study. We will further show in this paper that such characterization in fact can be promoted, in an appropriate sense, to a 1-to-1 correspondence between Anosov flows (up to $C^1$-conjugacy) and such geometric objects (see Theorem~\ref{1to1}).

%%%%%%%%%%%%%%%%%%%%

\subsection{Mitsumatsu's construction and the Liouville geometric theory of Anosov flows in dimension 3}\label{3.3}

In this section, we discuss the construction of 4-dimensional Liouville domains with disconnected boundary given a closed 3-manifold equipped with an Anosov 3-flow. The construction was carried by Mitsuamtsu \cite{mitsumatsu} for smooth volume preserving Anosov 3-flows, but smoothness and the existence of invariant volume form can be dropped easily. This construction was later used as a basis for the symplectic geometric theory of Anosov 3-flows in \cite{hoz3} and one of the main goals of this paper is to dig deeper in understanding the resulting Liouville geometry, concluding that the Liouville geometry in this case is determined by the underlying Anosov flow in a strong sense and is independent of all the choices made along the way.

\begin{convention}\label{orientation}
The description of an Anosov flow does not induce a canonical orientation on the underlying manifold. Therefore, we here fix our orientation convention for the computations to follow (we use the same convention for a general projectively Anosov flow).

Let $M$ be a closed oriented 3-manifold and $X$ a non-vanishing vector field, generating an Anosov flow on $M$. The 1-forms $\alpha_u$ and $\alpha_s$, whose kernels coincides with the invariant weak stable and unstable bundles, respectively, can be defined (we are assuming the weak invariant bundles to be oriented, by Convention~\ref{orientableflow}). Our orientation convention is to choose $\alpha_s$ and $\alpha_u$ such that the volume form $\Omega$ defined by $\iota_X \Omega=\alpha_s \wedge \alpha_u$ is positive. This is equivalent to $(X,e_s,e_u)$ being an oriented basis of $TM$, where $e_s\in E^s$, $e_u\in E^u$ and $\alpha_s(e_s),\alpha_u(e_u)>0$.
%\textcolor{red}{orientation convention}
\end{convention}

%Let $M$ be a closed oriented 3-manifold and $X$ a $C^\infty$ non-vanishing vector field, generating an Anosov flow on $M$. Let $\alpha_u$ and $\alpha_s$ be 1-forms whose kernels define the invariant weak stable and unstable bundles, respectively, and correspond to an adapted norm on the strong unstable and stable bundles, respectively (we can do so by Convention~\ref{orientableflow}). 

We also choose orientations such that the volume form $\Omega$ defined by $\iota_X \Omega=\alpha_s \wedge \alpha_u$ is positive. The weak invariant bundles are known to be $C^1$ \cite{hps,reg,reg2} and hence, $\alpha_u$ and $\alpha_s$ are $C^1$ 1-forms (while they are infinitely many times differentiable along the flow). In particular, we have
$$\begin{cases}
\mathcal{L}_X \alpha_u=\iota_X d\alpha_u=r_u\alpha_u \\
\mathcal{L}_X \alpha_s=\iota_X d\alpha_s=r_s\alpha_s
\end{cases},$$
where $r_u$ and $r_s$ are the expansion rates of the unstable and stable bundles, respectively, satisfying $r_s<0<r_u$ (note that we can assume $r_u$ and $r_s$ to be induced from an adapted norm and be $C^1$ by Lemma~\ref{simiclemma}). Let
$$\begin{cases}
\alpha_+:=\alpha_u-\alpha_s \\
\alpha_-:=\alpha_u+\alpha_s
\end{cases}$$
and notice that $\ker{\alpha_+}\pitchfork \ker{\alpha_-}=\langle X \rangle$ and
$$\begin{cases}
\alpha_+\wedge d\alpha_+=-\alpha_u \wedge d\alpha_s -\alpha_s \wedge d\alpha_u=(r_u-r_s)\Omega \\
\alpha_-\wedge d\alpha_-=\alpha_u \wedge d\alpha_s +\alpha_s \wedge d\alpha_u=(r_s-r_u)\Omega
\end{cases}.$$ 

In particular, $\alpha_+$ and $\alpha_-$ are positive and negative contact forms, respectively, and $(\xi_-:=\ker{\alpha_-},\xi_+:=\ker{\alpha_+})$ is a supporting bi-contact structure for $X$.

Now, consider the compact 4-manifold $W:=[-1,1]_s\times M$ ($s$ is used as the parameter in the interval direction), define
$$\alpha:=(1-s)\alpha_- +(1+s)\alpha_+=2\alpha_u-2s\alpha_s$$ and compute
$$d\alpha=(1-s)d\alpha_- +(1+s)d\alpha_++ds\wedge(\alpha_+-\alpha_-)=2d\alpha_u-2sd\alpha_s-2ds\wedge \alpha_s$$
and therefore,
$$d\alpha \wedge d\alpha =-4ds \wedge \alpha_s \wedge d\alpha_u=4r_u ds\wedge \Omega,$$
indicating that $\alpha$ is in fact a Liouville form on $W$, thanks to the absolute expansion on $E^{u}$, i.e. $r_u>0$. To see that $(W,\alpha)$ is a Liouville domain, we note that (here $-M$ is $M$ with reversed orientation)
$$\partial W =\big\{ \{-1\}\times -M\big\} \cup \big\{ \{1\}\times M \big\}$$
as oriented manifolds and
$$\begin{cases}
\alpha|_{\{1\}\times M}=2\alpha_+ \\
\alpha|_{\{-1\}\times -M}=2\alpha_-
\end{cases},$$
which means that the restriction of $\alpha$ to $\partial W$ is a positive contact form. One should note that in the above construction, $\alpha$ is only as regular as the invariant bundles and hence, only $C^{1+}$ in general. 
However, it is not hard to see that $\alpha_-$ and $\alpha_+$ can be $C^1$-approximated with $C^\infty$ contact forms $\tilde{\alpha}_-$ and $\tilde{\alpha}_+$ (at the cost of breaking the symmetry in the above computations), still containing $X$ in their kernel (the approximation can be performed on $TM /\langle X \rangle$ to make sure the inclusion of the vector field $X$ in the plane fields is unaffected). The fact that the approximation is $C^1$ makes it easy to preserve the contactness conditions for $\tilde{\alpha}_-$ and $\tilde{\alpha}_+$, and the resulting $\tilde{\alpha}$ constructed from the interpolation of these two contact forms stays a Liouvilel form on $W$, as long as the approximation is $C^1$-small.

It is shown in \cite{hoz3} that using more careful approximations, one can achieve the same construction for flows generated by $C^1$ vector fields and without assuming the weak invariant bundles to be $C^1$. Note that in this case (without assuming the derivatives of $X$ being Hölder continuous), the regularity theory \cite{hps,reg,reg2} discussed in Section~\ref{2.3} fails. As a matter of fact, bypassing the low regularity of the invariant bundles is the essence of the Liouville geometric theory of Anosov flows.

\begin{remark}
Even though we restrict our attention to $C^\infty$ flows in the paper, we still need to be familiar with the mentioned approximation ideas and therefore, we will briefly discuss them in Section~\ref{3.6}. The reasons are twofold. Firstly, we want to generalize our constructions and theory to the category of non-singular partially hyperbolic flows where the flow being $C^\infty$ does not imply that the weak invariant bundles are necessarily $C^1$. In fact, these invariant plane fields might even fail to be uniquely integrable in this case \cite{et}. Secondly, it is often considerably easier to carry the computations in the {\em symmetric} construction of Liouville forms, while, except in the case of algebraic Anosov flows, that is only possible if we allow low regularity. Hence, we would like to know how such geometries with low regularity can be approximated by asymmertric high regularity models. The upshot of this paper is that Liouville geometry is not sensitive to such conventions and approximations.
\end{remark}

%In conclusion, given a $C^k$ Anosov vector field, we can always find a $C^k$ pair of contact forms $(\alpha_-,\alpha_+)$ with Liouville property as above.

The construction of Mitsumatsu can be exploited to achieve a Liouville geometric characterization of Anosov 3-flows as shown in \cite{hoz3}. To see this, first note that similar to the above, the absolute contraction of $E^s$ gives rise to another Liouville domain by considering the 1-form
$$\bar{\alpha}=(1-s)\alpha_- -(1+s)\alpha_+,$$
where we can similarly show
$$d\bar{\alpha}\wedge d\bar{\alpha}=-4ds\wedge \alpha_u \wedge d\alpha_s=-r_s ds\wedge \Omega.$$

The contact condition on the boundary of $W$, as well as the approximation claims, apply similar to above. This means that $X$ being an Anosov vector field on $M$ yields the construction of two distinct Liouville domains $([-1,1] \times M,\alpha)$ and $([-1,1] \times M,\bar{\alpha})$ as above.

Conversely, suppose there exists a pair of negative and positive contact forms $(\alpha_-,\alpha_+)$ on $M$, whose kernels form a transverse bi-contact structure supporting a non-vanishing vector field $X\subset \ker{\alpha_-}\cap\ker{\alpha_+}$, and the 1-form $\alpha:=(1-s)\alpha_-+(1+s)\alpha_+$ is a Liouville form on $W=[-1,1]_s\times M$. Note that by Lemma~\ref{pabicontact}, the existence of the supporting bi-contact structure for $X$ yields the existence of weak invariant bundles via projective Anosovity, i.e. $TM/\langle X \rangle \simeq E\oplus F$ (as well as relative expansion of $F$ compared to $E$). Furthermore, assume the interpolation of the kernels is through the weak (relatively) stable bundle, i.e. for some $s_0\in [-1,1]$ we have $\ker{\alpha}:=\{(1-s_0)\alpha_-+(1+s_0)\alpha_+\}=E$. A simple computation shows
$$\iota_X \iota_{\partial_s}(d\alpha \wedge d\alpha)=2 (\mathcal{L}_X \alpha)\wedge (\mathcal{L}_{\partial_s} \alpha) =2 (\mathcal{L}_X \alpha) \wedge (\alpha_+-\alpha_-),$$
where at $s=s_0$ boils down to
$$\iota_X \iota_{\partial_s}(d\alpha \wedge d\alpha) |_{s=s_0}=2   (\mathcal{L}_X \alpha_u) \wedge (\alpha_+-\alpha_-)|_{s=s_0}=2   (r_u\alpha_u) \wedge (-2\alpha_s)|_{s=s_0}=4r_u\alpha_s \wedge \alpha_u,$$
where $\alpha_u:=\{(1-s_0)\alpha_-+(1+s_0)\alpha_+\}$ is a 1-form with $\ker{\alpha_u}=E$ and expansion rate $r_u$, and $\alpha_s$ is the 1-form defined by $\alpha_s(F)=0$ and $\alpha_s(e)=(\alpha_--\alpha_+)(e)\neq 0$ for any $e\in F$. Keeping track of our orientation conventions (see Convention~\ref{orientation}), the Liouville condition of $\alpha$ implies that $r_u>0$, i.e. $X$ induces an absolute expansion on $F$ (which now can be thought as $E^{wu}$), i.e. $X$ is partially hyperbolic.

   \begin{figure}[h]
\centering
\begin{overpic}[width=1\textwidth]{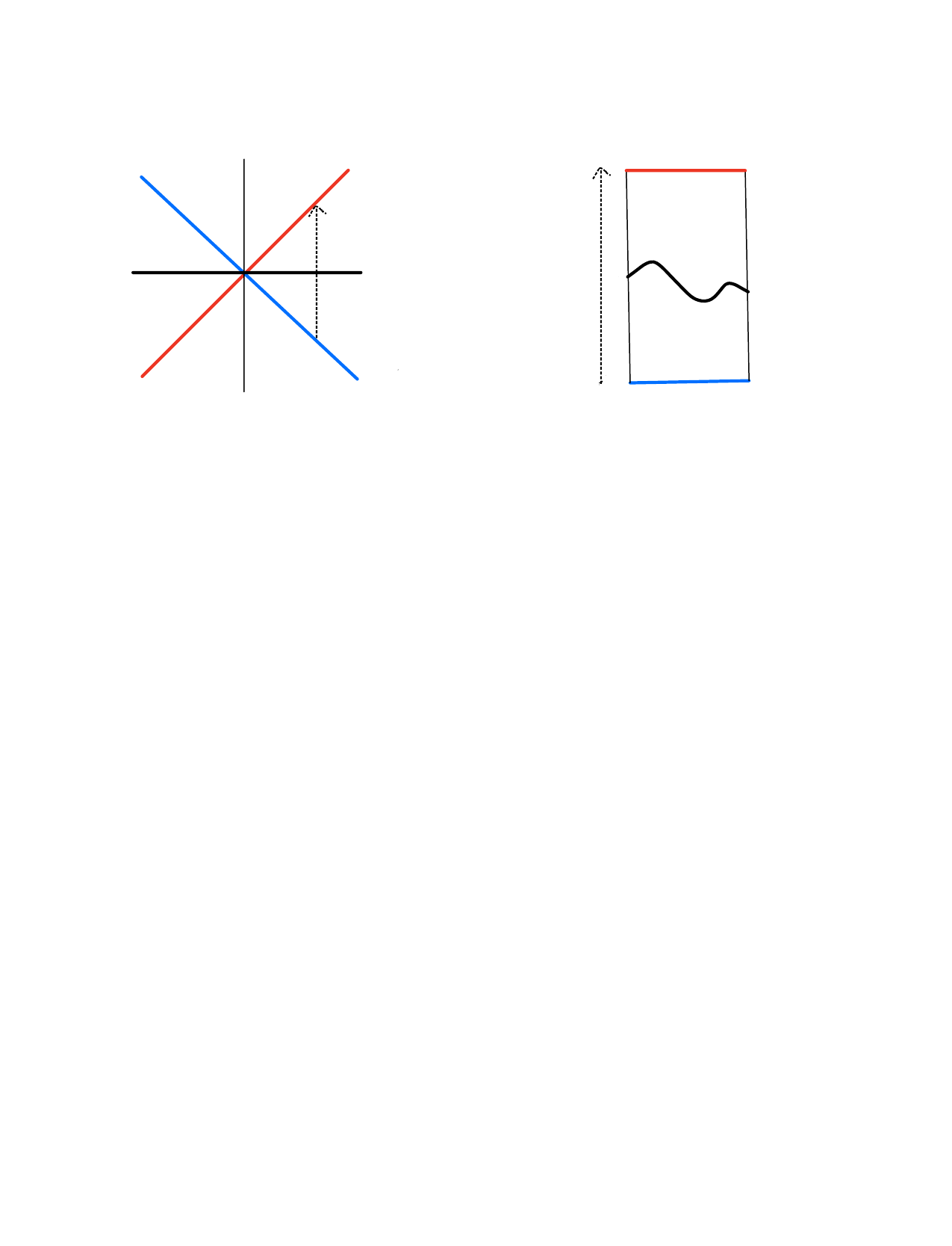}
    \put(108,182){$E^{wu}$}
        \put(164,130){interpolation}
            \put(177,118){through}
        \put(162,175){$\xi_+=\ker{\alpha_+}$}
          \put(32,175){$\xi_-=\ker{\alpha_-}$}
        \put(189,104){$E^{ws}$}
        
                \put(386,102){$\Lambda$}
                  \put(355,175){$(\{1\}\times M,2\alpha_+)$}
                   \put(350,24){$(\{-1\}\times M,2\alpha_-)$}
                     \put(334,175){$s$}
                        \put(273,99){$[-1,1]_s\times M$}

  \end{overpic}

\caption{3- vs 4-dimensional interpretation of a Liouville pair supporting an Anosov 3-flow}
\end{figure}

Similarly, we can show that the Liouville condition for the 1-form $\bar{\alpha}=(1-s)\alpha_--(1+s)\alpha_+$, whose kernel interpolates through $\ker{\alpha_-}$ and $\ker{\alpha_+}$ through $F$, implies $r_s<0$ and hence, the absolute contraction of $E$, (which now can be thought as $E^{ws}$). Therefore, the two assumption together impliy the  hyperbolicity of splitting and therefore, Anosovity of the flow is achieved by the Doering's lemma (Lemma~\ref{doering}). %One should note that in the reverse direction, the regularity theory of Anosov flows cannot be used and the invariant bundles can only be assumed to be $C^0$.

Essentially, the above argument yields Liouville geometric characterization of Anosov 3-flows. More specifically, the existence of a transverse bi-contact structures guarantees the existence of weak invariant bundles and a domination relation, where in an interpolation between the two contact structures and passing through each weak bundle, the Liouville condition corresponds to the absolute expansion and contraction of the dominating and dominated bundles, respectively, as required to establish the hyperbolicity of the flow.

To encapsulate this in a theorem, we first name the pairs of contact forms used in this construction, while noting an orientation convention for consistency.

\begin{definition}
The pair of 1-forms $(\alpha_-,\alpha_+)_l$ is called a {\em linear Liouville pair}, if $(\ker{\alpha_-},\ker{\alpha}_+)$ is a transverse bi-contact structure and the 1-form defined by $$\alpha:=(1-s)\alpha_-+(1+s)\alpha_+$$
is a Liouville form on $[-1,1]_s\times M$. We call $(\alpha_-,\alpha_+)$ a {\em supporting} linear Liouville pair for the flow generated by the non-vanishing vector field $X$, if $X$ is a projectively Anosov flow supported by $(\ker{\alpha_-},\ker{\alpha_+})$ with the dominated splitting $TM/\langle X \rangle \simeq E\oplus F$ and $\ker{\{ (1-s_0)\alpha_-+(1+s_0)\alpha_+\}}=E$ for some $s_0\in[-1,1]$.
\end{definition}

We also consider the space of consider linear Liouville pairs $(\alpha_-,\alpha_+)_l$, where $\alpha_-$ and $\alpha_+$ are $C^\infty$ contact forms, denoted by $\mathcal{LLP}(M)$, and naturally equip it with the topology inherited as an open subset of $\Omega^{1}(M)\times \Omega^{1}(M)$, where $\Omega^{1}(M)$ is the space of $C^\infty$ 1-forms on $M$. A natural map is then defined as 
$$\begin{cases}
P:\mathcal{LLP}(M)\rightarrow S\chi (M) \\
P(\alpha_-,\alpha_+)_l= [X]\subset \ker{\alpha_-}\cap \ker{\alpha_+}
\end{cases},$$
where by $S\chi (M)$ refers to the space of $C^\infty$ vector fields on $M$ up to positive reparametrization, and the above maps any linear Liouville pair to the (positive reparametrization class of) a (non-singular projectively Anosov) vector field $X$ supported by its kernel and respecting the orientation as discussed above.
We will later justify a natural refinement of such map by showing that a somewhat canonical choice of parametrization can be made.

We note that in the above definition a choice of orientation is implicit. In other words, a (linear) Liouville pair $(\alpha_-,\alpha_+)_l$ defines an orientation on $\ker{\alpha_-}\cap\ker{\alpha_+}$, which is characterized by the fact that, for any projectively Anosov vector field $X\subset \ker{\alpha_-}\cap\ker{\alpha_+}$ representing the image of the above map, the interpolation of kernels in the above definition is through the dominated bundle, i.e. $\ker{\{ (1-s_0)\alpha_-+(1+s_0)\alpha_+\}}=E$ for some $s_0\in[-1,1]$. This convention will be justified in Section~\ref{4}, when we show that this orientation agrees with the orientation induced from the resulting Liouville vector field. Using this terminology, the above construction implies that for any Anosov vector field $X$, there are two Liouville pairs $(\alpha_-,\alpha_+)_l$ and $(\alpha_-,-\alpha_+)_l$, one supporting $X$ and the other supporting $-X$.

Moreover, the construction of $C^1$ symmetric Liouville pairs show that, even when dealing with a high regularity flow, it is useful to consider lower regularity Liouville pairs, while assuming more differentiability along the flow. In our context, we typically want to restrict our attention to $C^\infty$ flows (see Remark~\ref{stability}), while the 1-forms involved in the symmetric construction are in general as regular as the dominated bundle $E$, which is $C^{1+}$ in the Anosov case, but is only Hölder continuous (i.e. $C^k$ for some $0<k$), in the category of partially hyperbolic flows. But even for such low regularity 1-forms we can assume extra differentiability along the flow, i.e. the Lie derivative of the 1-forms along the flow can be also assumed to be $C^k$. We will see in Section~\ref{3.6} that such 1-forms can be approximated appropriately by $C^{\infty}$ 1-forms.
Therefore, we also consider the space of $C^k$ linear Liouville pairs $(\alpha_-,\alpha_+)_l$ such that for some $C^\infty$ projectively Anosov vector field $X \subset  \ker{\alpha_-}\cap \ker{\alpha_+}$ with $C^k$ dominated bundle $E$, and we have $\mathcal{L}_X \alpha_-$ and $\mathcal{L}_X \alpha_+$ are also $C^k$. We denote the sapce of such linear Liouville pairs by $\mathcal{LLP}^{k*}(M)$. The construction of symmetric linear Liouville pairs above yields linear Liouville pairs of the form $(\alpha_u+\alpha_s,\alpha_u-\alpha_s)_l$, which are in $\mathcal{LLP}^{1*}(M)$ in the Anosov case.

We can now summarize the characterization of Anosov 3-flows in terms of linear Liouville pairs \cite{hoz3} as follows.

\begin{theorem}(H. 2020 \cite{hoz3})\label{anosovchar}
Let $X$ be a non-vanishing vector field on a closed oriented 3-manifold. Then, the followings are equivalent:

(1) The flow generated by $X$ is Anosov;

(2) There exists negative and positive contact forms $\alpha_-$ and $\alpha_+$, respectively, such that $(\ker{\alpha_-},\ker{\alpha_+})$ is a supporting bi-contact structure for $X$, and $(\alpha_-,\alpha_+)_l$ and $(\alpha_-,-\alpha_+)_l$ are linear Liouville pairs.
\end{theorem}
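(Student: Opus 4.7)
The strategy is a symmetric version of Mitsumatsu's construction in both directions. For (1) $\Rightarrow$ (2) the plan is to produce an explicit symmetric Liouville pair from the Anosov invariant bundles and then smooth it; for (2) $\Rightarrow$ (1) the plan is to combine Lemma~\ref{pabicontact} (which produces a dominated splitting from the bi-contact hypothesis) with a pointwise computation of $d\alpha \wedge d\alpha$ at the skeleton locus in order to extract the two sharp expansion/contraction inequalities needed to promote the dominated splitting to an Anosov one via Doering's Lemma~\ref{doering}.

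For (1) $\Rightarrow$ (2), I would define $\alpha_u, \alpha_s$ by $\ker \alpha_u = E^{ws}$ and $\ker \alpha_s = E^{wu}$, normalized so that $\iota_X(\alpha_s \wedge \alpha_u)$ is a positive volume form as in Convention~\ref{orientation}, and set $\alpha_- := \alpha_u + \alpha_s$, $\alpha_+ := \alpha_u - \alpha_s$. The computations at the start of Section~\ref{3.3} then give $\alpha_+ \wedge d\alpha_+ = (r_u-r_s)\Omega$ and $\alpha_- \wedge d\alpha_- = (r_s-r_u)\Omega$, together with the bulk identity $d\alpha \wedge d\alpha = 4 r_u\, ds \wedge \Omega$ for $\alpha = (1-s)\alpha_- + (1+s)\alpha_+$, which is positive because $r_u > 0$; the analogous computation for $\bar\alpha = (1-s)\alpha_- - (1+s)\alpha_+$ yields $-4 r_s\, ds \wedge \Omega > 0$ because $r_s < 0$. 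Hence both $(\alpha_-, \alpha_+)_l$ and $(\alpha_-, -\alpha_+)_l$ satisfy the Liouville and boundary conditions. The only issue is that $\alpha_u, \alpha_s$ inherit only the $C^{1+}$ regularity of the weak bundles (Corollary~\ref{c1regweak}), so a priori this produces an element of $\mathcal{LLP}^{1*}(M)$, not $\mathcal{LLP}(M)$. To land in $\mathcal{LLP}(M)$, I would $C^1$-approximate $\alpha_\pm$ by $C^\infty$ 1-forms whose kernels still contain $X$, which is possible because perturbing only in the $(TM/\langle X\rangle)^*$ direction preserves the tangency $X \subset \ker\alpha_\pm$; both the transverse bi-contact condition and the Liouville conditions are open in the $C^1$-topology and therefore survive any sufficiently small smoothing.

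For (2) $\Rightarrow$ (1), Lemma~\ref{pabicontact} immediately upgrades the transverse bi-contact structure $(\ker\alpha_-, \ker\alpha_+)$ to a dominated splitting $TM/\langle X\rangle \simeq E \oplus F$ with $F$ dominating $E$. Next, at the unique $s_0 \in [-1, 1]$ where $\ker \alpha|_{s=s_0} = E$ (guaranteed by the supporting-pair hypothesis), I would compute $\iota_X \iota_{\partial_s}(d\alpha \wedge d\alpha)|_{s=s_0}$ exactly as sketched in Section~\ref{3.3} and obtain $4 r_u\, \alpha_s \wedge \alpha_u$, where $r_u$ is the expansion rate of $F$ and $\alpha_s,\alpha_u$ are 1-forms whose kernels are $F, E$ respectively; the Liouville hypothesis together with the orientation convention then forces $r_u > 0$. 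Running the identical calculation for $(\alpha_-, -\alpha_+)_l$, whose kernel interpolation now passes through $F$ rather than $E$, yields $r_s < 0$ for the expansion rate of $E$. With both absolute expansion of $F$ and absolute contraction of $E$ in hand, Doering's Lemma~\ref{doering} lifts the dominated splitting of $TM/\langle X\rangle$ to an invariant hyperbolic splitting $TM \simeq E^s \oplus \langle X\rangle \oplus E^u$, completing the proof that $X$ is Anosov. The main obstacle throughout is not the dynamics but the regularity bookkeeping in (1) $\Rightarrow$ (2), where one has to pass from the natural but merely $C^{1+}$ symmetric pair to an honest $C^\infty$ element of $\mathcal{LLP}(M)$ while preserving all the supporting data; the openness of the contact and Liouville conditions and the possibility of perturbing transverse to $X$ are precisely what make this step work.
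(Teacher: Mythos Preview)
Your proposal is correct and follows essentially the same route as the paper: the symmetric construction $\alpha_\pm = \alpha_u \mp \alpha_s$ together with the bulk identities $d\alpha \wedge d\alpha = 4r_u\,ds\wedge\Omega$ and $d\bar\alpha \wedge d\bar\alpha = -4r_s\,ds\wedge\Omega$ for the forward direction, and the skeleton computation $\iota_X\iota_{\partial_s}(d\alpha\wedge d\alpha)|_{s=s_0} = 4r_u\,\alpha_s\wedge\alpha_u$ combined with Lemma~\ref{pabicontact} and Doering's Lemma for the converse, are exactly what the paper does in Section~\ref{3.3}. One small slip: the orientation convention is $\iota_X\Omega = \alpha_s \wedge \alpha_u$ with $\Omega$ a positive volume form, not ``$\iota_X(\alpha_s\wedge\alpha_u)$ is a positive volume form'' (the latter is a 1-form); this does not affect the argument.
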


%%%%%%%%%%%%%%%%

\subsection{Liouville interpolation systems: A generalized setting}\label{3.4}

While the construction of the previous subsection provides a Liouville geometric theory of Anosov 3-flows, in other contexts, one might want to use a model of Liouville geometry other than the linear one discussed. For instance, the non-compact model (as Liouville manifolds) is technically required in order to define Liouville geometric invariants of homotopies of Anosov 3-flows \cite{clmm}. A natural candidate in such setting is the exponential model, i.e. a Liouville form defined as
$$\alpha:=e^{-s}\alpha_- +e^s\alpha_+$$
on the (non-compact) 4-manifold $\mathbb{R}_s\times M$, where as before, $\alpha_-$ and $\alpha_+$ are negative and positive contact forms defining a projectively Anosov flow when transverse. Such pairs are called {\em exponential Liouville pairs} in the literature and it is easy to see that they satisfy the conditions required for Liouville manifolds (i.e. completeness of the Liouville vector field). We denote by $(\alpha_-,\alpha_+)_e$. These were studied in \cite{mnw,massoni,clmm}. In particular, \cite{mnw} studies them without the transversality assumptions and with generalizations to higher dimensions, \cite{massoni} shows that the argument of Theorem~\ref{anosovchar} carries over such setting in dimension 3, and \cite{clmm} exploits the formulation of \cite{massoni} to define new Liouville geometric invariants of Anosov 3-flows. 

Similar to linear Liouville pairs, we can consider the space of such $C^\infty$ pairs by $\mathcal{ELP}(M)$, naturally inheriting a topology as an open subset of $\Omega^{1}(M)\times \Omega^{1}(M)$, while the space of low regularity exponential Liouville pairs with additional conditions similar to the definition of $\mathcal{LLP}^{k*}(M)$ (see Section~\ref{3.3}), is denoted by $\mathcal{ELP}^{k*}(M)$.

While the constructions and arguments of \cite{massoni} for the exponential version of Theorem~\ref{anosovchar} are similar to the ones for linear Liouville pairs, some ambiguity remains on whether there is any genuine difference between the constructed Liouville geometries other than compactness. Moreover, for other motivations, other models might be appropriate. One of the main purposes of this paper is to show Liouville geometry is encapsulated in the interpolation of a plane field between a negative and positive contact structures and is completely independent of the chosen model. Therefore, we want to reformulate the theory in a generalized setting, namely {\em Liouville interpolation systems}, which generalizes and unifies the previous theories, while leaving room for other modifications needed in other contexts.

\begin{definition}\label{lis}
If $(\alpha_-,\alpha_+)$ is a transverse bi-contact form on $M$, we call $(\alpha_-,\alpha_+)_{(\lambda_-,\lambda_+)}$ a {\em Liouville interpolation system} (or LIS) if the 1-form defined by
$$\alpha=\lambda_+\alpha_+ +\lambda_- \alpha_-$$
is a Liouville form on $\mathbb{R}\times M$,
where

(1) $\alpha_\pm$ is a $\pm$ contact structure on $M$;

(2) $\lambda_{\pm}:\mathbb{R}\times M \rightarrow\mathbb{R}_+$ are positive $C^{\infty}$ functions;

(3) $s\mapsto \ln(\frac{\lambda_+(s,x))}{\lambda_-(s,x)})$ is a positive reparametrization of $\mathbb{R}$ for any $x\in M$.

We define $\mathcal{LIS}(M)$ to be the space of LIS s with the topology induced as an open subset of $\Omega^{1}(M)\times \Omega^{1} (M) \times C^\infty(\mathbb{R}\times M) \times C^\infty(\mathbb{R}\times M)$.
\end{definition}

Notice that
$$\alpha=\lambda_+[\alpha_++\frac{\lambda_+}{\lambda_-}\alpha_-]$$
means that $\partial_s\cdot (\frac{\lambda_+}{\lambda_-})>0$, implied from condition (3) above, forces $\ker{\alpha}$ to interpolate as a plane field between $\ker{\alpha_+}$ and $\ker{\alpha_-}$ as $s$ increases and in the above definition, we even allow such interpolation functions $\lambda_{\pm}$ to depend on the point in $M$. This can be summarized in the fact that with such local conditions, we have
$$\alpha \wedge \mathcal{L}_{\partial_s}\alpha \neq 0.$$
Moreover, as we will see in Lemma~\ref{liscomp}, the pair $(\mathbb{R}\times M, L(\alpha_-,\alpha_+)_{(\lambda_-,\lambda_+)})$ is in fact a (finite type) Liouville manifold for any LIS, thanks to the assumptions we force on $\lambda_{\pm}$ in the above definition.

\begin{remark}\label{yorient}
As we will see in Section~\ref{4} and \ref{5}, the geometry of interpolation in essence boils down to the fact that $$\alpha \wedge \mathcal{L}_{\partial_s}\alpha \neq 0,$$
which ($Y$ being the Liouville vector field) considering $\iota_Y\iota_{\partial_s}(d\alpha\wedge d\alpha )=2\alpha\wedge \mathcal{L}_{\partial_s}\alpha ,$ is equivalent to $$Y\pitchfork \partial_s.$$ For most local computations we do, this condition is enough and this is indeed enough for any compact theory. However, discussion in the non-compact setting is unavoidable as it provides the pragmatic deformation theory of our objects. The rest depends on the condition we impose at infinity. For us, that is the setting of Liouville manifolds, i.e. assuming completeness of the corresponding Liouville vector field. We will show that the condition (3) in the Definition~\ref{lis} guarantees the resulting Liouville vector field to be complete, by showing its flow is conjugate to one induced from an exponential LIS (which is known to be complete). Relaxations of this condition can be and in fact should be considered. While the use of Moser technique shows that any complete Liouville form of the form $\lambda_-\alpha_-+\lambda_+\alpha_+$ on $\mathbb{R}\times M$ is strictly Liouville equivalent to one induced from a LIS, the linearization of the Liouville vector field at the skeleton provides an important complete example which does not satisfy the condition of our definition. However, our study of such examples i still useful to determine the regularity of the strong normal Lagrangian bundles.
\end{remark}

We naturally define the following continuous maps sending a LIS to its induced Liouville form on $\mathbb{R}_s\times M$ and the positive reparametrization class of its supported (projectively Anosov) vector field, extending similar definitions for linear Liouville pair.

\begin{definition}\label{support}
We define the continuous maps $$\begin{cases}
L:\mathcal{LIS}(M)\rightarrow \Omega^{1} (\mathbb{R}\times M) \\
L(\alpha_-,\alpha_+)_{(\lambda_-,\lambda_+)}=\lambda_+\alpha_+ +\lambda_- \alpha_-
\end{cases}$$
and
$$\begin{cases}
P:\mathcal{LIS}(M)\rightarrow S\chi(M) \\
P(\alpha_-,\alpha_+)_{(\lambda_-,\lambda_+)}=[X]
\end{cases},$$
where the orientation of $[X]$ is chosen such that the interpolation of kernels is through the dominated bundle of $X$. We say a LIS $(\alpha_-,\alpha_+)_{(\lambda_-,\lambda_+)}$ supports a (projectively Anosov) vector field $X$ (or the flow generated by it), if $X\in P(\alpha_-,\alpha_+)_{(\lambda_-,\lambda_+)}$
\end{definition}

 \begin{remark}\label{lowliouville}

Similar to the linear and exponential cases, it is useful to consider Liouville interpolation systems with low regularity. More specifically, given a $C^\infty$ (projectively Anosov) vector field with $C^k$ invariant plane fields (for $k>0$ possibly less than $1$), $(\alpha_-,\alpha_+)_{(\lambda_-,\lambda_+)})$ is a $C^{k*}$ LIS, if 

(1) $\alpha_-$ and $\alpha_+$ are negative and positive $C^{k*}$ contact forms in the sense of Remark~\ref{lowcontact}, such that $\mathcal{L}_X \alpha_-$ and $\mathcal{L}_X\alpha_+$ are also $C^k$ (in this situation, we say, $\alpha_-$ and $\alpha_+$ are $C^{k*}$); and 

(2) $\lambda_-$ and $\lambda_+$ in Definition~\ref{lis} are also $C^k$ with both $\mathcal{L}_X \lambda_\pm$ and $\mathcal{L}_{\partial_s} \lambda_\pm$ are also $C^k$. Note that $k=\infty$ corresponds to $\mathcal{LIS}(M)$ defined above (in this situation, we say, $\lambda_-$ and $\lambda_+$ are $C^{k*}$).

 We can still make sense of the Liouville condition in the low regularity setting introduced above. Notice that when $\alpha=L(\alpha_-,\alpha_+)_{(\lambda_-,\lambda_+)}$ is $C^1$, we have
 $$d\alpha \wedge d\alpha >0 \Longleftrightarrow \iota_X\iota_{\partial_s}(d\alpha \wedge d\alpha) >0 \Longleftrightarrow \mathcal{L}_X \alpha \wedge \mathcal{L}_{\partial_s} \alpha >0,$$
 where in the last two inequalities, the comparison is made with respect to $\iota_X \iota_{\partial_s} \Omega$ as a basis for the space of 2-forms vanishing on $\langle X,\partial_s \rangle$, and $\Omega$ being any positive volume form on $\mathbb{R}\times M$. Since $\partial_s$ and $X$ are assumed to be $C^\infty$, the last condition still makes sense for 1-forms induced from $\mathcal{LIS}^{k*}(M)$ for any $k>0$ and we consider that to be the Liouville condition in such category of 1-forms, i.e. 1-forms annihilating $\langle \partial_s,X\rangle$.
 \end{remark}
 
 We have the following.

%Simple computation shows that if we let $\alpha=L(\alpha_-,\alpha_+)_{(\lambda_-,\lambda_+)}$, we have $$d \alpha=\lambda_-d\alpha_- +\lambda_+d\alpha_++d\lambda_- \wedge \alpha_- +d\lambda_+ \wedge \alpha_+.$$

\begin{lemma}\label{v}
Using the above notation, for any $0<k\leq \infty$, the map
$$\begin{cases}
V:\mathcal{LIS}^{k*}(M)\rightarrow Annih^{2;k}(\mathbb{R}\times M; \langle X, \partial_s\rangle) \\
V(\alpha_-,\alpha_+)_{(\lambda_-,\lambda_+)}=[\mathcal{L}_X L(\alpha_-,\alpha_+)_{(\lambda_-,\lambda_+)}] \wedge [\mathcal{L}_{\partial_s} L(\alpha_-,\alpha_+)_{(\lambda_-,\lambda_+)}]
\end{cases}$$
are continuous, where $Annih^{2;k}(\mathbb{R}\times M; \langle X, \partial_s\rangle)$ is the space of $C^k$ 2-forms on $\mathbb{R}\times M$ annihilating  $\langle \partial_s,X\rangle$.
\end{lemma}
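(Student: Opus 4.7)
The plan is to first verify that $V$ indeed lands in $Annih^{2;k}(\mathbb{R}\times M;\langle X,\partial_s\rangle)$, and then read off continuity from an explicit formula. Writing $\alpha:=L(\alpha_-,\alpha_+)_{(\lambda_-,\lambda_+)} = \lambda_-\alpha_- + \lambda_+\alpha_+$ and using the Leibniz rule together with the fact that $\alpha_\pm$ are pulled back from $M$ (so $\mathcal{L}_{\partial_s}\alpha_\pm = 0$) and that $[\partial_s,X]=0$, I obtain the formulas
$$\mathcal{L}_X\alpha = (\mathcal{L}_X\lambda_-)\alpha_- + \lambda_-\mathcal{L}_X\alpha_- + (\mathcal{L}_X\lambda_+)\alpha_+ + \lambda_+\mathcal{L}_X\alpha_+,$$
$$\mathcal{L}_{\partial_s}\alpha = (\mathcal{L}_{\partial_s}\lambda_-)\alpha_- + (\mathcal{L}_{\partial_s}\lambda_+)\alpha_+.$$

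To verify annihilation of $\langle X,\partial_s\rangle$, recall that $\iota_X\alpha_\pm = 0$ (by the supporting bi-contact condition) and $\iota_{\partial_s}\alpha_\pm = 0$ (since $\alpha_\pm$ live on $M$). From the displayed formulas each of $\mathcal{L}_X\alpha$ and $\mathcal{L}_{\partial_s}\alpha$ is a $C^\infty(\mathbb{R}\times M)$-linear combination of $\alpha_-,\alpha_+,\mathcal{L}_X\alpha_-,\mathcal{L}_X\alpha_+$, each of which annihilates both $X$ and $\partial_s$ (the latter two because $\mathcal{L}_Z\alpha_\pm$ evaluated on either $X$ or $\partial_s$ equals $Z(\alpha_\pm(\cdot)) - \alpha_\pm([Z,\cdot])$, and both terms vanish for $Z\in\{X,\partial_s\}$ and the argument in $\{X,\partial_s\}$). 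Hence both Lie derivatives of $\alpha$ are 1-forms vanishing on $\langle X,\partial_s\rangle$, and their wedge is therefore a 2-form contracting to zero with either $X$ or $\partial_s$, i.e.\ an element of $Annih^{2}(\mathbb{R}\times M;\langle X,\partial_s\rangle)$.

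For regularity, I invoke the definition of $\mathcal{LIS}^{k*}(M)$ from Remark~\ref{lowliouville}: the six ingredients $\lambda_\pm,\ \mathcal{L}_X\lambda_\pm,\ \mathcal{L}_{\partial_s}\lambda_\pm,\ \alpha_\pm,\ \mathcal{L}_X\alpha_\pm$ are all $C^k$. The displayed formulas exhibit $\mathcal{L}_X\alpha$ and $\mathcal{L}_{\partial_s}\alpha$ as sums of products of these $C^k$ pieces, hence they are $C^k$ 1-forms, and their wedge is a $C^k$ 2-form. Thus $V$ is well-defined with image in $Annih^{2;k}$. Continuity is then immediate: on $\mathcal{LIS}^{k*}(M)$ the natural topology controls precisely the six data above in the $C^k$ norm, and the algebraic operations appearing in $V$ — pointwise multiplication of $C^k$ functions and $C^k$ forms, addition, and wedge product — are each jointly continuous in the $C^k$ topology. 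Composition of continuous maps yields continuity of $V$.

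The only subtle point, and hence where I expect the main (if mild) care is needed, is the case $k<1$: there the individual Lie derivatives cannot be recovered by literal differentiation, so one must interpret $\mathcal{L}_X\alpha_\pm,\ \mathcal{L}_X\lambda_\pm,\ \mathcal{L}_{\partial_s}\lambda_\pm$ as independent data bundled into the $C^{k*}$ structure. Since $X$ and $\partial_s$ are globally defined smooth transverse vector fields and the defining compatibility ensures that these derivatives exist as $C^k$ objects, the formulas above still make sense verbatim, and the continuity argument goes through unchanged. There is no genuine regularity theorem to invoke; the lemma is purely a bookkeeping statement about the explicit algebraic expression for $V$.
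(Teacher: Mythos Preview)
Your proof is correct. The paper does not supply a proof for this lemma; it is stated immediately after Remark~\ref{lowliouville} as a direct consequence of the definitions there, and your argument makes explicit exactly the bookkeeping the paper leaves implicit: expand $\mathcal{L}_X\alpha$ and $\mathcal{L}_{\partial_s}\alpha$ via Leibniz, observe each term lies in $Annih^{1;k}(\mathbb{R}\times M;\langle X,\partial_s\rangle)$ by the $C^{k*}$ hypotheses, and note that the wedge of such forms is a continuous algebraic operation in the $C^k$ topology.
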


As mentioned before, one of the goals of this paper is to unify the previous interpolation models, i.e. linear vs. exponential, while in the above only non-compact version of an LIS only is introduced. In fact, we will show in this paper that fixing the flow any supporting linear Liouville pair can be strictly embedded in any supporting exponential Liouville pair, emphasizing the the difference between the two models is only in compactness. Hence, we carry most computations in the non-compact setting, as that provides the context for a convenient deformation theory. However, to establish the non-trivial fact mentioned above, we also define a compact LIS $(\alpha_-,\alpha_+)_{(\lambda_-,\lambda_+)}$ by only dropping the conditions of Definition~\ref{lis} at the non-compact ends and requiring the resulting 1-form to be restricted to positive contact forms on the boundary of $[-C,C]\times M$. We denote the space of such objects $\mathcal{LIS}_c(M)$ and can similarly define $\mathcal{LIS}^{k*}_c(M)$ as above. The above lemma still holds in the compact setting. 

\begin{convention}
We assume LIS s to be non-compact unless stated otherwise. \end{convention}

We reformulate and generalize the characterization of the previous section in this generalized setting.

\begin{theorem}\label{generalchar}
The followings are equivalent:

(1) $X$ is non-singular and partially hyperbolic with the splitting $TM/\langle X \rangle \simeq E\oplus  E^u$;

(2) $X$ admits some supporting LIS $(\alpha_-,\alpha_+)_{(\lambda_-,\lambda_+)}\in \mathcal{LIS}(M)$;

(3) $X$ admits some supporting compact LIS $(\alpha_-,\alpha_+)_{(\lambda_-,\lambda_+)}\in \mathcal{LIS}_c(M)$
\end{theorem}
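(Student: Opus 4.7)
The plan is to prove the cyclic chain $(1)\Rightarrow(3)\Rightarrow(2)\Rightarrow(1)$, following the outline sketched in the introduction after Theorem~\ref{introgeneralchar}. The step $(3)\Rightarrow(2)$ is essentially a non-compactification: given a compact LIS $(\alpha_-,\alpha_+)_{(\lambda_-,\lambda_+)}$ on $[N_-,N_+]_s\times M$, I would extend $\lambda_\pm$ smoothly to all of $\mathbb{R}\times M$ by grafting on exponential tails outside $[N_-,N_+]$, arranged so that the global function $s\mapsto \ln(\lambda_+(s,x)/\lambda_-(s,x))$ is a positive reparametrization of $\mathbb{R}$ for every $x\in M$. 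The Liouville condition on the tails is then automatic, since the computation there reduces to the standard check for the exponential model, which depends only on the bi-contact transversality of $(\alpha_-,\alpha_+)$.

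For $(2)\Rightarrow(1)$, I would analyze the Liouville condition at the skeleton, following the key observation highlighted after Theorem~\ref{introgeneralchar}. The transverse bi-contact pair underlying the LIS guarantees via Lemma~\ref{pabicontact} that $X$ is projectively Anosov with dominated splitting $TM/\langle X\rangle \simeq E\oplus F$. Since $\alpha(X)=0$ and $\alpha(\partial_s)=0$, Cartan's formula reduces the Liouville condition to the non-vanishing, with a prescribed sign, of $\mathcal{L}_X\alpha\wedge \mathcal{L}_{\partial_s}\alpha$. The monotonicity $\partial_s(\lambda_+/\lambda_-)>0$ produces a unique skeleton graph $\{(\Lambda_s(x),x)\}$ on which $\ker\alpha$ coincides with the lift of $E$; there, $\alpha$ is proportional to a 1-form $\alpha_u$ annihilating $E$ with eigenrelation $\mathcal{L}_X\alpha_u=r_u\alpha_u$, where $r_u$ is the expansion rate of $F$. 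Expanding the above wedge and using that $\mathcal{L}_{\partial_s}\alpha = (\partial_s\lambda_-)\alpha_- + (\partial_s\lambda_+)\alpha_+$ is linearly independent from $\alpha|_{\text{skel}}$ in the plane $\mathrm{span}\{\alpha_-,\alpha_+\}$ by the monotonicity above, the Liouville condition forces $r_u > 0$, i.e., the absolute expansion of $F$. Doering's Lemma (Lemma~\ref{doering}) then lifts the quotient splitting to $TM\simeq E\oplus E^u$, establishing partial hyperbolicity.

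For $(1)\Rightarrow(3)$, I would adapt the Mitsumatsu-type symmetric construction underlying Theorem~\ref{anosovchar} from the Anosov case to the partially hyperbolic setting. Given $TM\simeq E\oplus E^u$, I first produce (possibly low-regularity) 1-forms $\alpha_u$ with $\ker\alpha_u=E$ and $\alpha_e$ with $\ker\alpha_e=E^{wu}=\langle X\rangle\oplus E^u$, satisfying $\mathcal{L}_X\alpha_u=r_u\alpha_u$ with $r_u>0$ (from partial hyperbolicity, after choosing an adapted norm) and $\mathcal{L}_X\alpha_e=r_e\alpha_e$ with $r_e<r_u$ (from domination). Setting $\alpha_\pm:=\alpha_u\pm\alpha_e$ and $\alpha:=(1-s)\alpha_- + (1+s)\alpha_+ = 2\alpha_u - 2s\alpha_e$ on $[-1,1]_s\times M$, the local computation of Section~\ref{3.3} yields $d\alpha\wedge d\alpha$ as a positive multiple of $r_u\, ds\wedge \Omega$, and positive contactness on the boundary is automatic since $\alpha|_{s=-1}=2\alpha_-$ and $\alpha|_{s=+1}=2\alpha_+$. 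A smooth compact LIS in $\mathcal{LIS}_c(M)$ supporting $X$ is then obtained by $C^1$-approximating $\alpha_u,\alpha_e$ by smooth 1-forms preserving the inclusion $X\subset \ker\alpha_u\cap\ker\alpha_e$, using the smoothing techniques of Section~\ref{3.6} and \cite{hoz3}.

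The main obstacle, absent in the Anosov case, is the low regularity of the weak dominated bundle $E$ in the non-Anosov partially hyperbolic setting: $E$ need not be $C^1$ and may fail to be uniquely integrable, as exhibited by the Eliashberg--Thurston DA-example revisited in Section~\ref{3.5}. Consequently the identities such as $\alpha_u\wedge d\alpha_u = 0$ that cleanly collapse the $d\alpha\wedge d\alpha$ computation in the Anosov setting (where weak bundles are integrable) cannot be invoked directly, and the cancellations must instead be obtained after the smoothing step; the $C^1$-openness of the Liouville and contact conditions, together with the differentiability of $\alpha_u,\alpha_e$ along the $C^\infty$ direction $X$, makes this legitimate and will be where most of the care goes in the proof.
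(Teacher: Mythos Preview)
Your arguments for $(1)\Rightarrow(3)$ and for $(2)\Rightarrow(1)$ are essentially the paper's, and your identification of low regularity as the central obstacle is apt. Two points need correction, however.

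The step $(3)\Rightarrow(2)$ as you state it has a real gap: grafting exponential tails onto a compact LIS does \emph{not} give a Liouville form from bi-contact transversality alone. Being an exponential Liouville pair is strictly stronger than being a transverse bi-contact pair---indeed, any projectively Anosov flow that is not partially hyperbolic supplies a counterexample---and the computation in Section~\ref{5.2.1} shows the missing ingredient is exactly $r_u>0$. The paper sidesteps this entirely: the skeleton analysis you sketch for $(2)\Rightarrow(1)$ is local to the graph $\{s=\Lambda_s(x)\}$ and applies verbatim to a compact LIS, yielding $(3)\Rightarrow(1)$ directly; then $(1)\Rightarrow(2)$ follows from the symmetric \emph{exponential} construction in parallel with $(1)\Rightarrow(3)$. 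Your cyclic route thus inserts an unnecessary link whose claimed justification is the weakest part of the chain. (It \emph{can} be repaired, e.g.\ via Theorem~4.13 of \cite{massoni} or the embedding of Lemma~\ref{emb1}, but neither is the triviality you describe.)

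Your diagnosis of the ``main obstacle'' in $(1)\Rightarrow(3)$ is also slightly misdirected. The identity $\alpha_u\wedge d\alpha_u=0$ does not require integrability of $E$: flow-invariance alone gives $\iota_X d\alpha_u=\mathcal{L}_X\alpha_u=r_u\alpha_u$, hence $\iota_X(\alpha_u\wedge d\alpha_u)=0$, forcing this top form on the 3-manifold to vanish (and likewise for $\alpha_s$). So the symmetric computation $d\alpha\wedge d\alpha\propto r_u\,ds\wedge\Omega$ goes through \emph{formally} in the partially hyperbolic case just as in the Anosov one. The genuine obstacle is only that $d\alpha_u$ need not exist when $E$ is merely H\"older; the paper handles this by rewriting the Liouville condition as $(\mathcal{L}_X\alpha)\wedge(\mathcal{L}_{\partial_s}\alpha)>0$ (Remark~\ref{lowliouville}), which is well-defined in $\mathcal{LIS}^{k*}$, \emph{before} smoothing via Lemma~\ref{approx2} and invoking openness---not after, as you suggest.
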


\begin{proof}
The construction of symmetric linear and exponential Liouville pairs as highlighted in Section~\ref{3.3} and the beginning of this section yields $C^k$ compact and non-compact LIS s supporting any non-singular partially hyperbolic flow with $C^k$ invariant weak bundles. Using approximations of Section~\ref{3.6} (more specifically, Lemma~\ref{approx2}), we can then achieve $C^\infty$ supporting linear and exponential Liouville pairs, since $X$ is assumed to be $C^\infty$. Hence, we have $(1)\Rightarrow (2)$ and $(1)\Rightarrow (3)$.

The argument for $(2)\Rightarrow (1)$ and $(3)\Rightarrow (1)$ is also parallel with the discussion in Section~\ref{3.3}, but with a bit more care. Let $(\alpha_-,\alpha_+)_{(\lambda_-,\lambda_+)}$ be a (compact or non-compact) LIS and $\Lambda_s:M\rightarrow \mathbb{R}$ the function defined by $\ker{[\lambda_-(\Lambda_s(x),x)\alpha_-+\lambda_+(\Lambda_s(x),x)\alpha_+]}=E$, where $E$ is the weak dominated bundle of a projectively Anosov vector field $X\subset \ker{\alpha_-}\cap \ker{\alpha_+}$ and inducing a dominated splitting $TM/\langle X \rangle \simeq E\oplus F$.

Noting that the vector field $X+ (X\cdot \Lambda_s)\partial_s$ is tangent to the graph $s=\Lambda_s$, the Liouville condition at such submanifold reads
$$0< \iota_X\iota_{\partial s} (d\alpha \wedge d\alpha)|_{s=\Lambda_s}= \iota_{[X+ (X\cdot \Lambda_s)\partial_s]}\iota_{\partial s} (d\alpha \wedge d\alpha)|_{s=\Lambda_s}$$
$$=-2(\mathcal{L}_{\partial_s}\alpha)\wedge(\mathcal{L}_{[X+ (X\cdot \Lambda_s)\partial_s]}\alpha)=2\alpha_s \wedge (\mathcal{L}_X \alpha_u),$$
where $$\alpha_u:=\lambda_-(\Lambda_s(x),x)\alpha_-+\lambda_+(\Lambda_s(x),x)\alpha_+$$ is the restriction of $\alpha$ to the graph $s=\Lambda_s$ (with $\ker{\alpha_u}=E$) and $\alpha_s$ is defined as follows. Considering Convention~\ref{orientation}, we can then write
$$\begin{cases}
\alpha_+=h_1\alpha_u -h_2\bar{\alpha}_s \\
\alpha_-=h_3\alpha_u+h_4\bar{\alpha}_s
\end{cases},$$
for some $\bar{\alpha}_s$ and positive functions $h_1,h_2,h_3,h_4:M\rightarrow \mathbb{R}_{>0}$,
yielding
$$\mathcal{L}_{\partial_s}\alpha=(\partial_s\cdot \lambda_-) \alpha_-+(\partial_s\cdot \lambda_+) \alpha_-=A\alpha_u+[(\partial_s\cdot \lambda_-) h_4-(\partial_s\cdot \lambda_+) h_2]\bar{\alpha}_s,$$
for some function $A$.
This justifies the definition of the 1-form
$$\alpha_s:=[-(\partial_s\cdot \lambda_-) h_4+(\partial_s\cdot \lambda_+) h_2]\bar{\alpha}_s,$$
which is non-vanishing at the graph $s=\Lambda_s$ and defines the same orientation as $\bar{\alpha}_s$, since we have $h_4 \lambda_- -h_2\lambda_+=0$ at such graph, yielding
$$-(\partial_s\cdot \lambda_-) h_4+(\partial_s\cdot \lambda_+) h_2|_{s=\Lambda_s}=h_2[-(\partial_s\cdot \lambda_-) \frac{\lambda_+}{\lambda_-}+(\partial_s\cdot \lambda_+)]|_{s=\Lambda_s}=h_2\lambda_+^2[\partial_s \cdot (\frac{\lambda_+}{\lambda_-})]|_{s=\Lambda_s}>0.$$

Therefore, we have 
$$0< \iota_X\iota_{\partial s} (d\alpha \wedge d\alpha)|_{s=\Lambda_s}=(2r_u)\alpha_s \wedge \alpha_u,$$
where $r_u$ is the expansion rate of $F$ with respect to the norm induced from $\alpha_u$, completing the proof.

\end{proof}

Note that using this,  a characterization of Anosov 3-flows  can be formulated as non-singular vector fields like $X$, such that $X$ and $-X$ are both partially hyperbolic.

We will later see that the function $\Lambda_s:M\rightarrow \mathbb{R}$ defined in the proof of the above theorem in fact plays an important role in the theory as it determines the underlying Liouville skeleton. The above proof then means that the Liouville condition at the skeleton is equivalent to the absolute expansion of dominating bundle and hence, the partial hyperbolicity of the supported projectively Anosov flow.

%%%%%%%%%%%%%%%%%
\subsection{Bi-contact DA deformation and non-Anosov examples of partially hyperbolic flows}\label{3.5}

The goal of this section is to describe a deformation of flow near a periodic orbit of an Anosov flow which deforms it to a non-Anosov partially hyperbolic flow. This deformation is called DA, i.e. {\em derived from Anosov}, and its idea goes back to Smale and was used by Franks-Williams in their construction of non-transitive Anosov flows \cite{da}. In \cite{et}, the authors descrribe a DA deformation by deforming the invariant foliationsm claiming that the such deformation can be done while keeping the dominated splitting. Our construction revisits their idea by showing that the resulting projectively Anosov flow can be achieved by a homotopy of the flow through projectively Anosov flows, or equivalently, can be thought of as a bi-contact homotopy. Our description mostly follows, but generalizes, the description of a general DA deformation in \cite{bby}, while keeping track of the underlying bi-contact deformation.

Recall that projectively Anosov flows are generically axiom A flows and therefore, structurally stable. Let $\gamma$ be a periodic orbit of a structurally stable projectively Anosov flow $X$ and for simplicity, assume the invariant bundles are orientable along such orbit. Let $e^{T\nu}$ and $e^{T\mu}$ be the eigenvalues of the return map, where $T$ is the period of $\gamma$ and $\mu>0>\nu$. In particular, we are assuming $\gamma$ is saddle periodic orbit. After a perturbation of $X$, we can assume its structural stability and the fact that it is $C^1$-linearizable at $\gamma$. Therefore, after a homotopy of the flow on a tubular neighborhood $V$ of $\gamma$,  we can find a coordinate system near $V\rightarrow [-1,1]_x\times [-1,1]_y\times \mathbb{R}_\theta/T\mathbb{Z}$, in which we can write
$$X(x,y,\theta)=\nu x \partial_x +\mu y \partial_y +\partial_\theta.$$

Notice that a supporting bi-contact structure for $X$ can be defined on $V$ by
$$\begin{cases}
\alpha_+:=dy-dx+(\nu x -\mu y)d\theta \\
\alpha_-:=dy+dx+(-\nu x - \mu y)d\theta
\end{cases},$$
noting that
$$\begin{cases}
\ker{\alpha_+}=\langle X, \partial_x +\partial_y \rangle \\
\ker{\alpha_-}=\langle X, -\partial_x +\partial_y \rangle 
\end{cases}$$
and
$$\begin{cases}
\alpha_+ \wedge d\alpha_+=(\mu -\nu) dx \wedge dy \wedge d\theta \\
\alpha_- \wedge d\alpha_-=(\nu-\mu) dx \wedge dy \wedge d\theta
\end{cases}.$$

Now, for any choice of real numbers $0<\eta<1$ and $\bar{\nu}<\mu$, we can define a deformation of $X$, which vanishes outside $V$ and is defined by
$$\begin{cases}
Y_\eta (x,y,\theta):=[(\bar{\nu} -\nu)\phi (\frac{x}{\eta}) \phi (\frac{y}{\eta}) ]x\partial_x \\
X_\eta:=X+Y_\eta
\end{cases},$$
where $\phi(t)=(1-t^2)^2 \mathbf{1}_{[-1,1]}$ is a non-negative real bump function. We show that for any choice of such $\eta$ and $\bar{\nu}$, the resulting flow is projectively Anosov and furthermore, notice that $\lim_{\eta\rightarrow 0} Y_\eta=0$. Therefore, the resulting flow is in fact homotopic, through projectively Anosov flows, to the original flow.

Now, we let $$\begin{cases}
\bar{\alpha}_+:= dy-dx+(\hat{\nu} x -\mu y)d\theta \\
\bar{\alpha}_-:=dy+dx+(-\hat{\nu} x -\mu y)d\theta
\end{cases},$$
where
$$\hat{\nu}=\nu+(\bar{\nu} -\nu)\phi (\frac{x}{\eta}) \phi (\frac{y}{\eta})$$
and note that $\bar{\alpha}_+(X_\eta)=\bar{\alpha}_-(X_\eta)=0$ and $\ker{\bar{\alpha}_+} \pitchfork \ker{\bar{\alpha}_-}$. This is enough for us to show that $\bar{\alpha}_+$ and $\bar{\alpha}_-$ are positive and negative contact structures on $[-\eta ,\eta]_x \times[-\eta,\eta ]_y\times \mathbb{R}_\theta/T\mathbb{Z} \subset [-1,1]_x\times [-1,1]_y\times \mathbb{R}_\theta/T\mathbb{Z}$, respectively, in order to establish $X_\eta$ being projectively Anosov (notice $\bar{\alpha}_{\pm}=\alpha_{\pm}$ outside $[-\eta ,\eta]_x \times[-\eta,\eta ]_y\times \mathbb{R}_\theta/T\mathbb{Z}$). 
We show this claim for $\bar{\alpha}_+$. The computations for $\bar{\alpha}_-$ are similar. We compute
$$\bar{\alpha}_+\wedge d \bar{\alpha}_+ = (\mu-\hat{\nu})dx \wedge dy \wedge d\theta +(dy-dx)\wedge (xd\hat{\nu})\wedge d\theta$$
$$=\bigg[(\mu-\hat{\nu}) -(\frac{\bar{\nu}-\nu}{\eta} )\phi'(\frac{x}{\eta})\phi(\frac{y}{\eta})x-(\frac{\bar{\nu}-\nu}{\eta} )\phi(\frac{x}{\eta})\phi'(\frac{y}{\eta})x \bigg]dx\wedge dy \wedge d\theta$$
$$=\bigg[(\mu-\nu) +(\nu-\bar{\nu})[\phi(\frac{x}{\eta})\phi(\frac{y}{\eta})+\frac{1}{\eta}\phi'(\frac{x}{\eta})\phi(\frac{y}{\eta})x+\frac{1}{\eta}\phi(\frac{x}{\eta})\phi'(\frac{y}{\eta})x] \bigg]dx\wedge dy \wedge d\theta$$
$$=\bigg[(\mu-\nu) +(\nu-\bar{\nu})[(1-\frac{x^2}{\eta^2})^2(1-\frac{y^2}{\eta^2})^2  -\frac{4x^2}{\eta^2} (1-\frac{x^2}{\eta^2})(1-\frac{y^2}{\eta^2})^2 - \frac{4xy}{\eta^2}(1-\frac{x^2}{\eta^2})^2(1-\frac{y^2}{\eta^2})] \bigg]dx\wedge dy \wedge d\theta$$
$$=\bigg[(\mu-\nu) +(\nu-\bar{\nu}) (1-\frac{x^2}{\eta^2})(1-\frac{y^2}{\eta^2})[ (1-\frac{5x^2}{\eta^2})(1-\frac{y^2}{\eta^2})- \frac{4xy}{\eta^2}(1-\frac{x^2}{\eta^2})] \bigg]dx\wedge dy \wedge d\theta$$
$$=\bigg[(\mu-\nu) +(\nu-\bar{\nu})A(\frac{x}{\eta},\frac{y}{\eta}) \bigg] dx\wedge dy \wedge d\theta,$$
where
$$\begin{cases}
A:[-1,1]_x\times[-1,1]_y\rightarrow \mathbb{R} \\
A:=(1-x^2)(1-y^2)\big[ (1-5x^2)(1-y^2)- 4xy(1-x^2)\big] 
\end{cases}.$$

Using direct computation one can show the following claim. See Figure~5.

\begin{claim}
The function $A$ has unique local maxima at $(0,0)$ where $A=1$.
\end{claim}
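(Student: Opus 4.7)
The plan is to verify both parts of the claim --- that the origin is a local maximum with value $1$ and that it is the only local maximum --- by a direct calculus argument on the compact square. First, I note that $A$ vanishes identically on $\partial[-1,1]^2$, since the prefactor $(1-x^2)(1-y^2)$ is zero there. Hence any local maximum with $A>0$ must lie in the open interior $(-1,1)^2$, and the boundary contributes no competitor.

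Next, I would verify that $(0,0)$ is a strict local maximum with value $1$. Using the identities $(1-5x^2)(1-y^2) = (1-x^2)(1-y^2) - 4x^2(1-y^2)$ and $x(1-y^2)+y(1-x^2) = (x+y)(1-xy)$, the expression simplifies to $A = P^2 - 4Px(x+y)(1-xy)$ with $P:=(1-x^2)(1-y^2)$. From this form one reads off $A(0,0)=1$ and the quadratic Taylor expansion $A(x,y)=1-6x^2-4xy-2y^2+O(|(x,y)|^3)$; the associated Hessian has determinant $32>0$ and trace $-16<0$, so it is negative definite and $(0,0)$ is a strict local maximum.

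For uniqueness, I would enumerate the interior critical points. Computing $\nabla A = 0$ and dividing out the nonvanishing interior factors $(1-y^2)$ and $(1-x^2)$ respectively, the system reduces to
\[
\begin{cases}
x(3-5x^2)(1-y^2) + y(1-x^2)(1-5x^2) = 0,\\
y(1-5x^2)(1-y^2) + x(1-x^2)(1-3y^2) = 0.
\end{cases}
\]
A polynomial elimination (via a resultant in one variable, or by case-splitting on whether $x=0$, $y=0$, or one of $1-5x^2$, $1-3y^2$ vanishes) should show that the only real solution in $(-1,1)^2$ is the origin. Combined with the boundary values $A\equiv 0$ on $\partial[-1,1]^2$, this identifies $(0,0)$ as the unique local maximum and fixes the maximum value at $1$.

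The main obstacle will be the elimination in the last step: although elementary, it is mildly tedious and requires some care to rule out spurious branches coming from the vanishing of individual linear or quadratic factors. A cleaner sum-of-squares certificate for $1-A\ge 0$ does not seem to present itself directly (the cross term $-4Px(x+y)(1-xy)$ can have either sign on the square), so a direct critical-point analysis --- or a slightly inelegant case-by-case bound exploiting the symmetry $A(-x,-y)=A(x,y)$ --- appears to be the most efficient route.
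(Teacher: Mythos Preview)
Your verification that $(0,0)$ is a strict local maximum with $A(0,0)=1$ is correct: the rewriting $A = P^2 - 4Px(x+y)(1-xy)$ with $P=(1-x^2)(1-y^2)$ is valid, and the Hessian $\begin{pmatrix}-12 & -4\\ -4 & -4\end{pmatrix}$ at the origin is indeed negative definite. Your critical-point system is also derived correctly.

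The uniqueness step, however, has a genuine gap: the expectation that the origin is the \emph{only} interior critical point is false. A single evaluation (for instance $A(0.7,0.2)\approx -0.82$) shows that $A$ takes negative values in $(-1,1)^2$; since $A\equiv 0$ on the boundary, $A$ must attain a negative global minimum at some interior critical point, and by the symmetry $A(-x,-y)=A(x,y)$ such minima come in pairs away from the origin. Hence your resultant/elimination will produce several real solutions in $(-1,1)^2$, not just $(0,0)$, and you would still need to classify each one (via the Hessian or by inspecting the value of $A$) to rule out any additional local maxima. That is doable, but it is substantially more than the ``mildly tedious'' step you anticipate.

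For comparison, the paper gives no analytical argument at all: it writes ``Using direct computation one can show the following claim. See Figure~5'' and simply displays the graph of $A$. So your approach is already more rigorous in spirit; it just needs the corrected endgame of enumerating \emph{all} interior critical points and verifying that only the origin is a local maximum (or, alternatively, of proving the global bound $A\le 1$ directly, which is what the application actually uses).
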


\begin{figure}
\centering
\begin{tikzpicture}[scale=1.5]
\begin{axis}[
  %  hide axis,
  colormap/cool, 
]
\addplot3[
    mesh,
  samples=50,
    domain=-1:1,
]
{(1-x^2)*(1-y^2)*((1-5*x^2)*(1-y^2)-4*(x*y)*(1-x^2))};

\end{axis}

\end{tikzpicture}
\caption{Graph of the map $A:[-1,1]_x\times[-1,1]_y\rightarrow \mathbb{R}$ }
\end{figure}
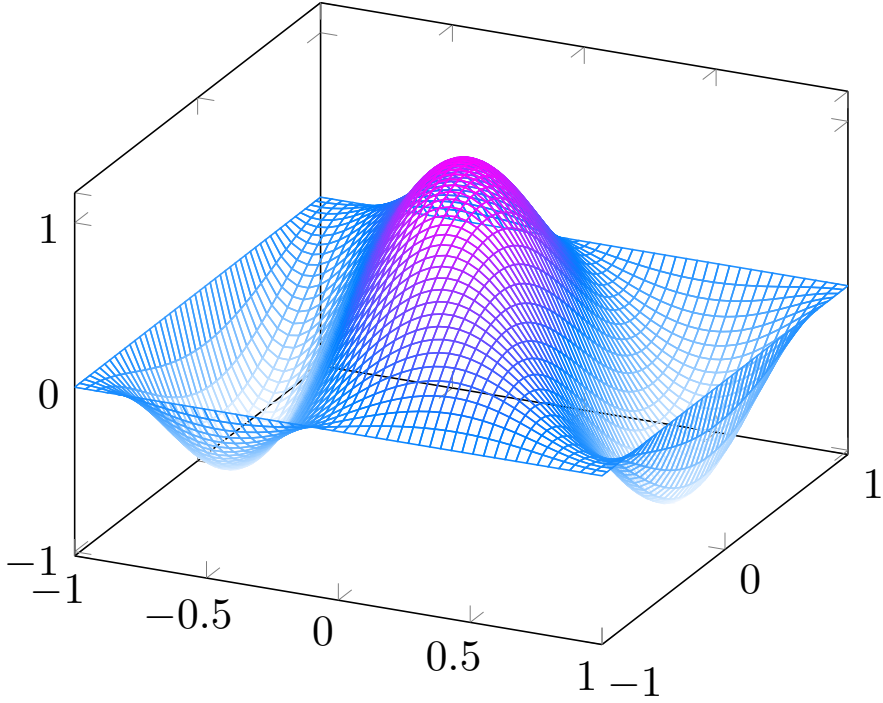

%\begin{figure}[h]
%\centering
%\includegraphics[width=0.7\textwidth]{da.png}
%\caption{Graph of the map $A:[-1,1]_x\times[-1,1]_y\rightarrow \mathbb{R}$}
%\end{figure}

This in fact finishes the proof of the fact that as long as $\mu>\nu$ and $\mu >\bar{\nu}$, the (localized) DA deformation can be done while the preserving the generating vector field in the intersection of a bi-contact structure and hence, preserving projectively Anosovity of the flow via the bi-contact condition (see Lemma~\ref{pabicontact}).

Moreover, we can show that if the original flow is a non-singular partially hyperbolic flow, the resulting flow stays partially hyperbolic. This can be checked in fact using the Liouville geometric description of Theorem~\ref{anosovchar}. More precisely, start with the Anosov flow as above (it is sufficient to assume non-singular partial hyperbolicity) and consider the supporting linear Liouville pair $(\alpha_-,\alpha_+)_l$ which is defined (and is symmetric) near the periodic orbit $\gamma$. To have the claim, we need to show that for any $0<\eta<1$ the 1-form $$\bar{\alpha}:=(1-s)\bar{\alpha}_-+(1+s)\bar{\alpha}_+$$
is a Liouville form on $[-1,1]_s\times M$, which is equivalent to $X_\eta$ being partially hyperbolic. Similar computations as above, we can check the Liouville condition explicitly.
$$d\bar{\alpha}\wedge d\bar{\alpha}= [(1-s)d(-\hat{\nu} x-\mu y)\wedge d\theta+(1+s)d(\hat{\nu} x-\mu y)\wedge d\theta]\wedge ds \wedge [-2dx+2\hat{\nu} x d\theta],$$
where as discussed in Section~\ref{3.4}, we only need to check the Liouville condition at $s=0$. We have 
$$d\bar{\alpha}\wedge d\bar{\alpha}|_{s=0}=[-2\mu dy\wedge d\theta]\wedge ds \wedge [-2dx+2\hat{\nu} x d\theta]$$
$$=4\mu ds\wedge dx \wedge dy \wedge d\theta,$$
i.e. we did not ruin the absolute expansion of $E^u$ in the above deformation. We have proved the following.

\begin{theorem}\label{nonanosov}
The DA deformation of a projectively Anosov (non-singular partially hyperbolic) flow near a saddle periodic orbit can be done through projectively Anosov (non-singular partially hyperbolic) flows. In particular, the DA deformation of an Anosov flow near a periodic orbit results in a non-Anosov non-singular partially hyperbolic flow. 
\end{theorem}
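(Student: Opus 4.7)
The proof organizes the explicit computations that precede the theorem statement into an application of the two characterization results already established. For the projectively Anosov part of the claim, I would invoke the bi-contact characterization of projective Anosovity (Lemma~\ref{pabicontact}) by exhibiting $(\bar\alpha_-, \bar\alpha_+)$ as a transverse bi-contact structure whose intersection contains $X_\eta$; for the partially hyperbolic claim, I would upgrade this to a (linear) supporting LIS on $[-1,1]_s \times M$ and invoke Theorem~\ref{generalchar}. Outside the tubular neighborhood $V$ nothing changes, so both claims need only be verified on $V$.

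For the bi-contact step, the inclusions $X_\eta \subset \ker\bar\alpha_+ \cap \ker\bar\alpha_-$ and the transversality $\ker\bar\alpha_+ \pitchfork \ker\bar\alpha_-$ are immediate from the explicit formulas. The essential content is the contactness of $\bar\alpha_+$ (and symmetrically the negative contactness of $\bar\alpha_-$), which by the author's computation reduces to the strict positivity of $(\mu-\nu) + (\nu-\bar\nu) A(x/\eta, y/\eta)$ on $[-1,1]^2$, where $A$ is the displayed polynomial. The delicate step---and the one I expect to be the main obstacle---is the pointwise bound $A \le 1$ on $[-1,1]^2$ with equality only at the origin. This is a bounded finite-dimensional optimization which I would settle by locating the interior critical points of $A$ and analyzing its one-variable restrictions on the four boundary edges (where one of the factors $(1-x^2),(1-y^2)$ forces a zero or a lower-order extremum). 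Given the bound, the contact condition follows whenever $\mu > \nu$ and $\mu > \bar\nu$, since on the range $A \in [0,1]$ the expression $(\mu-\nu) + (\nu-\bar\nu)A$ is a convex combination of $\mu-\nu$ and $\mu-\bar\nu$, both positive.

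For non-singular partial hyperbolicity, I would start from a symmetric supporting linear Liouville pair $(\alpha_-,\alpha_+)_l$ for $X$ and check that the deformed pair $(\bar\alpha_-,\bar\alpha_+)_l$ yields a Liouville form $\bar\alpha = (1-s)\bar\alpha_- + (1+s)\bar\alpha_+$ on $[-1,1]_s\times M$. Appealing to the proof of Theorem~\ref{generalchar}, it then suffices to verify the Liouville condition at the skeleton graph $s = \Lambda_s$, which in the symmetric coordinates near $\gamma$ is $s=0$; the author's explicit computation gives $d\bar\alpha \wedge d\bar\alpha|_{s=0} = 4\mu\, ds\wedge dx\wedge dy\wedge d\theta > 0$, delivering the absolute expansion of the strong unstable direction. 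Since $\lim_{\eta \to 0} Y_\eta = 0$, letting $\eta$ decrease to $0$ provides the claimed homotopy $X_\eta \leadsto X$ through projectively Anosov (respectively non-singular partially hyperbolic) flows. Finally, for the non-Anosov assertion, starting from Anosov $X$ and choosing $\bar\nu > 0$ turns the original saddle $\gamma$ into a fully repelling periodic orbit of $X_\eta$; no Anosov flow admits such an orbit (the strong stable bundle would have to vanish there), so $X_\eta$ is genuinely non-Anosov while remaining non-singular partially hyperbolic by the preceding step.
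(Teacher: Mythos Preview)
Your proposal is essentially correct and follows the paper's own argument step by step: bi-contact verification via the bound on $A$ to get projective Anosovity (Lemma~\ref{pabicontact}), the Liouville check at the skeleton to upgrade to partial hyperbolicity (Theorem~\ref{generalchar}), the limit $\eta\to 0$ for the homotopy, and the repelling orbit for the non-Anosov conclusion. Two small points deserve attention.

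First, your claim that $A$ ranges in $[0,1]$ is incorrect: the factor $(1-5x^2)$ makes $A$ negative on a large portion of $[-1,1]^2$ (this is visible in the paper's Figure~5), so the convex-combination phrasing fails. What the paper actually establishes and uses is only the upper bound $A\le 1$, and that is all you need in the genuine DA regime $\bar\nu>\nu$: then $\nu-\bar\nu<0$, so $(\mu-\nu)+(\nu-\bar\nu)A \ge (\mu-\nu)+(\nu-\bar\nu)=\mu-\bar\nu>0$. Rephrase your contactness step accordingly.

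Second, your assertion that the skeleton of the deformed pair sits at $s=0$ is correct but not automatic from ``symmetric coordinates'': one must observe that $\bar\alpha_-+\bar\alpha_+=2(dy-\mu y\,d\theta)$ is unchanged by the deformation (the $\hat\nu x$ terms cancel), and that its kernel $\langle\partial_x,\,\partial_\theta+\mu y\partial_y\rangle$ is still $X_\eta$-invariant because the perturbation $Y_\eta$ lies entirely in the $\partial_x$ direction. The paper glosses over this point as well, but it is what justifies checking the Liouville condition only at $s=0$.
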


%%%%%%%%%
\subsection{Approximation lemmas}\label{3.6}

In this section, we briefly discuss the approximation ideas which was used in the results in Section~\ref{3.3}, as well the throughout this paper. As we have observed so far, it is often useful to consider LIS s with low regularity (see Remark~\ref{lowliouville}). The following lemmas are useful to approximate LIS s of low regularity with ones of higher regularity. 

The key idea of these approximations boils down to Lemma~4.3 in \cite{hoz3}.

\begin{lemma}\label{approx1}
Let $X^t$ be the flow generated by a non-singular $C^{k+1}$ vector field on a manifold of arbitrary dimension $M$ ($k\geq 0$). Assume $f:M\rightarrow \mathbb{R}$ is a $C^k$ function on $M$ such that $\mathcal{L}_X f$ is also $C^k$. Then, for any $\epsilon>0$, there exists a $C^{k+1}$ function $\tilde{f}:M\rightarrow \mathbb{R}$ such that
$$||f-\tilde{f}||_{C^k}<\epsilon\ \ \ \ \ \ \text{and} \ \ \ \ \ \ ||\mathcal{L}_X f-\mathcal{L}_X\tilde{f}||_{C^k}<\epsilon,$$
where $||.||_{C^k}$ is the $C^k$-norm.
\end{lemma}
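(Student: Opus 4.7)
The plan is a classical mollification-and-partition-of-unity argument, with the crucial twist that one mollifies only in directions transverse to $X$, since that is where extra regularity is needed. Because $X$ is non-singular and $C^{k+1}$, cover $M$ by finitely many flow-box neighborhoods $U_i \cong (-\epsilon_i, \epsilon_i)_t \times D_i$ in which $X = \partial_t$, and fix a smooth partition of unity $\{\rho_i\}$ subordinate to an inner shrinking of this cover. In such coordinates, the joint hypothesis on $f$ and $\partial_t f = \mathcal{L}_X f$ says exactly that every derivative $\partial_t^a \partial_x^\beta f$ of total order $a+|\beta|\le k+1$ exists and is continuous, except possibly the pure transverse derivative of top order ($a=0$, $|\beta|=k+1$).

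In each chart $U_i$, set $\tilde f_i^{\delta} := \phi_\delta *_x f$, convolving only in the transverse variable with a standard compactly supported mollifier $\phi_\delta$ on $D_i$. Transverse differentiation is absorbed into $\phi_\delta$, so $\tilde f_i^{\delta}$ is $C^\infty$ in $x$; the $\partial_t$ derivative commutes with the convolution, giving $\partial_t \tilde f_i^{\delta} = \phi_\delta *_x (\partial_t f)$, which is itself $C^k$ in all variables. Together these recover precisely the missing top transverse derivative, so $\tilde f_i^{\delta}$ is $C^{k+1}$ on $U_i$. Standard mollifier estimates then give, as $\delta\to 0$, both $\|\tilde f_i^\delta - f\|_{C^k(U_i')} \to 0$ and $\|\partial_t \tilde f_i^\delta - \partial_t f\|_{C^k(U_i')} \to 0$ on any relatively compact $U_i' \subset U_i$, because $f$ and $\partial_t f$ were already $C^k$ to begin with.

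Take $\tilde f := \sum_i \rho_i \tilde f_i^{\delta_i}$; this is a finite sum of products of $C^\infty$ and $C^{k+1}$ functions, hence $C^{k+1}$ on $M$. The bound $\|\tilde f - f\|_{C^k} < \epsilon$ is immediate from the triangle inequality applied to $\tilde f - f = \sum_i \rho_i(\tilde f_i^{\delta_i} - f)$. The parallel bound for $\mathcal{L}_X\tilde f - \mathcal{L}_X f$ is the subtle step: expanding the derivative and exploiting the cancellation $\sum_i \mathcal{L}_X\rho_i = \mathcal{L}_X(1) = 0$ yields
$$\mathcal{L}_X\tilde f - \mathcal{L}_X f \;=\; \sum_i \rho_i\bigl(\mathcal{L}_X\tilde f_i^{\delta_i}-\mathcal{L}_X f\bigr) \;-\; \sum_i (\mathcal{L}_X\rho_i)\bigl(\tilde f_i^{\delta_i}-f\bigr),$$
in which both sums are controlled in $C^k$ by the two convergences of the previous paragraph, so both can be forced below $\epsilon/2$ by choosing each $\delta_i$ small enough.

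The main technical point is exactly this cancellation identity: a naive differentiation of the partition-of-unity sum would produce a term $\sum_i (\mathcal{L}_X\rho_i)\, f$ that is not small in $C^k$ (since $f$ itself is not), and only the vanishing of $\sum_i \mathcal{L}_X \rho_i$ allows one to trade $f$ for the small quantity $f - \tilde f_i^{\delta_i}$ inside that sum. Once this is noted, everything else reduces to the textbook fact that convolution with a mollifier converges in $C^k$ on $C^k$ functions, applied separately to $f$ and to $\partial_t f$.
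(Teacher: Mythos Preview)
Your argument is correct and matches the method the paper points to: the paper does not give a self-contained proof here but refers to Lemma~4.3 of \cite{hoz3}, stating that the same ``careful partition of unity on flow boxes'' works verbatim with the $C^k$-norm in place of the $C^0$-norm. Your write-up is exactly that argument, with the transverse-only mollification supplying the missing top-order derivative and the identity $\sum_i \mathcal{L}_X\rho_i = 0$ handling the cross term. One cosmetic correction: in your displayed formula the sign on the second sum should be $+$, not $-$, since $\mathcal{L}_X\tilde f - \mathcal{L}_X f = \sum_i (\mathcal{L}_X\rho_i)(\tilde f_i^{\delta_i}-f) + \sum_i \rho_i(\mathcal{L}_X\tilde f_i^{\delta_i}-\mathcal{L}_X f)$; this does not affect the estimate.
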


The above lemma is proved in \cite{hoz3} in the case $k=0$ and using a careful partition of unity on flow boxes. But the exact same proof yields the above, if one replace the $C^0$-norm with the $C^k$-norm for arbitrary $k\geq 0$. One can expect to generalize the same approximation lemma, possibly with more care on the regularity degrees, for sections of any Banach bundle over a manifold $M$ and an action induced from a flow on $M$. However, for the purposes of this paper, we only require such approximations on 1-forms containing the flow direction, in order to provide approximations of the weak invariant plane fields and the related geometric objects we construct from them. This has been carried out explicitly in \cite{hoz3} and more abstractly and generally in \cite{massoni}. Here we quote Lemma~A.2 of \cite{massoni} which builds upon the above lemma.

\begin{lemma}\label{approx2}
Let $k\geq 0$ and $X$ be a non-singular $C^{k+1}$ vector field on a manifold of arbitrary dimension $M$. Then, for any $0\leq l \leq k$, the space of $C^{k+1}$ 1-forms annihilating $X$, denoted by $Annih^{1;k+1}(M;X)$, is dense in $Annih^{1;l*}(M;X)$, where $Annih^{1;l*}(M;X)$ is the space of $C^l$ 1-forms annihilating $X$ whose Lie derivative along the flow is also $C^l$.
 \end{lemma}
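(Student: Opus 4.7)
The plan is to reduce to Lemma~\ref{approx1} applied componentwise in flow box charts, and then globalize via a $C^\infty$ partition of unity. Since $X$ is non-vanishing and $C^{k+1}$, I first choose a locally finite cover of $M$ by flow box charts $\phi_\nu : U_\nu \xrightarrow{\sim} (-\epsilon_\nu, \epsilon_\nu)_t \times V_\nu$ conjugating $X$ to $\partial_t$, with $V_\nu \subset \mathbb{R}^{\dim M -1}$ carrying transverse coordinates $(x^\nu_1, \dots, x^\nu_{\dim M - 1})$. In such a chart, any 1-form annihilating $X$ on $U_\nu$ has a unique expression $\alpha|_{U_\nu} = \sum_i f^\nu_i \, dx^\nu_i$, with Lie derivative $\mathcal{L}_X \alpha|_{U_\nu} = \sum_i (\partial_t f^\nu_i)\, dx^\nu_i$. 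Consequently, the hypothesis $\alpha \in Annih^{1;l*}(M;X)$ translates locally to the following: every coefficient $f^\nu_i$ is $C^l$ and every $\mathcal{L}_X f^\nu_i = \partial_t f^\nu_i$ is also $C^l$, which is precisely the input format of Lemma~\ref{approx1}.

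Next, I would invoke Lemma~\ref{approx1} on each coefficient $f^\nu_i$ to produce a $C^{k+1}$ approximation $\tilde{f}^\nu_i$ for which both $\tilde{f}^\nu_i - f^\nu_i$ and $\mathcal{L}_X(\tilde{f}^\nu_i - f^\nu_i) = \partial_t(\tilde{f}^\nu_i - f^\nu_i)$ are arbitrarily $C^l$-small. Setting $\tilde{\alpha}_\nu := \sum_i \tilde{f}^\nu_i\, dx^\nu_i$ yields a $C^{k+1}$ 1-form on $U_\nu$ that still annihilates $X$ exactly and approximates $\alpha$ and $\mathcal{L}_X \alpha$ locally in $C^l$. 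Picking a $C^\infty$ partition of unity $\{\rho_\nu\}$ subordinate to $\{U_\nu\}$, I would then glue by setting $\tilde{\alpha} := \sum_\nu \rho_\nu\, \tilde{\alpha}_\nu$. Since each summand is $C^{k+1}$ and $\rho_\nu \tilde{\alpha}_\nu$ inherits the annihilation of $X$ from $\tilde{\alpha}_\nu$, the form $\tilde{\alpha}$ lies in $Annih^{1;k+1}(M;X)$.

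The main technical step will be verifying the $C^l$-closeness globally after gluing. Expanding
$$\mathcal{L}_X \tilde{\alpha} - \mathcal{L}_X \alpha = \sum_\nu (X\cdot \rho_\nu)(\tilde{\alpha}_\nu - \alpha) + \sum_\nu \rho_\nu\,(\mathcal{L}_X \tilde{\alpha}_\nu - \mathcal{L}_X \alpha),$$
I would bound each summand by a uniform multiple of the pointwise local errors, using that the cover has bounded multiplicity. The potentially worrying factor $X\cdot \rho_\nu$ is harmless because it multiplies $\tilde{\alpha}_\nu - \alpha$, which is itself $C^l$-small by construction; an analogous and strictly simpler estimate controls $\tilde{\alpha} - \alpha$ itself. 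Hence the only real obstacle is routine combinatorial bookkeeping of the cover, and nothing beyond Lemma~\ref{approx1} is needed at the analytic heart of the argument.
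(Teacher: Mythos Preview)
The paper does not give its own proof here; it simply quotes the statement as Lemma~A.2 of \cite{massoni}, remarking that it ``builds upon'' Lemma~\ref{approx1}. Your reduction to Lemma~\ref{approx1} via flow boxes and a partition of unity is therefore precisely the intended mechanism, and the gluing estimate you sketch is correct.

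One regularity detail needs fixing, though. Flow box charts for a $C^{k+1}$ vector field are only $C^{k+1}$ diffeomorphisms, so the transverse differentials $dx^\nu_i$, regarded as 1-forms on $M$, are only $C^k$; hence $\tilde\alpha_\nu = \sum_i \tilde f^\nu_i\, dx^\nu_i$ and the glued $\tilde\alpha$ land in $Annih^{1;k}(M;X)$, one degree short of the target. The remedy is to replace the flow-box coframe by a $C^{k+1}$ local frame for the annihilator of $X$ built from a smooth coframe of $T^*M$: if $\theta_0,\dots,\theta_{n-1}$ are smooth with $\theta_0(X)\neq 0$, then $\omega_i := \theta_i - \tfrac{\theta_i(X)}{\theta_0(X)}\theta_0$ are $C^{k+1}$ 1-forms annihilating $X$, and writing $\alpha = \sum_i c_i \omega_i$ one checks (using that $\mathcal{L}_X \omega_i$ is $C^k$ and again annihilates $X$) that each $c_i$ and each $\mathcal{L}_X c_i$ is $C^l$. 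A second minor point: Lemma~\ref{approx1} as literally stated takes $C^k$ input to $C^{k+1}$ output, so invoking it on $C^l$ coefficients with $l<k$ a priori yields only $C^{l+1}$; you should note that its proof (local mollification) in fact gives output as regular as the chart allows, with closeness in the $C^l$ norm. Under the paper's standing assumption that $X$ is $C^\infty$, both issues disappear.
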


   \begin{figure}[h]
\centering
\begin{overpic}[width=0.5\textwidth]{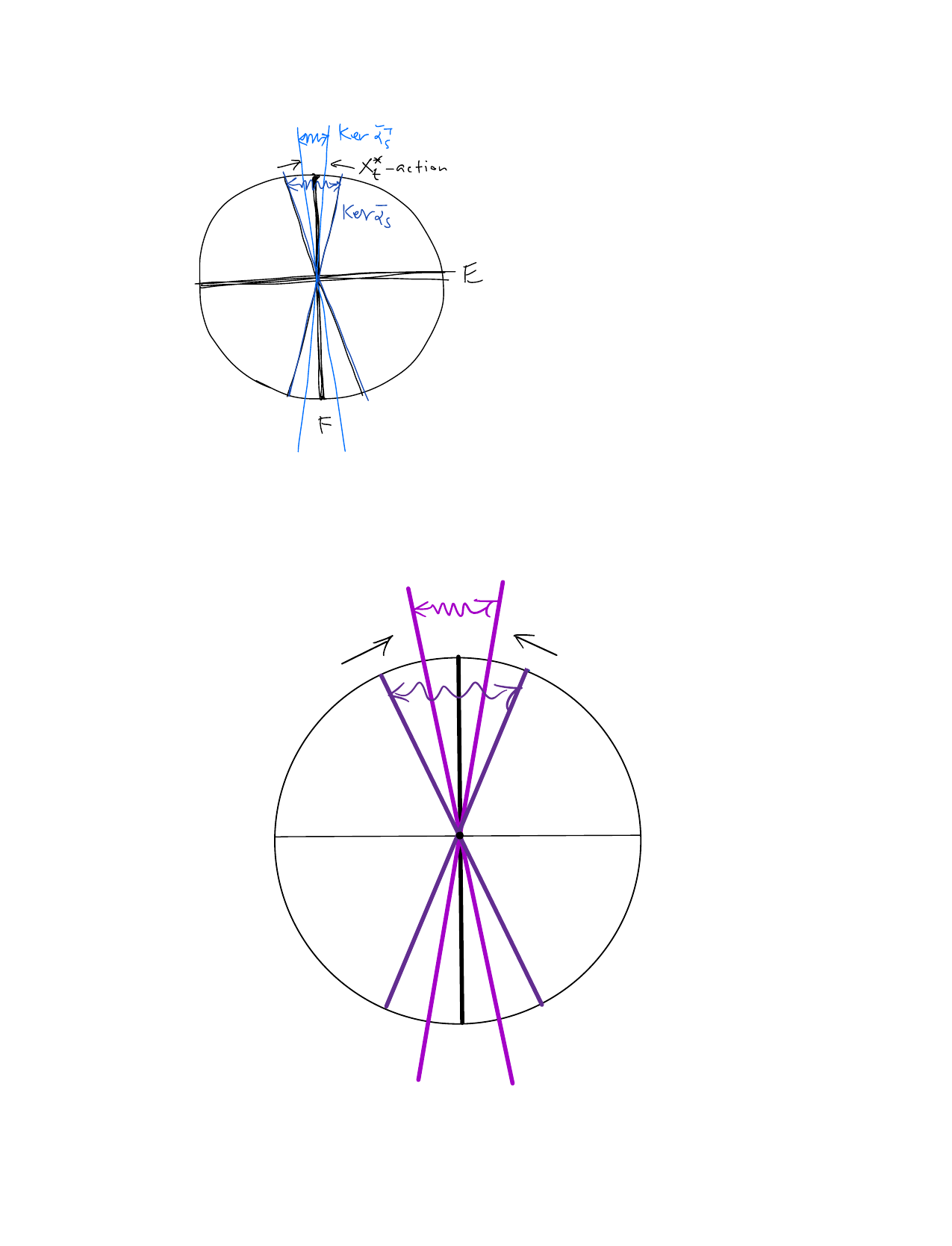}
    \put(112,20){$F$}
    \put(133,140){$\ker{\bar{\alpha}_s}$}
        \put(136,200){$\ker{\bar{\alpha}^T_s}$}
          \put(155,174){$X^T_*$-action}
        \put(190,104){$E$}

  \end{overpic}

\caption{Using the domination to approximate the weak invariant bundles}
\end{figure}
 
 \begin{remark}
 It is insightful to note that in our context, i.e. when $X$ is a $C^{k+1}$ (projectively) Anosov flow on a 3-manifold, explicit approximations of 1-forms annihilating $X$ is possible as shown in \cite{hoz3}. The rough idea is as follows. Since, the 1-forms $\alpha_s$ and $\alpha_u$ annihilating the weak invariant bundles, which we assume to be $C^l$ for some $l>0$, form a basis for $Annih^{1;l*}(M;X)$, it is enough to give an appropriate approximation of such 1-forms with ones which are as the regular as $X$. This is possible as follow. We start with a {\em bad} approximation of $\alpha_s$ with a 1-form $\bar{\alpha}_s$ which is as regular as $X$ and annihilates it (this is possible by performing the approximation in $TM/\langle X \rangle$). Then, one can replace such 1-from with $\bar{\alpha}_s^T:=\frac{1}{A_T}X^{T*}\bar{\alpha}_s$ for some large $T>0$ and an appropriate renormalization function $A_T$. This means that applying the action of the flow, not only pushes $\bar{\alpha}_s$ $C^l$-closer to $\alpha_s$, but also {\em damps down} its $C^{k+1}$ variations along the flow. See Figure~6.% \textcolor{red}{explain better}
  \end{remark}

%%%%%%%%%%%%%%%%%%%%
\section{Dynamical rigidity of Liouville interpolations}\label{4}

The main purpose of this section is to take a closer look at the Liouville geometry and dynamics of non-Weinstein Liouville manifolds (or domains) one can construct given an Anosov 3-flow, as discussed in Section~\ref{3}. We will carry explicit computations in the generalized platform of Liouville interpolation systems we developed previously. The bottom line is that Liouville dynamics and geometry is strongly determined by the underlying flow, and the interpolation of contact structures through the weak invariant bundles, regardless of all the other choices. In terms of dynamical rigidity, this is encapsulated in the fact that in these examples, the Liouville dynamics can be understood in terms of a normally repellent 3-dimensional skeleton diffeomorphic to the manifold ambient to the flow. In view of the Moser technique, the dynamical rigidity can translated into geometric rigidity, manifested strict Liouville embeddings os such objects into one another. We will discuss the geometric rigidity in Section~\ref{5}.

We begin this section with some elementary observations.

%%%%%%%%%%%%%%

\subsection{Normal foliations and elementary normal isotopies: the symmetries of Liouville interpolations}\label{4.1}

We begin this section with some elementary observations about the normal foliations and the space of Liouville interpolation systems associated with a flow. %\textcolor{red}{comment on low regularity}

\subsubsection{Weak normal foliations as invariant strictly exact Lagrangian foliations}\label{4.1.1}

As we will see more clearly throughout this section, given a LIS $(\alpha_-,\alpha_+)_{(\lambda_-,\lambda_+)}$ on $W:=\mathbb{R}_s\times M$, the 2-dimensional foliations tangent to the plane field $\langle\partial_s , X\rangle$, where $X$ is vector field supported by $(\alpha_-,\alpha_+)_{(\lambda_-,\lambda_+)}$, plays a special role in the theory. 

\begin{definition}
Using the above notation, we denote the plane field $\langle\partial_s , X\rangle$ by $E^{wn}$ and call it the {\em weak normal bundle}. The corresponding foliation $\mathcal{F}^{wn}$ is called the weak normal foliations.
\end{definition}

The terminology used in this definition is thanks to the fact that as we will see in Section~\ref{4.2}, such foliation contains and is invariant under the corresponding Liouville vector field and furthermore, is laminated by a 1-dimensional foliation, which we will call the {\em strong normal bundle} and denote by $E^n$, transverse to and invariant under the Liouville flow.

First and foremost, we observe that such foliation is in fact a foliation of $(W,\alpha)$ by (strict) exact Lagrangians. To see this, just note that
$$L(\alpha_-,\alpha_+)_{(\lambda_-,\lambda_+)}\big|_{E^{wn}}=0.$$

In Section~\ref{7}, we will see that the existence of these foliations is not a coincidence by construction, but in fact the result of Liouville dynamics near the skeleton.

Notice that the leaf space of such foliation is the same as the orbit space of $X$. In particular, a periodic orbit $\gamma$ of $X$ corresponds to an exact Lagrangian (non-compact) annulus $\mathbb{R}_s\times \gamma$, while non-periodic orbits are represented by leaves of this foliation diffeomorphic to a plane. We notice that by construction, the tangent field of this foliation is always as regular as the generating vector field $X$, even when dealing with LISs with lower regularity. Moreover, this foliation is preserved under Liouville deformations as long as we fix the underlying flow $X$, a fact which is crucial when establishing geometric rigidity in Section~\ref{5}.

 An elementary observation we prove here is the fact this foliation is invariant under Liouville dynamics. We will later explicitly compute the Liouville vector field in order to reach a complete understanding of Liouville dynamics.

\begin{lemma}\label{ypresl}
Let $Y$ be the Liouville vector field for $\alpha:=L(\alpha_-,\alpha_+)_{(\lambda_-,\lambda_+)}$ for some $(\alpha_-,\alpha_+)_{(\lambda_-,\lambda_+)} \in \mathcal{LIS}(M)$ and assume $\alpha$ is $C^1$. We have $Y\subset \langle X,\partial_s\rangle$.
\end{lemma}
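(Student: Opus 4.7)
The plan is to identify $E^{wn} := \langle X, \partial_s \rangle$ as a Lagrangian subbundle of $(W, d\alpha)$, and then show $Y$ is symplectically orthogonal to $E^{wn}$, so that $Y \subset (E^{wn})^{\perp_{d\alpha}} = E^{wn}$ since Lagrangian planes are self-orthogonal in a 4-dimensional symplectic vector space.

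First, I would verify that $\alpha$ vanishes on $E^{wn}$. Since $\alpha_\pm$ are pulled back from $M$, they annihilate $\partial_s$; and since $X \subset \ker \alpha_- \cap \ker \alpha_+$ by the definition of a supporting LIS, they annihilate $X$ as well. Hence
\[
\alpha(X) = \lambda_- \alpha_-(X) + \lambda_+ \alpha_+(X) = 0, \qquad \alpha(\partial_s) = \lambda_- \alpha_-(\partial_s) + \lambda_+ \alpha_+(\partial_s) = 0.
\]
Next, I would check $d\alpha(X, \partial_s) = 0$. Expanding $d\alpha = d\lambda_- \wedge \alpha_- + \lambda_- d\alpha_- + d\lambda_+ \wedge \alpha_+ + \lambda_+ d\alpha_+$ and evaluating on $(X, \partial_s)$, the $d\lambda_\pm \wedge \alpha_\pm$ terms vanish because $\alpha_\pm$ annihilate both $X$ and $\partial_s$. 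For the terms $\lambda_\pm d\alpha_\pm (X, \partial_s)$, I would use the Cartan-type identity together with $[X, \partial_s] = 0$ (as $X$ is pulled back from $M$ and $\partial_s$ is tangent to the $\mathbb{R}$ factor) and $\alpha_\pm(X) = \alpha_\pm(\partial_s) = 0$ to conclude each piece vanishes. Therefore $E^{wn}$ is isotropic, and being $2$-dimensional in the $4$-dimensional symplectic manifold $(W, d\alpha)$, it is Lagrangian.

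Now the conclusion is immediate from the defining equation $\iota_Y d\alpha = \alpha$: pairing with $X$ and $\partial_s$ gives
\[
d\alpha(Y, X) = \alpha(X) = 0, \qquad d\alpha(Y, \partial_s) = \alpha(\partial_s) = 0,
\]
so $Y$ lies in the symplectic orthogonal of $E^{wn}$. Since $E^{wn}$ is Lagrangian, $(E^{wn})^{\perp_{d\alpha}} = E^{wn}$, which yields $Y \subset \langle X, \partial_s \rangle$, as claimed.

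There is essentially no obstacle here: the argument is purely linear-algebraic once $E^{wn}$ is seen to be Lagrangian, and the verification of the Lagrangian property reduces to bookkeeping based on $\alpha_\pm$ being pulled back from $M$ and annihilating $X$. The mild subtlety is that the statement is phrased for $C^1$ Liouville forms, so I would note explicitly that the $C^{k*}$ regularity hypothesis on the LIS (in particular, differentiability of $\lambda_\pm$ and $\alpha_\pm$ along $X$ and $\partial_s$, see Remark~\ref{lowliouville}) is enough to make every pairing above well-defined, so the conclusion also extends to the low-regularity setting.
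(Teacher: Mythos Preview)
Your proof is correct and takes a slightly different organizing route from the paper's. The paper characterizes $E^{wn}$ as the common kernel of the two linearly independent 1-forms $\alpha$ and $\mathcal{L}_{\partial_s}\alpha$, then shows $\alpha(Y)=0$ (trivially) and $(\mathcal{L}_{\partial_s}\alpha)(Y)=0$ via a short Lie-derivative computation passing through $[\partial_s,Y]$. You instead first verify directly that $E^{wn}$ is Lagrangian (computing $d\alpha(X,\partial_s)=0$) and then conclude $Y\in(E^{wn})^{\perp_{d\alpha}}=E^{wn}$ from $d\alpha(Y,X)=\alpha(X)=0$ and $d\alpha(Y,\partial_s)=\alpha(\partial_s)=0$. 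The two are essentially equivalent once one notes $(\mathcal{L}_{\partial_s}\alpha)(Y)=d\alpha(\partial_s,Y)$ by Cartan since $\alpha(\partial_s)=0$; your version is more symplectic-geometric, while the paper's has the side benefit of immediately setting up the basis $(\mathcal{L}_X\alpha,\mathcal{L}_{\partial_s}\alpha)$ of $\mathrm{Annih}^1(E^{wn})$ that is used right after the lemma to write $\alpha=f\,\mathcal{L}_X\alpha+g\,\mathcal{L}_{\partial_s}\alpha$ and read off $Y=fX+g\partial_s$.
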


\begin{proof}
First notice $\alpha(Y)=d\alpha(Y,Y)=0$. Since $\alpha$ and $\mathcal{L}_{\partial_s} \alpha$ are linearly independent, it is enough to show $(\mathcal{L}_{\partial_s} \alpha)(Y)=0$. Compute
$$\iota_Yd\alpha=\alpha \Rightarrow \iota_{[\partial_s,Y]}d\alpha+\iota_Y(d\mathcal{L}_{\partial_s}\alpha)=\mathcal{L}_{\partial_s}\alpha$$
$$\Rightarrow (\mathcal{L}_{\partial_s}\alpha)(Y)=d\alpha([\partial_s,Y],Y)=-\alpha([\partial_s,Y])=d\alpha(\partial_s,Y)+\partial_s\cdot \alpha(Y)+Y\cdot \alpha(\partial_s)=0.$$

The last equality follows, since each term equals $0$.
\end{proof}

This means that the Liouville vector field can be written as $Y=fX+g\partial_s$, where a simple computation gives
$$\alpha=\iota_Y d\alpha=f\mathcal{L}_X\alpha +g \mathcal{L}_{\partial_s}\alpha.$$
In other words, the Liouville condition implies the $\mathcal{L}_X\alpha$ and $\mathcal{L}_{\partial_s}\alpha$ form a basis for $Annih^1(\mathbb{R}\times M ;E^{wn})$. Therefore, the above means that the coordinate functions of writing $\alpha$ in such basis is the same as the coordinate functions of $Y$ in the basis $(X,\partial_s)$.

\begin{remark}\label{lowvf}
The above computation also lets us make sense of the Liouville vector field in the low regularity setting of Remark~\ref{lowliouville}. More precisely, for any $\alpha=(\alpha_-,\alpha_+)_{(\lambda_-,\lambda_+)} \in \mathcal{LIS}^{k*}(M)$, the $C^k$ 1-forms $\mathcal{L}_X \alpha$ and $\mathcal{L}_{\partial_s}\alpha$ form a basis for $Annih^1(\mathbb{R}\times M ;E^{wn})$ and we can write $\alpha=f\mathcal{L}_X\alpha +g \mathcal{L}_{\partial_s}\alpha$ for some functions $f$ and $g$. We call the vector field $Y:=fX+g\partial_s$ the Liouville vector field for $\alpha$ and we have
$$\mathcal{L}_Y \alpha=\alpha.$$
\end{remark}

We have shown the following.

\begin{corollary}\label{lioureg}
For any $k>0$, the map 
$$\begin{cases}
F:\mathcal{LIS}^{k*}(M)\rightarrow \mathcal{\chi}^{k}(\mathbb{R}\times M) \\
F(\alpha_-,\alpha_+)_{(\lambda_-,\lambda_+)}=Y
\end{cases},$$
sending a $C^{k*}$ LIS to its Liouville vector field is continuous, where $\mathcal{\chi}^{k}(\mathbb{R}\times M)$ is the space of $C^k$ vector fields on $\mathbb{R}\times M$.
\end{corollary}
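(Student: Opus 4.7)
The plan is to write down $Y$ as an explicit algebraic expression in the data of the LIS, and then read off continuity term by term. By Lemma~\ref{ypresl} (together with Remark~\ref{lowvf} for the low regularity case), we have $Y=fX+g\partial_s$, where $f,g$ are the unique functions on $\mathbb{R}\times M$ satisfying
\[
\alpha \;=\; f\,\mathcal{L}_X\alpha \;+\; g\,\mathcal{L}_{\partial_s}\alpha, \qquad \alpha:=L(\alpha_-,\alpha_+)_{(\lambda_-,\lambda_+)}.
\]
Since $X$ and $\partial_s$ are fixed smooth vector fields, and the map $(\alpha_-,\alpha_+,\lambda_-,\lambda_+)\mapsto Y$ decomposes via $f$ and $g$, it suffices to show that $f$ and $g$ depend continuously on the LIS in the $C^k$ topology. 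The map $X\mapsto X$ and $\partial_s\mapsto\partial_s$ being smooth, once we have continuity of $f,g$ into $C^k(\mathbb{R}\times M)$, the assembly $Y=fX+g\partial_s$ lands continuously in $\chi^{k}(\mathbb{R}\times M)$.

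Next, I would solve for $f$ and $g$ explicitly. Wedging the defining equation with $\mathcal{L}_{\partial_s}\alpha$ and $\mathcal{L}_X\alpha$ respectively, and noting that $\alpha$, $\mathcal{L}_X\alpha$, and $\mathcal{L}_{\partial_s}\alpha$ all annihilate $E^{wn}=\langle X,\partial_s\rangle$ (so all ensuing 2-forms lie pointwise in the $1$-dimensional space $\bigwedge^2\!\mathrm{Annih}^{1}(E^{wn})$), yields
\[
f\,\bigl(\mathcal{L}_X\alpha\wedge\mathcal{L}_{\partial_s}\alpha\bigr) \;=\; \alpha\wedge\mathcal{L}_{\partial_s}\alpha, \qquad
g\,\bigl(\mathcal{L}_X\alpha\wedge\mathcal{L}_{\partial_s}\alpha\bigr) \;=\; \mathcal{L}_X\alpha\wedge\alpha.
\]
To make sense of the division, I would fix once and for all a smooth nonvanishing 2-vector field $\xi$ on $\mathbb{R}\times M$ transverse to $E^{wn}$ (e.g.\ $\xi=\partial_{x_1}\wedge\partial_{x_2}$ in local coordinates, glued by a partition of unity), and pair both sides with $\xi$, giving
\[
f \;=\; \frac{(\alpha\wedge\mathcal{L}_{\partial_s}\alpha)(\xi)}{(\mathcal{L}_X\alpha\wedge\mathcal{L}_{\partial_s}\alpha)(\xi)},
\qquad
g \;=\; \frac{(\mathcal{L}_X\alpha\wedge\alpha)(\xi)}{(\mathcal{L}_X\alpha\wedge\mathcal{L}_{\partial_s}\alpha)(\xi)}.
\]
The denominator is, up to the fixed nonvanishing factor $\xi$, precisely $V(\alpha_-,\alpha_+)_{(\lambda_-,\lambda_+)}$ from Lemma~\ref{v}, which is nowhere zero by the Liouville condition as reformulated in Remark~\ref{lowliouville}.

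The last step is to verify continuity. Expanding $\alpha=\lambda_-\alpha_-+\lambda_+\alpha_+$, one computes
\[
\mathcal{L}_X\alpha \;=\; \lambda_-\mathcal{L}_X\alpha_- + \lambda_+\mathcal{L}_X\alpha_+ + (X\!\cdot\!\lambda_-)\alpha_- + (X\!\cdot\!\lambda_+)\alpha_+,
\]
\[
\mathcal{L}_{\partial_s}\alpha \;=\; \lambda_-\mathcal{L}_{\partial_s}\alpha_- + \lambda_+\mathcal{L}_{\partial_s}\alpha_+ + (\partial_s\!\cdot\!\lambda_-)\alpha_- + (\partial_s\!\cdot\!\lambda_+)\alpha_+.
\]
Each ingredient on the right-hand side is, by the very definition of the $C^{k*}$ condition in Remark~\ref{lowliouville}, a $C^{k}$ object depending continuously on $(\alpha_-,\alpha_+,\lambda_-,\lambda_+)$ in the $\mathcal{LIS}^{k*}(M)$-topology (note $\mathcal{L}_{\partial_s}\alpha_\pm=0$ since $\alpha_\pm$ is pulled back from $M$). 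Algebraic operations (products and wedges) are continuous on $C^k$, and evaluation on a smooth $\xi$ preserves $C^k$. The numerators in the formulas for $f$ and $g$ are thus continuous maps $\mathcal{LIS}^{k*}(M)\to C^{k}(\mathbb{R}\times M)$. Because the denominator is nowhere zero on any given LIS and varies continuously, a standard argument (locally bounding $|(\mathcal{L}_X\alpha\wedge\mathcal{L}_{\partial_s}\alpha)(\xi)|$ away from zero on compact sets, together with the $C^k$ version of the quotient rule) shows that the ratios $f$ and $g$ are continuous into $C^k(\mathbb{R}\times M)$. Hence $Y=fX+g\partial_s$ depends continuously on the LIS in the $C^k$ topology.

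The main obstacle I anticipate is purely technical: controlling the denominator in low regularity. Because we only assume $C^k$ on the forms and interpolation functions, the denominator is \emph{a priori} only $C^k$, not smooth, so one must check that the openness of the nonvanishing condition is uniform enough on $\mathbb{R}\times M$ (which is non-compact) for continuity of the quotient in the global $C^k$ topology. Here the finite-type character of the associated Liouville manifold --- to be made precise in Section~\ref{4} via Lemma~\ref{liscomp} --- together with the product structure $\mathbb{R}\times M$ lets one reduce the estimate to compact slices $[-N,N]\times M$, so continuity is no more delicate than on a compact 4-manifold.
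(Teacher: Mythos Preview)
Your proposal is correct and follows essentially the same approach as the paper: the paper treats this corollary as an immediate consequence of the preceding Lemma~\ref{ypresl} and Remark~\ref{lowvf}, simply stating ``We have shown the following'' without further argument, since $Y=fX+g\partial_s$ with $f,g$ the coordinates of $\alpha$ in the $C^k$ basis $(\mathcal{L}_X\alpha,\mathcal{L}_{\partial_s}\alpha)$ of $\mathrm{Annih}^1(E^{wn})$. Your explicit wedge-and-divide formulas and the expansion of the Lie derivatives simply spell out what the paper leaves implicit; the only caveat is that $X$ is not literally fixed---it is the $C^\infty$ vector field spanning $\ker\alpha_-\cap\ker\alpha_+$, which itself varies continuously with the LIS data---but this does not affect your argument.
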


In order to understand the space of LIS s associated with a fixed Anosov flow, we next need to understand the space of Liouville deformations fixing the weak normal foliation. This can achieved using some maps (isotopies) we introduce next, which we call {\em elementary normal maps (isotopies)}, as they preserve the weak normal foliation leaf-wise: Change of basis, horizontal and scaling maps (isotopies). We will see in Section~\ref{6} that their explicit description makes these isotopies helpful in the low regularity setting where the Moser technique cannot be applied as usual.

%%%%%%%%%%%%%%
\subsubsection{Change of basis maps}\label{4.1.2}

We can naturally change the representation of a Liouville interpolation by a change of bi-contact forms. That is, for any $z_-,z_+:M\rightarrow \mathbb{R}$, we can write

$$L(\alpha_-,\alpha_+)_{(\lambda_-,\lambda_+)}=\lambda_-\alpha_- + \lambda_+ \alpha_+=(e^{-z_-}\lambda_-)(e^{z_-}\alpha_- )+( e^{-z_+}\lambda_+ )(e^{z_+}\alpha_+)$$
$$=L(e^{z_-}\alpha_-,e^{z_+}\alpha_+)_{(e^{-z_-}\lambda_-,e^{-z_+}\lambda_+)}.$$

This means that assuming $z_-$ and $z_+$ are $C^{k*}$, they induce a $C^{k*}$ action on $\mathcal{LIS}^{k*}(X)$ and we have the following.

\begin{lemma}
For $C^{k*}$ functions $z_-,z_+:M\rightarrow \mathbb{R}$, we have the {\em change of basis map} defined as
$$\begin{cases}
I_{(z_-,z_+)}: \mathcal{LIS}^{k*}(X) \rightarrow \mathcal{LIS}^{k*}(X)\\
I_{(z_-,z_+)} (\alpha_-,\alpha_+)_{(\lambda_-,\lambda_+)}=(e^{z_-}\alpha_-,e^{z_+}\alpha_+)_{(e^{-z_-}\lambda_-,e^{-z_+}\lambda_+)}
\end{cases}$$
and we have
$$LI_{(z_-,z_+)}=L.$$
\end{lemma}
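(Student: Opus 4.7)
The plan is to split the proof into (i) verifying well-definedness of $I_{(z_-,z_+)}$ as a self-map of $\mathcal{LIS}^{k*}(M)$, and (ii) establishing the identity $LI_{(z_-,z_+)} = L$. Part (ii) is exactly the algebraic manipulation displayed in the paragraph immediately preceding the lemma, namely
\[
(e^{-z_-}\lambda_-)(e^{z_-}\alpha_-) + (e^{-z_+}\lambda_+)(e^{z_+}\alpha_+) = \lambda_- \alpha_- + \lambda_+ \alpha_+,
\]
so no further work is needed for it. The substance is therefore part (i), which amounts to running through the conditions in Definition~\ref{lis} (with the $C^{k*}$ refinements from Remark~\ref{lowliouville}).

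First I would check the contact-form side. Since $e^{z_\pm}$ are nowhere-vanishing, we have $\ker(e^{z_\pm}\alpha_\pm) = \ker(\alpha_\pm)$, so the kernels are unchanged. This immediately transports the bi-contact condition, the transversality $\ker(e^{z_-}\alpha_-) \pitchfork \ker(e^{z_+}\alpha_+)$, and the containment $X \subset \ker(e^{z_-}\alpha_-) \cap \ker(e^{z_+}\alpha_+)$ from the original data. The low-regularity contact condition of Remark~\ref{lowcontact}, namely $\alpha_\pm \wedge \mathcal{L}_X \alpha_\pm \neq 0$, is preserved because $\mathcal{L}_X (e^{z_\pm}\alpha_\pm) = e^{z_\pm}(\mathcal{L}_X z_\pm)\alpha_\pm + e^{z_\pm} \mathcal{L}_X \alpha_\pm$, so modulo $\alpha_\pm$ the Lie derivative is rescaled by $e^{z_\pm}$, giving
\[
(e^{z_\pm}\alpha_\pm) \wedge \mathcal{L}_X(e^{z_\pm}\alpha_\pm) = e^{2z_\pm}\, \alpha_\pm \wedge \mathcal{L}_X \alpha_\pm \neq 0.
\]

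Next I would check the interpolation functions. Positivity of $e^{-z_\pm}\lambda_\pm$ is immediate. The key point is the monotonicity condition (3) of Definition~\ref{lis}. Since $z_-, z_+$ depend only on $x \in M$, we compute
\[
\partial_s \cdot \ln\frac{e^{-z_+}\lambda_+}{e^{-z_-}\lambda_-}(s,x) = \partial_s \cdot \ln\frac{\lambda_+}{\lambda_-}(s,x),
\]
which is positive by hypothesis; moreover, the map $s \mapsto \ln(\lambda_+/\lambda_-)(s,x) + (z_-(x) - z_+(x))$ is still a positive reparametrization of $\mathbb{R}$ since it differs from a positive reparametrization by an $s$-independent constant.

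Finally, I would verify that the $C^{k*}$ regularity class is preserved. The assumption on $z_\pm$ is that they are $C^{k*}$ functions on $M$ (so their $X$-derivatives are also $C^k$), and $C^{k*}$ is closed under multiplication, addition, and composition with $\exp$; since $\partial_s z_\pm = 0$, no condition along $\partial_s$ is affected. Hence $e^{z_\pm}\alpha_\pm$ and $e^{-z_\pm}\lambda_\pm$ remain $C^{k*}$. The main (and only) obstacle worth flagging is this bookkeeping of the $C^{k*}$ class, since $C^{k*}$ is a nonstandard regularity measuring both $C^k$ behavior and $C^k$ behavior of Lie derivatives along $X$ and $\partial_s$; the observation that $z_\pm$ are pulled back from $M$ makes everything go through without a hitch.
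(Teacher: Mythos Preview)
Your proposal is correct and follows the same approach as the paper: the identity $LI_{(z_-,z_+)}=L$ is exactly the algebraic manipulation given just before the lemma, and the paper's only explicit verification of well-definedness is the observation $\ln\frac{e^{-z_+}\lambda_+}{e^{-z_-}\lambda_-}=\ln\frac{\lambda_+}{\lambda_-}+z_--z_+$ (stated immediately after the lemma), which is precisely your check of condition~(3). Your treatment is in fact more thorough than the paper's, which leaves the preservation of the contact conditions and the $C^{k*}$ bookkeeping implicit.
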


 Note that the functions $e^{z_-}\alpha_-$ and $e^{z_+}\alpha_+$ satisfy all the conditions in Definition~\ref{lis}, since $$\ln{\frac{e^{-z_+}\lambda_+}{e^{-z_-}\lambda_-}}=\ln{\frac{\lambda_+}{\lambda_-}}+z_--z_+.$$

Using such maps we can write any Liouville interpolation in terms of an arbitrary bi-contact form $(\alpha_-,\alpha_+)$ and the map in the above lemma isotopic, through change of basis maps, to $Id$ via $I_{(tz_-,tz_+)}$ for $t\in [0,1]$.

%%%%%%%%%%%%%%%%%%%
\subsubsection{Horizontal maps}\label{4.1.3}

Consider a LIS $(\alpha_-,\alpha_+)_{(\lambda_-,\lambda_+)}$ defined on $\mathbb{R}\times M$. We call maps of the form 
$$\begin{cases}
H_\psi:\mathbb{R}\times M\rightarrow \mathbb{R}\times M
\\
H_\psi(s,x)=(\psi(s,x),x)
\\
\partial_s\cdot \psi>0
\end{cases}$$
{\em horizontal}, where $\psi(.,x):\mathbb{R}_s\times M \rightarrow \mathbb{R}$ is an oriented reparametrization of $\mathbb{R}_s$ for any $x\in M$. In other words, a horizontal map shifts points of $\mathbb{R}_s\times M$ in the direction of $\partial_s$.

Note that $$dH_{\psi }=\begin{bmatrix}

\partial_s \cdot \psi \ \ \ \ \ B \\
0 \ \ \ \ \ \ \ Id_{M}
\end{bmatrix}.$$
and we have $H_\psi^*\alpha_\pm =\alpha_\pm$, since $\alpha_\pm(\partial_s)=0$. Therefore, for $\alpha=\lambda_-\alpha_-+\lambda_+\alpha_+$ we have $$H_\psi^*\alpha=( \lambda_-\circ H_\psi^{-1})\alpha_- +(\lambda_+\circ H_\psi^{-1})\alpha_+ ,$$
where $H_\psi^{-1}(s,x):=(\psi^{-1}(x,s),x)$ with $\psi^{-1}(.,x)$ is the inverse of $\psi(.,x)$ for any $x\in M$. In particular, the result of applying such map is another LIS $(\alpha_-,\alpha_+)_{( \lambda_-\circ {H_\psi}^{-1}, \lambda_+\circ (H_\psi)^{-1})}$ and therefore, assuming $\psi$ is $C^{k*}$, it induces an action on $\mathcal{LIS}^{k*}(X)$ which we denote by $H_\psi^*$.

\begin{lemma}
The horizontal map
$$\begin{cases}
H_\psi^*:\mathcal{LIS}^{k*}(M)\rightarrow \mathcal{LIS}^{k*}(M) \\
H_\psi^*(\alpha_-,\alpha_+)_{( \lambda_-, \lambda_+)}=(\alpha_-,\alpha_+)_{( \lambda_-\circ H_\psi^{-1}, \lambda_+\circ H_\psi^{-1})}
\end{cases}$$
is continuous, if $\psi$ is $C^{k*}$.
\end{lemma}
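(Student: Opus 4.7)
The plan is to verify that $H_\psi^*$ is well-defined as a map into $\mathcal{LIS}^{k*}(M)$, and then check continuity, by reducing everything to the regularity and positivity features of precomposition by $H_\psi^{-1}$. Note first that the inverse $H_\psi^{-1}(s,x) = (\psi^{-1}(\,\cdot\,,x)(s),x)$ exists and is itself a horizontal $C^{k*}$ diffeomorphism of $\mathbb{R}\times M$, because $\psi(\,\cdot\,,x)$ is a positive reparametrization of $\mathbb{R}$ for each fixed $x$ and the fiberwise inverse is $C^{k*}$ in both variables by the implicit function theorem applied fiberwise.

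First I would verify that the image lies in $\mathcal{LIS}^{k*}(M)$. The bi-contact pair $(\alpha_-,\alpha_+)$ is untouched, so condition (1) of Definition~\ref{lis} is immediate. Positivity and $C^{k*}$-regularity of the new interpolation functions follow from $\lambda_\pm$ being positive and $C^{k*}$ together with $H_\psi^{-1}$ being $C^{k*}$; the chain rule identities
\begin{align*}
\mathcal{L}_{\partial_s}(\lambda_\pm\circ H_\psi^{-1})(s,x) &= (\partial_s\lambda_\pm)(H_\psi^{-1}(s,x))\cdot \partial_s\psi^{-1}(s,x),\\
\mathcal{L}_X(\lambda_\pm\circ H_\psi^{-1})(s,x) &= (\mathcal{L}_X\lambda_\pm)(H_\psi^{-1}(s,x)) + (\partial_s\lambda_\pm)(H_\psi^{-1}(s,x))\cdot (X\cdot \psi^{-1})(s,x),
\end{align*}
make the required $C^{k*}$ bounds explicit. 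For the reparametrization condition (3), note that
$$\ln\!\bigg(\frac{\lambda_+\circ H_\psi^{-1}}{\lambda_-\circ H_\psi^{-1}}\bigg)(s,x) = \ln\!\bigg(\frac{\lambda_+}{\lambda_-}\bigg)\bigl(\psi^{-1}(\,\cdot\,,x)(s),x\bigr)$$
is the fiberwise composition of two positive reparametrizations of $\mathbb{R}$, hence itself a positive reparametrization.

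Next I would verify the Liouville condition. Since $\alpha_\pm$ is pulled back from $M$ via the projection $\pi:\mathbb{R}\times M \to M$ and $\pi\circ H_\psi^{-1} = \pi$, we have $(H_\psi^{-1})^*\alpha_\pm = \alpha_\pm$, so $(H_\psi^{-1})^*L(\alpha_-,\alpha_+)_{(\lambda_-,\lambda_+)}$ equals exactly $L(\alpha_-,\alpha_+)_{(\lambda_-\circ H_\psi^{-1},\lambda_+\circ H_\psi^{-1})}$. Since $H_\psi^{-1}$ is an orientation-preserving diffeomorphism of $\mathbb{R}\times M$, pullback preserves the Liouville condition $d\alpha\wedge d\alpha >0$ in the smooth case, and in the $C^{k*}$ setting of Remark~\ref{lowliouville} the condition $\mathcal{L}_X\alpha\wedge \mathcal{L}_{\partial_s}\alpha > 0$ is preserved because $dH_\psi^{-1}$ sends $\langle X,\partial_s\rangle$ to itself with positive Jacobian (explicitly, $\partial_s$ maps to a positive multiple of $\partial_s$, and $X$ maps to $X$ modulo $\partial_s$).

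Finally, for continuity, the map factors through precomposition by the fixed $C^{k*}$ diffeomorphism $H_\psi^{-1}$ on the interpolation functions, while leaving the contact forms unchanged. Precomposition by a fixed $C^{k*}$ map is continuous in the $C^{k*}$ topology on functions with prescribed Lie derivatives along $X$ and $\partial_s$, by the chain rule formulas above applied uniformly. The main subtlety I expect is the book-keeping for the $C^{k*}$ topology when $k<\infty$: one needs that the cross-term $(X\cdot\psi^{-1})$ appearing in the formula for $\mathcal{L}_X(\lambda_\pm\circ H_\psi^{-1})$ is genuinely $C^{k*}$ in $(s,x)$, which is exactly the $C^{k*}$-hypothesis on $\psi$ being applied to its inverse; this is a routine but careful calculation.
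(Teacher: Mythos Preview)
Your proposal is correct and follows the same underlying idea as the paper: the horizontal map is induced by pullback along an orientation-preserving $C^{k*}$ diffeomorphism that fixes $\alpha_\pm$, so the LIS conditions and the Liouville inequality transfer directly, and continuity is precomposition by a fixed map. The paper in fact gives no separate proof of this lemma; it simply records the pullback identity $H_\psi^*\alpha_\pm=\alpha_\pm$ and the resulting formula for $H_\psi^*\alpha$ in the paragraph preceding the statement, and treats continuity as evident. Your write-up supplies the details (chain-rule formulas for $\mathcal{L}_X$ and $\mathcal{L}_{\partial_s}$, verification of condition~(3), and the low-regularity form of the Liouville condition) that the paper leaves implicit.
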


Notice that the map in the above lemma is isotopic, through horizontal maps, to $Id$ via $H_{t\psi+(1-t)s}$ for $t\in [0,1]$.

Here, we would like to point out using the two introduced maps above, i.e. a change of basis map and a horizontal map, we can write transform any LIS to a {\em distorted} exponential one. 

More precisely, consider an LIS $(\alpha_-,\alpha_+)_{(\lambda_-,\lambda_+)}$. By Theorem~4.13 of \cite{massoni}, there exists an exponential Liouville pair $(\bar{\alpha}_-,\bar{\alpha}_+)_e$ such that $\ker{\bar{\alpha}_-}=\ker{\alpha_-}$ and $\ker{\bar{\alpha}_+}=\ker{\alpha_+}$. Therefore, there is a change of basis map such that $I_{(f,g)}(\alpha_-,\alpha_+)_{(\lambda_-,\lambda_+)}=(\bar{\alpha}_-,\bar{\alpha}_+)_{(\bar{\lambda}_-,\bar{\lambda}_+)}$.

Then, use the horizontal map $H_\psi$, where $\psi(s,x)= \frac{1}{2}\ln{(\frac{\bar{\lambda}_+}{\bar{\lambda}_-})}$. Notice that we are using the fact that $s\mapsto \ln{(\frac{\bar{\lambda}_+}{\bar{\lambda}_-})}$ is an oriented reparametrization of $\mathbb{R}_s$ for any $x\in M$, as required in Definition~\ref{lis}. As a result, we get an LIS of the form $(\bar{\alpha}_-,\bar{\alpha}_+)_{(e^{-s+w},e^{s+w})}$, where $w=\frac{1}{2}\ln{(\bar{\lambda}_-\bar{\lambda}_+)}:\mathbb{R}\times M \rightarrow \mathbb{R}$. We have
$$L(\bar{\alpha}_-,\bar{\alpha}_+)_{(e^{-s+w},e^{s+w})}=e^{-s+w(s)}\alpha_- + e^{s+w(s)}\alpha_+=e^{w(s)}[e^{-s}\alpha_- + e^{s}\alpha_+].$$

Now, that our Liouville form is only a scaling away from one induced from an exponential Liouville pair.

%%%%%%%%%%%%%%%%%
\subsubsection{Scaling maps}\label{4.1.4}

Another useful operation for us is scaling a Liouville form without changing its kernel. The following general lemma provides the basic properties of such maps.

\begin{lemma}\label{genscale}
Let $(W,\alpha_0)$ be a Liouville manifold with the associated Liouville vector field $Y_0$. Also, assume $\alpha_1:=e^f \alpha_0$ is a Liouville form on $W$ (i.e. only assuming $d\alpha_1\wedge d\alpha_1>0$). Then, for any $0\leq t \leq 1$

(a) ${Y_0}\cdot f>-1$ and the Liouville vector field of $\alpha_t:=e^{tf}\alpha_0$ is $Y_t:=\frac{Y_0}{1+t{Y_0}.f}$ (in particular, $Y_t$ is conjugate to $Y_0$);

(b) $(W,\alpha_t)$ is a Liouville manifold;

(c) the map $\psi_t:W\rightarrow W$ defined by $\psi_t(x):=\phi_{tf}(x)$ is an isotopy of Liouville manifolds, i.e. $\psi_t^* \alpha_t=\alpha_0$.
\end{lemma}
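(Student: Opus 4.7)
My plan is to establish (a), (b), (c) in sequence, with (a) reducing to a single key identity for $d\alpha_t\wedge d\alpha_t$. For (a), I expand $d\alpha_t=e^{tf}(t\,df\wedge\alpha_0+d\alpha_0)$ and square it; the $(df\wedge\alpha_0)\wedge(df\wedge\alpha_0)$ term vanishes by antisymmetry, and the cross term is handled by rewriting $\alpha_0\wedge d\alpha_0=\tfrac{1}{2}\iota_{Y_0}(d\alpha_0\wedge d\alpha_0)$ (using $\alpha_0=\iota_{Y_0}d\alpha_0$) and then applying the Leibniz rule for $\iota_{Y_0}$ together with the vanishing of the $5$-form $df\wedge d\alpha_0\wedge d\alpha_0$ on the $4$-manifold $W$. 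The resulting identity
$$d\alpha_t\wedge d\alpha_t=e^{2tf}(1+t\,Y_0\!\cdot\!f)\,d\alpha_0\wedge d\alpha_0$$
immediately yields $Y_0\!\cdot\!f>-1$ from the Liouville hypothesis at $t=1$, and a brief monotonicity argument in $t$ upgrades this to $1+t\,Y_0\!\cdot\!f>0$ throughout $[0,1]$. The proposed formula $Y_t=Y_0/(1+t\,Y_0\!\cdot\!f)$ is then verified by direct substitution into $\iota_{Y_t}d\alpha_t$, using $\alpha_0(Y_0)=0$ and $\iota_{Y_0}d\alpha_0=\alpha_0$. Conjugacy of $Y_t$ and $Y_0$ is transparent since they differ by a positive function and thus share integral curves (an explicit conjugating diffeomorphism is produced by (c)).

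For (b), I must verify convexity and completeness of $(W,\alpha_t)$. Convexity reuses the $\alpha_0$-Liouville exhaustion $W=\bigcup_k W_k$: since $Y_t$ is a positive rescaling of $Y_0$, it remains positively transverse to every $\partial W_k$, and $\alpha_t|_{\partial W_k}=e^{tf}\alpha_0|_{\partial W_k}$ is still contact because contactness is preserved under multiplication by a positive function. For completeness, I compare elapsed times along a common $Y_0$-orbit: if $s$ denotes $Y_0$-time then the $Y_t$-time is $\tau(s)=\int_0^s(1+t\,Y_0\!\cdot\!f)\,ds'$, a non-negative combination of the $Y_0$- and $Y_1$-elapsed times along that orbit. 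Since both diverge as $s\to\pm\infty$ (by completeness of $Y_0$ and of $Y_1$, which is granted by the Liouville hypothesis on $\alpha_1$), so does $\tau$, which gives completeness of $Y_t$.

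For (c), I apply the Moser trick. Writing $\psi_t$ as the flow of a time-dependent vector field $v_t$ and differentiating $\psi_t^*\alpha_t=\alpha_0$ gives $\mathcal{L}_{v_t}\alpha_t+\partial_t\alpha_t=0$; choosing the ``exact-free'' primitive (automatically consistent since $\iota_{-fY_t}\alpha_t=0$) reduces this to $\iota_{v_t}d\alpha_t=-f\alpha_t=-f\iota_{Y_t}d\alpha_t$, whence $v_t=-fY_t$ by non-degeneracy of $d\alpha_t$. Since $v_t$ is a scalar multiple of $Y_0$, the isotopy $\psi_t$ slides each point along its $Y_0$-orbit, and solving the resulting scalar ODE for the elapsed $Y_0$-time recovers the stated formula $\psi_t(x)=\phi_{tf}(x)$. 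The step I expect to require the most care, and the genuine obstacle, is checking that this implicitly defined ODE admits a unique global solution on all of $[0,1]$ for every starting point and that the resulting $\psi_t$ is a diffeomorphism; for this I will invoke completeness of $Y_0$ from (b) at $t=0$, positivity of $1+t\,Y_0\!\cdot\!f$ from (a) to bound $v_t$ on compact sets, and the inverse function theorem applied to the implicit equation relating $\psi_t(x)$ to $x$.
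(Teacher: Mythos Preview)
Your treatment of (a) is correct and close to the paper's. The paper stops at the linear-in-$t$ expression
\[
e^{-2tf}\,d\alpha_t\wedge d\alpha_t \;=\; d\alpha_0\wedge d\alpha_0 \;+\; 2t\,df\wedge\alpha_0\wedge d\alpha_0
\]
and concludes $\alpha_t$ is Liouville by convexity in $t$; you go one step further, rewriting the cross term as $(Y_0\!\cdot\!f)\,d\alpha_0\wedge d\alpha_0$ to obtain the factored identity. These are equivalent, and the verification of $Y_t$ by direct substitution is the same in both. For (c), your Moser derivation of $v_t=-fY_t$ is correct and more detailed than the paper, which simply posits $\psi_t(x)=\phi_{tf}(x)$ (with $\phi$ the $Y_0$-flow) and asserts the conclusion; your observation that $v_t$ slides points along $Y_0$-orbits recovers essentially the same map, and you are right to flag the global-existence/diffeomorphism check as the crux.

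The gap is in (b). You write that completeness of $Y_1$ is ``granted by the Liouville hypothesis on $\alpha_1$,'' but the lemma expressly assumes only $d\alpha_1\wedge d\alpha_1>0$; completeness of $Y_1$ is part of the \emph{conclusion} of (b), not a hypothesis. Your time-comparison $\tau(s)=(1-t)s+t\cdot[\text{$Y_1$-elapsed time}]$ therefore works for $t<1$ (the $(1-t)s$ term already diverges by completeness of $Y_0$ alone, and the other summand is nonnegative), but at $t=1$ it becomes circular: asking $\int_0^s(1+Y_0\!\cdot\!f)\,ds'$ to diverge is exactly the completeness you are trying to establish. The paper avoids this by reversing the logical order of (b) and (c): it first produces the conjugating map $\psi_t$ using only completeness of $Y_0$ (which \emph{is} given, since $(W,\alpha_0)$ is a Liouville manifold), and then reads off completeness of $Y_t$ from the conjugacy. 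Your own plan for (c) already invokes only completeness of $Y_0$, so it is the right engine for (b) --- you just need to run it before (b), not after.
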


\begin{proof}
Compute $$d\alpha_t=e^{tf} d\alpha_0 +te^{tf}df\wedge \alpha_0$$
$$\Rightarrow e^{-2tf}d\alpha_t \wedge d\alpha_t = d\alpha_0\wedge d\alpha_0 +2t df\wedge \alpha_0\wedge d\alpha_0.$$
Therefore, $\alpha_0$ and $\alpha_1$ being Liouville implies the same for $\alpha_t$.

Now, we have $$\iota_{Y_0} d\alpha_t =e^{tf} \iota_{Y_0} d\alpha_0+(te^{tf}{Y_0}\cdot f) \alpha_0$$
$$=(1+t{Y_0}\cdot f)e^{tf}\alpha_0=(1+t{Y_0}\cdot f)\alpha_t.$$

Since $d\alpha_t$ is non-degenerate, we have $1+t{Y_0}\cdot f>0$ for $0\leq t \leq 1$ or equivalently, ${Y_0}\cdot f>-1$ (notice that this is automatically satisfied at points where $\alpha_0$, and consequently $Y_0$ vanish). Furthermore, letting $Y_t:=\frac{Y_0}{1+t{Y_0}\cdot f}$ yields $\iota_{Y_t}d\alpha_t=\alpha_t$ as desired. This moreover implies that $Y_t$ and $Y_0$ are conjugate via the map $\psi_t(x):=\phi_{tf}(x)$. Therefore, if $Y_0$ is complete, so is $Y_t$ for all $0\leq t \leq 1$. Hence, $(W,\alpha_t)$ is a Liouville manifold for all $0\leq t \leq 1$. Furthermore, $\psi_t^* \alpha_t=\alpha_0$ provides an isotopy of Liouville manifolds.
\end{proof}

In the context of Liouville interpolation systems, suppose we have a Liouville manifold of the form $(\mathbb{R}\times M,\alpha)$, this justifies defining a {\em scaling function} associated to the function $f:\mathbb{R}\times M \rightarrow \mathbb{R}\times M$ to be
$$\begin{cases}
S_f: \mathbb{R}\times M\rightarrow  \mathbb{R}\times M \\
S_f(s,x)=Y^{f}(s,x)
\end{cases},$$
where $Y^t$ is the flow generated by the Liouville vector field, which implies $S_f^*\alpha=e^f\alpha$. Note that by part (a) of the previous lemma, not all functions $f$ are admissible in this construction, but they form a contractible open subset of all functions on $\mathbb{R}\times M$. Now, if $\alpha=L(\alpha_-,\alpha_+)_{( \lambda_-, \lambda_+)}$ and $f$ are $C^{k*}$, this induces an action on $\mathcal{LIS}^{k*}(X)$ which we denote by $S_f^*$.

\begin{lemma}
The scaling map
$$\begin{cases}
S_f^*:\mathcal{LIS}^{k*}(M)\rightarrow \mathcal{LIS}^{k*}(M) \\
S_f^*(\alpha_-,\alpha_+)_{( \lambda_-, \lambda_+)}=(\alpha_-,\alpha_+)_{(e^f \lambda_-,e^f  \lambda_+)}
\end{cases}$$
is continuous, if $f$ is $C^{k*}$ and is admissible in the sense of the Lemma~\ref{genscale}.
\end{lemma}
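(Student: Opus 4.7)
The plan is to reduce the proof to combining Lemma~\ref{genscale} with a direct verification that the defining axioms of Definition~\ref{lis} are preserved under scaling, after which continuity is a routine consequence. First I would unwind the definition of $S_f$ to write
\[
S_f^*\bigl(L(\alpha_-,\alpha_+)_{(\lambda_-,\lambda_+)}\bigr)=e^f\bigl(\lambda_-\alpha_-+\lambda_+\alpha_+\bigr)=(e^f\lambda_-)\alpha_-+(e^f\lambda_+)\alpha_+=L(\alpha_-,\alpha_+)_{(e^f\lambda_-,e^f\lambda_+)},
\]
so the target of $S_f^*$ is algebraically the claimed LIS and the only real content is to check that it genuinely lies in $\mathcal{LIS}^{k*}(M)$.

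Next I would go through the conditions of Definition~\ref{lis} for $(\alpha_-,\alpha_+)_{(e^f\lambda_-,e^f\lambda_+)}$. The bi-contact data $(\alpha_-,\alpha_+)$ is left untouched, and positivity of $e^f\lambda_\pm$ is immediate from positivity of $\lambda_\pm$. The crucial observation for axiom (3) is the algebraic cancellation $\ln(e^f\lambda_+/e^f\lambda_-)=\ln(\lambda_+/\lambda_-)$: the reparametrization condition on the ratio is inherited from the input for free, which is precisely why scaling is the right kind of symmetry of the LIS framework. The Liouville condition $d(e^f\alpha)\wedge d(e^f\alpha)>0$ on $\mathbb{R}\times M$ is exactly the admissibility hypothesis from Lemma~\ref{genscale}, and completeness of the new Liouville vector field follows from parts (a) and (c) of that lemma, where $\psi_1$ conjugates $Y$ to the new Liouville vector field $Y/(1+Y\cdot f)$, so completeness transfers from $Y$.

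Then I would verify the regularity claim that $e^f\lambda_\pm\in C^{k*}$. Since $f,\lambda_\pm\in C^{k*}$, Leibniz's rule applied to both $\mathcal{L}_X$ and $\mathcal{L}_{\partial_s}$ expresses $\mathcal{L}_X(e^f\lambda_\pm)=e^f(\mathcal{L}_X f)\lambda_\pm+e^f\mathcal{L}_X\lambda_\pm$ and the analogous $\partial_s$-derivative purely in terms of $C^k$ quantities, so both lie in $C^k$, giving the $C^{k*}$ condition of Remark~\ref{lowliouville}. For continuity, recall that $\mathcal{LIS}^{k*}(M)$ inherits its topology as an open subset of the product space including two copies of $C^{k*}(\mathbb{R}\times M)$; the map $S_f^*$ is the identity on the bi-contact factors and multiplication by the fixed function $e^f$ on each of the interpolation factors, and multiplication by a fixed $C^{k*}$ function is a continuous linear operation on $C^{k*}(\mathbb{R}\times M)$. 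I do not expect a genuine obstacle, since the statement is essentially bookkeeping on top of Lemma~\ref{genscale}; the only point one must not forget is the cancellation in the ratio $\lambda_+/\lambda_-$, which is what guarantees that scaling stays inside $\mathcal{LIS}^{k*}(M)$ rather than merely preserving the underlying Liouville manifold.
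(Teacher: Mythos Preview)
Your proposal is correct and follows essentially the same route as the paper, which in fact offers no explicit proof for this lemma and treats it as a direct consequence of the preceding discussion around Lemma~\ref{genscale}. Your verification spells out the details the paper leaves implicit; one minor remark is that completeness is already recorded as part (b) of Lemma~\ref{genscale}, so you could cite that directly rather than reconstructing it from (a) and (c).
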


Similar to the change of basis and horizontal maps, any scaling map is isotopic, through scaling maps, to $Id$ via $S_{tf}$ for $t\in [0,1]$.

%%%%%%%%%%%%%%%%%%%
\subsection{Liouville flows, skeleton and dynamical rigidity}\label{4.2}

The goal of this section is compute Liouville dynamics in details and show rigidity beyond all choices.

Lemma~\ref{ypresl} implies that we can write $Y=fX+g\partial_s$, where $f,g:\mathbb{R}_s\times M\rightarrow \mathbb{R}$. We have
$$\iota_Y d\alpha=f\mathcal{L}_X \alpha +g \mathcal{L}_{\partial_s} \alpha=\alpha.$$
It is easy to see that $f$ is nowhere vanishing, since $\alpha=\lambda_-\alpha_-+\lambda_+ \alpha_+$ and $\mathcal{L}_{\partial_s}\alpha=(\partial_s\cdot\lambda_-)\alpha_-+(\partial_s\cdot\lambda_+) \alpha_+$ are always linearly independent. In fact we will see that $f>0$, if we assume $(\alpha_-,\alpha_+)_{(\lambda_-,\lambda_+)}$ supports $X$ (respecting the orientation convention in Definition~\ref{support}).

%Let $\alpha_0=e^{-s+w_0}\alpha_-+e^{s+w_0}\alpha_+$ and $\alpha_1=e^{-s+w_1}\alpha_-+e^{s+w_1}\alpha_+$

For the sake of simplicity, we would like to do the computations of Liouville vector field in the exponential setting. To do so, we need to show that any LIS can be isotoped to an exponential one using the maps introduced in Section~\ref{4.1}, i.e. change of basis, horizontal and scaling maps. First, we record our observations of the previous section about how applying such maps affects the underlying Liouville dynamics in the following lemma. Note that non of these maps affects the supported  (positive reparametrization class of) non-singular partially hyperbolic vector field(s).

\begin{lemma}\label{lioumaps}
The map 
$$\begin{cases}
F:\mathcal{LIS}_X^{k*}\rightarrow \mathcal{\chi}^{k}(\mathbb{R}\times M) \\
F(\alpha_-,\alpha_+)_{(\lambda_-,\lambda_+)}:=Y
\end{cases},$$
where $Y$ is the Liouville vector field of $L(\alpha_-,\alpha_+)_{(\lambda_-,\lambda_+)}$, is continuous and for any change of basis map $I_{(z_-,z_+)}$, horizontal map $H_\psi$ or scaling map $S_f$, we have 
$$FI_{(z_-,z_+)}=F,$$
$$FH_\psi^*=dH_{\psi } F$$
and
$$FS_f^*=S_{f*}F=(1+ [F,f])^{-1}F.$$
In particular, applying such maps does not affect the conjugacy class of the underlying Liouville vector field.
Furthermore, we have
$$P(\alpha_-,\alpha_+)_{(\lambda_-,\lambda_+)}=PI_{(z_-,z_+)}(\alpha_-,\alpha_+)_{(\lambda_-,\lambda_+)}=PH_\psi^*(\alpha_-,\alpha_+)_{(\lambda_-,\lambda_+)}=PS_f^*(\alpha_-,\alpha_+)_{(\lambda_-,\lambda_+)},$$
i.e. applying an elementary map (isotopy) does not affect the supported (non-singular partially hyperbolic) flow.
\end{lemma}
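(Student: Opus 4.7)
The continuity of $F$ is already the content of Corollary~\ref{lioureg}, so I would simply invoke it. The identity $FI_{(z_-,z_+)}=F$ is immediate from $LI_{(z_-,z_+)}=L$ (Section~\ref{4.1.2}): a change of basis rewrites the Liouville $1$-form in a different basis of $Annih^{1}(\mathbb{R}\times M;\langle X,\partial_s\rangle)$ without changing it, and the Liouville vector field is uniquely determined by the form via Remark~\ref{lowvf}. Likewise, the invariance of $P(\alpha_-,\alpha_+)_{(\lambda_-,\lambda_+)}$ under all three elementary maps is essentially tautological: a change of basis preserves each of $\ker\alpha_-$ and $\ker\alpha_+$ (hence their transverse intersection), while horizontal and scaling maps leave the contact forms themselves untouched and only modify the interpolation data, so the supported vector field and its orientation (for which the sign of $\partial_s\cdot(\lambda_+/\lambda_-)$ is the relevant datum per Definition~\ref{support}) are preserved.

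The identity $FH_\psi^*=dH_\psi\cdot F$ rests on the standard functoriality of Liouville vector fields under diffeomorphisms: if $\phi:W\to W$ is a diffeomorphism and $\iota_Y d\alpha=\alpha$, then $\iota_{\phi^*Y}d(\phi^*\alpha)=\phi^*\alpha$ by naturality of pullback, so the Liouville vector field of $\phi^*\alpha$ is $\phi^*Y=(d\phi)^{-1}(Y\circ\phi)$. Feeding in $\phi=H_\psi$ and using that the action $H_\psi^*$ on LIS's was set up in Section~\ref{4.1.3} so that $L\bigl(H_\psi^*(\alpha_-,\alpha_+)_{(\lambda_-,\lambda_+)}\bigr)$ is the pullback of $L(\alpha_-,\alpha_+)_{(\lambda_-,\lambda_+)}$ by $H_\psi^{-1}$, the claimed formula pops out; the only risk here is cosmetic bookkeeping of $\psi$ versus $\psi^{-1}$.

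The scaling identity is the one substantive computation, but it is already packaged in Lemma~\ref{genscale}. Setting $\alpha_0=L(\alpha_-,\alpha_+)_{(\lambda_-,\lambda_+)}$, $Y_0=F(\alpha_-,\alpha_+)_{(\lambda_-,\lambda_+)}$, and $\alpha_1=e^f\alpha_0=L\bigl(S_f^*(\alpha_-,\alpha_+)_{(\lambda_-,\lambda_+)}\bigr)$, that lemma produces the Liouville vector field $Y_1=Y_0/(1+Y_0\cdot f)$, which is exactly the expression abbreviated as $(1+[F,f])^{-1}F$ in the statement, and also exhibits the time-$f$ map of $Y_0$ as the isotopy realizing $S_{f*}Y_0=Y_1$, giving the equivalent form $FS_f^*=S_{f*}F$. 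The conjugacy claim for the Liouville vector fields under the three maps then assembles from these identities: $I_{(z_-,z_+)}$ acts trivially, $H_\psi$ is literally a diffeomorphism, and $S_f$ is the time-$f$ map of the Liouville flow. I do not expect any genuine obstacle beyond the clerical care of keeping the conventions straight (pullback versus pushforward in the horizontal case, admissibility of $f$ in the scaling case, and the low-regularity interpretation of $F$ via Remark~\ref{lowvf}) throughout.
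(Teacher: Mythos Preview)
Your proposal is correct and matches the paper's approach: the lemma is introduced explicitly as a record of the observations in Sections~\ref{4.1.2}--\ref{4.1.4}, and you have assembled precisely those ingredients (Corollary~\ref{lioureg} for continuity, $LI_{(z_-,z_+)}=L$ for the change of basis, diffeomorphism functoriality for horizontal maps, and Lemma~\ref{genscale} for scaling). Your caveat about the $\psi$ versus $\psi^{-1}$ bookkeeping in the horizontal case is apt---the paper's own Section~\ref{4.1.3} has some notational looseness there---but the underlying argument is the intended one.
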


Now, the next lemma shows that the Liouville form induced from any LIS is strictly isotopic to one induced from an exponential one and in particular, yields a Liouville manifold as claimed in Section~\ref{3.4}, i.e. its induced Liouville vector field is complete. Part of the argument is already carried out in Section~\ref{4.1.3}.

\begin{lemma}\label{liscomp}
Any LIS $(\alpha_-,\alpha_+)_{(\lambda_-,\lambda_+)}$ is isotopic to an exponential LIS and in particular, defines a Liouville manifold.
\end{lemma}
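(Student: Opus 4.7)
The strategy is to compose three elementary normal isotopies from Sections~\ref{4.1.2}--\ref{4.1.4} to bring any LIS to an exponential one, and then transfer the Liouville manifold property from the (known) exponential model via Lemma~\ref{lioumaps}.

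First, apply a change of basis. By Theorem~4.13 of \cite{massoni}, there exist functions $z_\pm:M\to\mathbb{R}$ such that $(e^{z_-}\alpha_-,e^{z_+}\alpha_+)$ is an exponential Liouville pair; the family $I_{(tz_-,tz_+)}$ for $t\in[0,1]$ is a continuous path in $\mathcal{LIS}(M)$ (fixing $L$ pointwise) which rewrites our LIS as $(\bar{\alpha}_-,\bar{\alpha}_+)_{(\bar{\lambda}_-,\bar{\lambda}_+)}$ with $(\bar{\alpha}_-,\bar{\alpha}_+)_e$ exponential. Second, set $\psi(s,x)=\tfrac{1}{2}\ln(\bar{\lambda}_+/\bar{\lambda}_-)$; condition~(3) in Definition~\ref{lis} guarantees that $\psi(\cdot,x)$ is a positive reparametrization of $\mathbb{R}$, so $H_\psi$ is well-defined, and the horizontal isotopy $H_{t\psi+(1-t)s}^*$ brings us to the ``distorted exponential'' LIS $(\bar{\alpha}_-,\bar{\alpha}_+)_{(e^{-s+w},e^{s+w})}$ with $w=\tfrac{1}{2}\ln(\bar{\lambda}_-\bar{\lambda}_+)$, whose induced Liouville form is $e^w(e^{-s}\bar{\alpha}_-+e^s\bar{\alpha}_+)$.

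Third, scale away the factor $e^w$. Both $e^{-s}\bar{\alpha}_-+e^s\bar{\alpha}_+$ (the exponential pair) and $e^w(e^{-s}\bar{\alpha}_-+e^s\bar{\alpha}_+)$ (the outcome of Step~2) are Liouville, so by Lemma~\ref{genscale} every intermediate form $e^{tw}(e^{-s}\bar{\alpha}_-+e^s\bar{\alpha}_+)$, $t\in[0,1]$, is Liouville. Consequently $S_{-tw}^*$ for $t\in[0,1]$ is a valid scaling isotopy in $\mathcal{LIS}(M)$ terminating at the exponential LIS $(\bar{\alpha}_-,\bar{\alpha}_+)_e$. Concatenating the three isotopies yields the desired isotopy from $(\alpha_-,\alpha_+)_{(\lambda_-,\lambda_+)}$ to an exponential LIS.

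Finally, exponential Liouville pairs are known to define Liouville manifolds (\cite{mnw,massoni}), in particular their Liouville vector field is complete and they admit an exhaustion by Liouville domains. By Lemma~\ref{lioumaps}, each change of basis, horizontal, or scaling map transports the Liouville vector field to a positive time reparametrization (in fact a conjugate) of itself, so both completeness and a convex exhaustion are preserved along the isotopy, and hence hold for the original LIS. The main subtlety is admissibility of the scaling step without any a priori control on $Y\cdot w$; this is bypassed by using the \emph{converse} direction of Lemma~\ref{genscale}, which makes admissibility automatic as soon as both endpoints of the scaling are independently verified to be Liouville, as is the case here.
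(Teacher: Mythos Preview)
Your proof is correct and follows essentially the same three-step approach as the paper's own proof: change of basis to an exponential pair (via Massoni's result), horizontal map to reach the distorted exponential form, and a scaling isotopy via Lemma~\ref{genscale} to remove the distortion $e^w$. You are slightly more explicit than the paper in justifying the admissibility of the scaling step (by invoking Lemma~\ref{genscale} with $\alpha_0$ the exponential form, already known to be a Liouville manifold) and in transferring completeness back via conjugacy of the Liouville vector fields, but the architecture is identical.
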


\begin{proof}
This can be done in three steps and using the maps introduced in Section~\ref{4.1}. Given any LIS $(\alpha_-,\alpha_+)_{(\lambda_-,\lambda_+)}$, as described in Section~\ref{4.1.3}, after an appropriate change of basis and a horizontal map, we have 
$$H^*_{\psi}I_{(f,g)}(\alpha_-,\alpha_+)_{(\lambda_-,\lambda_+)}=(\bar{\alpha}_-,\bar{\alpha}_+)_{(e^{-s+w},e^{s+w})},$$
where $(\bar{\alpha}_-,\bar{\alpha}_+)_e$ is also an exponential Liouville pair.

Now, by Lemma~\ref{genscale}, $(\bar{\alpha}_-,\bar{\alpha}_+)_{(e^{-s+w},e^{s+w})}$ and $(\bar{\alpha}_-,\bar{\alpha}_+)_e$ can be isotoped using the scaling map $S_w$. Therefore, we have achieved $S_wH_\psi I_{(f,g)}(\alpha_-,\alpha_+)_{(\lambda_-,\lambda_+)}=(\bar{\alpha}_-,\bar{\alpha}_+)_e$, where each map induces an isotopy of LISs as claimed. 

%\textcolor{red}{bring this before? }
The fact that Liouville forms induced from the exponential model is implicitly shown and used throughout \cite{massoni,clmm}, and in fact, boils down to the fact that we have exponential expansion of a contact from as we approach any of the ends. We will confirm this fact, when we do our explicit computations of Liouville vector field in Section~\ref{4.2}. In short, in the exponential case we have
$$
\begin{cases}
\alpha=e^{-s}\alpha_- +e^s\alpha_+ \\
 d\alpha=e^{-s}d\alpha_- +e^s d\alpha_++ds\wedge (-e^{-s}\alpha_- +e^s\alpha_+) \\
 \iota_Y d\alpha=\alpha
 \end{cases}$$
 $$\text{as}\ \ s\rightarrow +\infty \Longrightarrow
 \begin{cases}
 \alpha \approx e^s \alpha_+ \\
 d\alpha \approx e^s d\alpha_+ +e^s ds\wedge \alpha_+=d(e^s\alpha_+)
 \end{cases} \Longrightarrow Y\approx \partial_s, $$
 which in particular implies the completeness of $Y$. 
\end{proof}

We want to compute the Liouville vector field for $C^{k*}$ exponential Liouville pair $(\alpha_-,\alpha_+)_e$ (note that by Remark~\ref{lowvf}, this still make sense if $0<k<1$), since after $C^{k*}$ operations of change of basis, horizontal shifting and scaling, we can reduce the computations for a general LIS to the exponential case. So, consider $(\alpha_-,\alpha_+)_e$ and write

$$\begin{cases}
\alpha_+=\alpha_u-\alpha_s \\
\alpha_-=h_u\alpha_u+h_s\alpha_s
\end{cases}$$
for foliation 1-forms $\alpha_u$ and $\alpha_s$ and positive functions $h_u,h_s:M\rightarrow \mathbb{R}_+$, all of which are differentiable along $X$. We furthermore let $r_u$ and $r_s$ be the expansion rates associated with $\alpha_u$ and $\alpha_s$, respectivel,y and note that the positive and negative contactness of $\alpha_+$ and $\alpha_-$, respectively, implies
$$\begin{cases}
r_u-r_s>0 \\
r_u-r_s+X\cdot \ln{(\frac{h_u}{h_s})}>0
\end{cases}.$$

We write the associated Liouville vector field $Y=fX+g\partial_s$  and $\alpha=E\alpha_u+F\alpha_s$, where
$$\begin{cases}
E=e^s+h_u e^{-s} \\
F=-e^{s}+h_s e^{-s}
\end{cases}$$
and note 
$$\begin{cases}
\mathcal{L}_X \alpha= (X\cdot E +r_u E)\alpha_u+(X\cdot F+r_s F)\alpha_s \\
\mathcal{L}_{\partial_s} \alpha= (e^{s}-h_u e^{-s})\alpha_u+(-e^{s} -h_s e^{-s})\alpha_s
\end{cases}.$$

This implies that
$$\alpha=\iota_Y d\alpha=f\mathcal{L}_X\alpha+g\mathcal{L}_{\partial_s}\alpha=[Af+Bg]\alpha_u+[Cf+Dg]\alpha_s,$$
where
$$\begin{cases}
A=X\cdot E+r_u E=(r_u)e^s+(X\cdot h_u+r_uh_u)e^{-s} \\
B=e^{s} -h_u e^{-s} \\
C=X\cdot F+r_s F=(-r_s)e^s+(X\cdot h_s+r_sh_s )e^{-s} \\
D= -e^{s} -h_s e^{-s}
\end{cases}.$$

In other words, the above matrix is the change of coordinate matrix between $(\alpha_s,\alpha_u)$ and $(\mathcal{L}_X \alpha,\mathcal{L}_{\partial_s}\alpha)$, which $C^k$ when $\alpha$ is $C^{k*}$.

Considering the system of equations
$$\begin{cases}
Af+Bg=E \\
Cf+Dg=F
\end{cases} \Longleftrightarrow \begin{bmatrix} f \\ g \end{bmatrix}=\frac{1}{AD-BC}\begin{bmatrix} D \ -B \\-C \ A\end{bmatrix} \begin{bmatrix} E\\ F \end{bmatrix},$$ we compute 
$$\frac{g}{f}=\frac{-CE+AF}{DE-BF}=\frac{-(X\cdot F+r_s F)E+(X\cdot E+r_u E)F}{(-e^{s} -h_s e^{-s})E-(e^{s} -h_u e^{-s})F}$$
$$=\frac{EF(r_u-r_s)+FX\cdot E-EX\cdot F}{-2(h_u+h_s)}=\frac{-2F}{{E(h_u+h_s)}} [(r_u-r_s)+X\cdot {(\frac{F}{E})}] .$$

Note that 
$$FX\cdot E-EX\cdot F=(-e^{s}+h_s e^{-s})(X\cdot h_u)e^{-s}-(e^s+h_u e^{-s})(X\cdot h_s)e^{-s}$$
$$=-X\cdot (h_s+h_u)+(h_s X\cdot h_u-h_u X\cdot h_s)e^{-2s}$$
and
$$e^{2s}{EF}=(e^{2s}+h_u)(-e^{2s}+h_s).$$

This in particular implies

$$EF=0 \Longleftrightarrow h_s=e^{2s} \Longleftrightarrow s=\frac{\ln{h_s}}{2} \Longleftrightarrow \ker{\alpha}=E^s.$$

We want to show that $$\Lambda_s:=\{(s,x) | \ker{(e^{-s}\alpha_-+e^s\alpha_+)}=\text{the dominated bundle of } S(\alpha_-,\alpha_+)_e \}=\{ (s,x) | s=\frac{\ln{h_s}}{2}\}$$ is in fact the Liouville skeleton. Note that $S(\alpha_-,\alpha_+)_e$ is non-singular and partially hyperbolic by Theorem~\ref{generalchar}.
\begin{lemma}
$Y$ is tangent to the section $\Lambda_s$ and therefore, $\Lambda_s$ is invariant under the Liouville flow.
\end{lemma}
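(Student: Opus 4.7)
My plan is to verify the tangency of $Y$ to $\Lambda_s$ by a single direct scalar computation, exploiting the decomposition $Y = fX + g\partial_s$ provided by the preceding lemma. Since $\Lambda_s$ is the graph $\{s = \Lambda_s(x)\}$ with $\Lambda_s(x) = \tfrac{1}{2}\ln h_s(x)$, a vector of the form $fX + g\partial_s$ at a point of the graph is tangent to the graph if and only if the single pointwise identity
\begin{equation*}
\frac{g}{f}\bigg|_{\Lambda_s} \;=\; X\cdot \Lambda_s \;=\; \frac{X h_s}{2 h_s}
\end{equation*}
holds. So the entire content of the lemma reduces to computing $g/f$ on $\{F=0\}$ and recognizing this expression.

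For the computation I would use the linear system $Af + Bg = E$, $Cf + Dg = F$ already derived from $\iota_Y d\alpha = \alpha$. On $\Lambda_s$ the equation $F = 0$ trivializes the second equation to $Cf + Dg = 0$, so $g/f|_{F=0} = -C/D|_{F=0}$. The key algebraic observation is that on $\{e^{2s} = h_s\}$ the two $r_s$-contributions in $C = -r_s e^s + (Xh_s + r_s h_s)e^{-s}$ cancel, giving $C|_{F=0} = (Xh_s)/\sqrt{h_s}$, while $D|_{F=0} = -2\sqrt{h_s}$. Forming the quotient produces exactly $Xh_s/(2h_s)$, which matches $X\cdot \Lambda_s$.

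Once this identity is in hand, the rest is formal: tangency of $Y$ at every point of the closed submanifold $\Lambda_s$ is precisely the statement of invariance under the Liouville flow, since a graph cannot be left by a flow tangent to it. There is no real obstacle in the argument; the only mildly subtle step is the cancellation of the $r_s$-terms in $-C/D$, which reflects the fact that the expansion rate of the dominated bundle drops out at the skeleton (because $\alpha_s$ enters $\mathcal{L}_X \alpha_-$ via $r_s h_s$ and via $-r_s$ in a way that is forced to cancel once $F = 0$). As a sanity check one can instead compute $Y\cdot F$ directly from $XF = (Xh_s)e^{-s}$ and $\partial_s F = -(e^s + h_s e^{-s})$; the ratio identity above makes $Y\cdot F$ vanish identically on $\{F=0\}$, confirming the tangency from the defining-equation side.
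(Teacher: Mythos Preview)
Your proposal is correct and follows essentially the same approach as the paper: both compute $g/f$ at $\Lambda_s$ from the linear system $Af+Bg=E$, $Cf+Dg=F$ and check it equals $X\cdot\tfrac{1}{2}\ln h_s$. Your route is slightly cleaner, since you use $F=0$ directly in the second equation to get $g/f|_{\Lambda_s}=-C/D$ immediately, whereas the paper first expands the general quotient $g/f=(-CE+AF)/(DE-BF)$ and then restricts; the paper's fuller expression is, however, reused in the subsequent computation of $\partial_s(g/f)$.
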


\begin{proof}
First note that
$$2(\frac{g}{f})=(e^{2s}+h_u)(1-e^{-2s}h_s)\frac{r_u-r_s}{h_u+h_s}+(1-e^{-2s}h_s)(\frac{X\cdot (h_u +h_s)}{h_u+h_s})+e^{-2s}(X\cdot h_s),$$
which at $\Lambda_s$ it boils down to 
$$ 2(\frac{g}{f})=e^{-2s}(X\cdot h_s) \Longleftrightarrow \frac{g}{f}=X\cdot \frac{\ln{h_s}}{2}.$$

To see that the Liouville vector field preserves $\Lambda_s$, note
$$Y\cdot (e^{2s}-h_s) \bigg|_{\Lambda_s}=(fX+g\partial_s)\cdot (e^{2s}-h_s)=-fX\cdot h_s +2ge^{2s}\bigg|_{\Lambda_s}=0,$$ where the last equality follows from the above observation.

\end{proof}

This is particular implies $\Lambda_s \subset Skel(Y)$. To show the equality, we need to show normal expansion of $Y$ at $\Lambda_s$. It is enough to show that $\tilde{Y}:=X+\frac{g}{f}\partial_s$ expands the $\partial_s$ direction. When considered at $\Lambda_s$, it implies the a $C^1$-expansion of $E^{wn}/\langle X\rangle$ at $\Lambda_s$ which eventually yields $Skel(Y)=\Lambda_s$.

First note that we have

$$[\tilde{Y},\partial_s ]=[X+\frac{g}{f}\partial_s,\partial_s]=-(\partial_s \cdot \frac{g}{f})\partial_s ,$$
which means that $\tilde{Y}$ naturally preserves the $\partial_s$ direction and locally expands it, if and only if, $\partial_s \cdot \frac{g}{f}>0$.

We write
$$\partial_s \cdot \frac{g}{f}=(e^{2s}+h_sh_ue^{-2s})\frac{r_u-r_s}{h_u+h_s}+e^{-2s} h_s\frac{X\cdot (h_u +h_s)}{h_u+h_s}-e^{-2s}(X\cdot h_s)$$
$$=e^{2s}\bigg[ \frac{r_u-r_s}{h_u+h_s}\bigg]+e^{-2s}\bigg[\frac{h_sh_u(r_u-r_s+X\cdot \ln{\frac{h_u}{h_s}})}{h_u+h_s}\bigg]>0,$$
indicating the expansion of the $\partial_s$ direction under the Liouville flow everywhere. This in particular implies that the invariant set $\Lambda_s$ is the skeleton of $Y$ and the underlying Liouville manifold is in fact of finite type.

In particular, at $\Lambda_s=\{ e^{2s}=h_s\}$ we have
\begin{equation}\label{expansioneq}
\partial_s \cdot \frac{g}{f} \bigg|_{\Lambda_s}=h_s\bigg[ \frac{r_u-r_s}{h_u+h_s}\bigg]+h_u\bigg[\frac{r_u-r_s+X\cdot \ln{\frac{h_u}{h_s}}}{h_u+h_s}\bigg]>0,
\end{equation}
indicating the normal expansion of $\langle \partial_s \rangle$ at $\Lambda_s$ along $\tilde{Y}$. The same is true for $Y$ as expansion can be measured in the normal bundle of $Y$, i.e. $T(\mathbb{R}\times M)/\langle Y\rangle$ (see Proposition~\ref{expansion}). % up to conjugacy (at least at the topological level-check regularity) implied from the existence of the strong unstable bundle for $Y$ inside $\langle X,\partial_s \rangle$, induced by an application of Lemma~\ref{strong} as follows.

The following lemma shows that the Liouville vector field preserves a transverse invariant foliation inside the weak normal bunlde $E^{wn}=\langle X,\partial_s \rangle$, which we denote by $E^n$ and call the strong normal bundle. In the Anosov case, the implication is immediate thanks to the normal hyperbolicity of the Liouville flow at its skeleton (by Theorem~\ref{normalh}) which will be observed next. But the following lemma shows that this can be extended to when the LIS supports any non-singular partially hyperbolic flow (where normal hyperbolicity at the skeleton can fail).

\begin{lemma}\label{strongnormal}
$Y$ admits a strong repelling line bundle inside $E^{wn}=\langle X,\partial_s \rangle=\langle Y,\partial_s \rangle$.
\end{lemma}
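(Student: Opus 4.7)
The plan is to construct the invariant line bundle $E^n$ explicitly as a graph over $\langle\partial_s\rangle$ inside $E^{wn}$: any line subbundle of $E^{wn}$ transverse to $\langle Y\rangle$ is uniquely of the form $L_h := \langle \partial_s + h\,Y\rangle$ for a function $h\colon \mathbb{R}\times M \to \mathbb{R}$, so the task reduces to finding the correct $h$. Writing $Y = fX + g\partial_s$ and using $[X,\partial_s]=0$, one computes
\[
[Y,\partial_s] = -(\partial_s f)\,X - (\partial_s g)\,\partial_s = -(\partial_s \ln f)\,Y - f\,\partial_s(g/f)\,\partial_s,
\]
where the second equality substitutes $X = (Y-g\partial_s)/f$. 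Since $[Y,hY] = (Y\cdot h)\,Y$, the $Y$-invariance condition $[Y,\partial_s + hY] \in \langle \partial_s + hY\rangle$ becomes the first-order linear ODE
\[
Y\cdot h + A\,h = B, \qquad A := f\,\partial_s(g/f),\quad B := \partial_s \ln f.
\]

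I then solve this ODE along every $Y$-orbit. The crucial point is that $A$ is \emph{strictly positive} on all of $\mathbb{R}\times M$: positivity of $\partial_s(g/f)$ is exactly the global estimate displayed just before Equation~\ref{expansioneq}, and $f>0$ follows from the orientation convention adopted for a supporting LIS. Since $\Lambda_s = \mathrm{Skel}(Y)$ is compact and is the backward attractor of the Liouville flow, there is a neighborhood $U$ of $\Lambda_s$ on which $A \geq c > 0$, and every backward orbit $\tau \mapsto Y^{\tau}(p)$ eventually enters and remains in $U$. Using the integrating factor $\exp\!\big(\int_0^t A\,d\tau\big)$ and letting $t \to -\infty$, the unique bounded solution along the orbit through $p$ is
\[
h(p) \;=\; \int_{-\infty}^{0} \exp\!\left(-\int_{s}^{0} A(Y^{\tau} p)\,d\tau\right) B(Y^{s} p)\,ds.
\]
The lower bound on $A$ near $\Lambda_s$ yields uniform exponential decay of the integrand, hence convergence and boundedness of $h$, while continuity in $p$ follows from standard ODE parameter-dependence. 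By construction, $E^n := L_h \subset E^{wn}$ is $Y$-invariant and transverse to $\langle Y\rangle$, giving the desired strong repelling bundle.

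The main obstacle is the uniform convergence of the improper integral defining $h$; this is handled by the positivity of $A$ on a neighborhood of the compact skeleton together with the backward-attracting property of $\Lambda_s$. Conceptually, this is the natural global extension of what Lemma~\ref{strong} (Hirsch--Pugh--Shub) yields over $\Lambda_s$ itself: applied to the short exact sequence $0 \to \langle Y\rangle \to E^{wn}|_{\Lambda_s} \to E^{wn}|_{\Lambda_s}/\langle Y\rangle \to 0$ with the trivial action on $\langle Y\rangle$ and strict expansion on the quotient, that lemma gives $E^n|_{\Lambda_s}$, and the ODE formula above propagates this uniquely to all of $\mathbb{R}\times M$.
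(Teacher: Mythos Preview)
Your proof is correct but takes a different, more explicit route than the paper. The paper simply invokes the abstract Hirsch--Pugh--Shub Lemma~\ref{strong} over the compact base $\Lambda_s$: setting $E_1=\langle Y\rangle$, $E_2=E^{wn}$, $E_3=E^{wn}/\langle Y\rangle$, the rate inequality $m(T_3)>\|T_1\|$ is exactly $\partial_s(g/f)|_{\Lambda_s}>0$, and the lemma then produces the invariant complement. This is a one-line citation but, strictly speaking, only yields $E^n$ over the skeleton; the paper defers the global extension (and regularity) to the linearization argument of Section~\ref{6}.

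By contrast, you parametrize candidate complements as graphs $L_h=\langle\partial_s+hY\rangle$, reduce invariance to the linear ODE $Y\cdot h+Ah=B$ with $A=f\,\partial_s(g/f)>0$ and $B=\partial_s\ln f$, and write down the unique bounded solution via the integrating factor and the backward-attracting property of $\Lambda_s$. This gives $E^n$ on all of $\mathbb{R}\times M$ at once, with an explicit formula and an immediate continuity statement; it is essentially what the HPS machinery is doing under the hood, carried out by hand in this rank-one setting. Your closing paragraph correctly identifies this relationship. The only point worth tightening is the convergence estimate: $B=\partial_s\ln f$ is not globally bounded (since $f\to 0$ at the ends), so you should state explicitly that boundedness of $B$ is only needed on the compact neighborhood $U$ of $\Lambda_s$, which is where the backward orbit eventually lives---this is implicit in your argument but deserves a sentence.
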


\begin{proof}
This is a direct consequence of invariant bundle theory of \cite{hps} discussed in Section~\ref{2.3}. In Lemma~\ref{strong}, let $\Lambda=\Lambda_s=Skel(Y)$, $E_1=\langle Y\rangle$, $E_2=\langle Y,\partial_s \rangle$ and $E_3=\langle Y,\partial_s \rangle/ \langle Y\rangle$ and we have
$$m(T_3|E_{3x}) > || T_1 |E_{1x}|| \ \ \text{for all}\ x\in\Lambda \Longleftrightarrow \partial_s \cdot \frac{g}{f} \bigg|_{\Lambda_s}>0,$$
which is the case here. Therefore, $\langle Y \rangle$ has a $Y$-invariant complement in $\langle Y,\partial_s \rangle$, which is a strong unstable bundle.
\end{proof}

The next lemma shows the dynamics of the supported non-singular partially hyperbolic flow is realized as the skeleton dynamics for a LIS. This also means the following. Note that when $\Lambda_s$ is $C^{1+}$, this is equivalent to the restriction of $Y$ to $\Lambda_s$ being a synchronization of $X$.

\begin{lemma}\label{sync}
If $\pi:\mathbb{R}\times M\rightarrow M$ is the natural projection, then $\pi_*(Y)|_{\Lambda_s}=f|_{\Lambda_s}X$ is a synchronization of $X$.
\end{lemma}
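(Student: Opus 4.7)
The plan is to pull back the defining Liouville equation $\mathcal{L}_Y\alpha=\alpha$ to the skeleton and interpret it, via Proposition~\ref{expansion}(7), as the statement that the expansion rate of $\pi_*(Y)|_{\Lambda_s}$ on the strong unstable bundle equals $1$. All regularity issues are benign here since $Y$ is tangent to $\Lambda_s$ (by the preceding lemma) and the pullback by the graph map $i:M\hookrightarrow\mathbb{R}\times M$, $x\mapsto(\Lambda_s(x),x)$, is well defined with at least the regularity of $\Lambda_s$, which is sufficient for the Lie derivative identity we need.

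First I would identify $i^*\alpha$ explicitly. Since $\alpha_u$ and $\alpha_s$ are pulled back from $M$, we have $i^*(E\alpha_u+F\alpha_s)=(E|_{\Lambda_s})\alpha_u+(F|_{\Lambda_s})\alpha_s$, and we have already observed $F|_{\Lambda_s}=0$ at the skeleton (the equation $EF=0$ characterizing $\Lambda_s$ reduces to $F=0$ since $E>0$). Hence $\bar{\alpha}:=i^*\alpha=E|_{\Lambda_s}\alpha_u$, a positive multiple of $\alpha_u$, whose kernel is the weak dominated bundle $E$. Second, I would identify $\pi_*(Y|_{\Lambda_s})$: since $Y=fX+g\partial_s$ and $Y$ is tangent to $\Lambda_s$, the unique vector field $\bar{Y}$ on $M$ with $i_*\bar{Y}=Y|_{\Lambda_s}$ is $\bar{Y}=f|_{\Lambda_s}X$ (with $g|_{\Lambda_s}=f|_{\Lambda_s}\,X\cdot\Lambda_s$, matching the previous lemma). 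Thus $\pi_*(Y|_{\Lambda_s})=f|_{\Lambda_s}X$.

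Third, I would pull back $\mathcal{L}_Y\alpha=\alpha$ by $i$; tangency of $Y$ to $\Lambda_s$ gives $i^*\mathcal{L}_Y\alpha=\mathcal{L}_{\bar{Y}}\bar{\alpha}$, so $\mathcal{L}_{\bar{Y}}\bar{\alpha}=\bar{\alpha}$. Expanding the left-hand side, using $X\in\ker\alpha_u$ (so that $\iota_{\bar{Y}}\alpha_u=0$ and $\mathcal{L}_{\bar{Y}}\alpha_u=f|_{\Lambda_s}\iota_Xd\alpha_u=f|_{\Lambda_s}r_u\alpha_u$), yields
\[
\mathcal{L}_{\bar{Y}}\bar{\alpha}=\bigl(\bar{Y}\cdot E|_{\Lambda_s}\bigr)\alpha_u+E|_{\Lambda_s}\mathcal{L}_{\bar{Y}}\alpha_u=f|_{\Lambda_s}\bigl(X\cdot\ln E|_{\Lambda_s}+r_u\bigr)\,\bar{\alpha}.
\]
Setting this equal to $\bar{\alpha}$ gives $f|_{\Lambda_s}\bigl(X\cdot\ln E|_{\Lambda_s}+r_u\bigr)=1$.

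Finally, interpretation via Proposition~\ref{expansion}(7): the 1-form $\bar{\alpha}$ annihilates the weak dominated bundle $E$, so it represents a norm on the strong unstable line bundle $E^u$, and the expansion rate of $\bar{Y}$ on $E^u$ with respect to this norm is precisely the coefficient of $\bar{\alpha}$ in $\mathcal{L}_{\bar{Y}}\bar{\alpha}$. The computation above shows this coefficient is $1$. Hence $\bar{Y}=f|_{\Lambda_s}X$ is $E^u$-synchronized, i.e.\ a synchronization of $X$ in the sense of Section~\ref{2.1}. The only delicate point is the pullback identity $i^*\mathcal{L}_Y\alpha=\mathcal{L}_{\bar{Y}}\bar{\alpha}$ when $\Lambda_s$ has low regularity, but this is exactly where working in the $C^{k*}$ setting of Remark~\ref{lowliouville} and Remark~\ref{lowvf} pays off, since $Y$ and $\bar{\alpha}$ are infinitely differentiable along $X$ regardless of the transverse regularity of the skeleton.
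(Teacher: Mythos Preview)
Your argument is correct and is genuinely different from the paper's own proof of this lemma. The paper proceeds by brute force: it computes $f|_{\Lambda_s}$ explicitly from the Cramer formula $f=\frac{DE-BF}{AD-BC}$, obtaining $\frac{1}{f}=r_u+\frac{1}{2}X\cdot\ln(h_u+h_s)+\frac{1}{2}X\cdot\ln(\frac{h_u}{h_s}+1)$, and then independently computes the expansion rate $\tilde{r}_u$ of $X$ with respect to the norm determined by $\alpha|_{\Lambda_s}=\big[\frac{h_u}{\sqrt{h_s}}+\sqrt{h_s}\big]\alpha_u$, verifying by direct comparison that $\tilde{r}_u=\frac{1}{f}$. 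You instead pull back the structural equation $\mathcal{L}_Y\alpha=\alpha$ to the skeleton and read off the synchronization condition directly via Proposition~\ref{expansion}(7), bypassing all of the coordinate algebra.

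Your route is precisely what the paper has in mind when it parenthetically remarks ``we will later discuss a more abstract proof of this fact'': the same idea reappears in Section~\ref{7.1}, where the identity $\mathcal{L}_Y\alpha=\alpha$ is used to conclude that $Y|_\Lambda$ acts on $T\Lambda/\ker\alpha$ with constant expansion rate $1$. The advantage of your approach is that it is coordinate-free and makes transparent why the synchronization is forced---it is the Liouville equation itself, restricted to the skeleton---whereas the paper's computation, though longer, has the merit of producing the explicit formula for $f|_{\Lambda_s}$, which is used nearby (e.g.\ to check $f>0$ and in the normal-expansion computation).
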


Write

$$f=\frac{DE-BF}{AD-BC}=\frac{-2(h_u+h_s)}{J}$$

and compute

$$J=AD-BC$$
$$=[-r_u+r_s]e^{2s}+[h_uh_s(r_s-r_u-X\cdot \ln(\frac{h_u}{h_s}))]e^{-2s}
+[-h_sr_u-X\cdot h_u-r_uh_u -r_sh_u-X\cdot h_s -r_sh_s]$$
$$=[-r_u+r_s]e^{2s}+[h_uh_s(r_s-r_u-X\cdot \ln(\frac{h_u}{h_s}))]e^{-2s}
+[-(h_s+h_u)(r_s+r_u)-X\cdot (h_u+h_s)]$$

Note that $\lim_{s\rightarrow \pm \infty}J=-\infty$ and the maximum of $J$ happens at $$\bigg\{{ [r_s-r_u]e^{2s}=[h_uh_s(r_s-r_u-X\cdot \ln(\frac{h_u}{h_s}))]e^{-2s}} \bigg\},$$
and considering the fact that $\alpha \wedge \mathcal{L}_{\partial_s} \alpha \neq 0$ (see Remark~\ref{yorient}), means $J<0$ everywhere,
implying that $f>0$ everywhere.

At $\Lambda_s$ we have
$$J=h_s(r_s-r_u)+h_u(r_s-r_u-X\cdot \ln(\frac{h_u}{h_s}))+[-(h_s+h_u)(r_s+r_u)-X\cdot (h_u+h_s)]$$
$$J=-2r_u(h_s+h_u)-h_u X\cdot \ln(\frac{h_u}{h_s})-X\cdot (h_u+h_s),$$
which implies
$$\frac{1}{f}=\frac{J}{-2(h_u+h_s)}=r_u+\frac{1}{2}X\cdot{\ln(h_u+h_s)}+\frac{h_u\frac{X\cdot \frac{h_u}{h_s}}{\frac{h_u}{h_s}}}{2(h_u+h_s)}=r_u+\frac{1}{2}X\cdot \ln{(h_u+h_s)}+\frac{X\cdot \frac{h_u}{h_s}}{2(\frac{h_u}{h_s}+1)}$$
$$=r_u+\frac{1}{2}X\cdot \ln{(h_u+h_s)}+\frac{1}{2}X\cdot \ln{(\frac{h_u}{h_s}+1)}.$$

To see that $f|_{\Lambda_s}X$ is in fact a synchronization (we will later discuss a more abstract proof of this fact), consider the 1-form $$\alpha\big|_{\Lambda_s}=[h_u e^{-s}+e^s]\alpha_u \big|_{\Lambda_s}=[\frac{h_u}{\sqrt{h_s}}+{\sqrt{h_s}}]\alpha_u,$$
which defines a norm on $E^u$ and subsequently an expansion rate $\tilde{r}_u$ with respect to $X$. Claiming that $f|_{\Lambda_s}X$ is synchronized is equivalent to show that $f|_{\Lambda_s}=\frac{1}{\tilde{r}_u}$. We have
$$\tilde{r}_u=r_u+X\cdot \ln[\frac{h_u}{\sqrt{h_s}}+{\sqrt{h_s}}]=r_u+X\cdot \ln[(\frac{h_u}{h_s}+1)\sqrt{h_s}]$$
$$=r_u+\frac{1}{2}X\cdot \ln(\frac{h_u}{h_s}+1)+\frac{1}{2}X\cdot \ln{(h_u+h_s)},$$
which proves the claim.

%%%%%%%%%%

Our arguments so far in this section give a complete and explicit understanding of Liouville dynamics, and in particular the Liouville skeleton, in the case of exponential Liouville pair. Lemma~\ref{lioumaps} indicates how the Liouville dynamics of a general LIS can be related to an exponential one, and in order to formulate our dynamical rigidity theorem for a general LIS, we record the following lemma on how the Liouville skeleton is affected under the elementary maps applied to a general LIS.

\begin{lemma}\label{skel}
For any $(\alpha_-,\alpha_+)_{(\lambda_-,\lambda_+)}\in\mathcal{LIS}^{k*}(M)$, the Liouville skeleton of $\alpha=L(\alpha_-,\alpha_+)_{(\lambda_-,\lambda_+)}$ is graph of a Hölder continuous function in $\mathbb{R}\times M$, which we define as 
$$Skel:\mathcal{LIS}^{k*}(M)\rightarrow C^0(M),$$
which satisfies
$$Skel=Skel\ I_{(z_-,z_+)},$$
$$Skel=Skel\ S_f$$
and
$$Skel\ H^*_\psi=H_\psi Skel.$$
We have $Skel\equiv\Lambda_s:=\{ (s,x)| \ker{(\lambda_-(s,x)\alpha_-+\lambda_+(s,x)\alpha_+)}=\text{the dominated bundle of }S(\alpha_-,\alpha_+)_{(\lambda_-,\lambda_+)} \}$. In particular, $Skel$ is exactly as regular as the dominated bundle of any supported (non-singular partially hyperbolic) flow.
\end{lemma}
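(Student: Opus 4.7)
The plan is to reduce to the exponential case, where the skeleton has been explicitly computed earlier in this section as $\{e^{2s}=h_s\}$, and transport the result back along the elementary normal isotopies of Section~\ref{4.1}. Lemma~\ref{liscomp} writes any LIS as a composition of a change of basis, a horizontal map and a scaling applied to an exponential Liouville pair, so it suffices to track how $Skel$ behaves under each such elementary map and to identify the answer for an exponential pair with the intrinsic formula in the statement.

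First I would verify the three equivariance assertions. Under $I_{(z_-,z_+)}$ the Liouville form $L(\alpha_-,\alpha_+)_{(\lambda_-,\lambda_+)}$ is literally unchanged, hence $Y$ and $Skel$ are unchanged. Under $S_f^*$, Lemma~\ref{genscale} shows the Liouville vector field is rescaled by a positive function, so the oriented flow lines, and hence the set of non-escaping points, are preserved. Under $H_\psi^*$, the Liouville vector field transforms as recorded in Lemma~\ref{lioumaps}, and since a diffeomorphism of the ambient manifold carries the skeleton of a Liouville form to the skeleton of its pullback, the skeleton transforms by $H_\psi$. Combining these with the exponential computation $Skel=\{(s,x):\ker(e^{-s}\alpha_-+e^s\alpha_+)=E\}$ and tracing through the chain of isotopies of Lemma~\ref{liscomp} yields $Skel(\alpha_-,\alpha_+)_{(\lambda_-,\lambda_+)}=\Lambda_s$, since the defining condition $\ker(\lambda_-\alpha_-+\lambda_+\alpha_+)=E$ is manifestly covariant under all three elementary operations.

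For the regularity claim, I would rewrite the defining condition as a scalar equation in $s$ for each fixed $x$. Since $(\ker\alpha_-,\ker\alpha_+)$ is a transverse bi-contact pair and $E$ is distinct from both kernels at every point of $M$, there is a unique positive function $r:M\to\mathbb{R}_{>0}$ satisfying $\ker(\alpha_-+r(x)\alpha_+)=E_x$, and $r$ has exactly the regularity of $E$ because $\alpha_\pm$ are $C^\infty$. The equation for $\Lambda_s$ then becomes $\lambda_+(\Lambda_s(x),x)/\lambda_-(\Lambda_s(x),x)=r(x)$, and since $\partial_s\cdot(\lambda_+/\lambda_-)>0$ by Definition~\ref{lis}, an implicit function argument solves uniquely for $\Lambda_s$ with at least the regularity of $r$; conversely, substituting $\Lambda_s$ back recovers $r$ from $\Lambda_s$ and the smooth data $(\alpha_\pm,\lambda_\pm)$, so any extra regularity of $\Lambda_s$ would propagate to $E$, forcing the match to be exact. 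The main technical subtlety I anticipate is the non-Anosov partially hyperbolic case, where $E$ may be Hölder of small exponent and one needs an implicit function statement valid in low Hölder regularity; this is available because the $\partial_s$-derivative of $\lambda_+/\lambda_-$ is smooth in $s$ with strictly positive values and the remaining data are $C^{k*}$-regular, so the standard Hölder implicit function theorem applies.
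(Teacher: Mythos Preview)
Your proposal is correct and follows essentially the same route as the paper: derive the equivariance of $Skel$ under the elementary maps from the corresponding properties of the Liouville vector field in Lemma~\ref{lioumaps}, reduce to the exponential case via Lemma~\ref{liscomp}, and identify $Skel$ with $\Lambda_s$ by noting both transform identically. Your implicit-function argument for the regularity claim is more detailed than what the paper writes (the paper simply leaves it implicit in the identification $Skel\equiv\Lambda_s$), though note that in the full $\mathcal{LIS}^{k*}$ setting $\alpha_\pm$ are only $C^{k*}$ rather than $C^\infty$, so the function $r$ inherits regularity from both $E$ and $\alpha_\pm$---this does not affect the conclusion since by definition both are $C^k$.
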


\begin{proof}
The properties of $Skel$ as a map $Skel:\mathcal{LIS}^{k*}(M)\rightarrow C^0(M)$ are implied by similar properties of the Liouville vector field in Lemma~\ref{lioumaps}. The fact that we always have $Skel\equiv\Lambda_s$ is because that is the case for exponential Liouville pairs and $\Lambda_s$ is transformed under the elementary maps similar to $Skel$.
\end{proof}

%\begin{proof}
%Note $Skel(Y)=Skel(\tilde{Y}=X+(\frac{g}{f})\partial_s)$. Then, the above computation shows that $Y$ expands the $\partial_s$ direction everywhere with the minimum of of the expansion rate at each point is given as 

%$$\min_{s\in \mathbb{R}}{\partial_s \cdot \frac{g}{f}}=\min_{s\in\mathbb{R}}{\bigg\{e^{2s}\bigg[ \frac{r_u-r_s}{h_u+h_s}\bigg]+e^{-2s}\bigg[\frac{h_sh_u(r_u-r_s+X\cdot \ln{\frac{h_u}{h_s}})}{h_u+h_s}\bigg]}\bigg\}$$
%$$=\frac{2\sqrt{h_uh_s}}{h_u+h_s}\sqrt{(r_u-r_s)(r_u-r_s+X\cdot \ln{(\frac{h_u}{h_s})})}\ \ \ ,$$
%which by $C^r$ and/or Hölder section theorems (?? see Theorem~3.5 and 3.8 of \cite{hps}), the skeleton is a section of the projection $\mathbb{R}\times M\rightarrow M$. The properties are easy to draw.
%\end{proof}

The above lemma indicates that the regularity of the weak dominated bundle of non-singular partially hyperbolic flows, and in particular, the weak invariant bundles of Anosov flows in dimension 3, can be investigated as the regularity of the Liouville skeleton for Liouville interpolation systems, which is proved to be a section of the projection $\pi:\mathbb{R}\times M \rightarrow M$. In this context, one can use classical methods of graph transformations to study the regularity of such invariant bundles instead of the more modern bunching technology discussed in Section~\ref{2.3}. In Section~\ref{8}, we will exploit this viewpoint to reprove the result of Hasselblatt on the $C^1$-regularity of weak invariant bundles of Anosov 3-flows \cite{reg,reg2} (see (1) of Corollary~\ref{c1regweak}), generalize it to non-singular partially hyperbolic 3-flows and also give a parametric version of it using the standard theory of normal hyperbolicity.

We are now ready to formulate our dynamical rigidity theorem. The following implies that the Liouville dynamics is unique up to a certain regularity and is strongly determined by the dynamics a supported non-singular partially hyperbolic 3-flow. We bring this theorem here in its strongest form and will postpone the proof of some elements to the next sections.

\begin{theorem}\label{dynrig}
Assume $X^t$ be a non-singular partially hyperbolic flow on $M$ admitting the dominated splitting $TM/\langle X\rangle\simeq E\oplus E^{wu}$, the weak dominated bundle $E$ is $C^k$ for some $0<k$, both weak invariant bundles are $C^l$ for some $0<l\leq k$, $(\alpha_-,\alpha_+)_{(\lambda_-,\lambda_+)}\in\mathcal{LIS}(M)$ is a supporting LIS and $Y$ is the corresponding Liouville vector field on $\mathbb{R}\times M$. Then,

(1) $Skel(Y)$ is a section of $\pi:\mathbb{R}\times M \rightarrow M$ given as
$$Skel(Y)=\bigg\{(\Lambda_s(x),x)\in \mathbb{R}\times M \ \ \text{ where } \ \ \Lambda_s:M\rightarrow \mathbb{R} \ \ \text{ is determined by }$$
$$\ker{\big[ \lambda_-(\Lambda_s(x),x)\alpha_-+\lambda_+(\Lambda_s(x),x)\alpha_+\big]}=E \bigg\},$$
 implying that $Skel(Y)$ is a $C^k$ section of $\pi$. More precisely, $Skel(Y)$ is exactly as regular as $E$. When $X$ is Anosov, the skeleton is exactly as regular as the weak stable bundle $E^{ws}$ and in particular, it is always a $C^{1+}$ section of $\pi$;
 
 (2) $\pi_*(Y|_{Skel(Y)})\subset TM$ is a synchronization of $X$;
 
 (3) $Y$ preserves a $C^\infty$ exact Lagrangian foliation $\mathcal{F}^{wn}$ containing the flow lines of $Y$ expands a differentiable transverse measure at $Skel(Y)$. Furthermore, $Y$ is normally hyperbolic at $Skel(Y)$, if and only if, $X$ is Anosov;
 
(4) there exists a 1-dimensional foliation $\mathcal{F}^n$ inside $\mathcal{F}^{wn}$ which is transverse to and invariant under the flow of $Y$. Furthermore, $\mathcal{F}^n$ is $C^l$. In particular, $\mathcal{F}^n$ is $C^{1+}$ when $X$ is Anosov;\\

(5) $Skel(Y)$ $C^k$-varies as one $C^{k}$-deforms $(\alpha_-,\alpha_+)_{(\lambda_-,\lambda_+)}$, while fixing $X$;

(6) %$Skel(Y)$ $C^k$-varies as one $C^{k+1}$-deforms $(\alpha_-,\alpha_+)_{(\lambda_-,\lambda_+)}$ through LIS s with $C^k$ weak invariant bundle. In particular,
 $Skel(Y)$ is $C^1$-persistent under $C^2$-deformations of a supported Anosov vector field $X$.
 
 % \textcolor{red}{Ck non-anosov version?}

%(7) $Skel(Y)$ is $C^k$-foliated by strict exact Lagrangians.

\end{theorem}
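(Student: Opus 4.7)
My plan is to reduce the entire statement to the exponential model $(\alpha_-,\alpha_+)_e$ via Lemma~\ref{liscomp}, which produces a strict Liouville isotopy from any LIS to an exponential one through compositions of the elementary normal maps of Section~\ref{4.1}. Lemmas~\ref{lioumaps} and~\ref{skel} ensure that these maps conjugate the Liouville vector field, transport the skeleton by a horizontal shift of graphs, and preserve the weak normal foliation leaf-wise. Hence each claim is invariant under such isotopies and it suffices to verify them in the exponential model, where the explicit expressions for $E,F,f,g$ derived just before this theorem statement are available.

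Parts (1) and (2) then follow quickly: in the exponential model the identity $EF=0 \Leftrightarrow \ker[e^{-s}\alpha_- + e^s\alpha_+]=E$ identifies $Skel(Y)=\Lambda_s=\{e^{2s}=h_s\}$ as a graph over $M$, the tangency computation together with the normal expansion of Equation~\ref{expansioneq} upgrades inclusion to the equality $Skel(Y)=\Lambda_s$, and part (2) is Lemma~\ref{sync}. The defining condition $\ker \alpha|_{s=\Lambda_s(x)}=E_x$ sets up a smooth implicit-function problem in $s$ once the contact forms are smoothed via Lemma~\ref{approx2}, making the graph at least as regular as $E$; conversely $E$ is recovered pointwise from the skeleton as $\ker\alpha|_{Skel(Y)}$, so the two regularities coincide exactly. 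The $C^{1+}$ addendum in the Anosov case is then Corollary~\ref{c1regweak}.

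For part (3) the weak normal bundle $\langle X,\partial_s\rangle$ is $C^\infty$ and strictly exact Lagrangian because $\alpha_\pm$ annihilate both $X$ and $\partial_s$; Lemma~\ref{ypresl} places the flow lines of $Y$ inside $\mathcal{F}^{wn}$, and the Liouville identity $\mathcal{L}_Y d\alpha=d\alpha$ furnishes a differentiable transverse measure with unit expansion rate along each leaf. The normally-hyperbolic-iff-Anosov dichotomy is tested against the inequality $\|Y^1_*|_{T\Lambda_s}\| < m(Y^1_*|_{E^n})$: by part (2) the skeleton dynamics is a synchronization of $X$, so an adapted norm can push the normal rate past every tangent rate precisely when $X$ admits a contracting direction, i.e.~is Anosov. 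Part (4) produces $E^n$ via Lemma~\ref{strongnormal} as a $Y$-invariant complement to $\langle Y\rangle$ inside $E^{wn}$; unique integrability of a continuous line field inside the $2$-dimensional leaves of $\mathcal{F}^{wn}$ then yields $\mathcal{F}^n$, whose regularity is controlled by that of the Liouville vector field via Corollary~\ref{lioureg} and is therefore $C^l$, with the $C^{1+}$ refinement in the Anosov case coming from the bunching machinery of Section~\ref{2.3}.

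Part (5) is an implicit-function-theorem argument applied in families to $\ker\alpha_t=E_t$, with continuous dependence of the input provided by Lemma~\ref{v}. Part (6) is then immediate: in the Anosov case, parts (1) and (3) make $Skel(Y)$ a $C^1$ normally hyperbolic submanifold, and the fundamental theorem of normal hyperbolicity (Theorem~\ref{normalh}) yields $C^1$-persistence under $C^2$-deformations of $X$. I expect the main obstacle to lie in part~(3), specifically the ``only if'' direction of the normal-hyperbolicity dichotomy: showing that in the non-Anosov partially hyperbolic case no choice of norm and reparametrization can simultaneously deliver normal domination requires carefully ruling out all compensating rescalings, and this is where the distinction between partial hyperbolicity and Anosovity becomes sharp in the Liouville setting.
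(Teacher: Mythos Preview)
Your overall strategy---reduce to the exponential model via Lemma~\ref{liscomp}, invoke Lemmas~\ref{lioumaps} and~\ref{skel} to transport everything, and then read off (1)--(6) from the explicit computations preceding the theorem---is exactly the paper's approach, and your handling of (1), (2), (5), (6) matches the paper's proof essentially verbatim.

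There is, however, a genuine gap in your argument for the regularity claim in (4). You write that the regularity of $\mathcal{F}^n$ ``is controlled by that of the Liouville vector field via Corollary~\ref{lioureg} and is therefore $C^l$''. This does not follow. Corollary~\ref{lioureg} tells you that for a $C^\infty$ LIS the Liouville vector field $Y$ is $C^\infty$; it says nothing about the regularity of a dynamically defined invariant sub-bundle of $E^{wn}$. Lemma~\ref{strongnormal} (via the Hirsch--Pugh--Shub Lemma~\ref{strong}) produces $E^n$ only as a \emph{continuous} invariant complement; upgrading this to $C^l$ requires a separate rate argument, and the relevant bunching constant for $E^n$ inside $E^{wn}$ is not obviously tied to $l$. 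The paper handles this by deferring to Section~\ref{6}: one constructs an explicit $C^l$ strict Liouville equivalence to the linearized model (built from a symmetric $C^{l*}$ linear Liouville pair), where $E^n=\langle\partial_s\rangle$ is manifestly $C^\infty$, and then pulls back. Without that linearization step, your argument gives existence of $\mathcal{F}^n$ but not its regularity.

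A smaller point on (3): your appeal to $\mathcal{L}_Y d\alpha=d\alpha$ for the transverse expansion is not quite what is meant---$d\alpha$ vanishes on the Lagrangian leaves of $\mathcal{F}^{wn}$, so it does not serve as a transverse measure \emph{within} the leaves. The intended expansion is of $E^{wn}/\langle Y\rangle$ at $\Lambda_s$, which is precisely Equation~\ref{expansioneq}. For the ``only if'' direction of the normal-hyperbolicity dichotomy, the paper's cleaner route (Section~\ref{7}, Lemma~\ref{anosovnh}) uses the divergence identity $r_{ds}+r_e=1$ at the skeleton to convert normal hyperbolicity ($r_{ds}>1$) directly into $r_e<0$, i.e.\ Anosovity; this avoids the norm-juggling you anticipate.
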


\begin{proof}
We have finished the proof of (1) in Lemma~\ref{skel} and (2) is established in Lemma~\ref{sync}. For (3), the existence of the weak normal foliation $\mathcal{F}^{wn}$ is discussed in Section~\ref{4.1} and the expansion is shown in Equation~\ref{expansioneq}. The fact that normal hyperbolicity at the Liouville skeleton is characterized by the Anosovity of the supported non-singular partially hyperbolic flow also follows from our computations of the Liouville vector field in this section. However, we will give a more straightforward proof for a generalization of this fact in Section~\ref{7} (see Theorem~\ref{pers}).  % \textcolor{red}{where exactly} 

The existence of the strong normal foliation $\mathcal{F}^n$ in (4) is implied by Lemma~\ref{strongnormal}. But we will give a more explicit proof in Section~\ref{6} which allows us to show the claimed regularity.

(5) directly follows from the characterization of the skeleton in (1). (6) for $k>1$ follows from normal hyperbolicity in the Anosov case, i.e. (3), and $C^1$-persistence of normally hyperbolic invariant sets (see Theorem~\ref{normalh} (c)). Note that $C^2$-deformations of $(\alpha_-,\alpha_+)_{(\lambda_-,\lambda_+)}$ implies $C^1$-deformations of the Liouville vector field (in fact $C^{1*}$-deformation of the LIS is sufficient by Lemma~\ref{lioureg}). In Section~\ref{8}, we extend this result to when $k<1$ (i.e. considering the class of non-singular partially hyperbolic flows).
\end{proof}

   \begin{figure}[h]
\centering
\begin{overpic}[width=1\textwidth]{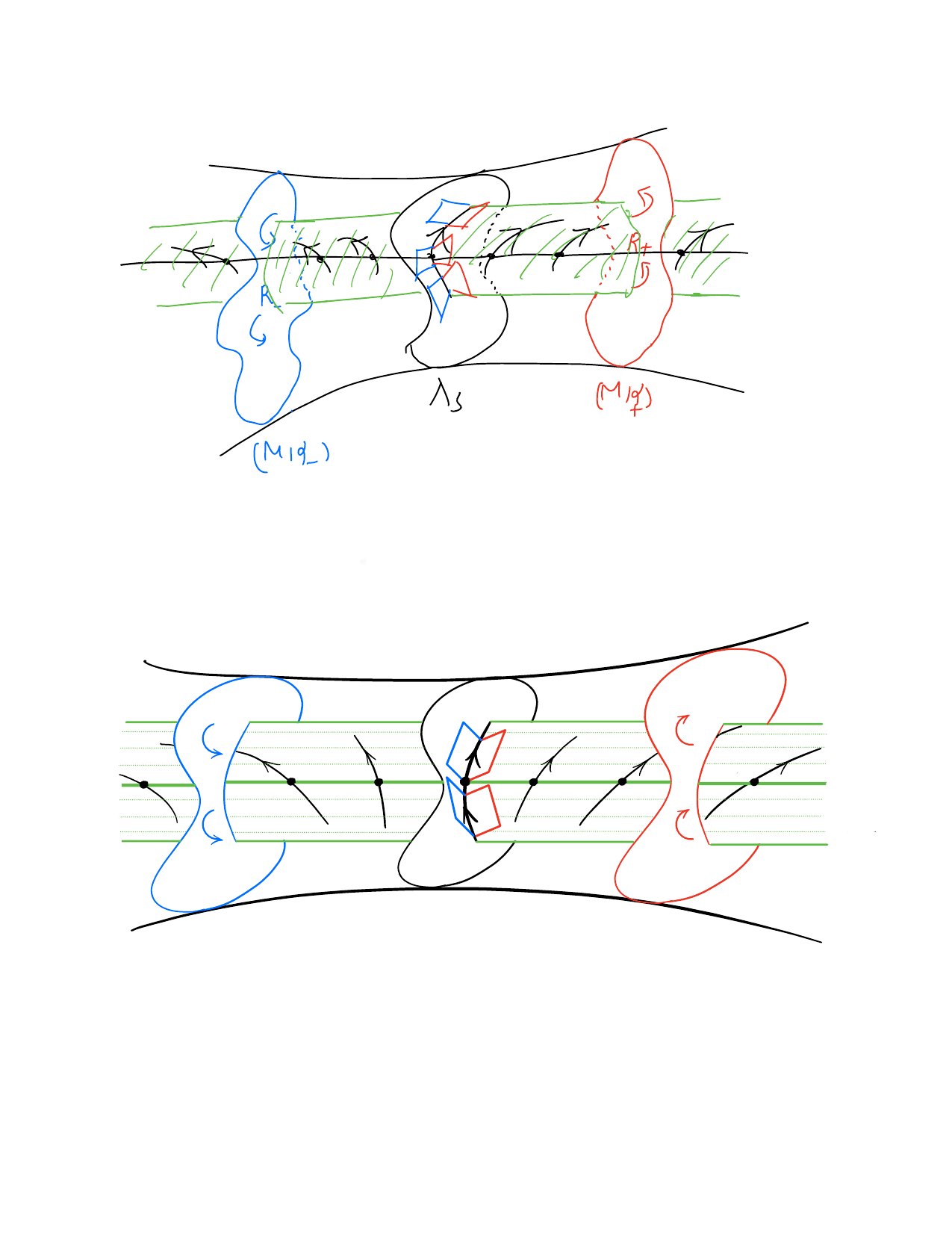}
    \put(55,48){$(M,\alpha_-)$}
      \put(315,48){$(M,\alpha_+)$}
           \put(205,60){$\Lambda_s$}
                \put(420,108){$\mathcal{F}^n$}
                 \put(405,134){$\mathcal{F}^{wn}$}
                    \put(155,36){$\langle Y|_{\Lambda_s}\rangle=P (\alpha_-,\alpha_+)_{(\lambda_-,\lambda_+)}$}
           
              \put(206,136){$\xi_-$}
               \put(247,136){$\xi_+$}
                \put(233,148){$Y$}
                  \put(350,138){$R_+$}
                   \put(75,133){$R_-$}

  \end{overpic}

\caption{Normal expansion at the Liouville skeleton}
\end{figure}

Among other things, the above theorem implies the following. More specifically, this is implied by the explicit understanding of the skeleton dynamics in (2) and the existence of the strong normal bundle in (4). 

\begin{corollary}\label{dynuniq}
Let $X^t$ be a non-singular partially hyperbolic 3-flow with $C^k$ weak dominated bundle. The Liouville vector field induced from a supporting LIS is unique up to $C^k$-conjugacy.
\end{corollary}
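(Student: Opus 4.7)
\textbf{Proof proposal for Corollary~\ref{dynuniq}.} Given two supporting LIS $(\alpha_-^i,\alpha_+^i)_{(\lambda_-^i,\lambda_+^i)}\in\mathcal{LIS}(M)$, $i=1,2$, with corresponding Liouville vector fields $Y_i$ and skeletons $\Lambda_i\subset\mathbb{R}\times M$, the plan is to build a $C^k$-conjugacy $\phi:\mathbb{R}\times M\to\mathbb{R}\times M$ satisfying $\phi_*Y_1=Y_2$ in three stages: first on the skeleton (using the uniqueness of synchronization), then on a tubular neighborhood (using the strong normal foliation), and finally globally via flow equivariance.

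On the skeleton, Theorem~\ref{dynrig}(1) gives that each $\Lambda_i$ is a $C^k$ section of $\pi:\mathbb{R}\times M\to M$, so $\pi|_{\Lambda_i}:\Lambda_i\to M$ is a $C^k$-diffeomorphism. Theorem~\ref{dynrig}(2) gives that $\pi_*(Y_i|_{\Lambda_i})$ is a synchronization of $X$. By the uniqueness of synchronization up to time-change conjugacy along the flow (Proposition~\ref{syncuniq}(2), extended to the non-singular partially hyperbolic setting via Remark~\ref{syncuniqph}), there is a diffeomorphism $h:M\to M$ of the form $h(x)=X^{\tau(x)}(x)$ with $h_*\pi_*(Y_1|_{\Lambda_1})=\pi_*(Y_2|_{\Lambda_2})$, where $\tau$ is determined by the ratio of the two synchronization speeds (and is $C^k$ in our setting by Lemma~\ref{simiclemma} and the assumed regularity of $E$). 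Composing gives the skeleton conjugacy $\phi_0:=(\pi|_{\Lambda_2})^{-1}\circ h\circ\pi|_{\Lambda_1}:\Lambda_1\to\Lambda_2$.

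For the extension, observe that both $Y_i$ lie in the $C^\infty$ weak normal plane field $E^{wn}=\langle X,\partial_s\rangle$ (Lemma~\ref{ypresl}) and preserve the $C^\infty$ weak normal foliation $\mathcal{F}^{wn}$, which depends only on $X$ and is therefore shared by both LIS. Working leaf-by-leaf in $\mathcal{F}^{wn}$ reduces the conjugacy problem to a family of 2-dimensional problems: on each leaf, conjugate a 2D flow with a 1D normally expanding invariant set (the skeleton slice) to another such flow. On each leaf, the strong normal foliation $\mathcal{F}^n_i$ from Theorem~\ref{dynrig}(4) supplies a $Y_i$-invariant transversal through the skeleton, along which the normal dynamics is topologically determined by the expansion rate computed in Section~\ref{4.2}. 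Matching these transversals and using $\phi_0$ on the skeleton yields a neighborhood conjugacy. Globally, since $\Lambda_1$ is the full skeleton and $Y_1^{-t}$ retracts $\mathbb{R}\times M$ to $\Lambda_1$, we extend by equivariance $\phi(Y_1^t(p)):=Y_2^t(\phi(p))$, well-defined by completeness of the $Y_i$.

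The main obstacle will be the regularity bookkeeping. The skeleton conjugacy $\phi_0$ is $C^k$ and the weak normal foliation $\mathcal{F}^{wn}$ is $C^\infty$, but the strong normal foliation $\mathcal{F}^n$ is only $C^l$ a priori, so a naive leafwise extension would deliver only a $C^l$-conjugacy. The key to obtaining $C^k$ is that on each $C^\infty$ leaf of $\mathcal{F}^{wn}$ the Liouville vector field is $C^\infty$ and the normal expansion rate along $\Lambda_i$ is determined by the synchronization data computed in Stage~1, which varies $C^k$ along the base $M$. One can therefore construct a 2D normal-form chart on each leaf whose variation across leaves inherits precisely the $C^k$ regularity of the skeleton graph (rather than the weaker regularity of $\mathcal{F}^n$), and flow equivariance then transports this regularity to all of $\mathbb{R}\times M$ without further loss.
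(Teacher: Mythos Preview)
Your approach is essentially the paper's: the paper's entire proof of this corollary is the single sentence ``this is implied by the explicit understanding of the skeleton dynamics in (2) and the existence of the strong normal bundle in (4)'' (referring to Theorem~\ref{dynrig}), and you have correctly unpacked what that sentence means---match the skeletons via synchronization uniqueness (Proposition~\ref{syncuniq} / Remark~\ref{syncuniqph}), then extend along the common $C^\infty$ weak normal foliation $\mathcal{F}^{wn}$ using the strong normal direction, and globalize by equivariance.

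One comment on your regularity bookkeeping. You correctly flag that $\mathcal{F}^n$ is only $C^l$ a priori, which threatens the $C^k$ claim. Your proposed fix---that the normal expansion rate is ``determined by the synchronization data computed in Stage~1, which varies $C^k$''---is not quite the right reason: the normal expansion rate at the skeleton (Equation~(\ref{expansioneq}), or equivalently $1-r_s$ in the language of Section~\ref{7.1}) genuinely involves $r_s$, whose regularity is tied to $E^{wu}$ and is only $C^l$ for a general norm. What \emph{is} true, and what your workaround should rest on, is that for a $C^\infty$ LIS the coefficient functions $f,g$ in $Y=fX+g\partial_s$ are $C^\infty$ on $\mathbb{R}\times M$, so the relevant expansion data become $C^k$ once restricted to the $C^k$ graph $\Lambda_s$; moreover, within each $C^\infty$ leaf of $\mathcal{F}^{wn}$ the skeleton curve is itself $C^\infty$ (being an integral curve of $Y_i$). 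The paper does not spell any of this out either---it simply records the corollary as a first consequence and then sidesteps the issue entirely in Corollary~\ref{dynuniq2}, where the Moser-type argument of Section~\ref{5.1} upgrades the conclusion to $C^\infty$-conjugacy, superseding the bookkeeping you are wrestling with here.
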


Roughly speaking, the standard use of the Moser technique for Liouville manifolds, i.e. Lemma~\ref{moserclassic}, implies that after applying some deffeomorphism the symplectic structure $d\alpha$ in the above setting can be assumed to be fixed under Liouville deformation. Therefore, the rigidity of the Liouville dynamics in Corollary~\ref{dynuniq} should have a geometric counterpart thanks to the fact that $\iota_Y d\alpha=\alpha$. We will explore such geometric rigidity in Section~\ref{5} and show that a novel use of the Moser technique in this contexts proves to refine our understanding of the (even dynamical) Liouville rigidity for Liouville interpolation systems. In fact, while the observations above establish the uniqueness of the Liouville dynamics up to $C^k$-conjugacy (Corollary~\ref{dynuniq}), the application of the Moser technique will improve the regularity of the conjugacy to a $C^\infty$ one.

%%%%%%%%%%%%%%
%%%%%%%%%%%%%%%
\section{Geometric rigidity of Liouville interpolations}\label{5}

In the light of the standard application of the Moser technique in Liouville geometry  (see Theorem~\ref{moserclassic}), up to diffeomorphism, we can assume the underlying symplectic structure $d\alpha$ to be fixed during any deformation. Therefore, the rigidity and somewhat uniqueness of the Liouville dynamics explored in Section~\ref{4} should be paralleled with equivalent rigidity phenomena of its dual, the Liouville form $\alpha$. In this section, we explore such rigidity with further exploitation of the Moser technique in the presence of a strict exact Lagrangian foliation, which gives sufficient information to recover the Liouville form strictly (and not just up to homotopy). This viewpoint also helps us go beyond our LIS model and study the regularity of strong normal bundles, which we will discuss in Section~\ref{6}.

%%%%%%
\subsection{Moser technique and strict Liouville embeddings}\label{5.1}

In Section~\ref{7}, we will see that the existence of the exact Lagrangian foliation $\mathcal{F}^{wn}$ is not just a coincidence by LIS construction, but in fact, is inherited from the persistent behavior (or normal hyperbolicity) of the underlying Liouville dynamics near its skeleton. 

In the following, a {\em normal diffeomorphism} in the context of LIS s adapted to a fixed (non-singular partially hyperbolic) vector field $X$ is a diffeomorphism of $\mathbb{R}\times M$ which preserves the normal foliation $\mathcal{F}^{wn}$ leaf-wise. {\em Normal isotopy} is defined similarly. We use the Moser technique to show that such isotopies in fact preserve the Liouville form strictly.

The idea is to show that for $k\geq 1$, any 1-parameter family of $\alpha_t$ s induced from a $C^{k*}$ family in $\mathcal{LIS}^{k*}(M)$ is induced from a normal isotopy of $\mathbb{R}_s\times M$, i.e. there is a family of $C^{k*}$ diffeomorphisms $\phi^{t*}\alpha^t=\alpha_0$. %We will later see that useful extension of the main rigidity results is possible to the low regularity case $k<1$, relying on the elementary isotopy moves introduced in Section~\ref{4}.
 %\textcolor{red}{how to make sense for k less than 1}.

\begin{remark}\label{lowisotopy}
Moser technique provides a convenient tool, when one wants to study a deformation of Liouville forms $\alpha_t$ induced LIS s. However, it relies on the non-degeneracy of $d\alpha_t$ and therefore, we only apply that to $C^{k*}$-deformation of LIS s for $k\geq 1$, in order to guarantee the vector field generating the desired isotopy is at least $C^1$. This is enough for most purposes. However, we will still find it useful to extend the main rigidity claim to the lower regularity case of $k<1$. For instance, this is helpful in the study of the linearization of the Liouville dynamics of the regularity theory of the strong normal bundle (see Section~\ref{6}).
In this setting, we will exploit explicit isotopies of lower regularity between Liouville forms, i.e. the elementary isotopies introduced in Section~\ref{4.1}. That is, the change of basis, horizontal and scaling isotopies, work in the general $C^{k*}$ setting for any $k>0$. But more care is needed in their use.
\end{remark}

The Liouville condition helps us find a vector field generating the desired isotopy. Here, the underlying manifold $\mathbb{R}_s\times M$ is not compact and hence, such generating vector field is not always complete. Therefore, we need to be more careful trying to do our deformations in a compact sub-domain of $\mathbb{R}_s\times M$ (in contrast to our standard isotopy moves we introduced in Section~\ref{4.1}, which were not compactly supported). This is possible thanks to the fact that any Liouville manifold is uniquely determined, up to strict Liouville equivalence, by a sub-domain containing the skeleton. This is in fact nothing but the LIS version of Lemma~\ref{moserclassic}, where the compactly supported exact error term $\psi^{t*}\alpha_t-\alpha_0$ is shown to vanish. Thanks to the rigidity of Liouville manifolds at their non-compact ends, we can apply our Moser technique beyond the conditions we imposed at the non-compact ends of our manifolds in the definition of LISs (Definition~\ref{lis}) and exploit this to find explicit conjugacies of the dynamics to its linearization at the skeleton (Section~\ref{6}).

We start from the compact embedding version. % \textcolor{red}{(this is true for any k bigger than 0)?}

\begin{lemma}\label{emb1}
For any $(\alpha_-,\alpha_+)_{(\lambda_-,\lambda_+)}\in \mathcal{LIS}^{k*}_c(M)$, there exists a strict Liouviile embedding into some exponential LIS $(\bar{\alpha}_-,\bar{\alpha}_+)_e$.
\end{lemma}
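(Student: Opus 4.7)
The plan is to construct the embedding explicitly, combining a horizontal shift in the $s$-parameter with a correction given by the Liouville flow of a suitably chosen target exponential LIS. First, by Theorem~4.13 of \cite{massoni} we pick an exponential Liouville pair $(\bar\alpha_-,\bar\alpha_+)_e$ with $\ker\bar\alpha_\pm=\ker\alpha_\pm$, supporting the same (positive reparametrization class of) flow. A change of basis as in Section~\ref{4.1.2} then rewrites our compact LIS, without altering its Liouville form, as $L(\bar\alpha_-,\bar\alpha_+)_{(\bar\lambda_-,\bar\lambda_+)}=\bar\lambda_-\bar\alpha_-+\bar\lambda_+\bar\alpha_+$ on $[N_-,N_+]\times M$ for positive functions $\bar\lambda_\pm$.

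Letting $\tilde\alpha=e^{-t}\bar\alpha_-+e^t\bar\alpha_+$ denote the target Liouville form on $\mathbb{R}_t\times M$ with its complete Liouville vector field $\tilde Y$, I would define
$$
\Phi:[N_-,N_+]\times M\longrightarrow\mathbb{R}\times M,\qquad \Phi(s,x)=\tilde Y^{\,u(s,x)}\bigl(\psi(s,x),\,x\bigr),
$$
where $\psi=\tfrac12\ln(\bar\lambda_+/\bar\lambda_-)$ and $u=\tfrac12\ln(\bar\lambda_-\bar\lambda_+)$; note that $\psi$ is strictly increasing in $s$ by the LIS condition on $(\bar\lambda_-,\bar\lambda_+)$. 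To verify $\Phi^*\tilde\alpha=L(\bar\alpha_-,\bar\alpha_+)_{(\bar\lambda_-,\bar\lambda_+)}$, I would combine the identities $\tilde\alpha(\tilde Y)=0$ and $(\tilde Y^u)^*\tilde\alpha=e^u\tilde\alpha$ (valid for each fixed time parameter $u$): on any test vector $v=a\partial_s+v_M$ at $(s,x)$, the $\tilde Y$-direction contribution $(v\cdot u)\,\tilde\alpha(\tilde Y)$ vanishes, while the remaining piece collapses via the relations $e^{u\mp\psi}=\bar\lambda_\mp$ to $\bar\lambda_-\bar\alpha_-(v_M)+\bar\lambda_+\bar\alpha_+(v_M)$, exactly matching the right-hand side (both sides annihilate $\partial_s$). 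Immersion of $\Phi$ is then automatic from the non-degeneracy of $\Phi^*d\tilde\alpha=d\bar\alpha$.

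The main obstacle will be global injectivity of $\Phi$. The strategy is to work leaf-wise in the weak normal foliation $\mathcal{F}^{wn}$: by Lemma~\ref{ypresl}, $\tilde Y$ is tangent to $\mathcal{F}^{wn}$, and the partial map $(s,x)\mapsto(\psi(s,x),x)$ preserves the $M$-factor, so $\Phi$ sends each source leaf $[N_-,N_+]\times O_x$ (with $O_x$ the $X$-orbit through $x$) into the corresponding target leaf $\mathbb{R}\times O_x$. On each such 2-dimensional leaf, the strict monotonicity $\partial_s\psi>0$ combined with the transverse $\tilde Y$-expansion at the target skeleton (Equation~\ref{expansioneq}) should force $\Phi|_{\mathrm{leaf}}$ to be a bijection onto its image; the most delicate situation is when $O_x$ is a closed $X$-orbit, in which case the target leaf is a cylinder and one must carefully rule out any wrap-around of $\tilde Y^{u}$-orbits inside the leaf.
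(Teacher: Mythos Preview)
Your construction is exactly the paper's: change of basis to an exponential pair with the same kernels, then the horizontal map $H_\psi$ with $\psi=\tfrac12\ln(\bar\lambda_+/\bar\lambda_-)$, then scaling along the target Liouville flow by $w=\tfrac12\ln(\bar\lambda_-\bar\lambda_+)$ (your $u$). Your pullback computation is correct.

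The only place you diverge is the injectivity step, and there you are making it harder than necessary. The paper simply extends $w$ from the compact image $D:=H_\psi([N_-,N_+]\times M)$ to a compactly supported function on all of $\mathbb{R}\times M$ and then applies the scaling map $S_w$ of Section~\ref{4.1.4} as a \emph{global} self-map of $(\mathbb{R}\times M,\tilde\alpha)$; by Lemma~\ref{genscale} this is a diffeomorphism (one needs the extension to satisfy $\tilde Y\cdot w>-1$, which can be arranged since it already holds on $D$). Your $\Phi$ is then the restriction to $D$ of a global diffeomorphism composed with the horizontal embedding, so injectivity is free and no leaf-wise analysis is needed.

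If you prefer to argue directly on the compact domain, there is still a cleaner route than tracking wrap-around on cylindrical leaves. Since both $\tilde\alpha$ and $e^{w}\tilde\alpha$ are Liouville on $D$, Lemma~\ref{genscale}(a) gives $\tilde Y\cdot w>-1$ and shows that the Liouville vector field of $e^{w}\tilde\alpha$ is the positive rescaling $\tilde Y/(1+\tilde Y\cdot w)$. But $D$ (being the $H_\psi$-image of a compact LIS) is a Liouville domain for $e^{w}\tilde\alpha$, so this vector field---hence $\tilde Y$ itself---points outward along $\partial D$. Consequently every $\tilde Y$-orbit meets $D$ in a single connected arc (or, for a periodic orbit on the skeleton, the full circle), along which the map $p\mapsto\tilde Y^{\,w(p)}(p)$ has flow-parameter derivative $1+\tilde Y\cdot w>0$ and is therefore injective. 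No separate treatment of closed $X$-orbits is required.
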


\begin{proof}
Consider $\alpha=L(\alpha_-,\alpha_+)_{(\lambda_-,\lambda_+)}$ defined on $[-1,1]_s \times M$. By \cite{massoni}, there is some exponential $(\bar{\alpha}_-,\bar{\alpha}_+)_e$ such that $\ker{\bar{\alpha}_\pm}=\ker{\alpha_\pm}$. After a change of basis isotopy, we write $(\bar{\alpha}_-,\bar{\alpha}_+)_{(\bar{\lambda}_-,\bar{\lambda}_+)}$ and after a horizontal isotopy, we get $(\bar{\alpha}_-,\bar{\alpha}_+)_{(e^{-s+w},s^{s+w})}$ for some distortion function $w:\bar{W} \rightarrow\mathbb{R}$ defined on a compact subset $\bar{W}\subset W=\mathbb{R}\times M$, where $\bar{W}$ is the image of $[-1,1]_s \times M$ under these isotopies. Extend $w$ to an arbitrary compactly supported map $W\rightarrow \mathbb{R}$. Apply a (compactly supported) scaling isotopy, we have an embedding as claimed.
\end{proof}

We then note that this works for a 1-parameter family as well. %\textcolor{red}{(for this wee need k at least 1)?}

\begin{lemma}\label{newmosercomp}
Let $\alpha_t$ be a $C^{k*}$ family of Liouville forms induced from a family of LIS s in $P^{-1}[X]\subset \mathcal{LIS}_c^{k*}(M)$ on $[-1,1]\times M$ and consider the strict Liouville embedding $i:([-1,1]\times M,\alpha_0)\rightarrow (\mathbb{R}\times M, L(\bar{\alpha}_{0,-},\bar{\alpha}_{0,+})_e))$ as given in Lemma~\ref{emb1}. Then, there is an isotopy $\psi^t:\mathbb{R}\times M \rightarrow \mathbb{R}\times M$ such that $\psi^t \circ i:([-1,1]\times M,\alpha_t)\rightarrow (\mathbb{R}\times M, L(\bar{\alpha}_{0,-},\bar{\alpha}_{0,+})_e)$ is a strict Liouville embedding.
\end{lemma}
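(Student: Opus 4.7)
The plan is to upgrade the parametric Moser argument (Lemma~\ref{moserclassic}) from a Liouville homotopy \emph{up to exact terms} to a \emph{strict} isotopy, by exploiting the weak normal foliation $\mathcal{F}^{wn}$ of Section~\ref{4.1.1}. The key structural observation is that every LIS in $P^{-1}[X]\subset\mathcal{LIS}_c^{k*}(M)$ shares the same weak normal plane field $E^{wn}=\langle X,\partial_s\rangle$ and annihilates it, so $\mathcal{F}^{wn}$ is a common exact Lagrangian foliation for every Liouville form in our family. This extra symmetry will force the usual ``exact error term'' of Moser to vanish identically.

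The first step is to extend $\alpha_t$ to a Liouville homotopy $\tilde{\alpha}_t$ on $\mathbb{R}\times M$ satisfying: (i) $\tilde{\alpha}_0=L(\bar{\alpha}_{0,-},\bar{\alpha}_{0,+})_e$; (ii) $i^*\tilde{\alpha}_t=\alpha_t$; (iii) $\tilde{\alpha}_t=\tilde{\alpha}_0$ outside a fixed compact set independent of $t$; (iv) each $\tilde{\alpha}_t$ annihilates $T\mathcal{F}^{wn}$. Property (iv) is the crucial one. Since the elementary maps defining $i$ (change of basis, horizontal, scaling) all preserve $\mathcal{F}^{wn}$, the push-forward $i_*\alpha_t$ is a 1-form on $i([-1,1]\times M)$ annihilating $T\mathcal{F}^{wn}|_{i([-1,1]\times M)}$. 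Since $\bar{\alpha}_{0,-}$ and $\bar{\alpha}_{0,+}$ span $\mathrm{Ann}(\langle X,\partial_s\rangle)$ pointwise, I would write $i_*\alpha_t = \mu^t_0\bar{\alpha}_{0,-}+\nu^t_0\bar{\alpha}_{0,+}$ and extend the coefficients $\mu^t_0,\nu^t_0$ to functions on $\mathbb{R}\times M$ that agree with $e^{-s},e^s$ outside a compact neighborhood of $i([-1,1]\times M)$. Openness of the Liouville condition and continuity in $t$, together with subdivision of $[0,1]$ if necessary, keep $\tilde{\alpha}_t$ Liouville throughout.

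With $\tilde{\alpha}_t$ in hand, I would run Moser's trick under the constraint $v_t\subset T\mathcal{F}^{wn}$. Lagrangianity of $\mathcal{F}^{wn}$ for $\tilde{\alpha}_t$ together with the fact that $\partial_t\tilde{\alpha}_t\in\mathrm{Ann}(T\mathcal{F}^{wn})$ (since every $\tilde{\alpha}_t$ does) imply, via the symplectic duality isomorphism $T\mathcal{F}^{wn}\xrightarrow{\sim}\mathrm{Ann}(T\mathcal{F}^{wn})$ given by $v\mapsto\iota_v d\tilde{\alpha}_t$, that the equation $\iota_{v_t}d\tilde{\alpha}_t=-\partial_t\tilde{\alpha}_t$ admits a unique solution $v_t\in T\mathcal{F}^{wn}$. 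By (iii), $v_t$ is compactly supported; let $\psi^t$ be its flow. The standard Moser computation gives $\psi^{t*}\tilde{\alpha}_t-\tilde{\alpha}_0=dh_t$ with $\partial_t h_t=\psi^{t*}(\tilde{\alpha}_t(v_t))$, but now $v_t\in T\mathcal{F}^{wn}$ and $\tilde{\alpha}_t\in\mathrm{Ann}(T\mathcal{F}^{wn})$ force $\tilde{\alpha}_t(v_t)\equiv 0$, so $h_t\equiv 0$ and $\psi^{t*}\tilde{\alpha}_t=\tilde{\alpha}_0$ on the nose. Pulling back by $i$ yields $(\psi^t\circ i)^*L(\bar{\alpha}_{0,-},\bar{\alpha}_{0,+})_e=i^*\tilde{\alpha}_t=\alpha_t$, as required.

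The main obstacle is the first step: producing the extension $\tilde{\alpha}_t$ simultaneously satisfying (ii)--(iv) and the Liouville condition uniformly in $t$. Property (iv) is what permits the strict (rather than up-to-exact) Moser, and is the reason one must extend in the basis of 1-forms annihilating $\mathcal{F}^{wn}$ instead of naively patching the Liouville forms. The remaining step -- using a common Lagrangian foliation to kill the exact error -- is then a clean piece of symplectic linear algebra once (iv) holds, and is precisely the mechanism that distinguishes our strict Liouville isotopies from the generic Liouville isotopies produced by Lemma~\ref{moserclassic}.
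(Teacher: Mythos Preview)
Your proposal is correct and uses the same core mechanism as the paper: run Moser with the generating field constrained to $E^{wn}=\langle X,\partial_s\rangle$, and observe that since every form in the family annihilates $E^{wn}$, the term $\alpha_t(v_t)$ vanishes identically, so the usual exact error $dh_t$ is zero and the isotopy is strict.

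The organizational difference is where you place the extension step. You first extend the \emph{Liouville forms} $\tilde\alpha_t$ to all of $\mathbb{R}\times M$ (Liouville everywhere, matching the exponential model outside a compact), and then run Moser globally; you correctly flag this extension as the main obstacle. The paper instead pushes $\alpha_t$ forward only to the compact image $\bar W=i([-1,1]\times M)$, solves $\dot{\bar\alpha}_t=\iota_{V_t}d\bar\alpha_t$ there by writing $V_t=p_tY_t+q_t\partial_s$ and reading off $p_t,q_t$ from the basis $(\bar\alpha_t,\mathcal L_{\partial_s}\bar\alpha_t)$ of $\mathrm{Ann}(E^{wn})$, and only then extends the \emph{vector field} $V_t$ to a compactly supported section of $E^{wn}$ on $\mathbb{R}\times M$. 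Extending a vector field (cut off inside $E^{wn}$) is trivial; extending forms while preserving the Liouville condition uniformly in $t$ is the extra work your route incurs. Both arguments are valid, but the paper's ordering sidesteps exactly the obstacle you identify.
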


\begin{proof}
Let $\bar{\alpha}_t:=i^{-1*}\alpha_t$ be the family of Liouville forms defined on the compact set $\bar{W}:=i([-1,1]_s\times M)\subset \mathbb{R}_s\times M$ and note that by assumption $\bar{\alpha}_0$ is the restriction of $L(\bar{\alpha}_{0,-},\bar{\alpha}_{0,+})_e)$ to such compact set. The Liouville forms $\bar{\alpha}_t$ still annihilates $E^{wn}$, since fixing $[X]$ is equivalent to fixing the exact Lagrangian foliation $\mathcal{F}^{wn}$.

 We use Moser technique to show that such family can be realized as a normal isotopy of the ambient $(\bar{\alpha}_{0,-},\bar{\alpha}_{0,+})_e$. Assume $\psi^{t}$ is such isotopy generated by a vector field $V_t \subset E^{wn}=\langle X,\partial_s \rangle$ on $\mathbb{R}\times M$ and note $\dot{\alpha}_t \in  Annih^1(E^{wn})$.

Let $\bar{\alpha}=L(\bar{\alpha}_{0,-},\bar{\alpha}_{0,+})_e$. On $\bar{W}$, we have
$$\psi^{t*}\bar{\alpha}_0=\bar{\alpha}_t \Longleftrightarrow \psi^{-t*}\bar{\alpha}_t=\bar{\alpha}_0 \Longleftrightarrow \frac{d}{dt}(\psi^{-t*}\bar{\alpha}_t)=0 \Longleftrightarrow \psi^{-t*}(\dot{\bar{\alpha}}_t+\mathcal{L}_{-V_t}\bar{\alpha}_t)=0 \Longleftrightarrow \dot{\bar{\alpha}}_t=\iota_{V_t}d\bar{\alpha}_t,$$
which writing $V_t=p_t Y_t+q_t \partial_s$, is equivalent to
$$\dot{\bar{\alpha}}_t=-p_t\bar{\alpha}_t -q_t \mathcal{L}_{\partial_s} \bar{\alpha}_t.$$
Therefore, the fact that $(\bar{\alpha}_t$,$\mathcal{L}_{\partial_s}\bar{\alpha}_t)$ forms a $C^k$ basis for $ Annih^1(E^{wn})$ means that such $p_t$ and $q_t$ can be determined and would $C^k$ change as  we $C^{k*}$ defrom LIS s. Extend this $V_t$ to a compactly supported normal vector field $V_t:W\rightarrow \mathbb{R}$.This defines a family of complete vector fields $V_t$ (all these vector fields are compactly supported) and the consequently, the desired isotopy of embeddings (note that we are assuming $k\geq 1$).

\end{proof}

We finally notice that that any path of such strict Liouville embeddings $\psi^t\circ i:([-1,1]\times M,\alpha_t)\rightarrow  (\mathbb{R}\times M, L(\bar{\alpha}_{0,-},\bar{\alpha}_{0,+})_e)$ can be uniquely extended to an isotopy of the completions $(\mathbb{R}\times M,\bar{\alpha}_t)$. This basically follows from the discussion in Remark~\ref{isoextension}.

\begin{lemma}\label{moserext}
Any family of embeddings of compact LISs can be extended to a family of strict Liouville equivalences of their completions in a canonical way.
\end{lemma}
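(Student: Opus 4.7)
The plan is to combine the standard completion construction for Liouville domains (as summarized in Section~\ref{3.1.3}) with the parametric version of Moser isotopy already used in Lemma~\ref{newmosercomp}, and then invoke the extension principle of Remark~\ref{isoextension} in families.

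First, I will set up the two sides of the extension. Given a family of strict Liouville embeddings $\phi_t := \psi^t \circ i : ([-1,1]\times M, \alpha_t) \hookrightarrow (\mathbb{R}\times M, L(\bar\alpha_{0,-},\bar\alpha_{0,+})_e)$ coming from a $C^{k*}$ family of compact LIS s $(\alpha_{t,-},\alpha_{t,+})_{(\lambda_{t,-},\lambda_{t,+})} \in \mathcal{LIS}_c^{k*}(M)$, the boundary restrictions $\alpha_t|_{\partial([-1,1]\times M)}$ are (positive and negative) contact forms that vary smoothly in $t$. Using the dynamics of the Liouville vector field $Y_t$ near the boundary, one attaches canonical cylindrical ends $\partial([-1,1]\times M) \times [0,\infty)$ with the Liouville form $e^\sigma(\alpha_t|_\partial)$, obtaining the completion $(\bar W_t, \bar\alpha_t)$. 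Thanks to the rigidity of the cylindrical model (the relation $\mathcal{L}_{Y_t}\alpha_t=\alpha_t$ determines the germ), this completion is canonical up to strict Liouville equivalence, and varies as a $C^{k*}$ family in $t$. On the other side, the target $(\mathbb{R}\times M, L(\bar\alpha_{0,-},\bar\alpha_{0,+})_e)$ is already a (non-compact, finite-type) Liouville manifold, so no completion is needed there; equivalently, it is its own completion in the sense above.

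Next, I will produce the extended equivalence $\bar\phi_t : \bar W_t \to \mathbb{R}\times M$ by flowing. On $[-1,1]\times M$ set $\bar\phi_t := \phi_t$. On the positive end $\{1\}\times M \times [0,\infty)_\sigma$ of $\bar W_t$, define
\[
\bar\phi_t(x,\sigma) := \bar Y^\sigma\bigl(\phi_t(x)\bigr),
\]
where $\bar Y^\sigma$ is the Liouville flow of the target. Symmetrically on the negative end with $-\sigma$. The identity $\mathcal{L}_Y \alpha = \alpha$ on both sides guarantees $\bar\phi_t^*\bigl(L(\bar\alpha_{0,-},\bar\alpha_{0,+})_e\bigr) = \bar\alpha_t$: on the cylindrical end this is just the statement that the two Liouville flows are intertwined by $\phi_t$ on the boundary, which holds because $\phi_t$ already matches the Liouville forms on $[-1,1]\times M$ (and hence matches the Liouville vector fields there, up to the regularity caveat of Remark~\ref{isoextension}). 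Completeness of $\bar Y^\sigma$ (guaranteed by the Liouville manifold condition on the target) ensures the formula is defined for all $\sigma\geq 0$, and the fact that both LIS s interpolate through the same exact Lagrangian foliation $\mathcal{F}^{wn}$ (fixed because $X$ is fixed) keeps the construction in the normal class used throughout Section~\ref{5.1}.

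Finally, I will verify that this extension depends canonically and continuously on $t$. Canonicity follows because any two completions are identified by a unique germ-preserving strict Liouville equivalence (this is precisely what Remark~\ref{isoextension} records for a single embedding), so the freedom in the construction is exhausted by the initial choice of $\phi_t$. Continuity (resp.\ $C^{k*}$-regularity) in $t$ is inherited from the continuity of $\phi_t$ on the compact part and from the smoothness of the Liouville flow $\bar Y^\sigma$ on the target, since the extension formula $\bar\phi_t(x,\sigma) = \bar Y^\sigma(\phi_t(x))$ is a composition of the two.

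The main obstacle, and the one requiring the most care, is the regularity issue already flagged in Remark~\ref{isoextension}: a strict equivalence of Liouville forms does not automatically intertwine the Liouville vector fields unless one has enough differentiability, and we are working with LIS s whose Liouville forms may only be $C^{k*}$. In the situation of Lemma~\ref{newmosercomp} this is not an issue because $k\geq 1$ and the Moser vector field $V_t$ is constructed to preserve the Liouville form strictly, so $\phi_t^*\alpha = \bar\alpha_t$ is genuine and its infinitesimal version along the boundary does intertwine Liouville vector fields; hence $\bar\phi_t$ is well-defined. For the lower-regularity regime (as will be needed in Section~\ref{6}), the same construction still goes through provided one uses the elementary normal isotopies of Section~\ref{4.1} instead of Moser, since those are explicit enough to match $Y$ on the nose and the extension formula above applies verbatim.
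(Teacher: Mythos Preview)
Your proposal is correct and follows the same approach the paper has in mind: the paper does not give a standalone proof for this lemma but simply remarks that it ``basically follows from the discussion in Remark~\ref{isoextension},'' i.e., extend the embedding by flowing along the Liouville vector field on the cylindrical ends, with the regularity caveat handled exactly as you do. Your write-up is a faithful and more explicit unpacking of that remark, including the parametric dependence and the $C^{k*}$ regularity discussion.
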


Therefore, we have our isotopy claim as follows.

\begin{corollary}
Let $\alpha_t$ be a family of Liouville froms induced on from a family of LISs in $P^{-1}[X]\subset \mathcal{LIS}^{k*}(M)$, there exists a family of diffeomorphisms $\psi^t:\mathbb{R}\times M\rightarrow \mathbb{R}\times M$ satisfying $\psi^{t*}\alpha_t=\alpha_t$.
\end{corollary}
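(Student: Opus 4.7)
The plan is to reduce to the compact setting already handled by Lemma~\ref{emb1}, Lemma~\ref{newmosercomp}, and Lemma~\ref{moserext}, and then transport the resulting isotopy back to the non-compact LIS s. Since the whole family lies in $P^{-1}[X]$, every $\alpha_t$ annihilates the fixed weak normal plane field $E^{wn}=\langle X,\partial_s\rangle$, and (by Theorem~\ref{dynrig}(1)) its Liouville skeleton $\Lambda_s(t)$ is the graph of a $C^k$ function $M\to\mathbb{R}$ determined by the condition $\ker[\lambda_-^t\alpha_-^t+\lambda_+^t\alpha_+^t]=E$. Continuity of the family in $\mathcal{LIS}^{k*}(M)$ forces these graphs to vary continuously in $t$, so for $t$ in a compact parameter interval there is some large $N>0$ with $\Lambda_s(t)\subset (-N,N)\times M$ for every $t$.

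Next I would restrict each $\alpha_t$ to the compact domain $[-N,N]\times M$ to obtain a $C^{k*}$-family in $\mathcal{LIS}^{k*}_c(M)$ (for $N$ sufficiently large the positive contact condition on the boundary is inherited from the exponential asymptotics). By Lemma~\ref{emb1} there is a strict Liouville embedding
\[
i:\bigl([-N,N]\times M,\, \alpha_0|_{[-N,N]\times M}\bigr)\;\hookrightarrow\;\bigl(\mathbb{R}\times M,\, L(\bar\alpha_{0,-},\bar\alpha_{0,+})_e\bigr)
\]
into an ambient exponential Liouville manifold. Applying Lemma~\ref{newmosercomp} to this setup produces a compactly supported isotopy $\hat\psi^{\,t}$ of $\mathbb{R}\times M$ such that $\hat\psi^{\,t}\circ i$ is a strict Liouville embedding of $\alpha_t|_{[-N,N]\times M}$ for every $t$. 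Because the family varies $C^{k*}$-continuously, the Moser vector field $V_t=p_tY_t+q_t\partial_s$ constructed inside that proof depends $C^k$-continuously on $t$, so we genuinely obtain a family of embeddings, not just individual ones.

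Finally, I would invoke Lemma~\ref{moserext} to extend the family $\hat\psi^{\,t}\circ i$ of embeddings of the compact LIS s to a family of strict Liouville equivalences of their completions. The completion of $\alpha_t|_{[-N,N]\times M}$ is canonically strictly Liouville equivalent to the original non-compact $(\mathbb{R}\times M,\alpha_t)$ (the cylindrical ends are reconstructed uniquely from the Liouville flow, which $C^k$-varies in $t$ by Corollary~\ref{lioureg}), and the same is true for $\alpha_0$ on the target side. Composing the resulting canonical extensions on either side of $\hat\psi^{\,t}\circ i$ yields the desired isotopy $\psi^t:\mathbb{R}\times M\to\mathbb{R}\times M$ with $\psi^{t*}\alpha_t=\alpha_0$.

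The main obstacle is the $t$-parametric version of Lemma~\ref{moserext}: one needs the identifications of the non-compact LIS s with the completions of their compact truncations to vary continuously in $t$. This is what forces me to take $N$ large enough to contain every $\Lambda_s(t)$ and to use the $C^k$-dependence of the Liouville flow on the parameter (Corollary~\ref{lioureg}); once both are in hand, the extension step is routine and the final $\psi^t$ is automatically $C^k$ and restricts to the identity outside a compact set.
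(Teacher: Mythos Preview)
Your argument is correct and follows essentially the same route as the paper: choose $N$ large enough that $[-N,N]\times M$ contains all the skeletons, apply Lemma~\ref{newmosercomp} (which already invokes Lemma~\ref{emb1} internally) to get a family of strict Liouville embeddings of the truncations into an exponential model, and then use Lemma~\ref{moserext} to extend to the completions and identify these with the original $(\mathbb{R}\times M,\alpha_t)$. Your write-up is in fact more careful than the paper's three-line proof, since you make explicit the continuous $t$-dependence of the Moser field and of the completion identifications---points the paper leaves implicit.
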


\begin{proof}
Take $N>0$ large enough that $[-N,N]\times M$ contains the skeleton of all $\alpha_t$ s. By Lemma~\ref{newmosercomp}, there is a family of strict Liouville embeddings $([-N,N]\times M,\alpha_t)\rightarrow (\mathbb{R}\times M,\alpha_0)$. By Lemma~\ref{moserext}, this can be extended to a family of strict Liouville equivalences $(\mathbb{R}\times M,\alpha_t)\rightarrow (\mathbb{R}\times M,\alpha_0)$ which is an isotopy of $(\mathbb{R}\times M,\alpha_0)$.
\end{proof}

This essentially means that fixing the flow $X$, the Liouville form induced on $\mathbb{R}\times M$ from a LIS is unique up to strict Liouville equivalence. This helps us promote Mitsumatsu's construction to a 1-to-1 equivalence between appropriate equivalence classes. %Note that an obstruction for strict Liouville equivalence is the skeleton dynamics, i.e. $n$. But the constructions of the previous lemmas mean that in the absence of such obstruction, the two Liouviile forms are in fact $C^k$ strictly Liouville equivalent. 
We state this for $k=\infty$ as is mostly used.

\begin{theorem}\label{1to1}
There is a 1-to-1 correspondence between synchronized partially hyperbolic vector fields, up to $C^\infty$-conjugacy, and Liouville forms induced from some LIS on in $\mathcal{LIS}(M)$, up to strict Liouville equivalence, i.e.

$$\bigg\{\substack{\text{Positive reparametrization class of} \\  \text{non-singular partially hyperbolic flows} \\  \text{up to $C^\infty$-conjugacy}} \bigg\} \mbox{\Large$\overset{\text{1-to-1}}{\longleftrightarrow}$}
\bigg\{\substack{\text{Liouville forms induced from some LIS on $\mathbb{R}\times M$} \\  \text{up to strict Liouville equivalence}} \bigg\}.$$
\end{theorem}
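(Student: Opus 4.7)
The plan is to construct the correspondence explicitly in both directions and then verify each direction respects the stated equivalence relations. In the forward direction, send a strict Liouville equivalence class $[\alpha]$, with $\alpha = L(\alpha_-,\alpha_+)_{(\lambda_-,\lambda_+)}$, to the positive reparametrization class of $\pi_*(Y|_{Skel(Y)})$, which by Theorem~\ref{dynrig}(2) is a synchronization of the supported non-singular partially hyperbolic flow. This is well defined: a strict Liouville equivalence $\phi$ satisfies $\phi^*\alpha' = \alpha$, hence $\phi_*Y = Y'$ and $\phi(Skel(Y)) = Skel(Y')$; because $Skel(Y)$ and $Skel(Y')$ are graphs over $M$ via the projection $\pi$, the induced map $M \to M$ conjugates the two skeleton synchronizations by a $C^\infty$ diffeomorphism. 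In the reverse direction, given a flow $X$, pick any supporting LIS — one exists by Theorem~\ref{introgeneralchar} — and take its induced Liouville form; we must check this is independent of all choices, which is exactly the injectivity statement below.

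For surjectivity onto positive reparametrization classes, I would first invoke Theorem~\ref{introgeneralchar}(2) to get some supporting LIS $(\alpha_-,\alpha_+)_{(\lambda_-,\lambda_+)}$. The skeleton dynamics $\pi_*(Y|_{Skel(Y)})$ is then a synchronization of $X$, which lies in the positive reparametrization class of $X$. Combined with the uniqueness of synchronization up to $C^1$-conjugacy along the flow (Proposition~\ref{syncuniq} and Remark~\ref{syncuniqph}), we recover precisely the given positive reparametrization class modulo $C^\infty$-conjugacy.

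The main work is injectivity. Suppose two LIS $\Sigma_0, \Sigma_1 \in \mathcal{LIS}(M)$ induce Liouville forms $\alpha_0,\alpha_1$ whose skeleton synchronizations lie in the same class. First I would lift the $C^\infty$-conjugacy on $M$ between these synchronizations to the product diffeomorphism $\mathrm{id}_\mathbb{R} \times \phi$ on $\mathbb{R}\times M$, which pulls $\Sigma_1$ back to a LIS supporting a flow in the same positive reparametrization class as the one supported by $\Sigma_0$. Next, apply the elementary change-of-basis, horizontal, and scaling isotopies of Section~\ref{4.1} (none of which alter the supported flow, by Lemma~\ref{lioumaps}) to further reduce both to LIS in the same preimage $P^{-1}[X] \subset \mathcal{LIS}(M)$ — the fact that every LIS can be isotoped to a fixed exponential model (Lemma~\ref{liscomp}) shows that $P^{-1}[X]$ is path connected via such isotopies. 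Finally, along any such path $\Sigma_t$ in $P^{-1}[X]$ the induced family of Liouville forms $\alpha_t$ is exact and annihilates the fixed $C^\infty$ weak normal foliation $\mathcal{F}^{wn}$, so the Moser calculation carried out in Lemma~\ref{newmosercomp} and its corollary produces a normal isotopy of $\mathbb{R}\times M$ realizing $\psi^{1*}\alpha_1 = \alpha_0$.

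The main obstacle is the last step: the standard Moser technique of Lemma~\ref{moserclassic} only yields equivalence up to an exact Hamiltonian error, whereas we need a genuine \emph{strict} Liouville equivalence. This is where it becomes essential that along the homotopy $\Sigma_t$ the flow $X$ is fixed, which pins down the weak normal foliation $\mathcal{F}^{wn}$ leaf-wise; the time derivative $\dot\alpha_t$ then lives in $\mathrm{Annih}^1(E^{wn})$, spanned by $\alpha_t$ and $\mathcal{L}_{\partial_s}\alpha_t$, and one can explicitly solve for a generating vector field $V_t = p_tY_t + q_t\partial_s$ tangent to $\mathcal{F}^{wn}$, compactly supported once extended beyond a large enough cylinder containing all skeletons (such a compactly supported extension exists by the rigidity of Liouville manifolds near their cylindrical ends, cf.\ Remark~\ref{isoextension} and Lemma~\ref{moserext}). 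This removes any Hamiltonian error term and upgrades the homotopy to a strict equivalence, completing injectivity.
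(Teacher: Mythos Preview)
Your proposal is correct and follows essentially the same route as the paper: the correspondence is realized via the skeleton dynamics $\pi_*(Y|_{\Lambda_s})$, surjectivity comes from Theorem~\ref{introgeneralchar}, and injectivity is obtained by reducing (via a lifted conjugacy) to two LIS in the same fiber $P^{-1}[X]$ and then running the adapted Moser argument of Lemma~\ref{newmosercomp} together with the completion step of Lemma~\ref{moserext}. The paper's own proof is terser---it simply cites the corollary preceding the theorem (that any path in $P^{-1}[X]$ is realized by a strict normal isotopy) and the uniqueness of synchronization---but you have correctly unpacked the underlying mechanism, in particular the point that fixing $X$ pins down $\mathcal{F}^{wn}$ so that $\dot\alpha_t \in \mathrm{Annih}^1(E^{wn})$ and the Moser vector field can be taken tangent to $E^{wn}$, killing the exact error term.
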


\begin{proof}
The correspondence is made via the skeleton dynamics. More precisely, let $\mathcal{PHF}_{sync}(M)$ be the space of synchronized $C^\infty$ non-singular partially hyperbolic flows on $M$, the following map introduced in Section~\ref{4} (see Lemma~\ref{sync})
$$\begin{cases}
\mathcal{LIS}(M)\rightarrow \mathcal{PHF}_{sync} (M)\\
 (\alpha_-,\alpha_+)_{(\lambda_-,\lambda_+)}\mapsto \pi_*(Y|_{\Lambda_s})
\end{cases},$$
recovers the dynamics of the supported flow $P (\alpha_-,\alpha_+)_{(\lambda_-,\lambda_+)}$ from the skeleton of $L(\alpha_-,\alpha_+)_{(\lambda_-,\lambda_+)}$ (up to some positive reparametrization). Furthermore, notice that synchronization is unique in the positive reparametrization class of the supported flow, up to conjugacy (see Proposition~\ref{syncuniq} and Remark~\ref{syncuniqph}).

On the other hand, if two non-singular partially hyperbolic flows $X_1$ and $X_2$ are $C^\infty$-conjugate after a positive reparametrization via the map $\psi:M\rightarrow M$, then $P^{-1}[X_1]=P^{-1}[\psi_*(X_2)]\subset \mathcal{LIS}(M)$, completing the proof.
\end{proof}

An important corollary, which rectifies all the differences between the linear and exponential Liouville pairs, is as follows.

\begin{corollary}\label{linemb}
Fixing a non-singular partially hyperbolic flow (up to positive reparametrization), for any supporting compact LIS $(\alpha_-,\alpha_+)_{(\lambda_-,\lambda_+)}$ and any supporting (non-compact) LIS $(\bar{\alpha}_-,\bar{\alpha}_+)_{(\bar{\lambda}_-,\bar{\lambda}_+)}$, there exists a strict Liouville embedding
$$i:([N_-,N_+]\times M,L(\alpha_-,\alpha_+)_{(\lambda_-,\lambda_+)})\rightarrow (\mathbb{R}\times M, L(\bar{\alpha}_-,\bar{\alpha}_+)_{(\bar{\lambda}_-,\bar{\lambda}_+)}).$$
This is in particular true for any linear and exponential Liouville pairs supporting the same (positive reparametrization class of) flows.
\end{corollary}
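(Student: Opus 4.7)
The plan is to chain together two ingredients already established: the embedding of a compact LIS into its Liouville completion (realized as some exponential LIS), and the geometric rigidity of non-compact LIS s supporting a fixed flow. First, I would apply Lemma~\ref{emb1} to the supporting compact LIS $(\alpha_-,\alpha_+)_{(\lambda_-,\lambda_+)}$ to produce an exponential LIS $(\tilde{\alpha}_-,\tilde{\alpha}_+)_e$ together with a strict Liouville embedding
\[
i_1\colon \bigl([N_-,N_+]\times M,\, L(\alpha_-,\alpha_+)_{(\lambda_-,\lambda_+)}\bigr)\ \longrightarrow\ \bigl(\mathbb{R}\times M,\, L(\tilde{\alpha}_-,\tilde{\alpha}_+)_e\bigr).
\]
The key point to verify at this stage is that $(\tilde{\alpha}_-,\tilde{\alpha}_+)_e$ supports the same positive reparametrization class of non-singular partially hyperbolic flow as the original compact LIS. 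This follows from the construction in the proof of Lemma~\ref{emb1}: one chooses $\tilde{\alpha}_\pm$ with $\ker\tilde{\alpha}_\pm=\ker\alpha_\pm$ (using \cite{massoni}) and then applies only change-of-basis, horizontal, and scaling moves. By Lemma~\ref{lioumaps}, these elementary moves leave the supported flow invariant, so $P(\tilde{\alpha}_-,\tilde{\alpha}_+)_e=P(\alpha_-,\alpha_+)_{(\lambda_-,\lambda_+)}$.

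Next, since the exponential LIS $(\tilde{\alpha}_-,\tilde{\alpha}_+)_e$ and the given non-compact LIS $(\bar{\alpha}_-,\bar{\alpha}_+)_{(\bar\lambda_-,\bar\lambda_+)}$ both belong to $P^{-1}[X]\subset \mathcal{LIS}(M)$, the geometric rigidity theorem (Theorem~\ref{1to1}) furnishes a strict Liouville equivalence
\[
i_2\colon \bigl(\mathbb{R}\times M,\, L(\tilde{\alpha}_-,\tilde{\alpha}_+)_e\bigr)\ \longrightarrow\ \bigl(\mathbb{R}\times M,\, L(\bar{\alpha}_-,\bar{\alpha}_+)_{(\bar\lambda_-,\bar\lambda_+)}\bigr).
\]
The composition $i:=i_2\circ i_1$ is then the desired strict Liouville embedding, since $i^*L(\bar{\alpha}_-,\bar{\alpha}_+)_{(\bar\lambda_-,\bar\lambda_+)}=i_1^*\,L(\tilde{\alpha}_-,\tilde{\alpha}_+)_e=L(\alpha_-,\alpha_+)_{(\lambda_-,\lambda_+)}$.

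The main subtlety lies in invoking Theorem~\ref{1to1} correctly: one must check that the two LIS s give rise to the same element of the target set of the 1-to-1 correspondence, i.e.\ that the skeleton dynamics of the two induced Liouville forms are $C^\infty$-conjugate after a positive reparametrization. Here I would use Lemma~\ref{sync}, which identifies $\pi_*(Y|_{\Lambda_s})$ with a synchronization of the supported flow, together with the uniqueness of synchronizations up to $C^\infty$-conjugacy (Proposition~\ref{syncuniq} and Remark~\ref{syncuniqph}). The only remaining concern is whether the equivalence $i_2$ produced by Theorem~\ref{1to1} is genuinely a diffeomorphism of $\mathbb{R}\times M$ onto itself rather than merely an equivalence between abstract Liouville manifolds, but this is built into the proof of Theorem~\ref{1to1} via the Moser-based extension argument of Lemmas~\ref{newmosercomp} and~\ref{moserext}. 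The last sentence of the corollary, concerning linear versus exponential pairs, is then the special case where one takes the compact LIS to be linear.
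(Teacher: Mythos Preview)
Your proposal is correct and follows essentially the same approach implicit in the paper: the corollary is stated there without proof, as an immediate consequence of Lemma~\ref{emb1} (embedding a compact LIS into an exponential one) composed with the strict Liouville equivalence between non-compact LIS s supporting the same flow provided by Theorem~\ref{1to1}. Your explicit verification that the intermediate exponential LIS supports the same flow (via Lemma~\ref{lioumaps}) and your care about the nature of the equivalence $i_2$ fill in details the paper leaves to the reader.
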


\begin{remark}
We can also state above 1-to-1 correspondence in terms of compact LIS s. However, in order to do so, the appropriate equivalence relation between two compact LIS s would be strict Liouville equivalence after completion.
\end{remark}

Moreover, the isotopy and the subsequent strict Liouville equivalence, given in Theorem~\ref{1to1} is of regularity $C^\infty$ (assuming the LIS s are $C^\infty$). Such map naturally provides a $C^\infty$-conjugacy of the underlying Liouville vector fields, improving Corollary~\ref{dynuniq}.

\begin{corollary}\label{dynuniq2}
Let $X^t$ be a non-singular partially hyperbolic 3-flow with $C^k$ weak dominated bundle. The Liouville vector field induced from a $C^\infty$ supporting LIS is unique up to $C^\infty$-conjugacy.
\end{corollary}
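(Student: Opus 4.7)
The plan is to leverage the Moser-type construction that proves Theorem~\ref{1to1} and to observe that, when the input LIS s are taken to be $C^\infty$, its output is automatically $C^\infty$. This is what upgrades the $C^k$-conjugacy of Corollary~\ref{dynuniq} (whose regularity was bounded by that of the weak dominated bundle) to a $C^\infty$-conjugacy: the improvement is purely geometric (via Moser) rather than dynamical. Concretely, fix $X^t$ and let $(\alpha_-^i,\alpha_+^i)_{(\lambda_-^i,\lambda_+^i)}\in \mathcal{LIS}(M)$ for $i=0,1$ be two $C^\infty$ supporting LIS s with induced Liouville forms $\alpha_i$ and Liouville vector fields $Y_i$.

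First I would join $\alpha_0$ and $\alpha_1$ by a smooth path $\alpha_t$ inside $P^{-1}[X]\subset \mathcal{LIS}(M)$; the elementary isotopies of Section~\ref{4.1} (change of basis, horizontal, scaling) reduce both endpoints to distorted exponential form over a common bi-contact pair, after which a linear interpolation stays in $P^{-1}[X]$. Running the Moser argument of Lemma~\ref{newmosercomp} along this path, the generating vector field $V_t=p_tY_t+q_t\partial_s$ is determined by the pointwise linear equation $\dot{\alpha}_t=-p_t\alpha_t-q_t\mathcal{L}_{\partial_s}\alpha_t$ in the $C^\infty$ basis $(\alpha_t,\mathcal{L}_{\partial_s}\alpha_t)$ of $Annih^1(E^{wn})$. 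Because the family $\alpha_t$ and its time derivative are $C^\infty$, the solution $(p_t,q_t)$ is $C^\infty$ in all variables; truncating $V_t$ on the cylindrical ends as in Lemma~\ref{newmosercomp} and invoking the canonical extension of Lemma~\ref{moserext} to the completion, we integrate $V_t$ to a $C^\infty$ diffeomorphism $\psi\colon \mathbb{R}\times M\to \mathbb{R}\times M$ satisfying $\psi^*\alpha_1=\alpha_0$.

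The final step converts this strict Liouville equivalence into a conjugacy of Liouville vector fields. Since $\psi$ is $C^\infty$, and in particular $C^2$, differentiating gives $\psi^*(d\alpha_1)=d\alpha_0$; combined with $\iota_{Y_i}d\alpha_i=\alpha_i$ and the non-degeneracy of $d\alpha_0$, this forces $\psi_* Y_0=Y_1$, which is the required $C^\infty$-conjugacy. The only genuinely delicate point I anticipate is ensuring that the Moser vector field $V_t$ retains its full regularity after cutoff so that the integrated map is $C^\infty$ rather than merely continuous at infinity; this is exactly what the compactly supported setup of Lemma~\ref{newmosercomp}, together with the rigidity of Liouville manifolds on their cylindrical ends described in Remark~\ref{isoextension}, is tailored to handle, so no new analytic input beyond Section~\ref{5.1} should be required.
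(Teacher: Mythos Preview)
Your proposal is correct and follows essentially the same route as the paper: the paper simply observes that the strict Liouville equivalence produced in the proof of Theorem~\ref{1to1} is $C^\infty$ when the input LIS s are $C^\infty$, and that such a $C^\infty$ map automatically conjugates the Liouville vector fields. You have unpacked this into its ingredients (connecting path in $P^{-1}[X]$, Moser vector field $V_t$, extension to the completion, and the $\psi^*d\alpha_1=d\alpha_0$ step) in a way that matches the machinery of Section~\ref{5.1} precisely.
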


   \begin{figure}[h]
\centering
\begin{overpic}[width=1\textwidth]{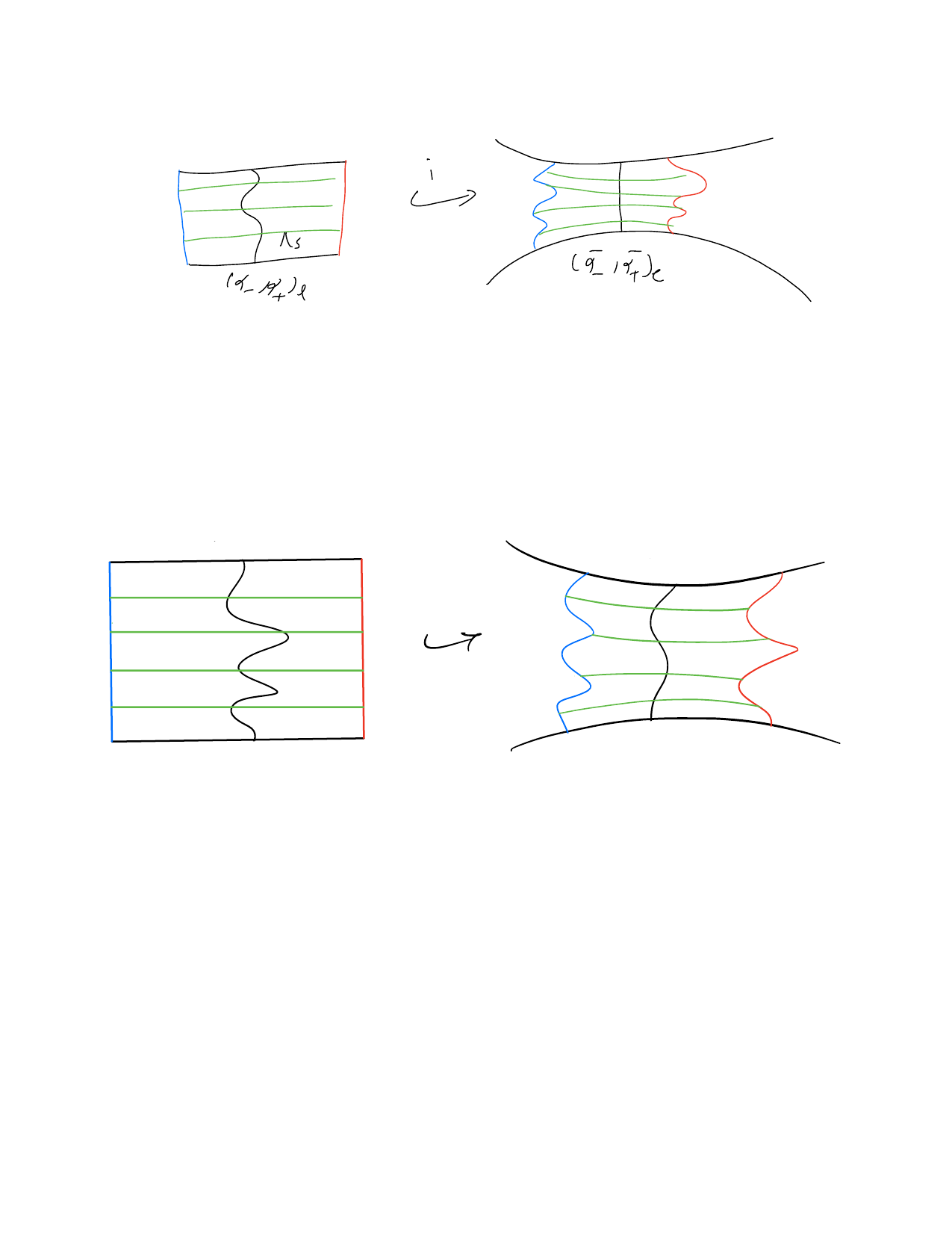}
    \put(80,20){$(\alpha_-,\alpha_+)_l$}
      \put(320,20){$(\bar{\alpha}_-,\bar{\alpha}_+)_e$}
      
         \put(83,80){$\Lambda_s$}
          \put(320,80){$\bar{\Lambda}_s$}

           \put(215,97){$i$}

  \end{overpic}

\caption{Strict Liouville embedding of any compact into non-compact LIS}
\end{figure}

%%%%%%%%%%%%%%
\subsection{A fibration theorem}\label{5.2}

Theorem~\ref{1to1} provide a 1-to-1 correspondence between classes of LIS s and non-singular partially hyperbolic flows, indicating that all the choices in the definition (including the interpolation functions and the chosen contact forms), under than the supported flow, are redundant. The goal of this section is to show that the space of such choices forms a fibration over the space of non-singular partially hyperbolic flows. Similar fibration theorem is proved by Massoni \cite{massoni} for exponential Liouville pairs. Here, we start by revisiting the exponential Liouville pairs. We then show that such fibration theorem can be extended and with more care, information about the skeleton and the adapted norm involved can be kept track of. An important feature of this viewpoint is the fact that the space of LIS s with higher regularity can be deformation retracted into the space of LIS s with lower regularity, a fact which will be used when studying the linearizations of LIS s in Section~\ref{6}.

\subsubsection{Exponential computations}\label{5.2.1}

First, we want to give an explicit fibration of exponential Liouville pairs which we will base our general fibration theorem on next. The essence of this construction is that in the exponential model, the Liouville condition on $\mathbb{R}_s\times M$ can be reduced to its skeleton $\Lambda_s$. Consider an exponential LIS $(\alpha_-,\alpha_+)_e$ with the skeleton $\Lambda_s:M\rightarrow\mathbb{R}$ and $\alpha_u:=\frac{1}{2}\alpha|_{s=\Lambda_s}$ . Let $X$ be a supported partially hyperbolic vector field with the splitting $TM\simeq E\oplus E^u$ (in particular, $\ker{\alpha_u}=E$). Theorem~\ref{generalchar} shows that $r_u$, the expansion rate of $\alpha_u$, is positive (by the Liouville condition at $\Lambda_s$). We can write
$$\begin{cases}
\alpha_+=e^{-\Lambda_s}(h_+\alpha_u-\alpha_s) \\
\alpha_-=e^{\Lambda_s}(h_-\alpha_u+\alpha_s)
\end{cases},$$
for some non-vanishing 1-form $\alpha_s$ (with $\ker{\alpha_s}=E^{wu})$) and positive functions with $h_\pm:M\rightarrow \mathbb{R}_{>0}$ with $h_-+h_+=2$. Note that the positive/negative contact condition for $\alpha_\pm$ implies
$$\begin{cases}
 X\cdot \ln{h_+}+r_u-r_s>0 \\
 X\cdot \ln{h_-}+r_u-r_s>0
\end{cases}
\Leftrightarrow
\begin{cases}
X\cdot \ln{h_+}+r_u-r_s>0 \\
\frac{-X\cdot h_+}{2-h_+}+r_u-r_s>0
\end{cases}.
$$

The two equations together imply $r_u-r_s>0$ (since the signs of $X\cdot \ln{h_+}$ and $\frac{-X\cdot h_+}{2-h_+}$ differ). On the other hand, if $\alpha_u$ and $\alpha_s$ are given such that $r_u>0$ and $r_s<r_u$, there is an open set of allowable functions for $h_+$. The Liouville condition is preserved under a horizontal isotopy $s\mapsto s-\Lambda_s$ to which gives an exponential LIS of the form $(\bar{\alpha}_-:=h_-\alpha_u+\alpha_s,\bar{\alpha}_+:=h_+\alpha_u-\alpha_s)_e$, whose skeleton is $\Lambda_{s}\equiv 0$. Letting $\bar{\alpha}:=L(\bar{\alpha}_-,\bar{\alpha}_+)_e$, the Liouville condition reads
$$0<\frac{1}{2}\iota_X\iota_{\partial_s}(d\bar{\alpha} \wedge d\bar{\alpha})=(\mathcal{L}_{X}\bar{\alpha})\wedge(\mathcal{L}_{\partial_s}\bar{\alpha})=(e^{-s}\mathcal{L}_X\bar{\alpha}_-+e^s\mathcal{L}_X\bar{\alpha}_+)\wedge (-e^{-s}\bar{\alpha}_-+e^s\bar{\alpha}_+)$$
$$=e^{-2s}[-(\mathcal{L}_X\bar{\alpha}_-) \wedge \bar{\alpha}_-]+e^{2s}[(\mathcal{L}_X\bar{\alpha}_+) \wedge \bar{\alpha}_+]+\mathcal{L}_X{(\bar{\alpha}_- \wedge \bar{\alpha}_+)}=:A(s,x)$$

The above shows that the poistive and negative contactness of $\alpha_+$ and $\alpha_-$ is a necessary condition for the Liouville condition to hold everywhere. Noting that
$$\mathcal{L}_X{(\bar{\alpha}_- \wedge \bar{\alpha}_+)}=\mathcal{L}_X{(h_-+h_+)(\alpha_s \wedge \alpha_u)}=2\mathcal{L}_X(\alpha_s \wedge \alpha_u)=2(r_s+r_u)(\alpha_s \wedge \alpha_u),$$
we continue the above computation
$$A(s,x)=[e^{-2s}(X\cdot h_- +h_-(r_u-r_s))+e^{2s}(X\cdot h_+ +h_+(r_u-r_s))+2(r_s+r_u)](\alpha_s\wedge\alpha_u).$$
With $\lim_{s\rightarrow \pm \infty}A(s,x)=+\infty$ for any $x\in M$, and the minimum (with respect to $s$) of the above expression happens at $$e^{2s}=\sqrt{\frac{X\cdot h_- +h_-(r_u-r_s)}{X\cdot h_+ +h_+(r_u-r_s)}},$$
yielding
$$\min_{s}\frac{1}{2}\iota_X\iota_{\partial_s }(d\bar{\alpha} \wedge d\bar{\alpha})/ (\alpha_s \wedge \alpha_u)=2\sqrt{(X\cdot h_- +h_-(r_u-r_s))(X\cdot h_+ +h_+(r_u-r_s))}+2(r_s+r_u)$$
$$=2\sqrt{(-X\cdot h_+ +(2-h_+)(r_u-r_s))(X\cdot h_+ +h_+(r_u-r_s))}+2(r_s+r_u)=2\sqrt{(2(r_u-r_s)-B)B}+2(r_s+r_u),$$
where $B=X\cdot h_+ +h_+(r_u-r_s)$. On the other hand,
$$\sqrt{(2(r_u-r_s)-B)B}+(r_s+r_u)\geq (r_u-r_s)+(r_u+r_s)=2r_u.$$

Therefore, as long as we have the necessary conditions of $r_u>0$ (the Liouville condition at the skeleton) and $\pm$-contactness of $\alpha_{\pm}$, the Liouville condition is automatically satisfied elsewhere.

This means that the following data determines an exponential LP uniquely: (1) the skeleton function $\Lambda_s:M\rightarrow \mathbb{R}$; (2) the 1-form $\alpha_u$ with $\ker{\alpha_u}=E$ and expansion rate $r_u>0$ (to be interpreted as $\frac{1}{2}\alpha|_{\Lambda_s}$); (3) the 1-form $\alpha_s$ with $\ker{\alpha_s}=E^{wu}$ and expansion rate $r_s<r_u$; (4) a function $h_+:M\rightarrow (0,2)$ satisfying %(the \textcolor{red}{open condition})
$$\begin{cases}
r_u-r_s+X\cdot \ln{h_+}>0 \\
r_u-r_s-\frac{X\cdot \ln{h_+}}{2-h_+}>0
\end{cases}.$$

Given this information, we have can define
$$\begin{cases}
\alpha_+:=e^{-\Lambda_s}(h_+\alpha_u-\alpha_s) \\
\alpha_-:=e^{\Lambda_s}((2-h_+)\alpha_u+\alpha_s)
\end{cases},$$
and the previous discussion implies that $(\alpha_-,\alpha_+)_e$ is an exponential Liouville pair. If the dominated bundle $E$ is $C^k$, this argument in fact gives an explicit fibration of the space of $C^{k*}$ exponential Liouville pairs over supported partially hyperbolic flow up to positive reparametrization.% \textcolor{red}{use simic for star claim}.

\subsubsection{Extension to the LIS setting}\label{5.2.2}

The goal of this section is to formalize the observations and the constructions of Section~\ref{5.1} in terms of a Serre fibration of the space of Liouville interpolating systems over the space of non-singular partially hyperbolic flows up to positive reparamterization $P:\mathcal{LIS}(M)\rightarrow S\mathcal{PHF}(M)$ . This generalizes and refines the fibration theorem of Massoni \cite{massoni} by showing that such fibration can carry information about the synchronization of the flow (or equivalently, the adapted norm induced on the stable bundle) as well as the skeleton.

Now, suppose a general LIS $(\alpha_-,\alpha_+)_{(\lambda_-,\lambda_+)}$ is given. We can write 
$$\bar{s}=[\frac{1}{2}\ln{\frac{\lambda_+(s,.)}{\lambda_-(s,.)}}-\frac{1}{2}\ln{\frac{\lambda_+(\Lambda_s,.)}{\lambda_-(\Lambda_s,.)}}]+\Lambda_s,$$
which yields
$$\alpha:=L(\alpha_-,\alpha_+)_{(\lambda_-,\alpha_+)}=\lambda_-\alpha_-+\lambda_+\alpha_+=e^{w(\bar{s},x)-w(\Lambda_s,x)}[e^{-\bar{s}}\alpha_-+e^{\bar{s}}\alpha_+],$$
where $w(\bar{s},x)=\frac{1}{2}\ln{(\lambda_-(s,x)\lambda_+(s,x))}$. Therefore, if we apply a change of basis operation $$(\alpha_-,\alpha_+)_{(\lambda_-,\lambda_+)}\mapsto (e^{w(\Lambda_s,.)}\alpha_-,e^{w(\Lambda_s,.)}\alpha_+)_{(e^{-w(\Lambda_s,.)}\lambda_-,e^{-w(\Lambda_s,.)}\lambda_+)},$$
which now has no distortion at $\Lambda_s$. Therefore, a horizontal map preserving $\Lambda_s$ as above gives us a LIS of the form $(\alpha_-,\alpha_+)_{(e^{-s+w},e^{s+w})}$, where $w:\mathbb{R}_s\times M\rightarrow$ is the distortion function satisfying $w(\Lambda_s,.)\equiv 0$ and
$$\begin{cases}
\alpha_+=e^{-\Lambda_s}[h_+\alpha_u-\alpha_s] \\
\alpha_-=e^{\Lambda_s}[h_-\alpha_u+\alpha_s]
\end{cases},$$
where $\alpha_u=\frac{1}{2}\alpha|_{\Lambda_s}$ (i.e. $h_-+h_+=2$), $\alpha_s$ is determined as the "$\alpha_s$
 part" of $\alpha_-|_{\Lambda_s}$. Now, the $\pm$-contactness of $\alpha_\pm$ implies 
 $$\begin{cases} r_u-r_s+X\cdot \ln{h_+}>0 \\
r_u-r_s-\frac{X\cdot \ln{h_+}}{2-h_+}>0
\end{cases},$$
yielding $r_u-r_s>0$. Therefore, by the exponential computation of the previous section $(\alpha_-,\alpha_+)_e$ is also an exponential Liouville pair. Therefore, an scaling isotopy between $(\alpha_-,\alpha_+)_{(e^{-s+w},e^{s+w})}$ and $(\alpha_-,\alpha_+)_e$ exists which preserves $\Lambda_s$ since $w(\Lambda_s,.)\equiv 0$.

This means that the only other (compared to the exponential case) piece of information needed to determine a general LIS is the choice of any distortion function $w:\mathbb{R}_s\times M \rightarrow \mathbb{R}$ such that $w(\Lambda_s,.)\equiv0$.
So, assuming the dominated bundle $E$ being $C^k$, this essentially provides a Serre fibration of $C^{k*}$ LIS s over the space of exponential LIS s, which is then fibered over the space of partially hyperbolic flows by the construction of the previous section and/or Massoni \cite{massoni}. Also, note how much we can fiber through the skeleton and/or synchronization space, i.e. we have the Serre fibration of the space of $C^{k*}$ LIS s $\mathcal{LIS}^{k*}(M)\rightarrow \mathcal{PHF}_{sync}(M)$,
over the space of synchronized partially hyperbolic flows with $C^k$ dominated bundle $E$.

The last piece we need in the argument to extend it to the smooth Serre fibration of the form $\mathcal{LIS}(M)\rightarrow \mathcal{PHF}_{sync}(M)$ via an argument of Massoni. More precisely, Massoni  proves a similar statement in a slightly different setting (\cite{massoni}, Lemma~4.6). His proof in fact works the same in our setting.

\begin{lemma}
The natural map $i:P^{-1}[X]\subset\mathcal{LIS}(M)\rightarrow P^{-1}[X]\subset \mathcal{LIS}^{k*}(M)$, i.e. fixing $X$, the inclusion of the space of $C^{k*}$ LIS s into the space of $C^{k*}$ LIS s, is a homotopy equivalence.
\end{lemma}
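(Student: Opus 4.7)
The statement (correcting an apparent typo) asserts that the inclusion of the fiber of smooth LIS s supporting $X$ into the fiber of $C^{k*}$ LIS s supporting $X$ is a weak homotopy equivalence. The plan is to reduce the problem to a density-plus-convexity argument, essentially parallel to Massoni's Lemma~4.6 in \cite{massoni}, but adapted to the broader LIS framework. The two key inputs are the parametric smoothing machinery from Section~\ref{3.6} (Lemmas~\ref{approx1} and \ref{approx2}), which supply $C^{k+1}$ approximations of $C^{k*}$ objects annihilating $X$, and the fact that every condition defining an LIS (transverse bi-contactness of $(\alpha_-,\alpha_+)$, positivity of $\lambda_\pm$, the reparametrization condition $\partial_s \ln(\lambda_+/\lambda_-) > 0$, and the Liouville condition for $L(\alpha_-,\alpha_+)_{(\lambda_-,\lambda_+)}$) is open in the appropriate $C^{k*}$ topology.

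The first step will be to construct, for any compact parameter space $K$ and any continuous family $\{(\alpha_-^\tau,\alpha_+^\tau)_{(\lambda_-^\tau,\lambda_+^\tau)}\}_{\tau\in K} \subset P^{-1}[X] \cap \mathcal{LIS}^{k*}(M)$, a continuous family of smooth LIS s in the same fiber arbitrarily $C^{k*}$-close to the given one. The idea is to smooth the four pieces $\alpha_-^\tau,\alpha_+^\tau,\lambda_-^\tau,\lambda_+^\tau$ separately: we apply Lemma~\ref{approx2} fiberwise to each $\alpha_\pm^\tau$ inside $Annih^{1;k*}(M;X)$, yielding $C^\infty$ 1-forms still annihilating $X$, and we apply Lemma~\ref{approx1} (on $\mathbb{R}\times M$, along the extended vector field $X$ on $\mathbb{R}\times M$, with controlled behavior in $\partial_s$) to the $\lambda_\pm^\tau$. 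The parametric and continuity aspects will come from standard convolution-based smoothing operators $S_\varepsilon$ acting on families, whose continuity in $\tau$ and whose convergence $S_\varepsilon \to \mathrm{id}$ in the $C^{k*}$ topology are routine. Because the open conditions are uniform on compact $K$, for $\varepsilon$ small enough the smoothed family still defines LIS s in $P^{-1}[X]$.

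The second step is to promote this density into a homotopy equivalence. Here I will use the convexity of the fiber $P^{-1}[X]$: if $(\alpha_-^0,\alpha_+^0)_{(\lambda_-^0,\lambda_+^0)}$ and $(\alpha_-^1,\alpha_+^1)_{(\lambda_-^1,\lambda_+^1)}$ are two LIS s supporting $X$ and sufficiently $C^{k*}$-close, then their straight-line convex combination $(1-t)(\alpha_-^0,\alpha_+^0,\lambda_-^0,\lambda_+^0)+t(\alpha_-^1,\alpha_+^1,\lambda_-^1,\lambda_+^1)$ remains an LIS supporting $X$ for all $t\in[0,1]$ (each defining open condition is convex in its arguments near a valid LIS, and supporting $X$ is preserved because $X\subset \ker\alpha_\pm^0\cap\ker\alpha_\pm^1$ and the $\lambda_\pm$-conditions are positive cones). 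Combining this with the smoothing from Step~1, we obtain a canonical homotopy from the identity on $P^{-1}[X]\cap\mathcal{LIS}^{k*}(M)$ to the smoothed version, which lies in $P^{-1}[X]\cap\mathcal{LIS}(M)$. This produces both the homotopy inverse $r$ of $i$ and the homotopies $r\circ i \simeq \mathrm{id}$, $i\circ r \simeq \mathrm{id}$.

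The principal obstacle will be the third condition in Definition~\ref{lis}, namely that $s\mapsto \ln(\lambda_+(s,x)/\lambda_-(s,x))$ be a positive reparametrization of $\mathbb{R}$ for every $x$. Unlike the algebraic conditions (contactness, Liouville), this is a global condition at infinity in the $s$-variable, and compactly supported smoothing could in principle distort the large-$s$ behavior. The remedy will be to carry out all smoothing using compactly supported modifications away from $\Lambda_s$ and then to appeal to Corollary~\ref{linemb} and the rigidity of Liouville geometry at the cylindrical ends (as in the proof of Lemma~\ref{liscomp}): since the ends are strictly Liouville equivalent to the exponential model, one may freely replace any LIS by its image under a horizontal plus scaling isotopy so that outside a large compact set it is literally exponential, reducing all smoothing to a compact $s$-interval where Lemma~\ref{approx1} applies without issue.
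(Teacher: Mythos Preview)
Your approach is essentially correct but genuinely different from the paper's. The paper does not construct an explicit smoothing operator or parametric homotopy; instead it invokes tom Dieck's local-to-global criterion for homotopy equivalences \cite{partition}: an inclusion $i$ is a homotopy equivalence provided there is a numerable open cover $\mathcal{U}$ of the target, closed under finite intersections, such that $i$ restricts to a homotopy equivalence over each $U\in\mathcal{U}$. The paper then takes $\mathcal{U}$ to be a cover of $P^{-1}[X]\subset\mathcal{LIS}^{k*}(M)$ by small $C^{k*}$-convex balls. Density of smooth LIS s (from Lemmas~\ref{approx1}--\ref{approx2}) guarantees each ball meets the smooth subspace, and convexity makes both each ball and its smooth preimage contractible, so the local restrictions of $i$ are trivially homotopy equivalences. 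Both arguments rest on the same two pillars---density and local convexity---but the paper packages them into a three-line invocation of an abstract theorem, whereas you build the homotopy by hand. Your route is more explicit and self-contained; the paper's route is shorter and yields the full homotopy equivalence immediately rather than the weak one you state (you would need Whitehead plus an argument that these metrizable spaces have CW type to upgrade).

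One caution on your proposal: the remedy you sketch for condition~(3) of Definition~\ref{lis}---pre-composing with a horizontal and scaling isotopy to make each LIS literally exponential near infinity before smoothing---requires that this normalization be chosen continuously in the parameter $\tau\in K$. You should verify that the maps of Section~\ref{4.1} used to pass to the exponential model (via Lemma~\ref{liscomp}) depend continuously on the input LIS; this is plausible from the explicit formulas there but is not entirely automatic, and it is a step the paper's argument sidesteps by never needing a globally defined retraction.
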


Massoni  proves a very similar statement in a slightly different setting (\cite{massoni}, Lemma~4.6) and using standard algebraic topology theory. More specifically, in his case, $k=1$ and he restricts to {\em Anosov} Liouville pairs. But his proof in fact works exactly the same in our slightly more general setting. Here, we bring the outline for completion.

\begin{proof}
As argued in \cite{massoni}, we first recall that homotopy equivalences are local \cite{partition}, in the sense that $i:\mathcal{LIS}(M,X)\rightarrow \mathcal{LIS}^{k*}(M,X)$ is a homotopy equivalence, if there exists a numerable open cover $\mathcal{U}$ of $\mathcal{LIS}^{k*}(M,X)$ such that (1) $\mathcal{U}$ is stable under intersection and (2) for every $U\in\mathcal{U}$, we have $i:i^{-1}(U)\rightarrow U$ is a homotopy equivalence. Let $U$ be a cover of $\mathcal{LIS}^{k*}(M,X)$ by small $C^{k*}$-balls (which is numerable since $\mathcal{LIS}^{k*}(M,X)$ is metrizable), which after a refinement includes all finite self-intersections, and its elements are convex in $||.||_{C^k}$-norm. Note that even when $k<1$, we can use the Hölder norm $||.||_{C^k}$. Since $C^\infty$ LIS s supporting $X$ is dense in $\mathcal{LIS}^{k*}(M,X)$ as a subset of $$\Omega^1(M)\times\Omega^1(M)\times C(\mathbb{R}_s\times M)\times C(\mathbb{R}_s\times M),$$
every $C^{k*}$-open ball in $\mathcal{LIS}^{k*}(M,X)$ intersects $\mathcal{LIS}(M,X)$. The induced cover on $\mathcal{LIS}(M,X)$ is still convex with the $||.||_{C^k}$-norm. Therefore, the conditions of homotopy equivalence are satisfied.
\end{proof}

\begin{theorem}\label{fib}
The map
$$\begin{cases}
\mathcal{LIS}(M) \rightarrow S \mathcal{PHF}(M)  \\
(\alpha_-,\alpha_+)_{(\lambda_-,\lambda_+)}  \mapsto [\pi(F(\alpha_-,\alpha_+)_{(\lambda_-,\lambda_+)} |_{\Lambda_u})]
\end{cases}$$
is a Serre fibration of the space of Liouville interpolating systems over the space of non-singular partially hyperbolic flows up to positive reparametrization. This
\end{theorem}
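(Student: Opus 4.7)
The plan is to exhibit $P$ as having the homotopy lifting property against disks by exploiting the explicit parametrization of LIS s developed in Section~\ref{5.2}. Combining Sections~\ref{5.2.1} and~\ref{5.2.2}, a LIS in the fiber $P^{-1}[X]$ over a fixed synchronized flow $X$ is determined uniquely by the tuple $(\Lambda_s, \alpha_s, h_+, w)$, where $\Lambda_s: M \to \mathbb{R}$ is the skeleton graph, $\alpha_s$ is a non-vanishing 1-form with $\ker \alpha_s = E^{wu}$, the function $h_+: M \to (0,2)$ lies in the convex open set carved out by the $\pm$-contact inequalities involving $r_u, r_s$, and $X \cdot \ln h_+$, and $w: \mathbb{R}_s \times M \to \mathbb{R}$ is any function vanishing on the graph $\{s = \Lambda_s\}$. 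The synchronization of $X$ pins down $\alpha_u = \frac{1}{2}\alpha|_{\Lambda_s}$ up to a sign, so no additional free data appears. Each fiber is therefore a product of convex/contractible spaces, which is the structural ingredient one needs for a Serre fibration.

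\textbf{Constructing the lift.} Given a continuous family $X_t$ of synchronized non-singular partially hyperbolic flows parametrized by a cube $I^n$, together with a lift $(\alpha^0_-, \alpha^0_+)_{(\lambda^0_-, \lambda^0_+)}$ on $I^n \times \{0\}$, I first produce a lift in the low-regularity category $\mathcal{LIS}^{k*}(M)$, where $k$ is the (uniform) regularity of the weak dominated bundle along the family. The skeleton function $\Lambda_s^t$ continuously depends on $t$ by Theorem~\ref{dynrig}(5) and Theorem~\ref{regex}. The 1-form $\alpha_s^t$ is a section of the contractible $\mathbb{R}^*$-bundle of non-vanishing 1-forms with kernel $E^{wu}_t$, and extends continuously from $\alpha_s^0$. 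The admissible region for $h_+^t$ is a fiberwise convex open subset of $C^k(M; (0,2))$ whose defining inequalities depend continuously on $(r_u^t, r_s^t, X_t)$; any continuous selection through $h_+^0$ therefore exists. Finally $w^t$ lives in the contractible affine space of functions vanishing on the graph of $\Lambda_s^t$. Reassembling these continuous choices via the formulas of Section~\ref{5.2.2} yields the desired lift in $\mathcal{LIS}^{k*}(M)$.

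\textbf{Passage to smooth LIS.} To upgrade to $\mathcal{LIS}(M)$ stated in the theorem, I invoke the homotopy equivalence lemma of Section~\ref{5.2.2}: for each fixed $X$, the inclusion $P^{-1}[X] \cap \mathcal{LIS}(M) \hookrightarrow P^{-1}[X] \cap \mathcal{LIS}^{k*}(M)$ is a homotopy equivalence. Combined with the low-regularity fibration constructed above and the numerable-cover criterion for local homotopy equivalences, this upgrades the Serre fibration property from $C^{k*}$ to $C^\infty$, following the scheme of Massoni~\cite{massoni}. The fiberwise smoothing uses the approximation lemmas of Section~\ref{3.6}, which allow smoothing of $\alpha_s^t, h_+^t, w^t$ while remaining in the open set defining supporting LIS s for $X_t$.

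\textbf{Main obstacle.} The delicate step is ensuring uniform control on the open set of admissible $h_+$ along the homotopy. The inequalities $r_u - r_s + X \cdot \ln h_+ > 0$ and $r_u - r_s - \frac{X \cdot h_+}{2 - h_+} > 0$ degenerate as $r_s \to r_u$, and away from the Anosov case (where Massoni's argument applies directly) one must control the collapse of this convex open region. Fibering through the synchronization space $\mathcal{PHF}_{\text{sync}}(M)$ rather than $S\mathcal{PHF}(M)$ directly pays off here: synchronization normalizes $r_u \equiv 1$, reducing the problem to the continuous dependence of $r_s^t$ and $X_t \cdot (\cdot)$ on $t$, which is guaranteed by Theorem~\ref{regex} together with Lemma~\ref{simiclemma} (in the form adapted to the partially hyperbolic setting). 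One then obtains refinements of the fibration that carry additional data such as the skeleton graph or the adapted norm on $E^u$, as noted in Remark~\ref{fibfilter}.
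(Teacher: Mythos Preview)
Your proposal is correct and follows essentially the same route as the paper. The paper's proof is much terser: it observes that Section~\ref{5.2.2} fibers $\mathcal{LIS}^{k*}(M)$ over $\mathcal{ELP}(M)$ (via the distortion function $w$), then invokes Massoni's fibration theorem (Theorem~4.3 of \cite{massoni}) for the step $\mathcal{ELP}(M)\to S\mathcal{PHF}(M)$, and finally uses the homotopy equivalence lemma (the paper's version of Massoni's Lemma~4.6) to pass from $C^{k*}$ to $C^\infty$. You unpack the middle step explicitly by parametrizing the fiber by $(\Lambda_s,\alpha_s,h_+,w)$ and arguing contractibility directly, which is exactly the content of Section~\ref{5.2.1}; the paper just packages that as ``Massoni's fibration theorem'' rather than repeating the argument. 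Your discussion of the obstacle (uniform control on the convex region for $h_+$ via synchronization and Theorem~\ref{regex}) is a useful elaboration of what the paper leaves implicit.
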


\begin{proof}
We have shown that $\mathcal{LIS}^{k*}(M)$ can fibered over the space of exponential Liouville pairs (if the dominated bundle $E$ is $C^k$). Then, use Massoni's fibration theorem to complete the proof. See Theorem.4.3 and Lemma 4.6 of Massoni \cite{massoni}.
\end{proof}

\begin{remark}\label{fibfilter}
Our construction shows that the fibration in Theorem~\ref{fib} can be filtered as
$$\mathcal{LIS}(M) \overset{\pi_1}{\rightarrow} \mathcal{ELP}(M) \overset{\pi_2}{\rightarrow}\mathcal{ELP}_{\Lambda_s}(M)  \overset{\pi_3}{\rightarrow}\mathcal{PHF}_{sync}(M) \overset{\pi_4}{\rightarrow} S \mathcal{PHF}(M),$$
where $\mathcal{ELP}_{\Lambda_s}(M)$ is the space of exponential Liouville pairs with skeleton $\Lambda_s$ and $\mathcal{PHF}_{sync}(M)$ is the space of synchronized non-singular partially hyperbolic flows on $M$. Therefore, in our fibration theorem of Theorem~\ref{fib}, we can equip the positive reparametrization class of non-singular partially hyperbolic flows with auxilliary information such as the synchronization (or equivalently, the information of an expanding norm on $E^u$, up to constant scaling) or the skeleton graph.
\end{remark}

\begin{corollary}
All invariants of Anosov flows through LP construction gives invariants of partially hyperbolic homotopies.
\end{corollary}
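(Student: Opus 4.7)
The plan is to promote any Liouville homotopy invariant $I$ defined on $\mathcal{LIS}(M)$ to a well-defined invariant $\bar{I}$ on $\pi_0(S\mathcal{PHF}(M))$ by means of the homotopy lifting property of the Serre fibration $P:\mathcal{LIS}(M)\to S\mathcal{PHF}(M)$ from Theorem~\ref{fib}. First I would verify that each fiber of $P$ is path-connected. This follows from the explicit filtration of $P$ recorded in Remark~\ref{fibfilter} together with the exponential model computations of Section~\ref{5.2.1}: over a fixed flow $[X]$, constructing an LIS amounts to choosing an adapted norm on $E^u$ (the synchronization), a skeleton function $\Lambda_s$, a 1-form $\alpha_s$ with $\ker\alpha_s = E^{wu}$, a positive function $h_+$, and a distortion $w$ vanishing on $\Lambda_s$, each living in a convex (hence contractible) open subset carved out by the positive/negative contactness and the Liouville inequalities $r_u-r_s+X\cdot\ln h_+>0$, etc., verified in Section~\ref{5.2}. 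Convex interpolation stays within the admissible region, so each fiber is contractible and in particular path-connected.

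Next, given a partially hyperbolic homotopy $X_t:[0,1]\to S\mathcal{PHF}(M)$ and any supporting LIS s $L_0$ of $X_0$ and $L_1$ of $X_1$, I would apply the homotopy lifting property to the Serre fibration to produce a path $\tilde{L}_t$ in $\mathcal{LIS}(M)$ covering $X_t$ with $\tilde{L}_0=L_0$. The endpoint $\tilde{L}_1$ lies in the fiber $P^{-1}([X_1])$, which by the previous step is path-connected, so $\tilde{L}_1$ can be joined to $L_1$ by a path entirely inside the fiber. Concatenating yields a Liouville homotopy from $L_0$ to $L_1$ in $\mathcal{LIS}(M)$; consequently $I(L_0)=I(L_1)$. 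This defines $\bar{I}$ unambiguously on $\pi_0(S\mathcal{PHF}(M))$ and shows that $I$ only sees the partially hyperbolic homotopy class of the underlying flow, even when the endpoints happen to be Anosov and the path strays into non-Anosov partially hyperbolic regimes.

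The main subtlety, to the extent there is one, is bookkeeping the path-connectedness of the fiber at the correct level of regularity: Section~\ref{5.2.2} works with $\mathcal{LIS}^{k*}(M)$ for finite $k$, while the fibration of Theorem~\ref{fib} is stated for the $C^\infty$ space $\mathcal{LIS}(M)$, and one passes between them using the homotopy equivalence $i:\mathcal{LIS}(M,X)\to\mathcal{LIS}^{k*}(M,X)$ of the lemma preceding Theorem~\ref{fib}. Modulo this technical care, the argument is a routine application of the long exact homotopy sequence of a Serre fibration at $\pi_0$.
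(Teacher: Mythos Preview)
Your argument is correct and is exactly the standard derivation of such a corollary from a Serre fibration; the paper states the corollary without proof, treating it as an immediate consequence of Theorem~\ref{fib}, and you have simply spelled out the implicit step (homotopy lifting plus path-connectedness of the fibers).
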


\begin{remark}
Note that the Liouville homotopic invariants one can derive from an LIS (see \cite{clmm}) are invariants of homotopy through non-singular partially hyperbolic flows and the construction of the DA bi-contact deformation discussed in Section~\ref{3.5} implies that during a Liouville homotopy through LIS s, the orbit structure of the supported flow can in fact change and therefore, the (isotopy class of the) exact Lagrangian foliation $\mathcal{F}^{wn}$ is not an invariant of Liouville homotopy in general.
\end{remark}

%%%%%%%%
%%%%%%%%%
\subsection{Important example:
From linear to exponential model (and back)}\label{5.3}

 The goal of this section is to take a closer look at the embedding of the linear model into the exponential one. Even though Corollary~\ref{linemb} provides this in more generality, it is still useful to construct explicitly construct such embeddings using the elementary maps of Section~\ref{4.1}. Firstly, both linear and exponential Liouville pairs have appeared in the literature most often and while the constructions are similar in nature (other than compactness), it remained curious whether there is any {\em real} difference between the two Liouville geometric models. The issue is discussed more explicitly by Massoni \cite{massoni}. Secondly, the constructions will be useful to study the linearization of a Liouville dynamics for a general LIS in Section~\ref{6}, as the use of elementary maps allows use to formulate the result in regularities less than $1$ as needed in the most general setting. Recall that the use of Moser technique is limited to when we have at least $C^1$-regularity (which is enough if we restrict to the Anosov case).
 
First, write a linear Liouville pair given as $\alpha=L(\alpha_-,\alpha_+)_l$. If we want to strictly Liouville embed $([-1,1]\times M,\alpha)$ into $(\mathbb{R}\times M,L(\bar{\alpha}_-,\bar{\alpha}_+)_e)$, we need $$\alpha=e^{-s}\bar{\alpha}_-+e^s\bar{\alpha}_+\big|_{s\in\{ -1,1\}} \Longleftrightarrow \begin{cases}
2\alpha_+=e^{-1}\bar{\alpha}_-+e\bar{\alpha}_+ \\
2\alpha_-=e\bar{\alpha}_-+e^{-1}\bar{\alpha}_+
\end{cases},$$
which gives
$$\alpha=\frac{1-s}{2}(e\bar{\alpha}_-+e^{-1}\bar{\alpha}_+)+\frac{1+s}{2}(e^{-1}\bar{\alpha}_-+e\bar{\alpha}_+)=\lambda_- \bar{\alpha}_- +\lambda_+\bar{\alpha}_+,$$
where
$$\begin{cases}
\lambda_-=\frac{e+e^{-1}}{2}-\frac{e-e^{-1}}{2}s \\
\lambda_+=\frac{e+e^{-1}}{2}+\frac{e-e^{-1}}{2}s
\end{cases}.$$

Letting $$\bar{s}:=\begin{cases}
\bar{s}:=\frac{1}{2}\ln{\frac{\lambda_+}{\lambda_-}} \\
w(\bar{s})=\frac{1}{2}\ln{(\lambda_-\lambda_+)}=\frac{1}{2}\ln\big( (\frac{e+e^{-1}}{2})^2-(-\frac{e-e^{-1}}{2})^2s^2 \big)
\end{cases},$$ and we have
$$\alpha=e^{-\bar{s}+w(\bar{s})}\bar{\alpha}_- + e^{-\bar{s}+w(\bar{s})}\bar{\alpha}_+,$$
defined for $-1\leq \bar{s}\leq 1$ where $w(-1)=w(1)=0$. Extend $\alpha$ arbitrarily to $\mathbb{R}\times M$ by extending $w(\bar{s})$, while respecting the Liouville condition, i.e. $w$ is admissible in the sense of Lemma~\ref{genscale}, or, equivalently, $Y_0\cdot w >-1$ where $Y_0$ is the Liouville vector field of. Notice that $\alpha$ is not necessarily a Liouville form outside $s\in [-1,1]$.

Now, by Theorem~4.13 of \cite{massoni}, there exists an exponential LIS $(\tilde{\alpha}_-,\tilde{\alpha}_+)_e$ such that $\ker{\tilde{\alpha}_\pm}=\ker{\bar{\alpha}_\pm}$. The computations of the previous sections for scaling maps still holds wherever $\alpha$ is Liouville. Therefore, we can get a map, via scaling along the Liouville flow of $L(\tilde{\alpha}_-,\tilde{\alpha}_+)_e$, which matches the Liouville flow of $\alpha$ wherever the latter is defined. The restriction of this map to $[-1,1]\times M$ defines the desired embedding into the exponential model of $(\tilde{\alpha}_-,\tilde{\alpha}_+)_e$. 

%\textcolor{red}{define elementary maps} 
Since we have done the deformation using the elementary maps, everything can be done in the category of $\mathcal{LIS}^{k*}(M)$ for any $k>0$, i.e. including the low regularity.

\begin{theorem}\label{linemb2}
 For any linear Liouville pair $(\alpha_-,\alpha_+)_l\in\mathcal{LLP}^{k*}(M)$, there exists an exponential Liouville pair $(\bar{\alpha}_-,\bar{\alpha}_+)_e \in \mathcal{ELP}^{k*}(M)$ and a $C^{k*}$ strict Liouville embedding $i:([-1,1]\times M, L(\alpha_-,\alpha_+)_l)\rightarrow (\mathbb{R}\times M,L(\bar{\alpha}_-,\bar{\alpha}_+)_e)$.
\end{theorem}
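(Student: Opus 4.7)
The plan is to combine the explicit change of variables from linear to exponential coordinates with the elementary normal isotopies (change of basis, horizontal, and scaling) from Section~\ref{4.1}, which work in the $C^{k*}$ category for any $k>0$ and hence allow us to circumvent the need for the Moser technique in low regularity.

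First, writing $\alpha=L(\alpha_-,\alpha_+)_l=(1-s)\alpha_-+(1+s)\alpha_+$ on $[-1,1]\times M$, I will impose the matching conditions $\alpha|_{s=\pm 1}=(e^{-s}\bar{\alpha}_-+e^s\bar{\alpha}_+)|_{s=\pm 1}$ at the two boundary components. Solving the resulting linear system yields a preferred pair $(\bar{\alpha}_-,\bar{\alpha}_+)$ with the same kernels as $(\alpha_-,\alpha_+)$, and an explicit rewriting
\[
\alpha=\lambda_-\bar{\alpha}_-+\lambda_+\bar{\alpha}_+,\qquad \lambda_\pm=\tfrac{e+e^{-1}}{2}\pm\tfrac{e-e^{-1}}{2}s .
\]
Introducing the new coordinate $\bar{s}=\tfrac{1}{2}\ln(\lambda_+/\lambda_-)$ and the distortion $w(\bar{s})=\tfrac12\ln(\lambda_-\lambda_+)$, this becomes $\alpha=e^{-\bar{s}+w(\bar{s})}\bar{\alpha}_-+e^{\bar{s}+w(\bar{s})}\bar{\alpha}_+$ on the corresponding compact subinterval, with $w$ vanishing at both endpoints. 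All of these manipulations are instances of a change-of-basis map $I_{(z_-,z_+)}$ and a horizontal map $H_\psi$ in the sense of Sections~\ref{4.1.2}--\ref{4.1.3}, so they produce strict $C^{k*}$ Liouville equivalences.

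Next, I will invoke Theorem~4.13 of \cite{massoni} to replace $(\bar{\alpha}_-,\bar{\alpha}_+)$ by a pair $(\tilde{\alpha}_-,\tilde{\alpha}_+)$ with $\ker\tilde{\alpha}_\pm=\ker\bar{\alpha}_\pm$ and such that $(\tilde{\alpha}_-,\tilde{\alpha}_+)_e$ is a genuine exponential Liouville pair on all of $\mathbb{R}\times M$. Since this second change only rescales $\bar{\alpha}_\pm$ by positive functions, it is again a change-of-basis operation and absorbs into a (possibly modified) distortion function $w$. I then extend $w$ from the compact interval, where it was defined, to a function on $\mathbb{R}\times M$ that is admissible in the sense of Lemma~\ref{genscale}, i.e.\ satisfies $Y_0\cdot w>-1$ for the Liouville vector field $Y_0$ of $L(\tilde{\alpha}_-,\tilde{\alpha}_+)_e$; any compactly supported extension with sufficiently small $C^1$-norm will do.

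Finally, I will apply the scaling map $S_w$ of Section~\ref{4.1.4} for this extended $w$ along the complete Liouville flow of $(\tilde{\alpha}_-,\tilde{\alpha}_+)_e$. By Lemma~\ref{genscale}, $S_w$ is a $C^{k*}$ diffeomorphism of $\mathbb{R}\times M$ intertwining $L(\tilde{\alpha}_-,\tilde{\alpha}_+)_e$ with the Liouville form it produces after rescaling by $e^w$; on the compact piece where $w$ coincides with the distortion computed above, this rescaled form is precisely $\alpha$ written in the $\bar{s}$-coordinate. Composing the explicit linear-to-exponential coordinate change, the Massoni replacement, and $S_w$ therefore yields the desired $C^{k*}$ strict Liouville embedding $i:([-1,1]\times M,L(\alpha_-,\alpha_+)_l)\hookrightarrow(\mathbb{R}\times M,L(\tilde{\alpha}_-,\tilde{\alpha}_+)_e)$. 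The main delicate point will be verifying admissibility of the extended distortion $w$ and checking that the composite really is an embedding (not merely a local diffeomorphism) on the compact slab; this is handled by noting that the scaling map is a flow for finite time along a complete vector field, hence a diffeomorphism onto its image, and by taking the extension of $w$ small outside $[-1,1]$ so that Lemma~\ref{genscale}(a) applies globally.
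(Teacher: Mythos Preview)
Your proposal is correct and follows essentially the same approach as the paper: the paper also matches the linear form at $s=\pm 1$ to obtain the pair $(\bar{\alpha}_-,\bar{\alpha}_+)$ and the explicit $\lambda_\pm$, rewrites in the $\bar{s}$-coordinate with distortion $w$, extends $w$ admissibly, invokes Massoni's Theorem~4.13 to obtain a genuine exponential pair with the same kernels, and then scales along that pair's Liouville flow to produce the $C^{k*}$ embedding. The only cosmetic difference is the order in which you perform the Massoni replacement versus the extension of $w$, which is immaterial.
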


%%%%%%%%%%%%%%%
\section{Linearization and regularity of the strong normal foliations}\label{6}

The goal of this section is to study the linearization of Liouville dynamics at the skeleton of a Liouville manifold induced from an LIS. The linearization of a dynamical systems at an invariant submanifold is well studied, especially in the case of normally hyperbolic $C^1$ invariant submanifolds, they play an important role in the rigidity in dynamics \cite{ps,hps}. We want to apply such analysis to our LIS construction, and in fact, extend to the lower regularity case of $C^k$ for $k<1$. We will see that in those cases (which is possible for non-Anosov non-singular partially hyperbolic flows), conjugacy to a linearized dynamics is still possible.

The linearization of the Liouville form can be written as $\alpha=\alpha_u+s\beta$ with $s=0$ is the skeleton, which can be written as $\alpha=(1-s)\frac{\alpha_u-\beta}{2}+(1+s)\frac{\alpha_u}{2}$, where $\alpha_u$ is foliation 1-form for $E^{ws}$ and $\frac{\alpha_u\mp\beta}{2}$ is a $\pm$ contact structure. However, this is not necessarily Liouville on all of $\mathbb{R}_s\times M$.

One interesting feature of linear Liouville pairs is the fact that it provides a model of the linearization of the Liouville dynamics at the skeleton of a LIS. Note that in the symmetric case (which by construction, can be assumed to be $C^{k*}$ whenever the weak invariant bundles are $C^k$), we have
$$\begin{cases}
\alpha+:=\alpha_u-\alpha_s \\
\alpha_-:=\alpha_u+\alpha_s
\end{cases} \Rightarrow \alpha:=(1-s)\alpha_-+(1+s)\alpha_+,$$
where a simple computation (previously given in \cite{hoz4}) gives the Liouville vector field as
$$Y=\frac{1}{r_u}X+2\frac{r_u-r_s}{r_u}s\partial_s,$$
where $X$ is any partially hyperbolic vector field supported by such pair, which can be written as
$$Y=X+2(1-r_s) s\partial_s,$$ if we take $X$ to be the synchronization with respect to $\alpha_u$. Note that in the above, $r_s$ is constant, if and only if, $r_s\equiv -1$, if and only if, $X$ is volume preserving.

We note that $Y$ produces aflow
$$\begin{cases}
Y^t:\mathbb{R}_s\times M \rightarrow \mathbb{R}_s\times M \\
Y^t(s,x)=(se^{2(1-r_s)},(X)^t(x))
\end{cases},$$
which is complete when defined on $(\mathbb{R}_s\times M)$. However, the definition of the Liouville form $\alpha=(1-s)\alpha_-+(1+s)\alpha_+$ does not fit our criteria for the model at $s\rightarrow \pm \infty$ of a LIS. The Liouville condition in this case can be written as
$$0<(\mathcal{L}_X \alpha)\wedge (\mathcal{L}_{\partial_s}\alpha)=(2r_u\alpha_u-2r_ss\alpha_s )\wedge (-2\alpha_s)=4r_u\alpha_s \wedge \alpha_u,$$
which is satisfied as long as $\alpha_u$ is expanding ($r_u>0$) and the above argument shows that it is complete. Therefore, we have a Liouville manifold structure on $(\mathbb{R}_s\times M,\alpha)$. By Theorem~\ref{linemb2}, we can embed the subset $([-1,1]_s\times M,\alpha)$ as a Liouville domain inside an exponential model $(\mathbb{R}_s\times M,L(\bar{\alpha}_-,\bar{\alpha}_+)_e$), which can then be extended to a struct Liouville equivalence between $(\mathbb{R}_s\times M,\alpha)$ and $(\mathbb{R}_s\times M,L(\bar{\alpha}_-,\bar{\alpha}_+)_e)$ as discussed in Section~\ref{5.1}. This gives a $C^{k*}$ normal strict Liouville equivalence between a LIS and its linearization, an equivalence which is at least $C^1$ in the Anosov case. The strong bundles of the linearization flow above is given by $E^n=\langle \partial_s \rangle$ (i.e. the strong normal foliation $\mathcal{F}^n$ is foliation by $s$-coordinate curves in $\mathbb{R}\times M$). Therefore, the $C^{k*}$ Liouville equivalence of a LIS to its linearization at the skeleton (which by Lemma~\ref{lioumaps}, $C^k$-maps the Liouville vector field to the one for the linearization flow) maps the strong normal foliation $\mathcal{F}^n$ to the $s$-coordinate curves. In particular, this means that the strong normal foliation of any LIS is $C^k$, whenever the weak invariant bundles of the supported flow are $C^k$. This gives $C^1$-regularity in the Anosov case. We summarize this in the following.% \textcolor{red}{define linearization carefully}

\begin{theorem}
Let $\alpha=L(\alpha_-,\alpha_+)_{(\lambda_-,\lambda_+)}$ supporting a non-singular partially hyperbolic $X$ with $C^k$ weak invariant bundle ($k\geq 1$ when $X$ Anosov). There is a $C^k$ strict Liouville equivalence $\psi:\mathbb{R}\times M \rightarrow \mathbb{R}\times M$ between $(\mathbb{R}\times M,\alpha)$ and its linearization. In particular, $\phi_*(Y)$ is the linearization of $Y$, the Liouville vector field of $\alpha$, at its skeleton.
\end{theorem}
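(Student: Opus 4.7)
The plan is to build the linearization explicitly as a symmetric linear model, realize both $\alpha$ and its linearization inside a common exponential Liouville pair via low-regularity strict embeddings, and then splice the embeddings together through the Moser/elementary-isotopy machinery already developed.

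First, I would fix a norm on $E^u$ such that the corresponding expansion rate $r_u$ is $C^k$ (Lemma~\ref{simiclemma}), and build the symmetric $C^{k*}$ linear Liouville pair $(\alpha_u+\alpha_s,\alpha_u-\alpha_s)_l$ supporting a synchronization of $X$, as in the computation preceding the theorem. This yields the linearization 1-form
\[
\alpha_{lin}:=(1-s)(\alpha_u+\alpha_s)+(1+s)(\alpha_u-\alpha_s)=2\alpha_u-2s\alpha_s
\]
on all of $\mathbb{R}_s\times M$, with explicit Liouville vector field $Y_{lin}=X+2(1-r_s)s\partial_s$. The condition $r_u>0$ (from non-singular partial hyperbolicity) gives $d\alpha_{lin}\wedge d\alpha_{lin}=4r_u\,ds\wedge\Omega>0$ everywhere, and the explicit flow formula $Y_{lin}^t(s,x)=(se^{2(1-r_s\circ X^\tau)\tau\text{-stuff}},X^t(x))$ shows completeness, so $(\mathbb{R}\times M,\alpha_{lin})$ is genuinely a Liouville manifold — it just fails the asymptotic reparametrization condition of Definition~\ref{lis}, so it is not literally an LIS.

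Next, I would apply Theorem~\ref{linemb2} to the compact piece $([-1,1]_s\times M,\alpha_{lin})$ to obtain a $C^{k*}$ strict Liouville embedding
\[
i_{lin}:([-1,1]_s\times M,\alpha_{lin})\hookrightarrow (\mathbb{R}\times M,L(\bar\alpha_-,\bar\alpha_+)_e)
\]
into some exponential LIS supporting the same synchronized non-singular partially hyperbolic $X$. By the same embedding theorem (or directly Corollary~\ref{linemb}), the given LIS $(\alpha_-,\alpha_+)_{(\lambda_-,\lambda_+)}$ also admits a $C^{k*}$ strict Liouville embedding of a compact neighborhood of its skeleton into the same ambient exponential model — this is because Theorem~\ref{1to1} identifies strict Liouville equivalence classes with $[X]$, and the fibration of Theorem~\ref{fib} (together with the filtration in Remark~\ref{fibfilter}) lets us use a supporting compact LIS whose skeleton graph is sent to $s=0$ via a horizontal isotopy. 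Then, by Lemma~\ref{moserext} (the canonical extension of strict equivalences of compact LIS s to their completions), each of these embeddings extends uniquely to a $C^{k*}$ strict Liouville equivalence of $(\mathbb{R}\times M,\alpha_{lin})$ and $(\mathbb{R}\times M,\alpha)$, respectively, with the ambient exponential Liouville pair.

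Composing gives the desired $\psi:\mathbb{R}\times M\to\mathbb{R}\times M$ with $\psi^*\alpha=\alpha_{lin}$. Since $\psi$ is $C^{k*}$ and intertwines the two Liouville forms, it also intertwines the associated Liouville vector fields (Lemma~\ref{lioureg} and the fact that strict Liouville equivalences transport $Y$), so $\psi_*Y=Y_{lin}$, confirming the ``in particular'' statement. The main obstacle I anticipate is purely regularity-theoretic: when $k<1$ (the genuinely non-Anosov partially hyperbolic case) the Moser technique is unavailable, and one must instead stitch the embedding together via the elementary change-of-basis, horizontal, and scaling isotopies of Section~\ref{4.1}, exactly as in the proof of Theorem~\ref{linemb2}. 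Keeping these elementary isotopies compactly supported (so they produce honest diffeomorphisms, not just local maps) and checking that the admissibility condition $Y_0\cdot w>-1$ of Lemma~\ref{genscale} survives throughout the construction is the delicate step; the assumption $k\geq 1$ in the Anosov case is what guarantees the weak invariant bundles are $C^{1+}$ (Corollary~\ref{c1regweak}), which is what makes the symmetric model $\alpha_{lin}$ at least $C^1$ and hence allows the Moser version of the argument to run cleanly.
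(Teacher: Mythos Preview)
Your proposal is correct and follows essentially the same approach as the paper: build the symmetric linear model as the linearization, verify it defines a complete Liouville manifold on $\mathbb{R}_s\times M$, embed its compact core into an exponential LIS via Theorem~\ref{linemb2}, extend to a strict equivalence of completions, and compose with the (already established) strict equivalence between the given LIS and the exponential model. Your identification of the low-regularity ($k<1$) case as the place where Moser must be replaced by the explicit elementary isotopies is exactly the point the paper emphasizes as well.
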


\begin{corollary}
The strong normal Lagrangian bundle $E^n$ of any LIS at its skeleton is tangent to a 1-dimensional $C^k$ foliation $\mathcal{F}^n$ of $\mathbb{R}\times M$, contained in the weak normal Lagrangian foliation $\mathcal{F}^{wn}$, whenever the weak bundle of the supported non-singular partially hyperbolic flow are $C^k$. In particular, $\mathcal{F}^n$ is $C^1$, whenever $X$ is Anosov.
\end{corollary}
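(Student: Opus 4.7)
\medskip

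\noindent\textbf{Proof proposal.} The plan is to deduce the corollary directly from the preceding Linearization Theorem, transporting the manifestly smooth strong normal foliation of the linear (symmetric) model through the $C^{k*}$ strict Liouville equivalence $\psi$ constructed there. First I would recall the setup: for a supporting LIS $(\alpha_-,\alpha_+)_{(\lambda_-,\lambda_+)}$, the linearization model is $\bar{\alpha}=(1-s)\alpha_- +(1+s)\alpha_+$ on $\mathbb{R}_s\times M$ with $\alpha_\pm=\alpha_u\mp\alpha_s$ symmetric, whose Liouville vector field $\bar Y=X+2(1-r_s)s\partial_s$ has the obvious invariant splitting $T(\mathbb{R}\times M)=\langle \bar Y,\partial_s\rangle \oplus \ldots$ at the skeleton $\{s=0\}$. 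In this model the strong normal bundle $\bar E^n=\langle \partial_s\rangle$ is integrated by the $C^\infty$ foliation $\bar{\mathcal F}^n$ whose leaves are the $s$-coordinate curves $\{x=\mathrm{const}\}$; this foliation is contained leafwise in the weak normal foliation $\bar{\mathcal F}^{wn}$ tangent to $\langle \partial_s, X\rangle$.

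Next I would apply the Linearization Theorem to get a $C^k$ strict Liouville equivalence $\psi:\mathbb{R}\times M\rightarrow \mathbb{R}\times M$ with $\psi_*(Y)=\bar Y$, and define
\[
\mathcal{F}^n := \psi^{-1}(\bar{\mathcal{F}}^n), \qquad E^n := \psi^{-1}_*\,\bar E^n .
\]
Since $\psi$ is a $C^k$ diffeomorphism and $\bar{\mathcal F}^n$ is $C^\infty$, the pullback $\mathcal F^n$ is a $C^k$ 1-dimensional foliation of $\mathbb{R}\times M$, tangent to $E^n$. To identify this $E^n$ with the strong normal Lagrangian bundle from Lemma~\ref{strongnormal}, I would use that $\psi_*Y=\bar Y$ implies $\psi$ maps the unique $Y$-invariant line complement of $\langle Y\rangle$ in $\langle Y,\partial_s\rangle$ (whose existence was proven via the Hirsch--Pugh--Shub criterion) onto the unique $\bar Y$-invariant line complement in $\langle \bar Y,\partial_s\rangle=\langle X,\partial_s\rangle$, which is precisely $\langle \partial_s\rangle$. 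The containment $\mathcal F^n\subset \mathcal F^{wn}$ then follows because $\psi$ carries the weak normal foliation $\mathcal F^{wn}=\mathcal{F}(\langle X,\partial_s\rangle)$ to $\bar{\mathcal F}^{wn}=\mathcal{F}(\langle X,\partial_s\rangle)$ (both foliations are intrinsically characterized as the unique exact Lagrangian foliation through the flow lines of the Liouville vector field, by Lemma~\ref{ypresl}), and $\bar{\mathcal F}^n\subset \bar{\mathcal F}^{wn}$ in the linear model by inspection.

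Finally, for the Anosov specialization, I would invoke Corollary~\ref{c1regweak} to note that the weak invariant bundles $E^{ws}, E^{wu}$ are $C^{1+}$ and that the Linearization Theorem in this regularity range is obtained via the Moser technique (rather than the elementary isotopies needed for $k<1$), producing a $C^1$ equivalence $\psi$; hence $\mathcal F^n$ is $C^1$. The main technical point I expect to require care is verifying that the equivalence $\psi$ produced from elementary isotopies (change of basis, horizontal, scaling) in the $C^{k*}$ regularity range does in fact achieve full $C^k$ regularity on the tangent level as a diffeomorphism, not merely $C^{k*}$ on forms, so that the pullback of the $C^\infty$ foliation $\bar{\mathcal F}^n$ genuinely has $C^k$ tangent field; this is a matter of tracking how the elementary isotopies of Section~\ref{4.1} act on vector fields transverse to the supported flow $X$, which is where the regularity of the weak bundle enters decisively.
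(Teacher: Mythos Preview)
Your proposal is correct and follows essentially the same route as the paper: pull back the $C^\infty$ strong normal foliation $\bar{\mathcal F}^n=\{s\text{-curves}\}$ of the symmetric linear model through the $C^{k}$ strict Liouville equivalence $\psi$ furnished by the Linearization Theorem, and use the uniqueness of the $Y$-invariant complement in $E^{wn}$ to identify the result with $E^n$. One small correction: the reason $\psi$ carries $\mathcal F^{wn}$ to $\bar{\mathcal F}^{wn}$ is not that this is ``the unique exact Lagrangian foliation through the flow lines'' (no such uniqueness is claimed), but simply that $\psi$ is a composition of elementary normal maps (change of basis, horizontal, scaling), each of which preserves $\mathcal F^{wn}$ leafwise by construction; this is also what underwrites the $C^k$ regularity of $\psi$ that you flag at the end, via Lemma~\ref{lioumaps}.
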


We have so far established the strict equivalence of a non-compact linear (which does not satisfy the conditions of a LIS as we defined in Definition~\ref{lis}) model with the LIS model, some care is needed to include them in the definition as we see below, which is one of the reasons we enforced certain conditions at infinity when defining the LIS model in Section~\ref{3.4}. In particular, consider the 1-form $\alpha=(1-s)\alpha_- +(1+s)\alpha_+$ defined on $\mathbb{R}_s\times M$ for the bi-contact form $(\alpha_-,\alpha_+)$. Considering that at $s\rightarrow \pm \infty$, we have $\alpha \simeq s(\alpha_+-\alpha_-)$.

\begin{claim}
The necessary condition for $\alpha$ to be Liouville on $\mathbb{R}_s\times M$ is $\ker{(\alpha_+-\alpha_-)}=E^{wu}$.
\end{claim}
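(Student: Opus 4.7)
The plan is to extract the asymptotic constraint from the Liouville condition as $|s| \to \infty$ and then identify the limiting kernel using the partial hyperbolicity provided by Theorem~\ref{generalchar}.

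First, I would expand $d\alpha \wedge d\alpha$ explicitly on $\mathbb{R}_s \times M$. Writing $A := \alpha_+ - \alpha_-$ and noting that any wedge of two $2$-forms pulled back from the $3$-manifold $M$ vanishes identically (they are top-forms on a $3$-manifold), one computes
\[
d\alpha \wedge d\alpha \;=\; 2\,ds \wedge A \wedge \big[(1-s)\,d\alpha_- + (1+s)\,d\alpha_+\big] \;=\; 2\,ds \wedge A \wedge d(\alpha_- + \alpha_+) \;+\; 2s\,ds \wedge A \wedge dA.
\]
The linear-in-$s$ piece dominates and changes sign as $|s|\to\infty$, so for the Liouville condition to hold on all of $\mathbb{R}_s \times M$ (with consistent sign at both $s \to +\infty$ and $s \to -\infty$), one necessarily has $A \wedge dA \equiv 0$ on $M$. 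By Frobenius, $\ker A$ is then tangent to a $2$-dimensional foliation on $M$; since $A(X) = \alpha_+(X) - \alpha_-(X) = 0$, this foliation contains the flow lines of $X$, and equivalently $\mathcal{L}_X A = r_A\,A$ for some function $r_A$ on $M$. With this identity, the remaining Liouville condition reduces to the $s$-independent positivity $A \wedge d(\alpha_- + \alpha_+) > 0$ as a $3$-form on $M$.

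To identify $\ker A$ with $E^{wu}$, I would use that the restriction of $\alpha$ to $[-1,1] \times M$ is a compact LIS, so Theorem~\ref{generalchar} applies and $X$ is non-singular partially hyperbolic with splitting $TM \simeq E \oplus E^{u}$, together with the skeleton condition $\ker\alpha|_{s=\Lambda_s} = E$ and the absolute expansion $r_u > 0$. I would then check the identification in the symmetric model $\alpha_\pm = \alpha_u \mp \alpha_s$ of Section~\ref{3.3}, where $A = -2\alpha_s$ has $\ker A = E^{wu}$ and the remaining positivity reads $A \wedge d(\alpha_- + \alpha_+) = -4\,\alpha_s \wedge d\alpha_u = 4\,r_u\,\Omega > 0$, matching the skeleton condition. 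In general, one then argues that the sign of $A \wedge d(\alpha_- + \alpha_+)$, combined with the identity $\mathcal{L}_X A = r_A A$, encodes the expansion rate of the $X$-invariant line subbundle of $TM/\langle X\rangle$ transverse to $\ker A$; positivity singles out the dominating direction $E^u$, so the complementary integrable $2$-plane must be $E^{wu}$.

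The hard part is making this last identification clean and model-independent, i.e.\ ruling out that $\ker A$ is some other integrable $2$-plane through $X$ (a real concern in the Anosov case, where both weak invariant bundles $E^{ws}$ and $E^{wu}$ are integrable). I anticipate the key step is to translate the positivity of $A \wedge d(\alpha_- + \alpha_+)$ into a sign condition on $r_A$ that matches only the absolute-expansion rate corresponding to $E^u$ and is incompatible with the dominated direction. Once that sign matching is carried out, the asymptotic foliation $\ker A$ can only be $E^{wu}$, completing the claim.
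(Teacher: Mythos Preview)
Your computation of $d\alpha \wedge d\alpha$ and the conclusion $A \wedge dA = 0$ is correct and equivalent to the paper's computation: the paper writes the Liouville condition as $\mathcal{L}_X\alpha \wedge \mathcal{L}_{\partial_s}\alpha = [\mathcal{L}_X(\alpha_-+\alpha_+) + s\,\mathcal{L}_X(\alpha_+-\alpha_-)]\wedge(\alpha_+-\alpha_-)$, and the vanishing of the $s$-linear term is exactly $(\mathcal{L}_X A)\wedge A = 0$, which on a $3$-manifold with $A(X)=0$ is the same as $A\wedge dA = 0$. So both of you arrive at the fact that $\ker A$ is an $X$-invariant plane field containing $X$.

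The gap is in your identification step. You try to pin down $\ker A$ by extracting an expansion-rate sign from the positivity of $A\wedge d(\alpha_-+\alpha_+)$, but you admit this is the ``hard part'' and do not complete it; in fact your phrasing ``the $X$-invariant line subbundle of $TM/\langle X\rangle$ transverse to $\ker A$'' already presupposes that such a complementary invariant line exists. The paper bypasses this entirely by invoking a standard dynamical fact you do not use: for a projectively Anosov flow the dominated splitting is \emph{unique}, so any continuous $X$-invariant plane field containing $X$ must equal either $E$ or $E^{wu}$. Once you know $\ker A\in\{E,E^{wu}\}$, the supporting convention (the interpolation $\ker\alpha$ passes through $E$ at the finite value $s=\Lambda_s$) rules out $\ker A=E$, because $\ker(B+sA)$ with $B=\alpha_-+\alpha_+$ can never equal $\ker A$ at a finite $s$ when $\ker\alpha_-\pitchfork\ker\alpha_+$. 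That is the entire argument; no expansion-rate matching is needed. Your route could likely be completed, but the missing ingredient is precisely this uniqueness of invariant plane fields for flows with a dominated splitting.
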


\begin{proof}
The Liouville condition reads $$0\neq \mathcal{L}_X \alpha \wedge \mathcal{L}_{\partial_s} \alpha =[\mathcal{L}_X (\alpha_-+\alpha_+)+s(\mathcal{L}_X(\alpha_+-\alpha_-))]\wedge(\alpha_+-\alpha_-).$$
Therefore, the necessary condition to have this non-vanishing condition for all $s$ is $$(\mathcal{L}_X(\alpha_+-\alpha_-))\wedge(\alpha_+-\alpha_-)=0,$$
i.e. $\ker{(\alpha_+-\alpha_-)}$ is invariant under the flow of $X$. However, the dominated splitting is unique in the sense that the invariant plane field $\ker{(\alpha_+-\alpha_-)}$ needs to be equal to $E$ or $E^{wu}$, which with our orientation convention, it should be $E^{wu}$.
\end{proof}

We complete our observations in the following lemma.

\begin{lemma}
Let $(\alpha_-,\alpha_+)$ be a bi-contact form. The followings are equivalent:

(1) $\ker{(\alpha_+-\alpha_-)}=E^{wu}$;

(2) $(1-s)\alpha_-+(1+s)\alpha_+$ is a complete Liouville form on $(\mathbb{R}_s\times M)$;

(3) we can write $$\begin{cases}\alpha_+=\alpha_u-\alpha_s \\ \alpha_-=\alpha_u+h\alpha_s \end{cases},$$
with $r_u>0$, $r_s<r_u$ and $r_s+X\cdot{\ln{h}}<r_u$.
\end{lemma}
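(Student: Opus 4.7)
The plan is to close the chain of implications $(1)\Rightarrow(3)\Rightarrow(2)\Rightarrow(1)$, with the last step already recorded as the claim immediately preceding the lemma. The first implication is a purely algebraic normalization once the invariant bundles of the supported non-singular partially hyperbolic flow are brought in, and the second reduces to a single pointwise Liouville inequality on $M$ together with a completeness argument analogous to the linearization model computed earlier in this section.

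For $(1)\Rightarrow(3)$, the transverse bi-contact hypothesis together with the implicit presence of $E^{wu}$ in the statement yields, via Theorem~\ref{generalchar} and Lemma~\ref{pabicontact}, a non-singular partially hyperbolic $X\subset\ker\alpha_-\cap\ker\alpha_+$ with dominated splitting $TM/\langle X\rangle\simeq E\oplus E^{wu}$ and associated foliation $1$-forms $\alpha_u,\alpha_s$ satisfying $\mathcal{L}_X\alpha_u=r_u\alpha_u$ and $\mathcal{L}_X\alpha_s=r_s\alpha_s$ with $r_u>0$. Since $\alpha_\pm(X)=0$, both $\alpha_\pm$ lie in $\mathrm{span}(\alpha_u,\alpha_s)$, and the hypothesis $\ker(\alpha_+-\alpha_-)=E^{wu}=\ker\alpha_s$ forces the $\alpha_u$-coefficients of $\alpha_+$ and $\alpha_-$ to coincide. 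Transversality of $\xi_-$ and $\xi_+$ to $E^{wu}$ makes this common coefficient nowhere vanishing, so after rescaling $\alpha_u$ (which preserves $\ker\alpha_u=E$) and then rescaling $\alpha_s$ (which preserves $\ker\alpha_s=E^{wu}$ while modifying $r_s$ by a Lie derivative term), the forms take the normal shape $\alpha_+=\alpha_u-\alpha_s$ and $\alpha_-=\alpha_u+h\alpha_s$ with $h$ nowhere vanishing; the orientation conventions of Convention~\ref{orientation} fix $h>0$. A Cartan-style computation as in Section~\ref{3.3} then gives $\alpha_+\wedge d\alpha_+=(r_u-r_s)\Omega$ and $\alpha_-\wedge d\alpha_-=h(X\cdot\ln h+r_s-r_u)\Omega$, and the positive, respectively negative, contact conditions on $\alpha_\pm$ translate into $r_s<r_u$ and $r_s+X\cdot\ln h<r_u$.

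For $(3)\Rightarrow(2)$, expand $\alpha=(1-s)\alpha_-+(1+s)\alpha_+=2\alpha_u+[(h-1)-(h+1)s]\alpha_s$. Since $\alpha(X)=\alpha(\partial_s)=0$ and $[X,\partial_s]=0$, the standard identity $\iota_X\iota_{\partial_s}(d\alpha\wedge d\alpha)=2\,\mathcal{L}_X\alpha\wedge\mathcal{L}_{\partial_s}\alpha$ holds, and a direct calculation using $\mathcal{L}_X\alpha_u=r_u\alpha_u$, $\mathcal{L}_X\alpha_s=r_s\alpha_s$, $\mathcal{L}_{\partial_s}\alpha_u=\mathcal{L}_{\partial_s}\alpha_s=0$ yields $\mathcal{L}_X\alpha\wedge\mathcal{L}_{\partial_s}\alpha=2r_u(h+1)\,\alpha_s\wedge\alpha_u$. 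By $r_u>0$, $h>0$, and the orientation convention $\alpha_s\wedge\alpha_u>0$, this expression is strictly positive on all of $\mathbb{R}_s\times M$, so $\alpha$ is a Liouville form. For completeness, apply Lemma~\ref{ypresl} to write the Liouville vector field as $Y=fX+G\partial_s$; solving $f\mathcal{L}_X\alpha+G\mathcal{L}_{\partial_s}\alpha=\alpha$ yields $f\equiv 1/r_u>0$ and $G$ affine in $s$ with coefficients depending only on $x\in M$. Along any orbit, the flow equation $\dot s=G(s,x(t))$ is therefore a linear ODE whose coefficients are uniformly bounded by compactness of $M$, whence solutions exist for all $t\in\mathbb{R}$ and $Y$ is complete.

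The main technical obstacle lies in the sign bookkeeping of $(1)\Rightarrow(3)$: ensuring that the two rescalings can be carried out globally and that $h$ is uniformly positive, which relies on the simultaneous transversality of $\xi_+$, $\xi_-$ and $E^{wu}$ as distributions containing $X$, together with the orientation conventions. Once this is settled, the striking point is that the Liouville coefficient $r_u(h+1)$ produced in $(3)\Rightarrow(2)$ is independent of $s$, so (2) imposes no constraint beyond the skeleton-level inequality $r_u>0$ already built into (3); this same $s$-independence is exactly what the proof of $(2)\Rightarrow(1)$ in the preceding claim exploited, via forcing the $s^2$-coefficient of $d\alpha\wedge d\alpha$ to vanish and thereby identifying $\ker(\alpha_+-\alpha_-)$ with the unique invariant dominated plane field $E^{wu}$.
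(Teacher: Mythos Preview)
Your proof is correct and follows essentially the same route as the paper: the chain $(2)\Rightarrow(1)\Rightarrow(3)\Rightarrow(2)$, the key Liouville computation $\mathcal{L}_X\alpha\wedge\mathcal{L}_{\partial_s}\alpha=2r_u(1+h)\,\alpha_s\wedge\alpha_u$ independent of $s$, and completeness via $f\equiv 1/r_u$ with $g$ affine in $s$. The only difference is that you spell out the normalization in $(1)\Rightarrow(3)$ in detail, whereas the paper simply calls it obvious.
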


\begin{proof}
So far, we have shown $(2)\rightarrow (1)$ and $(1)\rightarrow (3)$ is obvious. It is enough to show $(3)\rightarrow (2)$. As above, the Liouville condition is 
$$0\neq \mathcal{L}_X(\alpha_-+\alpha_+)\wedge (\alpha_+-\alpha_-)=(2r_u\alpha_u+A\alpha_s)\wedge (-1-h)\alpha_s=2r_u(1+h)\alpha_s \wedge \alpha_u.$$ Therefore, we get the Liouville condition everywhere, regardless of what $h$ is (as long as $h>-1$). To see that we get an actual Liouville manifold, we need to check that the resulting Liouville vector field is complete.

To see this, recall that $Y=fX+g\partial_s$ is equivalent to $\alpha=f\mathcal{L}_X\alpha+g\mathcal{L}_{\partial_s}\alpha$. Note that in our case, we have
$$f(2r_u\alpha_u+(Ax+B)\alpha_s)=f\mathcal{L}_X\alpha=\alpha-g(\alpha_+-\alpha_-)=2\alpha_u+(g-s)(1+h)\alpha_s,$$
for some $A,B:M\rightarrow \mathbb{R}$, implying $f\equiv \frac{1}{r_u}$ and $g=\bar{A}s+\bar{B}$ is a linear function of $s$. Such vector field is again integrable, since the $g$ part is dominated by a linear flow. More specifically, the Liouville flow is given by
$$\begin{cases}
Y^t:\mathbb{R}\times M \rightarrow \mathbb{R}\times M \\
Y^t(s,x)=(\bar{s}(t),(\frac{1}{r_u}X)^t)\bar{A}
\end{cases},$$
where $\bar{s}:\mathbb{R}\times M\rightarrow \mathbb{R}$ is the unique solution to the ODE
$$\begin{cases}
\bar{s}(0)=s \\
\partial_t\cdot \bar{s}(x,t)= \bar{A}\circ (\frac{1}{r_u}X)^t (x)s+\bar{B}\circ (\frac{1}{r_u}X)^t (x)
\end{cases},$$
for which the solution exists for all $t\in\mathbb{R}$. %\textcolor{red}{why?}
\end{proof}

%%%%%%%%%%%%%%
\section{Geometry of persistence near codimension 1 skeletons}\label{7}

The goal of this section is to study persistence in the dynamics and geometry of Liouville manifolds with 3 dimensional skeletons. The upshot of this section is that $C^1$-persistence, i.e. the persistence of the skeleton as a $C^1$ 3-dimensional submanifold, is characterized by the examples of Mitsumatsu's construction based on Anosov 3-flows, if we further assume the transversality of the skeleton and the Liouvile form. We still give structure results for the skeleton dynamics, when we drop the transversality assumption. We finally study the situation with weaker assumptions in the partially hyperbolic case. That corresponds to the absence of the rate condition of normal hyperbolicity in the repellence at the skeleton. We will see that the Liouville dynamics in the neighborhood of the skeleton can a priori be different than one induced from a LIS. % \textcolor{red}{better}

In the following, let $(W,\alpha)$ be a Liouville 4-manifold with the Liouville vector field $Y$ with a $C^1$ embedded 3 dimensional Liouville skeleton $i:\Lambda\rightarrow W$.

\subsection{Normal hyperbolicity at the skeleton and the LIS model}\label{7.1}

Normal hyperbolicity at an invariant submanifold is equivalent to $C^1$-persistence, by the celebrated results of Hirsch-Pugh-Shub \cite{hps} and Mañe \cite{mane}. One can try to have similar characterization results in the Liouville category. Normal hyperbolicty gives a geometric interpretation of the more dynamical notion of persistence. Normal hyperbolicity for an oriented $C^1$ 3-dimensional Liouville skeleton $\Lambda \subset (W,\alpha)$ essentially means one invariant unstable bundle in the sense of Definition~ (while no stable bundle, i.e. we have $TW\simeq T\Lambda\oplus E^n$ for a 1-dimensional repelling invariant bundle $E^n$) and in particular, $W\simeq \mathbb{R}\times M$.

We start our discussion with an elementary observation.

\begin{lemma}\label{trans}
Point wise, we have $i^*\alpha\neq 0$, if and only if, $i^{-1}_* Y\pitchfork \ker{i^*d\alpha}$. If $\ker{\alpha} \pitchfork T\Lambda$ everywhere, then $\alpha \neq 0$.
\end{lemma}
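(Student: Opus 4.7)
The plan is to deduce both claims directly from the defining Liouville relation $\iota_Y d\alpha=\alpha$, pulled back to $\Lambda$, together with the observation that the zeros of the Liouville vector field are confined to the skeleton.

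For the first equivalence, I would begin by noting that invariance of $\Lambda$ under the Liouville flow forces $Y|_{i(\Lambda)}$ to be tangent to $i(\Lambda)$, so the vector field $i^{-1}_*Y$ on $\Lambda$ is well defined. Naturality of interior product under pullback then yields the pointwise identity
\[
i^*\alpha \;=\; i^*(\iota_Y d\alpha) \;=\; \iota_{i^{-1}_*Y}\bigl(i^*d\alpha\bigr).
\]
Since $i^{-1}_*Y$ spans at most a line in $T_p\Lambda$, transversality of this line to the subspace $\ker(i^*d\alpha)_p\subseteq T_p\Lambda$ (which has even corank, hence is either all of $T_p\Lambda$ or of dimension $1$) should be read in the relaxed sense of the line not being contained in the kernel. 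That is exactly the condition $\iota_{i^{-1}_*Y}(i^*d\alpha)\neq 0$ at $p$, which by the displayed identity is precisely $(i^*\alpha)_p\neq 0$.

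For the second statement, I would first observe that at a point $p\in\Lambda$ where $\alpha_p\neq 0$, both $\ker\alpha|_p$ and $T_p\Lambda$ are $3$-planes in the $4$-dimensional $T_pW$, so transversality reduces to their being distinct, i.e.\ $T_p\Lambda\not\subseteq\ker\alpha|_p$, which in turn is equivalent to $(i^*\alpha)_p\neq 0$. Hence the hypothesis ``$\ker\alpha\pitchfork T\Lambda$ everywhere'' already presupposes $\alpha|_\Lambda$ nowhere vanishes. To promote this to $\alpha\neq 0$ on all of $W$, I would invoke the non-degeneracy of the symplectic form $d\alpha$ together with $\iota_Y d\alpha=\alpha$ to conclude $\alpha_p=0\iff Y_p=0$. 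But any zero of $Y$ is fixed by the Liouville flow, hence lies in $\bigcap_{t>0}Y^{-t}(W_k)$ for every exhausting Liouville domain $W_k\subset W$, and therefore in $\mathrm{Skel}(Y)=\Lambda$; since $\alpha$ has no zeros on $\Lambda$ by the first observation, it follows that $Y$ has no zeros in $W$, and so $\alpha$ vanishes nowhere.

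I do not anticipate a genuine obstacle here; both parts reduce to one line each once the pulled-back Liouville identity and the dynamical characterisation of the zeros of $Y$ are in hand. The only point requiring care is the non-standard reading of $\pitchfork$ when the dimensions of the subspaces fail to sum to the ambient dimension, which must be interpreted throughout as non-containment.
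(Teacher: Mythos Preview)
Your proof is correct and follows essentially the same route as the paper's. For the first equivalence both you and the paper reduce to the pulled-back identity $i^*\alpha=\iota_{i^{-1}_*Y}(i^*d\alpha)$ and read off that $(i^*\alpha)_p\neq 0$ is exactly $i^{-1}_*Y\notin\ker(i^*d\alpha)_p$; for the second claim the paper phrases the extension from $\Lambda$ to $W$ as ``scaling along $Y$'' (using $\mathcal{L}_Y\alpha=\alpha$, hence $(Y^t)^*\alpha=e^t\alpha$), while you phrase it via ``zeros of $Y$ lie in the skeleton''---these are two sides of the same observation. One small remark: the non-degeneracy of $d\alpha$ on $T_pW$ actually forces $\ker(i^*d\alpha)_p$ to be exactly one-dimensional (the case $\ker=T_p\Lambda$ you allow cannot occur), though your argument goes through regardless.
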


\begin{proof}
Let $i:\Lambda\rightarrow W$ be such $C^1$ embedding. Non-degeneracy of $d\alpha$ implies $i^* \alpha\neq 0$ and has maximal rank on $T\Lambda$, i.e. has a 1-dimensional kernel $l$, where $l\subset \ker{i^*\alpha}$, since $Y\subset  T\Lambda|_\Lambda$ implies $di^*\alpha(l,i^{-1}_* Y)=i^*\alpha(l)=0$.

At $p$, we have $$i^*\alpha \neq 0 \Longleftrightarrow \text{for some}\ \ v\in T_p\Lambda, \ \ i^*d\alpha(i^{-1}_* Y,v)=i^*\alpha(v)\neq 0$$  $$\Longleftrightarrow Y\not\subset \ker di^*\alpha=l.$$

For the rest, notice that $\ker{\alpha}\pitchfork T\Lambda$ implies $\alpha$ is non-vanishing in a neighborhood of $\Lambda$, which is extended to non-vanishing on $W$ via scaling along $Y$.
\end{proof}

Now, suppose we have $\ker{\alpha}\pitchfork T\Lambda$ everywhere. 

Note that in particular, we are assuming $\alpha$ to be non-vanishing. Since $\mathcal{L}_Y \alpha=\alpha$, we know that $i^{-1}_* Y$ preserves $\ker{i^*\alpha}$ and $Y|_\Lambda$ defines an action on $T\Lambda / \ker{\alpha}$ with the constant expansion rate equal to $r=1$. We also note that as discussed in the proof of the above lemma, $\ker{i^*d\alpha}\subset \ker{i^*\alpha}$ and transverse to $i^{-1}_* Y$. In fact $i^{-1}_* Y$ preserves $\ker{i^*d\alpha}$ since $\mathcal{L}_Y d\alpha=d\alpha$. In order, to find an invariant sub-bundle of $T\Lambda$ transverse to $\ker{i^*\alpha}$, we need the domination of $\ker{i^*\alpha}$ by $T\Sigma / \ker{\alpha}$ (in order to use Lemma~\ref{strong}). That is, we need the expansion rate of $\ker{\alpha}$ under the action of $Y$ to be strictly less than 1. It turns out this condition is equivalent to the action of $Y$ at on the normal bundle of $\Lambda$, i.e. the action of $Y$ on $TW/T\Lambda |_{\Lambda}$, to be locally expanding. While such expansion is generic in the class vector fields $C^0$-repelling at the invariant set $\Lambda$, it is not clear if that is the case in the class of Liouville vector fields (see Question~\ref{qgenrep}).

More precisely, let $s$ be a $C^1$ normal coordinate near $\Lambda$ with $\Lambda=\{ s=0 \}$. Notice that we can do so, thanks to the fact that $\Lambda$ and $W$ are oriented. If we take the non-vanishing vector field $e\subset \ker{di^*\alpha}$ (notice that we can do this since $\ker{di^*\alpha}$ is co-oriented with $Y$ and $i^*\alpha$) such that on $TW$ at $\Lambda$ we have
$$d\alpha \wedge d\alpha=ds\wedge \hat{e}\wedge  d\alpha.$$

We have
$$\mathcal{L}_Y(d\alpha \wedge d\alpha)=\mathcal{L}_Y ds \wedge \hat{e} \wedge i^* d\alpha + ds \wedge \mathcal{L}_Y \hat{e} \wedge i^* d\alpha+ds \wedge \hat{e} \wedge i^* d\alpha,$$
which considering the fact that $div_Y(d\alpha \wedge d\alpha)=2$ implies
$$r_{ds}+r_e=1,$$
where $r_{ds}$ and $r_e$ are the expansion rates of $ds$ and $e$ respectively.

Now, since $Y$ is Liouville with skeleton $\Lambda$, $Y$ is $C^0$-repelling at $\Lambda$. But if we assume $Y$ to be $C^1$-repelling at $\Lambda$, we have $r_{ds}>0$ which then implies $r_e<1$. Hence, we achieve the domination of $\ker{i^*\alpha}$ by $T\Lambda /\ker{i^*\alpha}$ as desired. We are in the situation of Lemma~\ref{strong} now. To see this, let $E_1=\ker{i^*\alpha}$, $E_2=T\Lambda$ and $E_3=T\Lambda / \ker{i^*\alpha}$ with $T_1,T_2,T_3$ induced by the flow as before and we have
$$m(T_3 | E_{1x})=r_u=1>r_e=||T_1 | E_{3x} ||$$
for $x\in\Lambda$. The Lemma implies that there exists an invariant bundle $E^{u}$ transverse to $\ker{\alpha}$ in $\Lambda$, which has the expansion rate $r_u=1$. This means that $Y|_{\Lambda}$ admits a partially hyperbolic splitting $E^u \oplus \ker{\alpha}$, where $E^{wu}=E^{u}\oplus \langle Y|_{\Lambda} \rangle$.

The case $Y$ is normally hyperbolic at $\Sigma$ corresponds to when $1-r_e=||Y_{ds} || > ||Y|_{T\Sigma} ||=1$, i.e. when $r_e<0$. This is exactly when $Y$ is contracting on $E^s:=\ker{i^*\alpha}$ and $Y|_{\Lambda}$ is in fact Anosov.

Finally, $Y|_\Lambda$ is a volume preserving and Anosov, if and only if, $r_e=-r_u=-1$, or equivalently, $r_{ds}=2$. This is equivalent to the eigenvalue of the $\partial_s$ direction to be the squared of the eigenvalue of the $E^{u}$-direction, both bigger than 1 in magnitude, at any periodic orbit of $Y|_\Lambda$, a condition called {\em 2:1 resonance} (see \cite{reg}). Therefore, having 2:1 resonance at all periodic orbits of the $\Lambda$ is equivalent to $Y|_\Lambda$ (which is Anosov), being volume preserving at each orbit, which is equivalent to be volume preserving everywhere. This is a straighforward consequence of the Livsic theorem if we assume transitivity of $Y|_\Lambda$, but Massoni shows in Appendix B of \cite{massoni} that the transitivity condition can be dropped.

We have proven the following.

\begin{lemma}\label{anosovnh}
Let $\Lambda \subset (W,\alpha)$ be the 3 dimensional $C^1$ embedded skeleton of $Y$ such that $\ker{\alpha}\pitchfork T\Lambda$. Then, $Y|_\Lambda$ is Anosov, if and only if, $Y$ is normally hyperbolic at $\Lambda$ 
\end{lemma}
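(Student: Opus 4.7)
The plan is to leverage the explicit structure on $T\Lambda$ set up in the paragraphs immediately preceding the lemma. First I would recall that the transversality hypothesis $\ker\alpha\pitchfork T\Lambda$ forces $i^*\alpha$ to be a nowhere-vanishing $1$-form on $\Lambda$ (by Lemma~\ref{trans}), and that its characteristic line field $E^s:=\ker(i^*d\alpha)\subset \ker(i^*\alpha)$ is $1$-dimensional, transverse to $\langle Y|_\Lambda\rangle$, and $Y|_\Lambda$-invariant because $\mathcal{L}_Y d\alpha=d\alpha$. Combined with the invariant unstable line $E^u$ produced via the domination argument through Lemma~\ref{strong}, this yields a continuous flow-invariant splitting
\[
T\Lambda \;=\; \langle Y|_\Lambda\rangle \,\oplus\, E^s \,\oplus\, E^u,
\]
in which $E^u$ expands at the constant rate $r_u=1$ (since $Y$ acts on $T\Lambda/\ker(i^*\alpha)$ with rate $1$, by $\mathcal{L}_Y\alpha=\alpha$).

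Next, I would extract the algebraic identity relating the normal and tangential rates. Choosing an oriented trivialization $d\alpha\wedge d\alpha = ds\wedge \hat e\wedge i^*d\alpha$ along $\Lambda$, where $s$ is a $C^1$ normal coordinate and $\hat e$ spans $E^s$, and applying $\mathcal{L}_Y(d\alpha\wedge d\alpha)=2\,d\alpha\wedge d\alpha$ (i.e.\ the divergence computation already performed in the text), produces the pointwise identity $r_{ds}+r_e=1$, where $r_{ds}$ and $r_e$ are the expansion rates of the normal direction $\langle\partial_s\rangle$ and of $E^s$, respectively.

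Given this identity, the equivalence is essentially bookkeeping. Normal hyperbolicity of $Y$ at $\Lambda$ requires the normal action $r_{ds}$ to uniformly dominate every tangential rate; since $\langle Y|_\Lambda\rangle$ has rate $0$ and $E^u$ has rate $r_u\equiv 1$, this reduces to $r_{ds}>1$ uniformly on $\Lambda$, which by the identity is equivalent to $r_e<0$ uniformly. On the other hand, Anosovity of $Y|_\Lambda$ on the $3$-manifold $\Lambda$ amounts to the splitting $T\Lambda=\langle Y|_\Lambda\rangle\oplus E^s\oplus E^u$ being hyperbolic, and since $E^u$ already carries uniform expansion at rate $1$, this reduces precisely to $E^s$ being uniformly contracting, i.e.\ $r_e<0$ uniformly. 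So both properties are governed by the single condition $r_e<0$ uniformly on $\Lambda$, and the equivalence follows.

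The main subtlety I expect is the passage from pointwise rates to uniform ones and the compatibility of the norms. Concretely, the rate $r_u\equiv 1$ is forced only in the norm induced by $i^*\alpha$, while normal hyperbolicity is phrased in terms of maximum/minimum norms of $Y^1_*$. I would handle this by compactness of $\Lambda$, together with the adapted-norm averaging recalled in Remark~\ref{adapt}, which lets one pass between eventual and immediate exponential estimates without affecting the sign of $r_e$ or the strict inequality $r_{ds}>1$. Once this is set up, both directions of the equivalence follow simultaneously from the relation $r_{ds}+r_e=1$ and the observation $r_u\equiv 1$.
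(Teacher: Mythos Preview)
Your proposal is correct and follows essentially the same line as the paper: the argument preceding the lemma develops the invariant line $\ker(i^*d\alpha)$, the identity $r_{ds}+r_e=1$ from $\mathcal{L}_Y(d\alpha\wedge d\alpha)=2\,d\alpha\wedge d\alpha$, and the observation that the action on $T\Lambda/\ker(i^*\alpha)$ has rate $1$, then reduces both conditions to the single inequality $r_e<0$. One small point of care: in your write-up you invoke the unstable line $E^u$ produced by Lemma~\ref{strong} before establishing either hypothesis, but that application requires the domination $r_e<1$, which in the paper is obtained from $C^1$-repellence ($r_{ds}>0$); for the direction ``normally hyperbolic $\Rightarrow$ Anosov'' this is immediate, while for ``Anosov $\Rightarrow$ normally hyperbolic'' you should instead take the splitting from the Anosov hypothesis itself and identify $\ker(i^*d\alpha)$ with the stable line by uniqueness of the invariant splitting (since the complementary strong bundle carries rate $1$), rather than appealing to Lemma~\ref{strong}.
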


   \begin{figure}[h]
\centering
\begin{overpic}[width=1\textwidth]{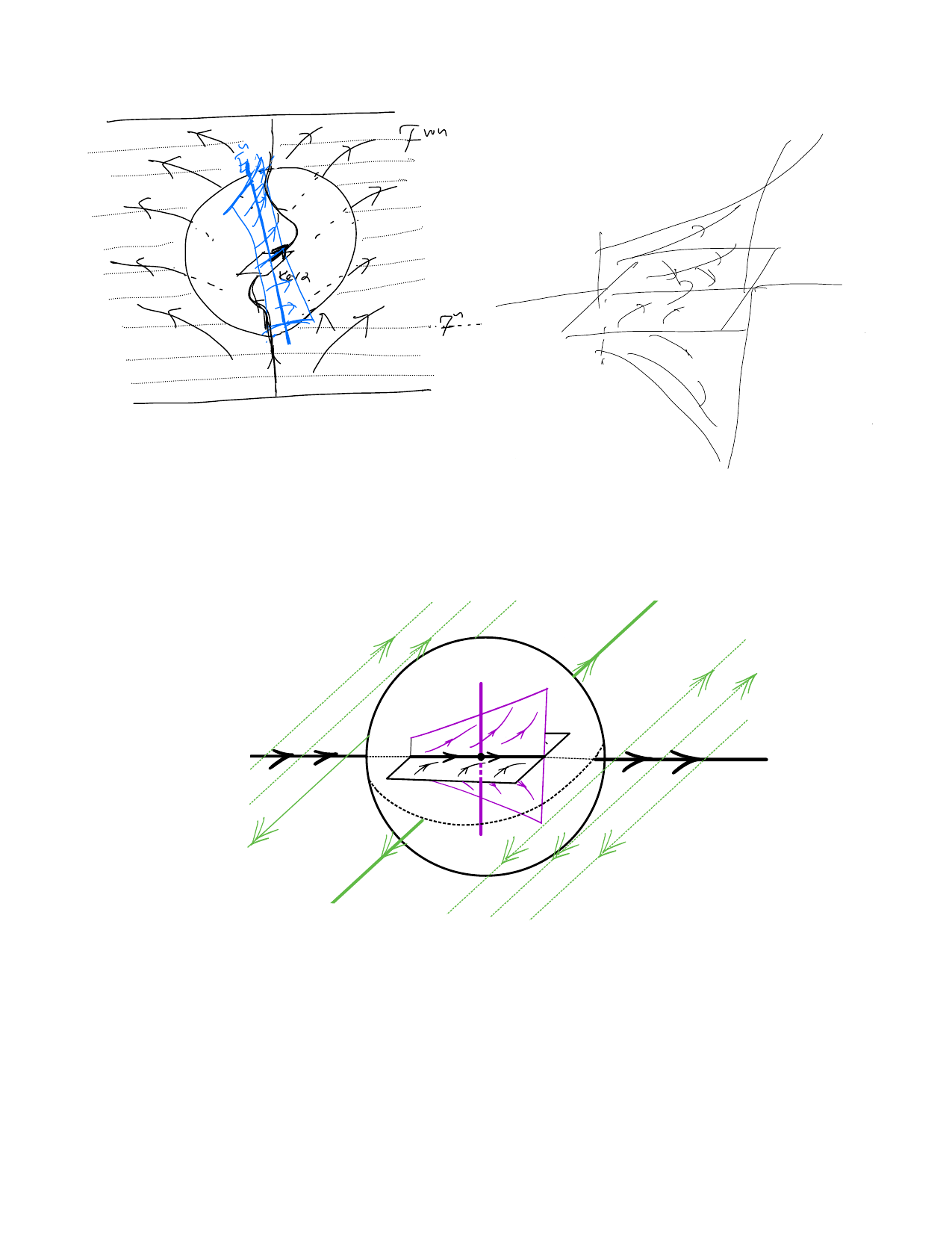}
    \put(90,20){$\mathcal{F}^n;r_n>1$}
     \put(143,112){$\ker{\alpha}\cap T\Lambda$}
     \put(197,200){$E^u;r_u\equiv 1$}
         \put(400,125){$\Lambda$}

  \end{overpic}

\caption{Geometry of normal hyperbolicity under the transversality assumption}
\end{figure}

In the Anosov case, we finally would like to establish $C^1$-equivalence of $(W,\alpha)$ and Liouville interpolations systems. The situation for partially hyperbolic flows is more flexible and leaves the possibility of exotic dynamics near the skeleton (see Question~\ref{qexotic}). To do so, we first prove the following lemma which gives a more general description of the interaction between normal hyperbolicty at $\Lambda$ and transversality of $\ker{\alpha}$, which will be useful later on.

\begin{lemma}
Let $\Lambda$ be the normally hyperbolic $C^1$ 3D skeleton of $(W,\alpha)$ with the 1 dimensional normal invariant bundle $E^n$ and assume $\alpha$ to be non-vanishing. Then,

(a) $\Lambda_L:=\{ E^n\subset \ker{\alpha} \}$ and $\Lambda_T:=\{ T\Lambda=\ker{\alpha}\}$ are compact invariant subsets of $\Lambda$;

(b) for any $p\in \Lambda \backslash (\Lambda_L \cup \Lambda_T)$, the forward and backward limit sets are contained in $\Lambda_L$ and $\Lambda_T$, respectively;

(c) $\Lambda_L\neq \emptyset$;

(d) when $\Lambda_T= \emptyset$, we have $\Lambda_L=\Lambda$;

(e) $\Lambda_T$ is an attractor for $Y|_\Lambda$;

(f) $\Lambda_L$ is a hyperbolic invariant set for $Y|_\Lambda$.
\end{lemma}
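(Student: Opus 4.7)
The plan is to combine the invariance of the three distributions $E^n$, $T\Lambda$, and $\ker\alpha$ with a graph-transformation analysis at $\ker\alpha$, and then to extract the hyperbolic splitting on $\Lambda_L$ from the Liouville identity $\mathcal{L}_Y\alpha=\alpha$ and its volume consequence $\mathcal{L}_Y(d\alpha\wedge d\alpha)=2(d\alpha\wedge d\alpha)$. For (a), each of $E^n$, $T\Lambda$, and $\ker\alpha$ is $Y^t$-invariant (respectively by normal hyperbolicity, invariance of $\Lambda$, and $\mathcal{L}_Y\alpha=\alpha$), so the conditions $E^n\subset\ker\alpha$ and $T\Lambda=\ker\alpha$ cutting out $\Lambda_L$ and $\Lambda_T$ are preserved by the flow; closedness of these conditions in the relevant Grassmannians plus compactness of $\Lambda$ yields the compactness claim.

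For (b)--(e), I would use the splitting $TW|_\Lambda=T\Lambda\oplus E^n$ to represent $\ker\alpha_p$, whenever it avoids $E^n_p$, as the graph of a unique linear map $\phi_p\colon T_p\Lambda\to E^n_p$; by construction $\phi_p=0$ precisely on $\Lambda_T$ and $\phi_p$ is ``infinite'' precisely on $\Lambda_L$. Flow-invariance of $\ker\alpha$ yields the transformation law
\[\phi_{Y^t(p)}=Y^t_*|_{E^n_p}\,\circ\,\phi_p\,\circ\,(Y^t_*|_{T_p\Lambda})^{-1},\]
and the normal hyperbolicity estimate $m(Y^t|_{E^n})/\|Y^t|_{T\Lambda}\|\geq e^{\mu t}$ for a uniform $\mu>0$ forces exponential growth of $\|\phi\|$ in forward time and exponential decay in backward time. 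Compactness of $\Lambda$ then pins the forward $\omega$-limit of any $p\in\Lambda\setminus\Lambda_T$ inside $\Lambda_L$ and the backward $\alpha$-limit of any $p\in\Lambda\setminus\Lambda_L$ inside $\Lambda_T$, proving (b). For (c), if $\Lambda_T=\Lambda$ then $i^*\alpha\equiv 0$ and hence $i^*d\alpha=d(i^*\alpha)\equiv 0$; but $T\Lambda$ is coisotropic of corank $1$ inside the symplectic $4$-space, so $d\alpha|_{T\Lambda}$ has rank $2$ everywhere, a contradiction. Thus $\Lambda\setminus\Lambda_T\neq\emptyset$ and (b) delivers $\Lambda_L\neq\emptyset$. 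Part (d) is immediate from the backward estimate and compactness, and (e) follows by picking a neighborhood $U$ of $\Lambda_T$ in $\Lambda$ disjoint from the closed set $\Lambda_L$: by the decay estimate every orbit in $U$ converges to $\Lambda_T$ in backward time, so $\Lambda_T$ is the maximal invariant set of the reversed flow in $U$, realizing the claimed attracting behaviour (equivalently, $\Lambda_T$ is a repeller for $Y|_\Lambda$).

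For (f), on the compact invariant set $\Lambda_L$ I would set $E^{cs}:=\ker\alpha\cap T\Lambda$, a $2$-dimensional $Y$-invariant subbundle of $T\Lambda$ containing $\langle Y\rangle$, and identify $T\Lambda/E^{cs}\cong\mathbb{R}$ through $\alpha$; since $\mathcal{L}_Y\alpha=\alpha$, the rate of $Y^t_*$ on this quotient is exactly $+1$, whence $\|Y^1|_{T\Lambda}\|\geq e$ and normal hyperbolicity forces the expansion rate $r_n$ of $E^n$ to satisfy $r_n>1$ pointwise on $\Lambda_L$. Expanding the Liouville volume identity in a frame adapted to $\langle Y\rangle\oplus(E^{cs}/\langle Y\rangle)\oplus(T\Lambda/E^{cs})\oplus E^n$ yields the trace relation $0+r_{cs}+1+r_n=2$, so $r_{cs}=1-r_n<0$; continuity of $E^{cs}=\ker\alpha\cap T\Lambda$ (the intersection is transverse on $\Lambda_L$ because $E^n\cap T\Lambda=0$) together with compactness of $\Lambda_L$ upgrades this to uniform contraction on $E^{cs}/\langle Y\rangle$. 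Finally I would apply Lemma~\ref{strong} to $E_1=E^{cs}$, $E_2=T\Lambda$, $E_3=T\Lambda/E^{cs}$ over $\Lambda_L$ --- the comparison $m(Y^1|_{E_3})=e>1\geq\|Y^1|_{E_1}\|$ holds after an adapted-norm choice as in Remark~\ref{adapt} --- to lift an invariant $E^u\subset T\Lambda$ complementary to $E^{cs}$ carrying rate $+1$, yielding the hyperbolic splitting $T\Lambda/\langle Y\rangle|_{\Lambda_L}=(E^{cs}/\langle Y\rangle)\oplus E^u$ with uniform contraction/expansion.

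The main obstacle I expect is propagating the pointwise trace identity $r_{cs}+r_n=1$ into a uniform rate estimate on the potentially irregular compact set $\Lambda_L$: the distributions $E^n$ and $E^{cs}$ are only continuous a priori and $\Lambda_L$ may be topologically complicated, so a careful adapted-norm choice --- in the spirit of the Gourmelon/Hirsch--Pugh--Shub averaging procedure recalled in Remark~\ref{adapt} --- is needed to run the domination hypothesis of Lemma~\ref{strong} uniformly over $\Lambda_L$. Once those norms are in place, the graph-transformation machinery and Lemma~\ref{strong} deliver $E^u$, and the hyperbolic splitting, directly.
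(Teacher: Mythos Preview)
Your proof is correct and for (a)--(d) and (f) coincides with the paper's: invariance of the three distributions, the rank-$2$ contradiction for $d\alpha|_{T\Lambda}$, and the trace identity $r_{cs}+r_n=1$ feeding into Lemma~\ref{strong}. The paper phrases (b) as an angle-rotation argument (the angle $\ker\alpha$ makes with $T\Lambda$ is monotone under $Y^t$ by domination), which is your graph estimate read through $\tan\theta$. The one genuine difference is (e): the paper does a local rate computation on $\Lambda_T$---there $\alpha|_{E^n}\neq 0$, so $\mathcal{L}_Y\alpha=\alpha$ forces $r_{E^n}=1$, normal hyperbolicity then gives $\|Y^1|_{T\Lambda}\|<e$, and since the two transverse rates on $T\Lambda/\langle Y\rangle$ sum to $1$ (from $\mathcal{L}_Y d\alpha=d\alpha$) with each strictly below $1$, both are positive---whereas you deduce the repelling behaviour directly from (b). Both arguments actually show that $\Lambda_T$ is a \emph{repeller} for $Y|_\Lambda$ (consistent with Theorem~\ref{pers}(a), where $\Lambda_T$ consists of repelling periodic orbits); the word ``attractor'' in the statement of (e) is a slip, and your parenthetical correctly flags it. Your route to (e) is shorter; the paper's buys the finer information that the two tangential rates lie in $(0,1)$, which is what later pins down $\Lambda_T$ as a finite union of periodic orbits.
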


\begin{proof}
(a) follows directly from the fact that $T\Lambda$, $\ker{\alpha}$ and $\langle Y,E^n\rangle$ are invariant under the flow of $Y$.

To see (b), recall the folklore fact, also put in details in \cite{folk}, that assuming transitivite partially hyperbolic diffeomorphism or flow, any invariant bundle can be split into invariant sub-bundles included in the componenets of the dominted splitting. This means that if $Y|_\Lambda$ is transitive, we have $\Lambda=\Lambda_L$ or $\Lambda=\Lambda_T$. But $\Lambda=\Lambda_T$ is impossible, since it would imply $d\alpha|_{T\Lambda}=0$ which contradicts $d\alpha$ being symplectic. Therefore, we have $\Lambda=\Lambda_L$.

But in the non-transitive case, a bit more care is needed. Suppose $p\notin \Lambda_T \cup \Lambda_L$, i.e. at $p$, $\ker{\alpha}$ makes some non-zero angle with $T\Lambda$. Without loss of generality, assume $L$ orthogonal to $T\Lambda$ and $0<\theta<\frac{\pi}{2}$ (see Figure 10). The domination of $T\Lambda$ by $L$, implies that $\theta_t$, the angle $Y^t_*(\ker{\alpha})$ makes with $T\Lambda$,  is increasing with $t$ and therefore, as $t\rightarrow \infty$, i.e. moving in the forward direction of the flow, $\ker{\alpha}|_\Lambda$ gets arbitrarily close to $E^n$ and consequently, $Y^t(p)$ gets arbitrarily close to $\Lambda_L=\{ \cos{\theta}=0 \}$. Similarly, as $t\rightarrow -\infty$,  i.e. moving in the backward direction of the flow $Y^t(p)$ gets arbitrary close to $\Lambda_T=\{ \sin{\theta}=0 \}$, proving the claim.%\textcolor{red}{rephrase a bit}
%first note that $TW/\langle Y \rangle$ admits a dominated splitting $L\oplus T\Lambda$ at $\Lambda$ and $\ker{\alpha}$ also provides an invariant bundle for $Y$, which is transverse to $T\Lambda$ by assumption. It is a well known folklore fact in dynamical systems, also put in details in \cite{folk}, that in the case of transitive $Y$, the invariant complement of $T\Lambda$ is unique and therefore, we need to have $L\subset \ker{\alpha}$.

   \begin{figure}[h]
\centering
\begin{overpic}[width=0.38\textwidth]{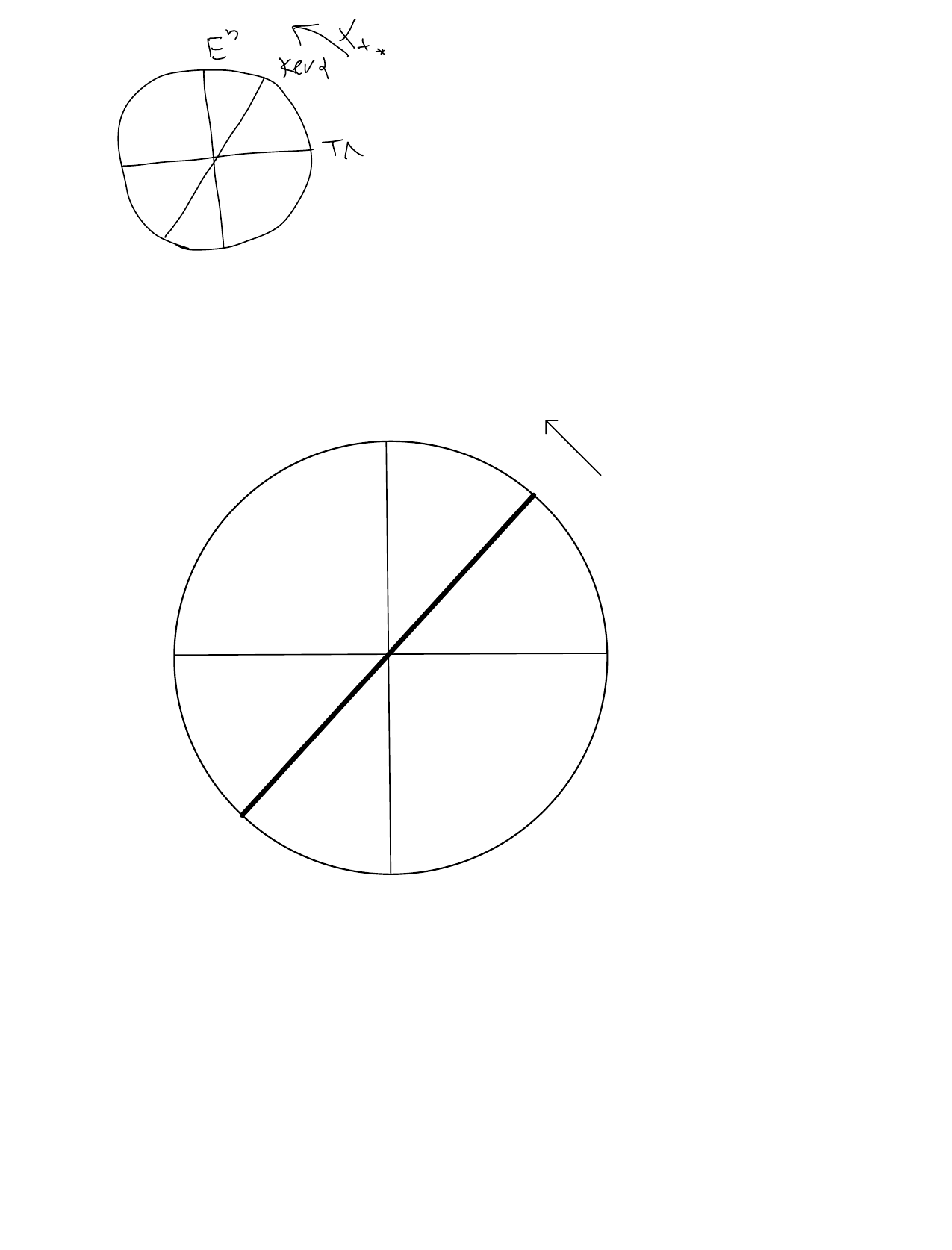}
    \put(85,162){$E^n$}
    \put(143,138){$\ker{\alpha}|_\Lambda $}
        \put(155,160){$Y_*^t$}
        \put(166,82){$T\Lambda$}
  \end{overpic}

\caption{From normal hyperbolicity to rotation of $\ker{\alpha}$ under the skeleton dynamics}
\end{figure}

(c) follows from (b) by noticing that if $\Lambda_L=\emptyset$, then we would have $\Lambda=\Lambda_T$ which yields $d\alpha|_{T\Lambda}=0$, a contradiction with $d\alpha$ being symplectic.

(d) follows from (b).

To see (e), note that on $\Lambda_T$, $\alpha$ does not vanish on $L$ and the expansion rate of $L$ is $r_L=1$ by the equation $\mathcal{L}_Y\alpha=\alpha$. Normal hyperbolicity implies that with respect to some norm on $T\Lambda$, we have $||Y^t_*(e)||<e^t ||e||$ for any $e\in T\Lambda|_{\Lambda_T}$ and $t\in\mathbb{R}$. On the other hand, $d\alpha|_{T\Lambda}$ is a transverse 2-form for $Y|_\Lambda$ at $\Lambda_T$ (see Lemma~\ref{trans}) and $\mathcal{L}_Yd\alpha=d\alpha$ implies the sum of the expansion rates of vectors in $T\Lambda/\langle Y \rangle |_{\Lambda_T}$ equals 1.% \textcolor{red}{more precise}
 Normal hyperbolicity implies that each of such expansions should be strictly less than 1, which yields both such expansions should be positive, hence, the claim about $\Lambda_T$ being an attractor.% \textcolor{red}{more precise}

(f) Since at $\Lambda_L$, $i^*\alpha$ is non-vanishing, our argument in Lemma~\ref{anosovnh} on the hyperbolicity of the entire $\Lambda$ can be extended to $\Lambda_L$ in the general setting.
\end{proof}

The above lemma provides a description for the skeleton dynamics of a 3 dimensional Liouville skeleton which is non-vanishing and normally repelling and as we saw in Lemma~\ref{anosovnh}, adding the condition of transversality between such skeleton and $\ker{\alpha}$ implies the skeleton dynamics is Anosov, similar to the LIS construction. We next want to show Liouville geometry is in fact $C^1$-strictly Liouville equivalent to the Mitsumatsu's examples. 

We have so far shown that assuming normal hyperbolicity of the Liouville vector field $Y$ at the 3 dimensional skeleton $\Lambda$ with $\ker{\alpha}\pitchfork T\Lambda$ implies Anosovity of $Y|_{\Lambda}$. Let $E^u$ and $E^s$ be the unstable and stable bundles of $Y|_{\Lambda}$ and $E^n$ the 1-dimensional repelling invariant bundle normal to $T\Lambda$. Therefore, $\Lambda$ is in fact a hyperbolic invariant set for $Y$ with the a 2 dimensional unstable bundle $\bar{E^u}=E^u\oplus E^n$ and 1 dimensional stable bundle $\bar{E^s}=E^s$. Since the corresponding weak bundle $\bar{E^{wu}}$ is codimension 1 and hence, Hasselblat's Theorem~\ref{hassbunch} implies that $\bar{E^{wu}}$ is in fact $C^{1+}$. On the other hand, we are assuming $\alpha$ (and hence, $\ker{\alpha}$) to be $C^\infty$. Therefore, $\langle E^n,X \rangle=\bar{E^{wu}} \cap \ker{\alpha}$ to be $C^1$. %\textcolor{red}{outperforms bunching}

\begin{remark}
Note that in the above argument, the Liouville condition is resulting in the regularity of the invariant plane field $E^{wn}$ outperforming the expectations from the standard regularity theory of normally hyperbolic flows. This is in fact important for what follows further, as we will use this extra degree of regularity to establish $C^1$-conjugacy with the construction of Mitsumatsu. More specifically, we have the dominated splitting  $TM/\langle Y\rangle \simeq T\Lambda \oplus E^{wn}$. It is easy to compute the unstable bunching constant for the regularity of $E^{wn}$ and observe that $B^u(Y|_{\Lambda})<1$. This means that our expectation of the regularity of $E^{wn}$ based on the rate conditions only guarantees Hölder continuity of such invariant bundle  (see Remark~\ref{bunchex}). Therefore, using the Liouville condition in the above argument (when assuming that $\ker{\alpha}$ is preserved under the flow of $Y$) is indispensible to establish the $C^1$-regularity of $E^{wn}$, a fact which will be used in the reconstruction of the LIS model from the $C^1$-persistence of the skeleton model.
\end{remark}

We then need to observe the weak invariant bundles are strict exact Lagrangians. Choose a $C^1$ normal coordinate $s$ such that $E^{wn}:=E^n\oplus \langle Y|_\Lambda \rangle= \langle \partial_s, Y|_\Lambda \rangle$, $\partial_s \pitchfork Y$ (this can be arranged by taking $\partial_s$ to be a $C^0$-approximation of $E^n\subset E^{wn}$) and $\Lambda=\{ s=0 \}$. Solving the ODE
$$\begin{cases}
\partial_s \cdot [\alpha(\partial_s)]=(\mathcal{L}_{\partial_s}\alpha)(\partial_s)=d\alpha(\partial_s,\partial_s)=0 |_{s=0}\\
\alpha(\partial_s)=0|_{s=0}
\end{cases},$$
yields $\alpha|_{E^{wn}}=0$. 
%To establish the equivalence to the LIS model, we first note that since $Y|_\Lambda$ is Anosov. we can take any supporting LIS $(\alpha_-,\alpha_+)_{(\Lambda_-,\lambda_+)}$ for such vector field and the Moser technique of Lemma~? yields a an isotopy of $(W,\alpha)$ to $(W,L(\alpha_-,\alpha_+)_{\Lambda_-,\lambda_+)})$. Thus, we have shown:
Therefore, $Y$ is tangent to and preserves a strict exact Lagrangian foliation tangent to $E^{wn}$, which we denote by $\mathcal{F}^{wn}$. Note that the Normal Hyperbolicity Theorem~\ref{normalh} only gives the existence of weak invariant manifold tangent to $E^n\oplus T\Lambda$ which in our case is the entire $W=\mathbb{R}\times M$ and hence, useless. To note the invariant Lagrangian manifold $\mathcal{F}^{wn}$, view $E^{wn}:=\bar{E}^{wu}\cap \ker{\alpha}$ is a $C^1$ plane field invariant under the flow of $Y$ and containing $Y$. Therefore, $E^{wn}$ is an integrable and tangent to a strict Lagrangian foliation $\mathcal{F}^{wn}$. %\textcolor{red}{rephrase} %We note that $Y$ admits an invariant line bundle $E^n$ inside $E^{wn}$.

We then need the following lemma next.

\begin{lemma}
$\Lambda$ has a tubular neighborhood $N(\Lambda)$ strictly Liouville equivalent to a compact LIS.
\end{lemma}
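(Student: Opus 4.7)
The plan is to produce an explicit compact LIS $(\alpha_-,\alpha_+)_{(\lambda_-,\lambda_+)}\in\mathcal{LIS}_c(\Lambda)$ on a tubular neighborhood $N(\Lambda)$ whose induced Liouville form is literally $\alpha$. The key ingredients already in hand are the $C^1$ weak normal foliation $\mathcal{F}^{wn}$, the strict exactness $\alpha|_{T\mathcal{F}^{wn}}=0$, the Anosov dynamics of $Y|_\Lambda$, and the transversality $\ker{\alpha}\pitchfork T\Lambda$ at the skeleton.

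First I would build $C^1$ tubular coordinates $N(\Lambda)\simeq[-T,T]_s\times\Lambda$ in which $\Lambda=\{s=0\}$, $\partial_s\in E^n$ along $\Lambda$, $\partial_s$ is tangent to $\mathcal{F}^{wn}$ throughout, and the $T\Lambda$-component of $Y$ is $s$-independent, so that $Y=a(s,x)\partial_s+b(s,x)\,Y|_\Lambda(x)$. Such coordinates exist because $\mathcal{F}^{wn}$ is a $C^1$ $2$-dimensional foliation transverse to $\Lambda$, its leaf space is the $Y|_\Lambda$-orbit space, and the $Y$-invariance of $\mathcal{F}^{wn}$ lets one parametrize each leaf so that the horizontal flow agrees with $Y|_\Lambda$ modulo $\partial_s$. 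In these coordinates $\alpha$ has no $ds$-component, so $\alpha=\alpha^s$ for a one-parameter family of $1$-forms on $\Lambda$, and combining $\alpha(\partial_s)=0$ with $\alpha(Y)=0$ gives $\alpha^s(Y|_\Lambda)\equiv 0$ for every $s\in[-T,T]$.

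Next, shrinking $T$ if necessary, normal repellence of $\Lambda$ makes $Y$ positively transverse to $\{s=\pm T\}$, so the boundary traces $\alpha_+:=\alpha^T$ and $\alpha_-:=\alpha^{-T}$ are positive and negative contact forms on $\Lambda$ (with the oriented boundary convention of Section~\ref{3.1.2}), each annihilating $Y|_\Lambda$. Their kernels are transverse because the plane field $\ker{\alpha^s}$ rotates strictly monotonically between them as $s$ traverses $[-T,T]$; this monotonicity is the dynamical content of $\alpha\wedge\mathcal{L}_{\partial_s}\alpha\neq 0$, which in turn follows from $Y\pitchfork\partial_s$ off of $\Lambda$. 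So $(\alpha_-,\alpha_+)$ is a supporting bi-contact pair for $Y|_\Lambda$. Since each $\alpha^s$ annihilates both $\partial_s$ and $Y|_\Lambda=\ker\alpha_-\cap\ker\alpha_+$, pointwise $\alpha^s$ lies in the $2$-plane $\mathrm{span}(\alpha_-,\alpha_+)$, uniquely determining $C^1$ positive functions $\lambda_\pm:[-T,T]\times\Lambda\to\mathbb{R}_{>0}$ with $\alpha=\lambda_-\alpha_-+\lambda_+\alpha_+$ and $\partial_s(\lambda_+/\lambda_-)>0$. Thus $(\alpha_-,\alpha_+)_{(\lambda_-,\lambda_+)}\in\mathcal{LIS}_c(\Lambda)$ and its induced Liouville form is identically $\alpha$, so the identity map is the desired $C^1$ strict Liouville equivalence.

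The main obstacle is the construction of the adapted coordinates in the first step, namely trivializing the horizontal component of $Y$ simultaneously with the tangency of $\partial_s$ to $\mathcal{F}^{wn}$. This is where the $C^1$ regularity of $E^{wn}$ that was bootstrapped from the Liouville condition together with normal hyperbolicity is essential: without it, $\partial_s$ would only be H\"older and the resulting LIS would fail to meet the regularity demanded by Definition~\ref{lis}. If one wishes to bypass the direct coordinate construction, an alternative route is to consider the completion of $(N(\Lambda),\alpha)$ as a finite type Liouville manifold with skeleton dynamics $Y|_\Lambda$ and appeal to Theorem~\ref{1to1}, which provides a strict Liouville equivalence with the completion of any compact LIS supporting $Y|_\Lambda$ (which exists by Theorem~\ref{generalchar}); restricting this equivalence to a compact Liouville subdomain around $\Lambda$ then gives the claim.
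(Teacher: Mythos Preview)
Your main argument is correct and follows the same skeleton as the paper's: build a $C^1$ product structure $[-T,T]_s\times\Lambda$ with $\partial_s\subset E^{wn}$, use the strict exactness $\alpha|_{E^{wn}}=0$ to write $\alpha=\lambda_-\alpha_-+\lambda_+\alpha_+$ in a basis $(\alpha_-,\alpha_+)$ of $Annih^1(E^{wn})$, and verify the interpolation condition $\partial_s(\lambda_+/\lambda_-)>0$. The one substantive difference is the choice of basis. The paper takes $\alpha_\pm:=\alpha_u\mp\alpha_s$ where $\alpha_u=\tfrac12\alpha|_\Lambda$ and $\alpha_s$ is an auxiliary $1$-form with $\ker\alpha_s=E^{wu}$; this makes $\lambda_-(0,\cdot)=\lambda_+(0,\cdot)=1$, and the Liouville condition at $s=0$ then gives $\partial_s(\lambda_+/\lambda_-)|_{\Lambda}>0$ by a one-line computation, whence the compact LIS conditions hold on a possibly smaller neighborhood. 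Your choice of boundary traces $\alpha_\pm=\alpha^{\pm T}$ is more in the spirit of the standard Liouville domain picture and lets you deduce the monotonicity globally from $Y\pitchfork\partial_s$ via $\iota_Y\iota_{\partial_s}(d\alpha\wedge d\alpha)=2\alpha\wedge\mathcal L_{\partial_s}\alpha$; the price is that $\lambda_\pm$ vanish at $s=\pm T$ (so ``positive'' should read ``positive in the interior''), which is harmless since this is exactly what happens in the linear model $(1-s)\alpha_-+(1+s)\alpha_+$. Your elaborate first step (arranging the horizontal part of $Y$ to be $s$-independent) is more than necessary: the paper simply extends $Y|_\Lambda$ via the product structure and observes $\langle\partial_s,X\rangle=E^{wn}$, which already forces $\alpha(X)=0$ for all $s$.

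One genuine issue: your proposed alternative via Theorem~\ref{1to1} is circular. That theorem establishes a bijection between flows and Liouville forms \emph{induced from some LIS}; it does not apply to an arbitrary Liouville manifold with prescribed skeleton dynamics, which is precisely what you have before the lemma is proved.
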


\begin{proof}
In a neighborhood of $\Lambda$, choose a $C^1$ vector field $\partial_s\subset E^{wn}$ transverse to $\Lambda$ and use this to define a $C^1$ product structure $N(\Lambda)\simeq [-\epsilon,\epsilon]_s\times \Lambda$ such that $\Lambda=\{ s=0\}$ and note that if we let $X$ be the natural extension of $Y|_\Lambda$ to $[-\epsilon,\epsilon]_s\times \Lambda$, we have $\langle \partial_s ,X\rangle$ is a trivial Lagrangian plane field. Also, define $\alpha_u:=\frac{1}{2}\alpha|_\Lambda$ (note that $Y|_\Lambda$ is synchonized with respect to such $\alpha_u$) and some other 1-form $\alpha_s$ with $\ker{\alpha_s}=E^{wu}$ such that $\alpha_\pm:=\alpha_u\mp\alpha_s$ is a $\pm$ contact structure. Finally, use the product structure to define these 1-forms on $[-\epsilon,\epsilon]_s\times \Lambda$. Since $\alpha$ vanishes on $E^{wn}=\langle \partial_s ,X \rangle$, we can write
$$\alpha=\lambda_-\alpha_-+\lambda_+\alpha_+,$$
for some functions $\lambda_\pm:N(\Lambda)\rightarrow \mathbb{R}$, since $(\alpha_-,\alpha_+)$ provides a basis for $Annih^1(E^{wn})$. At $\Lambda=\{ s=0\}$, we have $\alpha|_\Lambda=2\alpha_u$, $\lambda_-+\lambda_+=2$ and $\lambda_--\lambda_+=0$, i.e. $\lambda_-(0,.)=\lambda_+(0,.)=1$. Considering this, we can write the Liouville condition at $\Lambda$ as 
$$0<\frac{1}{2}\iota_X\iota_{\partial_s}( d\alpha \wedge d\alpha)=(\mathcal{L}_X\alpha)\wedge(\mathcal{L}_{\partial_s} \alpha) |_{s=0}=[\mathcal{L}_X(\alpha_-+\alpha_+)]\wedge [(\partial_s \cdot \lambda_-) \alpha_- +(\partial_s \cdot \lambda_+) \alpha_+]$$
$$=2\alpha_u \wedge[(\partial_s \cdot \lambda_-) \alpha_- +(\partial_s \cdot \lambda_+) \alpha_+]=2\alpha_u \wedge [\partial_s \cdot \lambda_- -\partial_s \cdot \lambda_+]\alpha_s.$$
Therefore, the Liouville condition at $\Lambda$ yields $$\partial_s \cdot \lambda_+ -\partial_s \cdot \lambda_- |_{\Lambda}>0 \Rightarrow \partial_s\cdot \frac{\lambda_+}{\lambda_-}|_{\Lambda}>0,$$
where the conclusion is thanks to the fact that $\lambda_-=\lambda_+=1|_{\Lambda}$. Therefore, in a neighborhood of $\Lambda$ (possibly smaller than $N(\Lambda)\simeq [-\epsilon,\epsilon]_s\times \Lambda$), we have the conditions $\lambda_\pm>0$ and $\partial_s \cdot \ln{\frac{\lambda_+}{\lambda_-}}>0$. Moreover, the restriction of $\alpha$ to the boundary of such tubular neighborhood is a contact from. Hence, we have all the conditions of a compact LIS.
\end{proof}

Given the above lemma, we can then use Corollary~\ref{linemb} to embed a neighborhood of the skeleton $\Lambda$ into any non-compact LIS. Therefore, there is a strict Liouville equivalence between $(W,\alpha)$, as the unique completion of such neighborhood, and any non-compact LIS. Hence, we have established the following in this section.%\textcolor{red}{rephrase. recall mane}

\begin{theorem}\label{pers}
Suppose $(W^4,\alpha)$ is Liouville manifold with an oriented $C^1$-persistent 3-dimensional skeleton $\Lambda$ and $\alpha$ is nowhere vanishing Let $E^n$ be the invariant unstable normal bundle at $\Lambda$. Then,

(a) $Y|_\Lambda$ is an Axiom A flow, where $\Lambda_T=\{ \ker{\alpha}=T\Lambda \}$ is collection of a finite number of repelling periodic orbits of $Y$, and for some tubular neighborhood of $\Lambda_T$, $\Lambda/N(\Lambda_T)$ is a hyperbolic plug whose core is given by $\Lambda_L=\{ E^n\subset \ker{\alpha}\}$.

(b)  If $T\Lambda \pitchfork \ker{\alpha}$, $Y|_\Lambda$ is a synchronized Anosov vector field and $(W^4,\alpha)$ is $C^1$-strictly Liouville equivalent to a alaiouville form induced from a LIS supporting $Y|_\Lambda$. 
\end{theorem}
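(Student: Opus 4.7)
The plan is to deduce structure from normal hyperbolicity, analyze the skeleton dynamics through the interaction between $\ker\alpha|_\Lambda$ and $T\Lambda$, and finally reconstruct a LIS structure from a tubular neighborhood. By Mañe's theorem, $C^1$-persistence of $\Lambda$ gives normal hyperbolicity of $Y$ at $\Lambda$; since $Y$ is Liouville the normal bundle is purely repelling and 1-dimensional, so $TW|_\Lambda \simeq T\Lambda \oplus E^n$. Set $\Lambda_T := \{T\Lambda = \ker\alpha\}$ and $\Lambda_L := \{E^n \subset \ker\alpha\}$; both are closed and $Y|_\Lambda$-invariant because $T\Lambda$, $\ker\alpha$ and $E^n\oplus \langle Y\rangle$ are all flow-invariant. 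The relation $\mathcal{L}_Y\alpha = \alpha$ provides normalization of expansion rates that will do much of the work.

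For part (a), the key input is the ``rotation argument'' driven by the domination of $\ker\alpha|_\Lambda$ by $E^n$: at a point of $\Lambda \setminus (\Lambda_L \cup \Lambda_T)$, $\ker\alpha$ makes a non-trivial angle with $T\Lambda$, and partial hyperbolicity forces that angle to open in forward time and close in backward time. Hence forward (resp. backward) orbits of such points accumulate on $\Lambda_L$ (resp. $\Lambda_T$). On $\Lambda_T$, $\alpha$ restricts non-trivially to $E^n$ with expansion rate $1$, while $d\alpha|_{T\Lambda}$ is a transverse $2$-form by Lemma~\ref{trans}; writing $\mathcal{L}_Y d\alpha = d\alpha$ forces the two transverse rates on $T\Lambda/\langle Y\rangle|_{\Lambda_T}$ to sum to $1$, and the normal hyperbolicity gap then forces each to be strictly positive. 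Thus $\Lambda_T$ is an attractor for $Y|_\Lambda$, and hyperbolicity of its transverse directions shows it consists of finitely many repelling periodic orbits of $Y$. Non-emptiness of $\Lambda_L$ follows because $\Lambda = \Lambda_T$ would give $d\alpha|_{T\Lambda} = 0$, contradicting $d\alpha$ being symplectic along $\Lambda_L$. Hyperbolicity of $Y|_\Lambda$ on $\Lambda_L$ reduces to the transverse argument in Lemma~\ref{anosovnh} applied pointwise, since $\ker i^*\alpha$ is transverse to the flow there; together with the standard attractor/repeller picture this delivers the Axiom~A and hyperbolic plug structure.

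For part (b), the extra transversality hypothesis gives $\Lambda_T = \emptyset$, hence $\Lambda_L = \Lambda$, and Lemma~\ref{anosovnh} promotes $Y|_\Lambda$ to an Anosov flow. The flow is synchronized with respect to $\alpha_u := \tfrac12 \alpha|_\Lambda$ because $\mathcal{L}_Y\alpha = \alpha$ forces the expansion rate on the unstable direction to equal $1$. Now $\Lambda$ is a hyperbolic invariant submanifold of $W$ with codimension-$1$ weak unstable bundle $\bar E^{wu} = E^u \oplus E^n \oplus \langle Y\rangle$, so Hasselblatt's theorem (Theorem~\ref{hassbunch}) gives $\bar E^{wu} \in C^{1+}$; intersecting with the smooth plane field $\ker\alpha$ yields the $C^1$ invariant plane field $E^{wn} := \bar E^{wu} \cap \ker\alpha = \langle Y, E^n\rangle$. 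This plane field is integrable, and solving $\partial_s[\alpha(\partial_s)] = d\alpha(\partial_s,\partial_s) = 0$ with initial data $\alpha|_\Lambda(E^n) = 0$ shows its integral foliation $\mathcal{F}^{wn}$ is strictly Lagrangian.

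To upgrade this to a LIS model, choose a $C^1$ normal coordinate $s$ with $\partial_s \subset E^{wn}$, $\{s = 0\} = \Lambda$, together with a smooth $\alpha_s$ having $\ker\alpha_s = E^{wu}$ so that $\alpha_\pm := \alpha_u \mp \alpha_s$ form a transverse bi-contact pair, and extend everything by the product structure to a thickening $[-\epsilon,\epsilon]_s \times \Lambda$. Because $\alpha$ annihilates $E^{wn}$, one can write $\alpha = \lambda_- \alpha_- + \lambda_+ \alpha_+$ with $\lambda_\pm(0,\cdot) = 1$; expanding the Liouville condition at $\Lambda$ as in Theorem~\ref{generalchar} yields $\partial_s \cdot \ln(\lambda_+/\lambda_-)|_\Lambda > 0$, which persists in a tubular neighborhood and verifies all axioms of a compact LIS after shrinking $\epsilon$. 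Finally, Corollary~\ref{linemb} embeds this compact LIS strictly into any non-compact LIS supporting $Y|_\Lambda$, and the canonical Liouville completion argument of Lemma~\ref{moserext} together with the uniqueness of the non-compact end extends the embedding to a global $C^1$ strict Liouville equivalence. The main obstacle is the regularity step: the naive unstable bunching constant $B^u(Y|_\Lambda)$ is less than $1$, so only Hölder regularity of $\bar E^{wu}$ is guaranteed by pure dynamics, and we must crucially exploit the smoothness of $\ker\alpha$ (a purely Liouville-geometric input) to recover the $C^1$ regularity of $E^{wn}$ needed to produce a genuine $C^1$ LIS model.
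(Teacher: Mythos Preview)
Your proposal is correct and follows essentially the same route as the paper: Ma\~n\'e gives normal hyperbolicity, the rotation argument organizes $\Lambda$ around $\Lambda_T$ and $\Lambda_L$, Hasselblatt's theorem applied to the codimension-one bundle $\bar E^{wu}$ plus intersection with the smooth $\ker\alpha$ yields a $C^1$ weak normal bundle $E^{wn}$, and then the compact LIS in a tubular neighborhood is completed via Corollary~\ref{linemb}. One small slip in your closing remark: it is not $\bar E^{wu}$ whose naive bunching constant is below $1$---that bundle is codimension one and Hasselblatt gives $C^{1+}$ directly, exactly as you used earlier---but rather $E^{wn}$ (viewed as a dominated sub-bundle for $Y$) whose bunching constant falls short, which is why the intersection with the smooth $\ker\alpha$ is the genuinely Liouville-geometric step.
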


As a result, under the skeleton transversality assumption, i.e. (b) in the above theorem, the 1-to-1 correspondence of Theorem~\ref{1to1} is reduced to one between (positive reparametrization classes of) Anosov 3-flows and $C^1$-persistent Liouville skeletons of codimension 1. In other words, we can drop the somewhat {\em extrinsic} condition of {\em being induced from a LIS} in the statement of Theorem~\ref{1to1} and rely on the intrinsic condition of $C^1$-persistence. We in fact construct an appropriate $C^1$ coordinate systems in which the given Liouville form is induced from a LIS.

\begin{corollary}
There exists the following 1-to-1 correspondence
$$\bigg\{\substack{\text{Positive reparametrization classes of} \\  \text{Anosov flows} \\  \text{up to $C^\infty$-conjugacy}} \bigg\} \mbox{\Large$\overset{\text{1-to-1}}{\longleftrightarrow}$}
\bigg\{\substack{\text{Liouville forms on $\mathbb{R}\times M$ with $C^1$-persistent} \\  \text{3-dimensional skeleton $\Lambda$ with $\ker{\alpha}\pitchfork T\Lambda$} \\  \text{up to strict Liouville equivalence}} \bigg\}.$$
\end{corollary}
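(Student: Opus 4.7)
The plan is to build the 1-to-1 correspondence via the same skeleton-dynamics map used in Theorem~\ref{1to1}, and then use Theorem~\ref{pers}(b) to verify that, under the transversality hypothesis, this map restricts to a bijection onto the subclass consisting of Anosov flows. So the argument is essentially the combination of Theorem~\ref{1to1} with Theorem~\ref{pers}(b), and the work is to match the two sides carefully.

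First, I would define the forward map. Given a positive reparametrization class of Anosov flows $[X]$ on $M$, note that $X$ is Anosov iff both $X$ and $-X$ are non-singular partially hyperbolic. Apply Theorem~\ref{1to1} (together with Theorem~\ref{generalchar}) to produce a Liouville form $\alpha = L(\alpha_-,\alpha_+)_{(\lambda_-,\lambda_+)}$ on $\mathbb{R}\times M$ from some supporting LIS, uniquely determined up to strict Liouville equivalence. I would then verify that this $\alpha$ satisfies the properties on the right-hand side: by Theorem~\ref{dynrig}(1) the skeleton $\Lambda = \{s=\Lambda_s(x)\}$ is a $C^{1+}$ section of $\pi$, by Theorem~\ref{dynrig}(3) the Liouville flow is normally hyperbolic at $\Lambda$ in the Anosov case, so $\Lambda$ is $C^1$-persistent by the Hirsch-Pugh-Shub theorem (Theorem~\ref{normalh}(c)). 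Transversality $T\Lambda \pitchfork \ker\alpha$ follows directly from the LIS construction: the restriction $\alpha|_\Lambda$ equals a positive contact form on $M$ (a positive multiple of $\alpha_u$ in the notation of Section~\ref{4.2}), so $\alpha|_\Lambda \neq 0$ and hence $\ker\alpha$ is a 3-plane inside $TW$ which differs from the 3-plane $T\Lambda$.

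Second, I would define the backward map. Given a Liouville form $\alpha$ on $\mathbb{R}\times M$ with $C^1$-persistent 3-dimensional skeleton $\Lambda$ and $\ker\alpha \pitchfork T\Lambda$, apply Theorem~\ref{pers}(b): the restricted Liouville flow $Y|_\Lambda$ is a synchronized Anosov vector field, and $(\mathbb{R}\times M,\alpha)$ is $C^1$-strictly Liouville equivalent to a Liouville form induced from a LIS supporting $Y|_\Lambda$. Pulling back by the diffeomorphism $\Lambda \simeq M$ coming from the section structure (Theorem~\ref{pers}(a) guarantees $\Lambda$ is diffeomorphic to $M$ via $\pi$), I would extract the positive reparametrization class of $Y|_\Lambda$ as an Anosov flow on $M$. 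Well-definedness up to $C^\infty$-conjugacy follows because any strict Liouville equivalence is at least $C^1$ by assumption and in fact can be upgraded to $C^\infty$ via the Moser-technique refinement (Corollary~\ref{dynuniq2}), and hence induces a $C^\infty$-conjugacy of skeleton dynamics.

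Finally, I would check that the two maps are mutually inverse. Starting from $[X]$ Anosov, constructing a supporting LIS, and restricting $Y$ to the skeleton recovers a synchronization of $X$ by Theorem~\ref{dynrig}(2); since synchronization is unique up to conjugacy in its positive reparametrization class (Proposition~\ref{syncuniq}), this gives back $[X]$. Conversely, starting from $(\mathbb{R}\times M,\alpha)$ as on the right-hand side, Theorem~\ref{pers}(b) identifies it strictly with a LIS whose supported flow is $Y|_\Lambda$; reapplying the forward construction to $[Y|_\Lambda]$ returns the same strict equivalence class by the uniqueness half of Theorem~\ref{1to1}. The main technical point, which is really the substance of Theorem~\ref{pers}(b), is the passage from the abstract $C^1$-persistence plus transversality hypothesis to the existence of an honest LIS presentation; everything else in this corollary is a bookkeeping combination of results already established.
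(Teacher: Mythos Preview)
Your proposal is correct and follows essentially the same route the paper indicates: the corollary is obtained by combining Theorem~\ref{1to1} with Theorem~\ref{pers}(b), and you have spelled out the two directions and the inverse checks in exactly this spirit. One small inaccuracy: the identification $\Lambda\simeq M$ does not come from Theorem~\ref{pers}(a) (which concerns the Axiom~A structure) but rather from normal hyperbolicity giving $W\simeq\mathbb{R}\times\Lambda$ together with the assumption $W=\mathbb{R}\times M$; also note that Theorem~\ref{pers}(b) only produces a $C^1$-strict Liouville equivalence, so the ``strict Liouville equivalence'' on the right-hand side should be read at that regularity (the paper is equally informal on this point).
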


\begin{remark}
Here, we see a property of Liouville vector fields which is not true for a general vector field with positive divergence. $C^1$-persistence at the Liouville skeleton $\Lambda\subset (W,\alpha)$ results in $C^0$ rigidity of $Y|_\Lambda$. This is not true in general. Since, any vector field $X$ on a 3-manifold $M$ can be extended to the thickening $(-\epsilon,\epsilon)\times M$, by adding a sufficiently repelling normal bundle and therefore realizing $(M,X)$ as a $C^1$-persistent 3-dimensional {\em skeleton dynamics}. The above result shows that this is not possible in general in the category of Liouville flows. In particular, Theorem~\ref{pers} implies the existence of a non empty hyperbolic invariant set $\Lambda_L\subseteq \Lambda$.
\end{remark}

%%%%%%%%%%%%%%%

\subsection{Partially hyperbolic dynamics on the skeleton}\label{7.2}

In this section, we want to take a look at $C^1$-embedded Liouville skeletons in the absence of normal hyperbolicity. Without normal hyperbolicity, we are unable to derive structure results near the invariant set $\{ \ker{\alpha}=T\Lambda \}$ like in Theorem~\ref{pers}. Therefore, we restrict our attention to when $\ker{\alpha}\pitchfork T\Lambda$.

First, recall this lemma from the discussion in the previous section.
\begin{lemma}\label{almosth}
Let $\Lambda \subset (W,\alpha)$ be the 3 dimensional $C^1$ embedded skeleton of $Y$ such that $\ker{\alpha}\pitchfork T\Lambda$. Also assume that at $\Lambda$, $Y$ expands $TW/T\Lambda$. Then, $Y|_\Lambda$ is partially hyperbolic.
\end{lemma}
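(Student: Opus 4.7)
The argument follows the same template as the discussion preceding Lemma~\ref{anosovnh}, but we simply keep the conclusion at the level of partial hyperbolicity rather than pushing for Anosovity. The plan has four steps.

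First, I would set up the relevant invariant splitting on $\Lambda$. Since $\ker{\alpha}\pitchfork T\Lambda$, Lemma~\ref{trans} gives $i^*\alpha\neq 0$ with a 1-dimensional kernel $l:=\ker{i^*d\alpha}\subset \ker{i^*\alpha}$, and moreover $i^{-1}_*Y\pitchfork l$. The identity $\mathcal{L}_Y\alpha=\alpha$ shows that $Y$ preserves $\ker{i^*\alpha}\subset T\Lambda$, and similarly $\mathcal{L}_Y d\alpha=d\alpha$ shows $Y$ preserves $l$. Thus, on $\Lambda$, we already have the continuous $Y$-invariant flag $\langle Y\rangle \subset \ker{i^*\alpha}\subset T\Lambda$, and a supplementary $Y$-invariant $\langle Y\rangle\subset \ker{i^*d\alpha}=l\subset \ker{i^*\alpha}$.

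Second, I would compute expansion rates. From $\mathcal{L}_Y\alpha=\alpha$, the action of $Y$ on the line bundle $T\Lambda/\ker{i^*\alpha}$ has constant expansion rate $r_u\equiv 1$. Choosing a $C^1$ normal coordinate $s$ near $\Lambda$ with $\Lambda=\{s=0\}$ (using orientability), the hypothesis that $Y$ expands $TW/T\Lambda$ means the expansion rate $r_{ds}$ of $ds$ under $Y|_\Lambda$ is strictly positive. Expanding $\mathcal{L}_Y(d\alpha\wedge d\alpha)=2\,d\alpha\wedge d\alpha$ in the frame $ds\wedge \hat e\wedge i^*d\alpha$ with $\hat e\subset l$, as already done in Section~\ref{7.1}, yields
\[
r_{ds}+r_e=1,
\]
where $r_e$ is the expansion rate of $l=\ker{i^*d\alpha}$. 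Thus $r_{ds}>0$ is equivalent to $r_e<1$, which is exactly the domination of $\ker{i^*\alpha}$ by $T\Lambda/\ker{i^*\alpha}$.

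Third, I would invoke the Hirsch--Pugh--Shub ladder (Lemma~\ref{strong}) with $E_1=\ker{i^*\alpha}$, $E_2=T\Lambda$, $E_3=T\Lambda/\ker{i^*\alpha}$, and with $T_k$ induced by the time-one map of $Y|_\Lambda$. The rate condition $m(T_3|E_{3x})=e>e^{r_e}\geq \|T_1|E_{1x}\|$ (obtained after passing to an adapted norm via the Gourmelon averaging recalled in Remark~\ref{adapt}) holds pointwise, and therefore $\ker{i^*\alpha}$ admits a unique continuous $Y$-invariant complement $E^u\subset T\Lambda$ of dimension one, with expansion rate $r_u\equiv 1>0$.

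Finally, passing to $T\Lambda/\langle Y\rangle\simeq \bigl(\ker{i^*\alpha}/\langle Y\rangle\bigr)\oplus E^u$, the splitting is $Y$-invariant, the expansion rate on $E^u$ is $1$ and on $\ker{i^*\alpha}/\langle Y\rangle$ is $r_e<1$, so $E^u$ dominates the first factor and in addition $E^u$ is absolutely expanded. By definition this is exactly the partially hyperbolic condition for $Y|_\Lambda$. The main subtlety is not a serious obstacle but worth flagging: one must verify that the adapted norm from Remark~\ref{adapt} can be chosen so that $r_e<1$ pointwise rather than merely on average; this is standard for dominated splittings and is the same averaging step used in Section~\ref{7.1}.
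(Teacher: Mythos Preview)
Your proposal is correct and follows essentially the same route as the paper: the lemma is stated there as a recap of the discussion in Section~\ref{7.1}, and you reproduce exactly that argument (the invariant flag from Lemma~\ref{trans}, the divergence identity $r_{ds}+r_e=1$, the HPS ladder Lemma~\ref{strong} applied to $E_1=\ker{i^*\alpha}\subset E_2=T\Lambda$, and the conclusion of a dominated splitting with absolute expansion on $E^u$). One small slip to fix: in your first step you write ``$\langle Y\rangle\subset \ker{i^*d\alpha}=l$'', but in fact $Y\pitchfork l$ (this is precisely the content of Lemma~\ref{trans}); what you want is that $l$ and $\langle Y\rangle$ are two transverse $Y$-invariant lines spanning $\ker{i^*\alpha}$.
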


We don't know if this condition is generic for Liouville vector fields, while it is generic among general vector fields (see Question~\ref{qgenrep}). This means that under such condition, transversality is enough to establish a weaker form of hyperbolicity for the skeleton dynamics.

Of course, we have already seen that any non-singular partially hyperbolic flows can in fact be realized as the Liouville dynamics on the skeletons in the LIS examples. However, the skeleton in those examples fail to be a priori $C^1$ (while it is still Hölder continuous). We observe that the Liouville dynamics admits a strong (repelling) invariant line bundle transverse to the skeleton (i.e. the Liouville flow is conjugate to a linearization). This at least gives $C^r$-persistence of the skeleton as a topological (or Hölder continuous) manifold via the classical graph transformation ideas. This argument for $C^0$-persistence works without using its linearization and only using the {\em $C^k$-section theorem} (see Section~\ref{8}). However, in general understanding topological persistence can be more complicated in the absence of the rate condition of normal hyperbolicity \cite{floer1,c0}.

However, we can argue that when not too far from the rate condition in normal hyperbolicity, the conjugacy to the linearization can be established. In particular, suppose $Y$ expands $TW/T\Lambda$ with $r_n>\frac{1}{2}$. The discussion of the previous section implies that $r_s=r_{\ker{\alpha}\cap T\Lambda}=1-r_n<\frac{1}{2}<r_n$. Therefore, $\ker{\alpha}$ is dominated by the transverse direction, which is enough to find an invariant line bundle now. More specifically, use Lemma~\ref{strong} by letting $E_1=\ker{d\alpha|_{T\Lambda}}$, $E_2=\ker{\alpha}$ and $E_3=E_2 / E_1$, we have
$$e^{\int_0^1 r_s\circ Y^\tau d\tau}=||Y^*|_{E_1}||<m(Y^*|_{E_3} )=e^{\int_0^1 r_n\circ Y^\tau d\tau},$$
giving the existence of an invariant complement for $E^1$ inside $\ker{\alpha}$, which we denote by $E^n$. Since $Y$ preserves $\ker{\alpha}$, this means that $E^n$ is in fact an invariant bundle under $Y$.

Notice that now $\Lambda$ is an invariant set with a dominated splitting
$$\begin{cases}
TW|_\Lambda \simeq E\oplus F \\
E=\langle Y \rangle \oplus \ker{(d\alpha|_{T\Lambda})} \text{ and } F=E^n \oplus E^u
\end{cases},$$
where $E^u$ is the unstable bundle of the partially hyperbolic flow $Y|_{\Lambda}$. Note that in the above, we have $m(Y|_F)>e^{\frac{1}{2}}$ and $||Y|_E|| <e^{\frac{1}{2}}$.

\begin{theorem}\label{nearanosov}
In the conditions of the Lemma~\ref{almosth}, suppose further that $Y$ expands $TW/T\Lambda$ with a rate $r_n>\frac{1}{2}$. Then, there exists an invariant bundle $E^n\subset \ker{\alpha}$ such that $E^n\pitchfork T\Lambda$. %\textcolor{red}{rephrase, conjugate to linearization?}
\end{theorem}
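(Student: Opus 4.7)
The invariance of $\Lambda$ together with $\mathcal{L}_Y\alpha=\alpha$ shows that $Y$ preserves $\ker\alpha|_\Lambda$, $T\Lambda$, and hence also the sub-bundle $V := \ker\alpha\cap T\Lambda$, which has rank $2$ by the transversality hypothesis $\ker\alpha\pitchfork T\Lambda$. My plan is to produce $E^n$ as the $Y$-invariant $1$-dimensional complement of $V$ inside $\ker\alpha$: any such complement is automatically transverse to $T\Lambda$, since $E^n\cap T\Lambda \subseteq E^n\cap V = 0$ forces $\dim(E^n+T\Lambda)=1+3=\dim TW$. The task therefore reduces to splitting the $Y$-invariant short exact sequence $0\to V\to \ker\alpha\to \ker\alpha/V\to 0$ over $\Lambda$.

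To apply the Hirsch--Pugh--Shub domination lemma (Lemma~\ref{strong}) to this sequence, I need the exponents on $V$ and on the quotient. Because $\ker\alpha/V \cong TW/T\Lambda$ canonically, the rate on $\ker\alpha/V$ is exactly $r_n$. The bundle $V$ itself carries the further $Y$-invariant filtration $\langle Y\rangle\subset V$; on $\langle Y\rangle$ the rate is $0$ (the flow direction is fixed), while on $V/\langle Y\rangle\cong \ker(d\alpha|_{T\Lambda})$ the rate equals $1-r_n$. The latter identity is where the Liouville condition enters: expanding $d\alpha\wedge d\alpha$ near $\Lambda$ as $ds\wedge \hat e\wedge d\alpha$ and combining with $\mathcal{L}_Y(d\alpha\wedge d\alpha)=2\,d\alpha\wedge d\alpha$ gives $r_{ds}+r_e=1$ with $r_{ds}=r_n$, as already recorded in Section~\ref{7.1}. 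The hypothesis $r_n>\tfrac12$ therefore produces the strict separation $\max\{0,\,1-r_n\}<r_n$, which is exactly the domination relation required by Lemma~\ref{strong}.

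The main technical subtlety is that $V$ is not a line bundle but itself carries a non-trivial invariant filtration with two distinct rates, so in a generic trivialization the operator norm $\|Y^1_*|_{V_x}\|$ can be inflated beyond $\max(e^0,e^{1-r_n})$ by off-diagonal terms, and a priori the rates are only asymptotic. I would handle this in the standard way, by first replacing the ambient metric with an adapted Finsler norm in the sense of Gourmelon (cf.\ Remark~\ref{adapt}), built from the dominated splitting $V\oplus(\ker\alpha/V)$ together with the interior filtration $\langle Y\rangle\subset V$; with respect to such a norm the strict rate inequality translates into the pointwise operator-norm inequality $\|T_1|_{V_x}\|<m(T_3|_{(\ker\alpha/V)_x})$ required by Lemma~\ref{strong}. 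The lemma then delivers a unique $Y$-invariant complement of $V$ inside $\ker\alpha$; this is the desired $E^n$, and its transversality to $T\Lambda$ was recorded in the first paragraph.
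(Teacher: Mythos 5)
Your proof is correct, and it rests on the same two ingredients as the paper's argument: the Liouville identity $r_{ds}+r_e=1$ established in Section~\ref{7.1} (so that the rate on $\ker(d\alpha|_{T\Lambda})$ is $1-r_n$) and the Hirsch--Pugh--Shub splitting lemma (Lemma~\ref{strong}). The difference is the choice of short exact sequence. The paper takes $E_1=\ker(d\alpha|_{T\Lambda})$ (rank $1$) inside $E_2=\ker\alpha$ (rank $3$), so the invariant complement that Lemma~\ref{strong} produces is a $2$-plane containing $\langle Y\rangle$ from which the line bundle $E^n$ still has to be extracted; moreover, as written there the quotient $E_3=\ker\alpha/E_1$ contains the class of $Y$, whose rate is $0$, so the displayed identity $m(Y^*|_{E_3})=e^{\int_0^1 r_n}$ implicitly treats $E_3$ as the normal direction alone. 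You instead quotient by the full rank-$2$ bundle $V=\ker\alpha\cap T\Lambda$, so that $\ker\alpha/V\cong TW/T\Lambda$ carries exactly the rate $r_n$ and the invariant complement is already the desired line bundle, automatically transverse to $T\Lambda$ since it meets $V$ trivially. The price is the extra care you correctly take with $\|Y^1_*|_{V}\|$: because $V$ carries the two internal rates $0$ and $1-r_n$, an adapted (Gourmelon-type) norm is needed before the rate inequality becomes the pointwise operator-norm inequality that Lemma~\ref{strong} requires, and the hypothesis $r_n>\tfrac{1}{2}$ is used precisely to dominate both rates simultaneously. Both arguments are instances of the same mechanism; yours states the domination more cleanly and removes the extraction step at the end.
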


In particular, under the rate condition of Theorem~\ref{nearanosov}, the Liouville dynamics is $C^0$-conjugate to its linearization at the skeleton. It is not clear if other kinds of {\em exotic} Liouville dynamics are allowed near a Liouville skeleton, if we further drop such rate condition (see Question~\ref{qexotic}). One would like to extend the classification results of this section for $C^1$-persistent Liouville skeletons to such lower regularity settings related to non-Anosov partially hyperbolic flows (see Question~\ref{qc0}).
%Or, can we have any classification of $C^0$-persistence 3D skeletons in terms of partially hyperbolic flows.\textcolor{red}{question}

%%%%%%%%%%

\section{Regularity theoretic interpretation}\label{8}

Part (a) of Theorem~\ref{dynrig} establishes a Liouville geometric approach toward the regularity theory of the weak invariant bundles of Anosov flows (or more generally, the dominated bundle of non-singular partially hyperbolic flows). In particular, the Liouville skeleton of a LIS supportin a non-singular partially hyperbolic $X$ is a graph of a $C^k$ function $\Lambda_s:M\rightarrow \mathbb{R}$, if and only if, the dominated bundle $E$ is $C^k$. Therefore, we can study the regularity theory of such dominated bundle in terms of this Liouville skeleton. We have seen so far, that the normal expansion at any point on the skeleton is $r=1-r_s$, where $r_s$ is the expansion rate of the dominated bundle. We can use the $C^r$-section theorem then in order to determine the regularity of such graph $\Lambda_s$. More specifically, we can demonstrate a fiberwise contraction of a tubular neighborhood $N(\Lambda_s)\rightarrow \Lambda_s$, via the strong normal bundle, or equivalently, the linearization of the flow at the skeleton. Point-wise at any $p\in \Lambda_s$, we have ($Y^*$ being the time-1 action) $$||(Y|_{\Lambda_s})^*|_{E^{wn}}||=e^{\int_0^1(1-r_s)\circ Y^t(p) dt} \text{ and } ||(Y|_{\Lambda_s})^*|_{T\Lambda_s}||=e,$$ since $r_u\equiv 1$ gives the maximum expansion on $T\Lambda_s$. Now, if
$$B_s(Y|_{\Lambda_s})=\inf_{p\in M}[1-\sup_{t>0}\frac{1}{t}\int_0^t r_s\circ X^\tau (p)d\tau]$$
is the bunching constant of the dominated weak bundle (see Remark~\ref{bunchex}), then we have
$$||(Y|_{\Lambda_s})^*|_{E^{wn}}||>||(Y|_{\Lambda_s})^*|_{T\Lambda_s}||^{B_s(Y|_{\Lambda_s})}.$$
Therefore, the $C^r$-section theorem (see Theorem~3.5 of \cite{hps}) implies that, the invariant manifold $\Lambda_s$ has the regularity $C^{B_s(Y)-\epsilon}$ for arbitrary $\epsilon>0$. This gives a new proof for the result of Hasselblatt on $C^{1+}$-regularity of weak invariant bundles for Anosov 3-flows \cite{reg} and in fact, generalizes such result to the weak dominated bundles of non-singular partially hyperbolic flows. We can furthermore use the persistence part of the $C^r$-section theorem to show that such regularity is well behaved under homotopy.

Note that the Anosovity case corresponds to $B_s>1$ and in that case, one could also use the normal hyperbolicity theorem (see Theorem~\ref{normalh}) to make the same conclusion about the regularity of $\Lambda_s$.

\begin{theorem}\label{regex}
Let ${\Pi}^k(M)$ be the space of $C^k$ plane fields and $\mathcal{PHF}(M;B_s>k)$ be the space of non-singular partially hyperbolic vector fields like $X$ with the splitting  $TM/\langle X\rangle \simeq E\oplus E^{u}$, and the bunching constant satisfying $B_s>k$. Then, the map defined by $$\begin{cases}\mathcal{D}:\mathcal{PHF}(M;B_s>k)\rightarrow \Pi^k(M) \\
\mathcal{D}(X):=E \end{cases},$$
sending a non-singular partially hyperbolic flow to its weak dominated bundle as a plane field,
is well-defined and continuous. In particular, the stable weak bundle $E^s$ $C^1$-varies as one deforms $X$ through Anosov vector fields. %\textcolor{red}{refinement in regularity}
\end{theorem}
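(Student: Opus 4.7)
The plan is to reduce the regularity and persistence questions about the weak dominated bundle $E$ of a non-singular partially hyperbolic flow to corresponding questions about the Liouville skeleton of an associated LIS, and then apply the classical $C^r$-section theorem of Hirsch--Pugh--Shub. The bridge is Theorem~\ref{dynrig}(1), which identifies $E$ with the graph $\Lambda_s$ of a function $M \to \mathbb{R}$ describing the skeleton of any supporting LIS, with matching regularity. Given any $X \in \mathcal{PHF}(M;B_s>k)$, I would first choose a canonical $C^\infty$ supporting LIS -- for instance, an exponential LIS constructed from a fixed synchronization of $X$ as in Section~\ref{5.2.1} -- so that $X \mapsto (\alpha_-,\alpha_+)_{(\lambda_-,\lambda_+)}$ depends continuously on $X$. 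This produces a Liouville vector field $Y$ on $\mathbb{R}\times M$, and $\Lambda_s$ becomes the invariant graph whose regularity I need to bound.

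For well-definedness, I would verify that the fiberwise action of $Y^1$ on the normal bundle $E^{wn}/\langle Y\rangle$ over $\Lambda_s$ is a fiber contraction of rate bounded above by $e^{\int_0^1(r_s-1)\circ X^\tau\,d\tau}$, while the base action $Y^1|_{\Lambda_s}$ expands $T\Lambda_s/\langle Y\rangle$ with rate exactly $e^{\int_0^1 1 \, d\tau} = e$ (after synchronization, $r_u \equiv 1$). The hypothesis $B_s(X) > k$, rewritten via Remark~\ref{bunchex} as $\inf_p[1 - \sup_{t>0} \frac{1}{t}\int_0^t r_s\circ X^\tau \, d\tau] > k$, becomes exactly the hypothesis of the $C^r$-section theorem (Theorem~3.5 of \cite{hps}): $\|\text{fiber contraction}\|<m(\text{base expansion})^{-k}$. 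The theorem then guarantees that the unique invariant section -- which we know a priori to be $\Lambda_s$ -- is in fact $C^k$, so $E = \mathcal{D}(X) \in \Pi^k(M)$.

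For continuity, I would appeal to the parametric (persistence) statement in the $C^r$-section theorem: when $X$ varies continuously in $\mathcal{PHF}(M;B_s>k)$, the associated bundle map varies continuously, the bunching inequality remains strict on a neighborhood, and so the invariant section $\Lambda_s$ varies $C^k$-continuously in $C^k(M,\mathbb{R})$. Passing back through the identification of Theorem~\ref{dynrig}(1), which is purely algebraic (inverting $\ker[\lambda_-\alpha_- + \lambda_+\alpha_+] = E$ to solve for $s$), this gives continuity of $X \mapsto E$ into $\Pi^k(M)$. The Anosov case is a direct specialization with $k>1$, where $C^1$-variation of $E^{ws}$ under $C^2$-deformations of $X$ can alternatively be obtained from the normal hyperbolicity of $Y$ at $\Lambda_s$ (Theorem~\ref{dynrig}(3)) combined with Theorem~\ref{normalh}(c).

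The main obstacle I foresee is ensuring that the construction $X \mapsto (\alpha_-,\alpha_+)_{(\lambda_-,\lambda_+)}$ is carried out so that the induced bundle map on $E^{wn}/\langle Y\rangle$ varies continuously with $X$ in the appropriate topology, so that the parametric $C^r$-section theorem applies uniformly. The Section~\ref{5.2.1} fibration description already shows that a synchronized $X$ together with auxiliary choices of $\alpha_s$, $h_+$, and $\Lambda_s$ determines the exponential LIS, and one needs to verify that a local continuous section of this fibration exists near any given $X$; this is a routine but non-trivial gluing argument exploiting the contractibility of the fibers. Once that is in place, the regularity and continuity of $\mathcal{D}$ follow mechanically from the $C^r$-section theorem applied to the resulting continuous family of fiber contractions.
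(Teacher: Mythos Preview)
Your proposal is correct and follows essentially the same route as the paper: reduce the regularity of $E$ to that of the skeleton graph $\Lambda_s$ via Theorem~\ref{dynrig}(1), compute the normal and tangential rates after synchronization ($r_n = 1 - r_s$, $r_u \equiv 1$), and apply the $C^r$-section theorem of \cite{hps} together with its parametric part for continuity, with the Anosov specialization handled by normal hyperbolicity. The only cosmetic difference is that the paper invokes Theorem~\ref{1to1} to justify passing to a continuous family of exponential Liouville pairs, whereas you flag this as the ``main obstacle'' and propose resolving it via a local section of the fibration in Section~\ref{5.2.1}; these amount to the same thing.
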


Here, the map $\mathcal{D}$ being well-defined refers to the fact that the bunching condition guarantees that the weak dominated bundle is $C^k$ and the continuity is equivalent to $C^k$-dependence of such plane field to the variations of the generating vector field.

\begin{remark}
We would like to point out that our argument proves a refinement of Theorem~\ref{regex} in terms of the regularity of the vector field $X$. More specifically, the map $\mathcal{D}$ can be extended to a continuous map on the space of $C^2$ non-singular partially hyperbolic flows, i.e. a map of the form $$\mathcal{D}:\mathcal{PHF}^2(M;B_s>k)\rightarrow \Pi^k(M).$$
\end{remark}

\begin{proof}
As discussed above, the map being well-defined is a non-trivial consequence of $C^r$-section theorem and to see to the continuity, 
suppose a $C^{k+1}$-family of generating vector fields $X$ is given with $B_s>k$. There exists a $C^{k+1}$-family of supporting LIS s $(\alpha_-,\alpha_+)_{(\lambda_-,\lambda_+)}$ (which without loss of generality by Theorem~\ref{1to1}, we can assume to be a family of exponential Liouville pairs) inducing a $C^k$-family of Liouville vector fields and the persistent part of $C^r$-section theorem implies that the skeleton is $C^k$-persistent under such $C^k$-deformation Liouville vector fields (see Theorem~3.5 of \cite{hps}).

Note that when the deformation is through Anosov flows, $k>1$ and any $C^2$-deformation of $X$ results in the $C^1$-deformation of the weak stable bundle. In this case, one can conclude $C^1$-regularity and persistence, thanks to the standard theory of normal hyperbolicity (see Theorem~\ref{normalh}). The same can be said for the weak unstable bundle by considering $-X$.
%\textcolor{red}{maybe ask about optimality}

\end{proof}

It is natural to ask about the optimality of the regularity results for the invariant bundles, as is investigated in the classical literature \cite{reg,reg2}. Our construction implies that such optimality questions can be formulated in terms of the {\em Liouville persistence} of skeleton, i.e. the persistence of the skeleton as we deform the Liouville flow through Liouville flows (see Question~\ref{qlioupers}).

%%%%%%%%%%%%%%%
\section{Remarks on the related geometric objects}\label{9}

The goal of this section is to make some remarks and observations about other related symplectic geometric objects inside the Liouville manifolds obtained from the LIS construction. That is, the Lagrangians and Hamiltonian vector fields. We gather our main observations in the following theorem and the proof follows the discussions in the remainder of this section.%\textcolor{red}{check statement}

\begin{theorem}
Suppose $(\alpha_-,\alpha_+){(\lambda_-,\lambda_+)}\in \mathcal{LIS}(M)$ supports a non-singular partially hyperbolic flow $X$.

(1) the weak normal foliation $\mathcal{F}^{wn}$ $C^\infty$-depends on $X$;

(2) the Liouville skeleton $\Lambda_s$ is foliated by a $C^1$ strict exact Lagrangian foliation. when $X$ is Anosov;

(3) the Reeb flows for any supporting $(\alpha_-,\alpha_+)$ can be realized as the Hamiltonian flows on a pair of energy hypersurfaces inside $(\mathbb{R}\times M,L(\alpha_-,\alpha_+)_{(\lambda_-,\lambda_+)})$.
\end{theorem}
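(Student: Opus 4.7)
\emph{Part (1).} The weak normal plane field is $E^{wn} = \langle X, \partial_s\rangle$ on $\mathbb{R}\times M$. Since $X$ is $C^\infty$ by our standing assumption and $\partial_s$ is a coordinate vector field on the product, both are $C^\infty$ with $[X, \partial_s]=0$. Hence $E^{wn}$ is a $C^\infty$ involutive plane field, integrating by Frobenius to a $C^\infty$ foliation $\mathcal{F}^{wn}$. Crucially, $E^{wn}$ depends only on $X$ (not on $\alpha_\pm$ or $\lambda_\pm$), and the assignment $X \mapsto \langle X,\partial_s\rangle$ is continuous in the $C^\infty$ topology of vector fields, since it is fiberwise the span operation followed by Frobenius integration of a pair of commuting $C^\infty$ vector fields. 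This establishes $C^\infty$-dependence of $\mathcal{F}^{wn}$ on $X$. I would emphasize that this regularity is genuinely higher than that of the underlying flow's invariant bundles, which is possible precisely because $\mathcal{F}^{wn}$ encodes only the $\langle X\rangle$-direction together with the trivial $\partial_s$-direction, not the full weak bundles of $X$.

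\emph{Part (2).} By Theorem~\ref{dynrig}(1), when $X$ is Anosov the skeleton $\Lambda_s$ is a $C^{1+}$ section of $\pi:\mathbb{R}\times M\to M$, and by Corollary~\ref{c1regweak}(1) the weak stable foliation $\mathcal{F}^{ws}$ of $X$ on $M$ is $C^{1+}$. Transporting $\mathcal{F}^{ws}$ along $(\pi|_{\Lambda_s})^{-1}$ yields a $C^{1+}$ two-dimensional foliation $\widetilde{\mathcal{F}}^{ws}$ of $\Lambda_s$. By the kernel characterization in Theorem~\ref{dynrig}(1), $\ker(\alpha|_{T\Lambda_s})$ (viewed via $\pi|_{\Lambda_s}$) coincides with $E^{ws}$, so $\alpha$ vanishes on tangent vectors to leaves of $\widetilde{\mathcal{F}}^{ws}$ in $\Lambda_s$. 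Since each leaf $L$ is $2$-dimensional and $\alpha|_L \equiv 0$, we have $d\alpha|_L = d(\alpha|_L) = 0$, so $L$ is Lagrangian in $(\mathbb{R}\times M, d\alpha)$, and strictly exact. The step needing care is to check that $\alpha$ genuinely vanishes on the \emph{full} tangent direction of $L$ inside $\mathbb{R}\times M$ (not just on its $\pi$-projection); this works because $\Lambda_s$ is a graph, so $TL \subset T\Lambda_s$ is determined by $\pi_*(TL) = E^{ws}$ together with the tangent direction of the graph, both of which lie in $\ker\alpha$ ($\partial_s$ is annihilated by $\alpha_\pm$ trivially, and $E^{ws}$ by construction of $\Lambda_s$).

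\emph{Part (3).} By Corollary~\ref{linemb} and Theorem~\ref{1to1}, I may replace $(\alpha_-,\alpha_+)_{(\lambda_-,\lambda_+)}$ up to strict Liouville equivalence with an exponential model $\alpha = e^{-s}\alpha_- + e^{s}\alpha_+$. In the positive cylindrical end $\{s \geq N\}$ for $N$ large, write $\alpha = e^{s}\alpha_+ + e^{-s}\alpha_-$: the first summand is precisely the Liouville form of the symplectization $(\mathbb{R}\times M, e^{s}\alpha_+)$, and the second is exponentially small with all its $s$-derivatives. A standard non-compact Moser argument, analogous to the one used in Section~\ref{5.1}, produces a strict Liouville equivalence between a neighborhood of the positive end of $(\mathbb{R}\times M,\alpha)$ and an open subset $\{s \geq N'\} \subset (\mathbb{R}\times M, e^{s}\alpha_+)$. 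In the symplectization, the level set $\Sigma_+ := \{s = c\}$ is a contact-type hypersurface with contact form $e^{c}\alpha_+$, and the Hamiltonian vector field of $H_+ = e^{s}$ restricted to $\Sigma_+$ is $R_+$; pulling back gives $R_+$ as a Hamiltonian vector field on an energy hypersurface in $(\mathbb{R}\times M,\alpha)$. The negative end is handled symmetrically with $H_- = e^{-s}$ and $\Sigma_-$.

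\emph{Main obstacle.} The hardest step is the Moser identification of the cylindrical ends with genuine symplectizations in part (3): one must show that the exponentially decaying error term $e^{-s}\alpha_-$ can be removed by a compactly-supported-at-infinity isotopy without disturbing the Liouville structure, and this requires carefully estimating the vector field generating the Moser isotopy to ensure it is integrable to an actual diffeomorphism on the non-compact end. Parts (1) and (2) follow fairly directly from the earlier rigidity and regularity results.
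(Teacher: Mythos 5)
Your proposal is correct, and for parts (1) and (2) it is essentially the paper's argument: the paper's Section~\ref{9.1} likewise observes that $E^{wn}=\langle X,\partial_s\rangle$ depends only on $X$ and is strictly exact, and obtains the Lagrangian foliation of the skeleton from the $C^{1+}$ weak stable foliation; the only cosmetic difference in (2) is that the paper works intrinsically with the weak stable foliation of the Anosov skeleton dynamics $Y|_{\Lambda_s}$, whose tangent plane is $\langle Y\rangle\oplus\ker(d\alpha|_{T\Lambda_s})\subset\ker(\alpha|_{T\Lambda_s})$ (cf.\ Lemma~\ref{trans}), whereas you transport $\mathcal{F}^{ws}$ of $X$ through the graph map --- since both foliations are tangent to the two-plane $\ker(i^*\alpha)$, they coincide. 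Part (3) is where the routes genuinely differ: the paper's proof in Section~\ref{9.2} is a short appeal to the fact that the two sides of the skeleton are Liouville cylinders over transverse hypersurfaces, together with the realization of any contact form for the induced contact structure as a graph/Hamiltonian level in a symplectization; you instead run a Moser deformation along the path $\alpha_t=(1-t)e^{-s}\alpha_-+e^{s}\alpha_+$ to identify the positive end literally with an end of the symplectization of $\alpha_+$. Your route buys an exact hypersurface on which $\alpha$ restricts to (a constant multiple of) $\alpha_+$ itself, rather than a form merely defining a contactomorphic structure, but it must confront a degeneration the paper's Moser lemma (Lemma~\ref{newmosercomp}) does not: at $t=1$ the pair $(\alpha_1,\mathcal{L}_{\partial_s}\alpha_1)$ is no longer a basis of the annihilator of $E^{wn}$ (the symplectization's Liouville field is $\partial_s$, so $Y\pitchfork\partial_s$ fails), and the coefficients of the Moser field acquire a $\frac{1}{1-t}$ factor that must be cancelled against the $O((1-t)e^{-2s})$ decay of $Y_t-\partial_s$. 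That cancellation does go through, so the step you flag as the main obstacle is exactly the right one to worry about, and your proof is complete once it is carried out.
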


%%%%%%%%%%%%%%%%%
\subsection{Underlying Lagrangian foliations}\label{9.1}

There are Lagrangian foliations naturally defined in the LIS construction. We would like to show that such foliations behave nicely under the deformation of the flow described in Section~\ref{4} and \ref{5}. 

First, we always have the normal Lagrangian bundle $E^{wn}=\langle X,\partial_s \rangle $ which is strictly exact, i.e. $\alpha|_{E^{wn}}\equiv 0$. These Lagrangian plane fields (and therefore the Lagrangian foliations $\mathcal{F}^{wn}$ induced) are $C^k$ deformed under $C^k$-deformations of $X$. Furthermore, if a $C^k$-deformation $X_\tau$ is through non-singular partially hyperbolic vector fields, we have a $C^k$ family of Lagrangian integrable plane fields $E^{wn}_\tau$ tangent to the Lagrangian foliations $\mathcal{F}^{wn}_\tau$. Note that the notion of being an exact Lagrangian foliation is invariant for a sub-manifold under Liouville homotopy. That is since by Lemma~\ref{moserclassic}, we have $\alpha_0+dh_\tau=\psi^{\tau*}\alpha_\tau$ after applying an isotopy $\psi^\tau$. Therefore, $\mathcal{F}_0^{wn}$ can be assumed, after an isotopy, to stay an exact Lagrangian foliation during a Liouville homotopy. Also, note that for all $\tau$, the leaf space of $\mathcal{F}^{wn}_\tau$ and the orbit space of $X_\tau$ is the same. This means that all invariants of Liouville pairs which are invariants up to Liouville homotopy stay invariant. As motivated in \cite{clmm}, one can try to relate the Fukaya sub-categories of $\mathcal{F}_\tau^{wn}$ generated by periodic orbits of $X_\tau$ (corresponding to exact Lagrangian annuli). Note that the bi-contact DA deformation gives an example where the two $\mathcal{F}^{wn}_\tau$ in the same family can be different as foliations, since their leaf spaces, which are equivalent to the orbit spaces of the underlying flows, are different, before and after the DA deformation). While it is natural ask whether such Fukaya sub-category split-generates the wrapped Fukaya category of $(\mathbb{R}\times M,\alpha)$ \cite{clmm}, we can also ask to what degree such sub-category remembers the orbit structure of the flow.

Another family of Lagrangians, which naturally shows up in the LIS setting, is the foliation of the Liouville skeleton by a 2-dimensional $C^k$ strictly exact Lagrangian foliation. While one might still be able to make sense of this in the low regularity setting of non-Anosov partially hyperbolic flow, we restrict our attention to the Anosov case, where the skeleton is the graph of a $C^{1+}$ function $\Lambda_s:M\rightarrow \mathbb{R}$. In this case, the flow $Y|_{\Lambda_s}$ is Anosov with the strong stable bundle $E^{s}=\ker{(d\alpha|_{T\Lambda_s} )}\subset \ker{(\alpha|_{T\Lambda})}$. Therefore, the weak stable foliation of the skeleton $\mathcal{F}^{ws}|_{\Lambda_s}$ is a foliation by strict Lagrangians (note $\alpha|_{T\mathcal{F}^{ws}|_{\Lambda_s}}=0$). With some more care, one might be able to make sense of this observation as the Anosovity condition is relaxed to partial hyperbolicity and the skeleton is only Hölder continuous. Notice that even in the case of non-singular partially hyperbolic flows, there is a $C^k$-conjugacy, for some $k>0$, with a linearized flow (see Section~\ref{6}) and therefore, the weak stable foliation of the skeleton in the low regularity linearized model, which is truly an exact Lagrangian foliation, is mapped, via a low regularity conjugacy, to the stable foliation of the low regularity skeleton of such LIS s.

%%%%%%%
\subsection{Hamiltonians}\label{9.2}

Consider the 3-dimensional $C^1$ Liouville skeleton $\Lambda_s$ induced from a LIS supporting an Anosov vector field, which is foliated by the weak stable foliation (the second exact Lagrangian foliation discussed above). Note that a Hamiltonian whose energy level is this $C^1$ skeleton $\Lambda_s$ satisfies $X_H\subset \ker{\alpha|_{T\Lambda}}=E^s$, where by $E^s$, we refer to the strong stable bundle of $Y|_{\Lambda_s}$. 

Naturally, the Reeb flows of the supporting bi-contact pairs can be manifested as Hamiltonian of a level surface on the appropriate side of the skeleton $\Lambda_s$, i.e. the Reeb flows of the positive contact form in the positive side the skeleton, i.e. $\{ s>\Lambda_s\}$, and the Reeb flows of the negative contact form in the negative side of the skeleton, i.e. $\{ s<\Lambda_s\}$. This is simply thanks to the standard fact that the two sides of the Liouvillr skeleton are determined as Liouville cylinder diffeomorphic to a symplectization (see Section~\ref{3.1}). Therefore, any contact form can be realized as a Hamiltonian on some Hamiltonian energy level. The realization of supporting contact forms as Hamiltonian flows restricted to sections of $\pi:\mathbb{R}\times M \rightarrow M$ can be generalized to non-singular partially hyperbolic flows, as it does not require $C^1$-regularity of the weak dominated bundle.

%%%%%%%%%%%%%%%%%%%%%%%%%%%%%
\section{Questions}\label{10}
In this section, we gather some of the questions which were motivated throughout this paper. Many of these questions revolve around establishing a general dynamical framework for Liouville dynamics.

A general line of questions is motivated by distinguishing a Liouville flow from an arbitrary flow of positive divergence, up to positive reparametrization. This is paralleled with the historical motivations of the subject as discussed in the introduction. Recall that Mitsumatsu's construction \cite{mitsumatsu} was introduced to show that the world of Liouville geometry is wider than the study of {\em Stein structures}. More specifically, in Liouville geometry, disconnected boundary can happen while that is not possible in the Stein case. Along the same lines, we can ask how different a general flow of positive divergence is from a Liouville one. Results in Section~\ref{7} for instance should be interpreted along these lines. Any 3-dimensional flow can be realized on the normally repelling 3-dimensional invariant submanifold $\Lambda_s \subset \mathbb{R}\times M$. Start with any vector field $X$ on $M$ and by adding enough expansion in the $\partial_s$-direction, extend it to a vector field on $\mathbb{R}\times M$, normally hyperbolic with restriction to $X$ on $\{ 0\}\times M$. After a positive reparametrization, we can set the divergence to be any prescribed function. However, Theorem~\ref{pers} shows that doing this as Liouville dynamics has implications on the skeleton dynamics. In particular, the skeleton dynamics is axiom A and has a non-empty hyperbolic invariant subset. One can ask about other phenomena distinguishing the Liouville condition for flow. In particular, one can ask if the generic properties of Liouville vector fields is the same as general vector fields.

\begin{question}\label{qgeneral}
How are Liouville flows qualititatively different from a general flow with positive divergence? In particular, what are the generic properties of Liouville flows?
\end{question}

To motivate the question about the generic properties above, recall that in Theorem~\ref{almosth}, we showed that the expansion of the normal bundle of a $C^1$ 3-dimensional Liouville skeleton $\Lambda$ and assuming $\ker{\alpha}\pitchfork T\Lambda$ implies that the skeleton dynamics to be a non-singular partially hyperbolic flow. $\Lambda$ being the Liouville skeleton implies topological repelling nearby. But in Theorem~\ref{almosth}, $C^1$-repellence is assumed. Now, in the category of general vector fields (even allowing restriction to flow with positive divergenc,e if desired), $C^1$-repellence can be arranged generically, since arbitrarily small normal repellence at $\Lambda$ can be added to guarantee $C^1$-repellence. However, such trivial perturbation does not work in the category of Liouville flows, since by Lemma~\ref{moserclassic} any deformation of the Liouville dynamics is done, after applying an isotopy, by adding a Hamiltonian, and hence volume preserving, vector field. Our modification of adding normal repellence at $\Lambda$ is clearly not volume preserving. Therefore, it remains unclear if such condition is generic in the category of Liouville flows.

\begin{question}\label{qgenrep}
Is $C^1$-repellence at $C^1$ 3-dimensional Liouville skeletons generic among Liouville flows?
\end{question}

As discussed further in Section~\ref{7}, thanks to the classic work of Hirsch-Pugh-Shub \cite{hps} and Mañe \cite{mane}, normal hyperbolicity and $C^1$-persistence of invariant submanifolds are known to be equivalent in the category of general flows. It remains unclear if $C^1$-persistence in the categories of Liouville flows is any different than the category of general (volume expanding) flows. Note that $C^1$-persistence implies $C^1$-Liouville persistence. So, the question is whether in converse is also true. More specifically, is it possible to have a $C^1$ skeleton, which $C^1$-persists under the ($C^1$- or $C^\infty$-) Liouville deformation of the generating Liouville vector field (i.e. $C^1$-deformation through Liouville vector fields), which is not normally hyperbolic in the sense of Theorem~\ref{normalh}?

\begin{question}\label{qlioupers}
Is there any difference between $C^1$-persistence and $C^1$-Liouville persistence?
\end{question}

Theorem~\ref{pers} gives a description of the skeleton dynamics in the case of normally hyperbolic 3-dimensional Liouville skeleton $\Lambda$. However, we currently only know of examples with $\ker{\alpha}\pitchfork T\Lambda$, i.e. the Mitsumatsu examples.

\begin{question}\label{qtrans}
Are there other examples of non-Weinstein Liouville geometry with 3-dimensional $C^1$-embedded oriented normally hyperbolic skeleton $\Lambda$, where $\ker{\alpha}$ and $T\Lambda_s$ are not transverse? %\textcolor{red}{bowden examples.}
\end{question}

The construction of Mitsumatsu implies that the deformation of flows through non-singular partially hyperbolic flows results in the homotopy of the underlying Liouville domain (or manifold). Considering that the symplectic geometric invariants of Anosov flows introduced in \cite{clmm} are invariants of Liouville homotopy, any two Anosov flows which can be homotoped through non-singular partially hyperbolic flows, have the same Liouville geometric invariants. Ideally, one would hope for these Liouville geometric invariants to determine the supported flow uniquely, up to orbit equivalence. The example of bi-contact DA deformation discussed in Section~\ref{5} shows that this does not hold in the category of non-singular partially hyperbolic flows, as one can construct two non-orbit equivalent flows, whose resulting LIS s are Liouville homotopic. But it is natural to ask about the situation in the category of Anosov flows. Therefore, natural to ask the following. %This question can be asked in the wider class of projectively Anosov flows.

\begin{question}\label{qinv}
Show that if two Anosov flows are connected through non-singular partially hyperbolic flows, then they are orbit equivalent.
\end{question}

Moreover, we only study non-vanishing Liouville forms with $C^1$-peristent Liouville skeletons. One can try to extend our results by allowing the Liouville form vanish at some points (necessarily on the skeleton).

\begin{question}\label{qsing}
Provide structure theorems for singular Liouville flows with $C^1$-persistent 3-dimensional Liouville skeleton.
\end{question}

As in the theory of general flows, studying $C^0$-persistence is typically much more complicated \cite{floer1,floer2}. Our LIS examples for non-Anosov partially hyperbolic 3-flows give examples of $C^0$-but-not-$C^1$-persistent skeletons (note that this is persistence in the category of general vector fields). We can ask whether any characterization similar to Theorem~\ref{pers} is possible for the skeleton dynamics in the case of $C^0$ persistent Liouville skeletons.

\begin{question}\label{qc0}
Can we characterize Liouville domains (or manifolds) with $C^0$-persistent 3-dimensional skeletons (possibly in terms of partial hyperbolicity)?
\end{question}

Finally, in Section~\ref{7.2} we argue that, for non-singular partially hyperbolic dynamics on an oriented 3-dimensional Liouville skeleton, which is sufficiently close to being Anosov in the sense of Theorem~\ref{nearanosov}, the Liouville flow is topologically conjugate to its linearization, similar to any Liouville flow induced from an LIS. However, dropping the hyperbolicity condition, other types of dynamics near the partially hyperbolic skeleton are a priori possible. We ask whether examples of such {\em exotic} Liouville dynamics in fact exist.

\begin{question}\label{qexotic}
Are there examples of exotic Liouville dynamics near 3-dimensional $C^1$-embedded Liouville skeleton?
\end{question}

%%%%%%%%%%%%%%%%%%%%%%%%%%%%%

\section{References}

\Addresses

\begin{thebibliography}{00}

\bibitem{anosov0} D. V. Anosov, {\em Ergodic properties of geodesic flows on closed Riemannian manifolds of negative curvature}, Dokl. Akad. Nauk SSSR, 151:6 (1963), 1250–1252

\bibitem{anosov} Anosov, Dmitry Victorovich. {\em Geodesic flows on closed Riemannian manifolds of negative curvature.} Trudy Mat. Inst. Steklov 90.5 (1967).

\bibitem{three} Araújo, Vítor, Maria José Pacifico, and Marcelo Viana. {\em Three-dimensional flows.} Berlin: Springer, 2010.

\bibitem{asaoka} Asaoka, Masayuki. {\em Regular projectively Anosov flows on three-dimensional manifolds.} Annales de l'Institut Fourier. Vol. 60. No. 5. 2010.

\bibitem{adn} Asaoka, Masayuki, Emmanuel Dufraine, and Takeo Noda. {\em Homotopy classes of total foliations.} Commentarii Mathematici Helvetici 87.2 (2012): 271-302.

\bibitem{bartintro} Barthelmé, Thomas. {\em Anosov flows in dimension 3 preliminary version.} Preprint (2017).

\bibitem{bby} Béguin, François, Christian Bonatti, and Bin Yu. {\em Building Anosov flows on 3–manifolds.} Geometry \& Topology 21.3 (2017): 1837-1930.

\bibitem{bowden} Bowden, Jonathan.{\em Exactly fillable contact structures without Stein fillings.} Algebraic \& Geometric Topology 12.3 (2012): 1803-1810.

\bibitem{bbp} Bowden, Jonathan, Christian Bonatti, and Rafael Potrie. {\em Some remarks on projective Anosov flows in hyperbolic 3-manifolds.} (2020): 359-369.

\bibitem{sing} Braddell, Roisin, et al. {\em An invitation to singular symplectic geometry.} International Journal of Geometric Methods in Modern Physics 16.supp01 (2019): 1940008.

\bibitem{folded} Breen, Joseph. {\em Folded symplectic forms in contact topology.} Journal of Geometry and Physics 201 (2024): 105213.

\bibitem{breen} Breen, Joseph. {\em Morse-Smale characteristic foliations and convexity in contact manifolds.} Proceedings of the American Mathematical Society 149.9 (2021): 3977-3989.

\bibitem{bc} Breen, Joseph, and Austin Christian. {\em Torus bundle Liouville domains are stably Weinstein.} arXiv preprint arXiv:2109.07615 (2021).

\bibitem{bhh} Breen, Joseph, Ko Honda, and Yang Huang. {\em The Giroux correspondence in arbitrary dimensions.} arXiv preprint arXiv:2307.02317 (2023).

\bibitem{c0} Capiński, Maciej J., and Hieronim Kubica. {\em Persistence of normally hyperbolic invariant manifolds in the absence of rate conditions.} Nonlinearity 33.9 (2020): 4967.

\bibitem{jul} Chaidez, Julian. {\em Robustly non-convex hypersurfaces in contact manifolds.} arXiv preprint arXiv:2406.05979 (2024).

\bibitem{gradient} Cieliebak, Kai. {\em A note on gradient-like vector fields.} arXiv preprint arXiv:2406.02985 (2024).

\bibitem{ce} Cieliebak, Kai, and Yakov Eliashberg. {\em From Stein to Weinstein and back: symplectic geometry of affine complex manifolds.} Vol. 59. American Mathematical Soc., 2012.

\bibitem{clmm} Cieliebak, K., Lazarev, O., Massoni, T., \& Moreno, A. (2022). {\em Floer theory of Anosov flows in dimension three.} arXiv preprint arXiv:2211.07453.

\bibitem{colin} Colin, Vincent, and Sebastiao Firmo. {\em Paires de structures de contact sur les variétés de dimension trois.} Algebraic \& Geometric Topology 11.5 (2011): 2627-2653.

\bibitem{pot} Crovisier, Sylvain, and Rafael Potrie. "Introduction to partially hyperbolic dynamics." School on Dynamical Systems, ICTP, Trieste 3.1 (2015).

\bibitem{partition} Dieck, Tammo Tom. {\em Partitions of unity in homotopy theory.} Compositio Mathematica 23.2 (1971): 159-167.

\bibitem{doering} Doering, Clauss. {\em persistently transitive vector fields on three-dimensional manifolds, Dynamical Systems and Bifurcation Theory.} Pitman Res. Notes Math. Ser. 160 (1987): 59-89.

\bibitem{eoy} Eliashberg, Yakov, Noboru Ogawa, and Toru Yoshiyasu. {\em Stabilized convex symplectic manifolds are Weinstein.} Kyoto Journal of Mathematics 61.2 (2021): 323-337.

\bibitem{ep} Eliashberg, Yakov, and Dishant Pancholi. {\em Honda-Huang's work on contact convexity revisited.} arXiv preprint arXiv:2207.07185 (2022).

\bibitem{et} Eliashberg, Yakov, and William P. Thurston. {\em Confoliations.} Vol. 13. American Mathematical Soc., 1998.

\bibitem{hyp} Fisher, Todd, and Boris Hasselblatt. {\em Hyperbolic flows.} 2019.

\bibitem{floer1} Floer, Andreas. {\em A topological persistence theorem for normally hyperbolic manifolds via the Conley index.} Transactions of the American Mathematical Society 321.2 (1990): 647-657.

\bibitem{floer2} Floer, Andreas. {\em A refinement of the Conley index and an application to the stability of hyperbolic invariant sets.} Ergodic Theory and Dynamical Systems 7.1 (1987): 93-103.

\bibitem{da} Franks, John, and Bob Williams. {\em Anomalous anosov flows.} Global Theory of Dynamical Systems: Proceedings of an International Conference Held at Northwestern University, Evanston, Illinois, June 18–22, 1979. Berlin, Heidelberg: Springer Berlin Heidelberg, 2006.

\bibitem{contop} Geiges, Hansjörg. {\em An introduction to contact topology.} Vol. 109. Cambridge University Press, 2008.

\bibitem{geiges} Geiges, Hansjörg. {\em Examples of Symplectic 4‐Manifolds with Disconnected Boundary of Contact Type.} Bulletin of the London Mathematical Society 27.3 (1995): 278-280.

\bibitem{geiges2} Geiges, Hansjörg. {\em Symplectic manifolds with disconnected boundary of contact type.} International Mathematics Research Notices 1994.1 (1994): 23-30.

\bibitem{ghys} Ghys, Étienne. {\em Flots d'Anosov dont les feuilletages stables sont différentiables.} Annales scientifiques de l'Ecole normale supérieure. Vol. 20. No. 2. 1987.

\bibitem{convex} Giroux, Emmanuel. {\em Convexité en topologie de contact.} Diss. Lyon 1, 1991.

\bibitem{giroux} Giroux,Emmanuel. {\em Geometrie de contact: de la dimension trois vers les dimensions superieures.} Proceedings of the international congress of mathematicians, ICM 2002, 2002, Beijing, Chine. pp.405-414.

\bibitem{ideal} Giroux, Emmanuel. {\em Ideal Liouville Domains-a cool gadget.} arXiv preprint arXiv:1708.08855 (2017).

\bibitem{gour} Gourmelon, Nikolaz. {\em Adapted metrics for dominated splittings.} Ergodic Theory and Dynamical Systems 27.6 (2007): 1839-1849.

\bibitem{gromov} Gromov, Mikhael. {\em Pseudo holomorphic curves in symplectic manifolds.} Inventiones mathematicae 82.2 (1985): 307-347.

\bibitem{poiss} Guillemin, Victor, Eva Miranda, and Ana Rita Pires. {\em Symplectic and Poisson geometry on b-manifolds.} Advances in mathematics 264 (2014): 864-896.

\bibitem{regp} Hasselblatt, Boris. {\em Periodic bunching and invariant foliations.} Mathematical Research Letters 1.5 (1994): 597-600.

\bibitem{reg} Hasselblatt, Boris. {\em Regularity of the Anosov splitting and of horospheric foliations.} Ergodic Theory and Dynamical Systems 14.4 (1994): 645-666.

\bibitem{reg2} Hasselblatt, Boris. {\em Regularity of the Anosov splitting II.} Ergodic Theory and Dynamical Systems 17.1 (1997): 169-172.

\bibitem{hps} Hirsch, Morris W., Charles Chapman Pugh, and Michael Shub. {\em Invariant manifolds.} Bulletin of the American Mathematical Society 76.5 (1970): 1015-1019.

\bibitem{holmes} Holmes, R. B. {\em A formula for the spectral radius of an operator.} The American Mathematical Monthly 75.2 (1968): 163-166.

\bibitem{honda} Honda, Ko, and Yang Huang. {\em Convex hypersurface theory in contact topology.} arXiv preprint arXiv:1907.06025 (2019).

\bibitem{hoz5} Hozoori, Surena. {\em Anosov contact metrics, Dirichlet optimization and entropy.} arXiv preprint arXiv:2311.15397 (2023).

\bibitem{hoz4} Hozoori, Surena. {\em On Anosovity, divergence and bi-contact surgery.} Ergodic Theory and Dynamical Systems (2022): 1-23.

\bibitem{hoz3} Hozoori, Surena. {\em Symplectic geometry of Anosov flows in dimension 3 and bi-contact topology.} Advances in Mathematics 450 (2024): 109764.

\bibitem{huang} Huang, Yang. {\em A dynamical construction of Liouville domains.} Proceedings of the American Mathematical Society 148.12 (2020): 5323-5330.

\bibitem{katok} Hurder, Steve, and Anatoly Katok. {\em Differentiability, rigidity and Godbillon-Vey classes for Anosov flows.} Publications Mathématiques de l'IHÉS 72 (1990): 5-61.

\bibitem{mane} Mañé, Ricardo. {\em Persistent Manifolds Are Normally Hyperbolic.} Transactions of the American Mathematical Society, vol. 246, 1978, pp. 261–83. JSTOR, https://doi.org/10.2307/1997974. Accessed 24 Apr. 2024.

\bibitem{massoni} Massoni, Thomas. {\em Anosov flows and Liouville pairs in dimension three.} arXiv preprint arXiv:2211.11036 (2022).

\bibitem{massoni2} Massoni, Thomas. {\em Taut foliations and contact pairs in dimension three.} arXiv preprint arXiv:2405.15635 (2024).

\bibitem{massot} Massot, Patrick. {\em Topological methods in 3-dimensional contact geometry.} Contact and symplectic topology 26 (2014): 27-83.

\bibitem{mnw} Massot, Patrick, Klaus Niederkrüger, and Chris Wendl. {\em Weak and strong fillability of higher dimensional contact manifolds.} Inventiones mathematicae 192.2 (2013): 287-373.

\bibitem{mcduff} McDuff, Dusa. {\em Symplectic manifolds with contact type boundaries.} Inventiones mathematicae 103.1 (1991): 651-671.

\bibitem{symptop} McDuff, Dusa, and Dietmar Salamon. {\em Introduction to symplectic topology.} Vol. 27. Oxford University Press, 2017.

\bibitem{mitsumatsu} Mitsumatsu, Yoshihiko. {\em Anosov flows and non-Stein symplectic manifolds.} Annales de l'institut Fourier. Vol. 45. No. 5. 1995.

\bibitem{mit2} Mitsumatsu, Yoshihiko, Daniel Peralta-Salas, and Radu Slobodeanu. {\em On the existence of critical compatible metrics on contact $3 $-manifolds.} arXiv preprint arXiv:2311.15833 (2023).

\bibitem{pa} Morales, Carlos A., Maria José Pacifico, and Enrique R. Pujals. {\em Robust transitive singular sets for 3-flows are partially hyperbolic attractors or repellers.} Annals of mathematics (2004): 375-432.

\bibitem{mori2} Mori, Atsuhide. {\em On the violation of Thurston-Bennequin inequality for a certain non-convex hypersurface.} arXiv preprint arXiv:1111.0383 (2011).

\bibitem{mori1} Mori, Atsuhide. {\em Reeb foliations on $S^5$ and contact 5-manifolds violating the Thurston-Bennequin inequality.} arXiv preprint arXiv:0906.3237 (2009).

\bibitem{folk} MION-MOUTON, M. A. R. T. I. N. {\em DISTRIBUTIONS INVARIANT BY PARTIALLY HYPERBOLIC DIFFEOMORPHISMS AND ANOSOV FLOWS.} (2024).

\bibitem{noda} Noda, Takeo. {\em Projectively Anosov flows with differentiable (un) stable foliations.} Annales de l'institut Fourier. Vol. 50. No. 5. 2000.

\bibitem{noda2} Noda, Takeo. {\em Regular projectively Anosov flows with compact leaves.} Annales de l'Institut Fourier, Volume 54 (2004) no. 2, pp. 481-497.

\bibitem{patreg} Paternain, Gabriel P. {\em Regularity of weak foliations for thermostats.} Nonlinearity 20.1 (2006): 87.



%\bibitem{noda2} Noda, Takeo, and Takashi Tsuboi. {\em Regular projectively Anosov flows without compact leaves.} Foliations: Geometry And Dynamics. 2002. 403-419.
 
\bibitem{ps} Pugh, Charles, and Michael Shub. {\em Linearization of normally hyperbolic diffeomorphisms and flows.} Inventiones mathematicae 10.3 (1970): 187-198.

\bibitem{pa2} Pujals, Enrique. {\em From hyperbolicity to dominated splitting.} Inst. de Matemática Pura e Aplicada, 2006.

\bibitem{sal2} Salmoiraghi, Federico. {\em Goodman surgery and projectively Anosov flows.} arXiv preprint arXiv:2202.01328 (2022).

\bibitem{sal1} Salmoiraghi, Federico. {\em Surgery on Anosov flows using bi-contact geometry.} arXiv preprint arXiv:2104.07109 (2021).

\bibitem{simic} Simić, Slobodan. {\em Codimension one Anosov flows and a conjecture of Verjovsky.} Ergodic Theory and Dynamical Systems 17.5 (1997): 1211-1231.

\bibitem{weinstein} Weinstein, Alan. {\em Contact surgery and symplectic handlebodies.} Hokkaido Mathematical Journal 20.2 (1991): 241-251.

\end{thebibliography}
\end{document}